\documentclass[a4paper,10pt]{amsart}

\usepackage{amsmath,amsthm,amssymb,enumerate}
\usepackage{mathtools}
\usepackage{stmaryrd}
\usepackage{epsfig}
\usepackage{amssymb}
\usepackage{amsmath}
\usepackage{amssymb}
\usepackage{amsmath,amsthm}
\usepackage{eqparbox}
\usepackage[latin1]{inputenc}
\usepackage{bm}
\usepackage{bbm}
\usepackage{esint}
\usepackage{tikz-cd}
\usetikzlibrary{decorations.pathreplacing}
\usepackage{amsfonts}
\usepackage{amsxtra}
\usepackage{euscript,mathrsfs}
\usepackage{color}
\usepackage{verbatim}
\usepackage[left=3.25cm,right=3.25cm,top=3cm,bottom=2.25cm]{geometry}
\usepackage[colorlinks=true, linktocpage=true, linkcolor=red!70!black, citecolor=green!50!black, urlcolor=blue!50!black]{hyperref}
\allowdisplaybreaks

\usepackage{tikz-cd}
\usetikzlibrary{decorations.pathreplacing}

\usepackage{enumitem}
\setenumerate{label={\rm (\alph{*})}}

\usepackage{amsfonts}
\usepackage{amsxtra}

\numberwithin{equation}{section}

\usepackage{xcolor}
\colorlet{ColorPink}{red!30}
\usepackage{graphicx}

\newcommand{\R}{\mathbb R}
\newcommand{\N}{\mathbb N}

\newcommand{\Q}{Q}

\renewcommand\ae{{a.\@e.\@}}
\newcommand\D{{\rm D}}

\newcommand\qq{\qquad}

\newcommand{\dx}{\,\mathrm{d}x}

\DeclareMathOperator{\BV}{BV}

\newcommand{\dif}{\mathrm{d}}

\renewcommand{\dif}{\operatorname{d}\!}
\newcommand{\lebe}{\operatorname{L}}
\newcommand{\sobo}{\operatorname{W}}

\newcommand{\locc}{\operatorname{loc}}

\newcommand{\hold}{\operatorname{C}}

\newcommand{\bv}{\operatorname{BV}}

\newcommand{\ball}{\operatorname{B}}

\newcommand{\mres}{\mathbin{\vrule height 1.6ex depth 0pt width
0.13ex\vrule height 0.13ex depth 0pt width 1.3ex}}

\newcommand{\dashint}{\fint}
\newcommand{\spt}{\operatorname{spt}}

\allowdisplaybreaks

\renewcommand\qq\qquad

\newcommand{\Cstabv}{S_j}
\newcommand{\CstabG}{A_j}

\theoremstyle{plain}

\newtheorem*{theorem*}{Theorem}
\newtheorem{theorem}{Theorem}[section]
\newtheorem{lem}[theorem]{Lemma}
\newtheorem{prop}[theorem]{Proposition}
\newtheorem{corollary}[theorem]{Corollary}

\newtheorem{definition}[theorem]{Definition}

\theoremstyle{definition}

\newtheorem{rem}[theorem]{Remark}
\newtheorem{example}[theorem]{Example}

\makeatletter
\def\namedlabel#1#2{\begingroup
    #2%
    \def\@currentlabel{#2}%
    \phantomsection\label{#1}\endgroup
}
\makeatother

\begin{document}


\title[Non-uniformly elliptic variational problems on BV]{Non-uniformly elliptic \\ variational problems on BV}
\author[L. Beck]{Lisa Beck}
\address{L. Beck: Institut f\"{u}r Mathematik, Universit\"{a}t Augsburg, Universit\"{a}tsstr. 12a, 86159 Augsburg, Germany} 
\email{lisa.beck@math.uni-augsburg.de}

\author[F. Gmeineder]{Franz Gmeineder}
\address{F. Gmeineder: Fachbereich Mathematik und Statistik, Universit\"{a}t Konstanz, Universit\"{a}tsstra\ss e 10, 78464 Konstanz, Germany} 
\email{franz.gmeineder@uni-konstanz.de}

\author[M. Sch\"{a}ffner]{Mathias Sch\"{a}ffner}
\address{M. Sch\"{a}ffner: Institut f\"{u}r Mathematik, Theodor Lieser Strasse 5,44227 Halle (Saale), Germany}
\email{mathias.schaeffner@mathematik.uni-halle.de}

\keywords{Variational integral, Sobolev regularity, non-standard growth conditions, $(p, q)$-type growth conditions, linear growth conditions, relaxed minimizers, functions of bounded variation.}

\date{\today}

\subjclass{26B30, 35B65, 35J60,  49J45}

\maketitle

\begin{abstract}
We establish $\sobo^{1,1}$-regularity and higher gradient integrability for relaxed minimizers of convex integral functionals on $\bv$. Unlike classical examples such as the minimal surface integrand, we only require linear growth from below but not necessarily from above. This typically comes with a non-uniformly degenerate elliptic behaviour, for which our results extend the presently available bounds from the superlinear growth  case in a sharp way. 
\end{abstract}
\setcounter{tocdepth}{1}
\tableofcontents

\section{Introduction}
Functionals with linear growth are one of the central topics in the Calculus of Variations, comprising the classical (non-parametric) minimal surface problem as a special case. Compared with more classical settings -- e.g., ~elastic energies in reflexive Sobolev spaces -- they come with a crucial lack of compactness and potential concentration effects. This  eventually leads to relaxed formulations on the space~$\bv$ of functions of bounded variation \cite{AMBFUSPAL00,GIAMODSOU79,GIUSTI77}, and the underlying regularity theory needs to take care of the generic degenerate elliptic behaviour of the integrands. Fundamentally different from the Sobolev case, a primary objective here is the absence of the singular parts and so the $\sobo^{1,1}$-regularity of minima; see Section~\ref{sec:contextmainresults} for more detail. 

While conditions for Sobolev regularity in the realm of linear growth functionals are fairly well understood \cite{BECBULGME20,BECEITGME,BECSCH13,BECSCH15,BILDHAUER03,GMEKRI19,GMEINEDER20}, much less is known for \emph{non-uniformly elliptic} problems on $\bv$. Deferring the precise meaning of this notion to our discussion below, such functionals represent the endpoint case of non-uniformly elliptic superlinear growth problems: With origins in the works of Marcellini \cite{MARCELLINI89,MARCELLINI91}, the latter -- also known as $(p,q)$-growth functionals -- now face an abundance of criteria leading to improved regularity assertions; see \cite{BELSCH20,BELSCH24,CarozzaKristensenPasarelli,ESPLEOMIN99,ESPLEOMIN04} for a non-exhaustive list, and De Filippis \& Mingione \cite{DEFMIN25,MINGIONE06Dark} for overviews. Yet, the borderline case $p=1$ is widely open so far and comes with more fundamental obstructions than in superlinear growth scenarios. In this paper, we aim to give the first results in this limiting case which bridge between the two growth regimes in a sharp way. Referring the reader to Section~\ref{sec:contextmainresults} below for the precise statements, we give our detailed set-up first. 

\subsection{Setting and LSM-extensions}\label{sec:introsetting}
In all of what follows, let $\Omega\subset\R^{n}$ be an open and bounded set with Lipschitz boundary. As the central objects of the present paper, we consider convex variational integrands $F\in\hold(\R^{N\times n})$ which satisfy the growth bound
\begin{align}\label{eq:1qgrowth}
\gamma|z| \leq F(z) \leq \Gamma(1+|z|^{q})\qquad\text{for all}\;z\in\R^{N\times n}, 
\end{align}
where $0<\gamma\leq \Gamma <\infty$ and  $1\leq q<\infty$. For future reference, we note that the choice $q=1$ in \eqref{eq:1qgrowth} corresponds to \emph{classical linear growth} and is, for instance, fulfilled by the area integrand $F(z)\coloneqq \sqrt{1+|z|^{2}}$.

Henceforth, let $u_{0}\in\sobo^{1,q}(\Omega;\R^{N})$ be a Dirichlet datum. In view of the variational problem 
\begin{align}\label{eq:minimize}
\text{to minimize}\;\;\;\mathscr{F}[u;\Omega] \coloneqq  \int_{\Omega}F(\nabla u)\dif x \qquad\text{over}\;\sobo_{u_{0}}^{1,q}(\Omega;\R^{N})\coloneqq u_{0}+\sobo_{0}^{1,q}(\Omega;\R^{N}), 
\end{align}
we deduce from the upper growth bound of \eqref{eq:1qgrowth} that $\mathscr{F}[-;\Omega]$ is well-defined on the Dirichlet class $\sobo_{u_{0}}^{1,q}(\Omega;\R^{N})$. However, by the lower growth bound from \eqref{eq:1qgrowth}, minimizing sequences will, in general, only be bounded in $\sobo^{1,1}(\Omega;\R^{N})$. The latter space is non-reflexive, and concentration effects might prevent minimizing sequences from being relatively weakly compact in $\sobo^{1,1}(\Omega;\R^{N})$ indeed. As a classical procedure, compactness can be achieved by passing to the larger space $\bv(\Omega;\R^{N})$ of \emph{functions of bounded variation}. In consequence, since $\mathscr{F}[-;\Omega]$ is a priori only well-defined on $\sobo^{1,q}(\Omega;\R^{N})$, this necessitates a suitable extension to $\bv(\Omega;\R^{N})$. 

Keeping in mind the direct method of the Calculus of Variations, such extensions ought to be lower semicontinuous with respect to the convergence yielding compactness; in our situation, this is the weak*-convergence on $\bv(\Omega;\R^{N})$ (see Section~\ref{sec:prelims} for the precise definition). This is accomplished by the  Lebesgue--Serrin--Marcellini (LSM) extension
\begin{align}\label{eq:LSM}
\overline{\mathscr{F}}_{u_{0}}^{*}[u;\Omega] \coloneqq \inf\bigg\{\liminf_{j\to\infty}\int_{\Omega}F(\nabla u_{j})\dif x\colon\;\begin{matrix} (u_{j}) \text{ in } u_{0}+\sobo_{0}^{1,q}(\Omega;\R^{N}) \\ u_{j}\stackrel{*}{\rightharpoonup} u\;\text{in}\;\bv(\Omega;\R^{N})\end{matrix}\bigg\}
\end{align}
for $u\in\bv(\Omega;\R^{N})$. Originally introduced in \cite{MARCELLINI86} in the semiconvex context, LSM extensions have been considered recently by De Filippis et al. \cite{DEFKOCKRI24,DEFMIN25b} in the study of convex functionals of $(p,q)$-growth with $p>1$. However, in the $\bv$-situation as discussed here, they usually cannot be represented by integrals or measures by easy means if $q>1$, and come with both benefits and drawbacks; see Section \ref{sec:contextmainresults}\ref{item:(a)} below  for a discussion. For now, we note that sequences as required in \eqref{eq:LSM} always exist in our setting (see Lemma~\ref{lem:nonempty}), whereby $\overline{\mathscr{F}}_{u_{0}}^{*}[-;\Omega]$ is well-defined on $\bv(\Omega;\R^{N})$. Crucially, the Dirichlet datum is directly included in the definition of the relaxed functional. This feature might be anticipated from classical relaxations in the linear growth context, where  it  corresponds to the usual solid-boundary-value approach. In particular, it causes the emergence of boundary penalisation terms; see also the integral representation~\eqref{eq:relaxintegral} in the case $q=1$ below.   

In the following, we call $u\in\bv(\Omega;\R^{N})$ a \emph{relaxed} or \emph{$\bv$-minimizer} of~$\mathscr{F}$ subject to the Dirichlet datum $u_{0}$ provided that 
\begin{align}\label{eq:minimality}
\overline{\mathscr{F}}_{u_{0}}^{*}[u;\Omega] \leq \overline{\mathscr{F}}_{u_{0}}^{*}[v;\Omega]\qquad\text{for all}\;v\in\bv(\Omega;\R^{N}). 
\end{align}
We note that in \eqref{eq:minimality}, the competitors are allowed to be arbitrary in $\bv(\Omega;\R^{N})$. As a key point, Theorem~\ref{thm:consistency} below establishes that~\eqref{eq:LSM} is in fact the correct extension for the minimization problem~\eqref{eq:minimize}. Namely, subject to the growth condition~\eqref{eq:1qgrowth} and Dirichlet data  $u_{0}\in\sobo^{1,q}(\Omega;\R^{N})$, the original functional $\mathscr{F}[-;\Omega]$ coincides with the relaxed functional $\overline{\mathscr{F}}_{u_{0}}^{*}[-;\Omega]$ on the Dirichlet class $\sobo_{u_{0}}^{1,q}(\Omega;\R^{N})$, relaxed minimizers of~$\mathscr{F}$ always exist, and we have the no-gap-result 
\begin{align}\label{eq:nogap!}
\min_{\bv(\Omega;\R^{N})}\overline{\mathscr{F}}_{u_{0}}^{*}[-;\Omega] = \inf_{\sobo_{u_{0}}^{1,q}(\Omega;\R^{N})}\mathscr{F}[-;\Omega].
\end{align}
Interestingly, minimality of some  $u\in\bv(\Omega;\R^{N})$ for the relaxed functional directly implies strong structural features on the densities $\frac{\dif\D^{s}u}{\dif|\D^{s}u|}$ of the singular parts, see Proposition~\ref{prop:finiteness}:  The finiteness of the relaxed functional excludes that these densities are contained in  parts of the rank-one-cone of directions where $F$ has superlinear growth. 

In general, by potential concentration effects of minimizing sequences, the absolutely continuous and singular parts of gradients of relaxed minimizers are intertwined in a delicate manner. It is thus natural to inquire as to which conditions on the integrand~$F$ imply the \emph{complete absence} of the singular parts $\D^{s}u$ and, if possible, higher gradient integrability.
\subsection{Main results, context  and strategy of proof}\label{sec:contextmainresults}  Regularity assertions of this kind necessarily rely on some form of ellipticity of $F$. In this regard, a flexible scale is that of \emph{$(\mu,q)$-ellipticity}, letting us grasp the typical degenerate elliptic behaviour of $\hold^{2}$-integrands verifying \eqref{eq:muell1q}. With origins in \cite{BILFUC01,BILDHAUERFUCHS2002MU,BILDHAUER03}, we say that $F\in\hold^{2}(\R^{N\times n})$ is \emph{$(\mu,q)$-elliptic} with $1\leq\mu<\infty$ and $1\leq q<\infty$ if there exist constants $0<\lambda\leq\Lambda<\infty$ such that  
\begin{align}\label{eq:muell1q}
\lambda (1+|z|^{2})^{-\frac{\mu}{2}}|\xi|^{2} \leq \langle \nabla^{2}F(z)\xi,\xi\rangle \leq \Lambda (1+|z|^{2})^{\frac{q-2}{2}}|\xi|^2\qquad\text{for all}\;z,\xi\in\R^{N\times n}.
\end{align}
In the classical linear growth context, \eqref{eq:muell1q} is typically satisfied for $1< \mu <\infty$ and $q=1$, and implies that the corresponding ellipticity ratio  
\begin{align}\label{eq:ellratio}
\mathscr{R}_{F}(z)\coloneqq \frac{\text{highest eigenvalue of $\nabla^{2}F(z)$}}{\text{lowest eigenvalue of $\nabla^{2}F(z)$}}
\end{align}
blows up as $|z|\to\infty$. Thus, in the terminology of De Filippis \& Mingione \cite{DEFMIN25}, even classical linear growth $\hold^{2}$-integrands are \emph{non-uniformly elliptic}. In this sense, integrands obeying \eqref{eq:muell1q} can be regarded as \emph{very} non-uniformly elliptic. It is then natural to examine how far the ellipticity ratio can be deteriorated while still maintaining the $\sobo_{\locc}^{1,1}$-regularity of relaxed minimizers \emph{and} connecting this borderline case with the available results for classical linear and $(p,q)$-growth in a sharp way. This is answered by
\begin{theorem}[Universal higher gradient integrability]\label{thm:main}
Let $\Omega\subset\R^{n}$ be open and bounded with Lipschitz boundary. Moreover, let $F\in\hold^{2}(\R^{N\times n})$ be a variational integrand with \eqref{eq:1qgrowth} and \eqref{eq:muell1q}, where $1\leq\mu,q<\infty$ satisfy
\begin{align}\label{eq:maincondition}
q + \mu < 2 + \frac{2}{n-1} \;\;\;\text{and}\;\;\;\begin{cases}
1\leq \mu < 1+\frac{2}{n}&\;\text{if}\;n\geq 3, \\
1\leq \mu\leq 2&\;\text{if}\;n=2. 
\end{cases}
\end{align}
Then, for any $u_{0}\in\sobo^{1,q}(\Omega;\R^{N})$, \emph{every  relaxed minimizer $u\in\bv(\Omega;\R^{N})$ of $\mathscr{F}$ subject to the Dirichlet datum $u_{0}$ belongs to $\sobo^{1,1}(\Omega;\R^{N})$}. 

More precisely, there exist a constant {$c=c(\gamma,\Gamma,\lambda,\Lambda,n,q,\mu)\in[1,\infty)$} and an exponent  $\mathtt{d}=\mathtt{d}(n,\mu,q)\in[1,\infty)$ such that the following estimates hold for every such $u\in\bv(\Omega;\R^{N})$ and all  balls $\ball_{R}(x_{0})\Subset\Omega$: 
\begin{enumerate}
    \item\label{item:main2} If $1\leq\mu<1+\frac{2}{n}$  {and $n\geq 3$}, then  \begin{align}\label{eq:finalboundA1}
\begin{split}
\bigg(\dashint_{\ball_{R/2}(x_0)}|\nabla u|^{\frac{(2-\mu)n}{n-2}}\dif x\bigg)^\frac{n-2}{(2-\mu)n}   \leq c\,\bigg(1+\bigg(\dashint_{{\ball_{R}(x_0)}}|\nabla u|\dif x \bigg)^{\mathtt{d}}\bigg).
\end{split}
\end{align}
    \item\label{item:main2b} If $1\leq \mu<2$ and $n=2$, then 
\begin{align}\label{eq:finalboundA2}
\begin{split}
{\;\;-}{\!\!\!\|\,|\nabla u|^{2-\mu}\|}_{\exp\lebe^{1}(\ball_{R/2}(x_{0}))}^{\frac{1}{2-\mu}}
 &   \leq c\,\bigg(1+\bigg(\dashint_{{\ball_{R}(x_0)}}|\nabla u|\dif x \bigg)^{\mathtt{d}}\bigg),
\end{split}
\end{align}
where $\!\!\!{\;\;-}{\!\!\!\|\cdot\|}_{\exp\lebe^{1}(\ball_{R/2}(x_{0}))}$ is the scaled Orlicz norm with respect to the Young function $t\mapsto\exp(|t|)-1$; see Section~\ref{sec:notation} for its precise definition. 
\item\label{item:main3} If $n=2$ and $\mu=2$, then we have for every $1\leq t <\infty$ that
\begin{align}\label{eq:finalboundA1log}
\begin{split}
\bigg(\dashint_{\ball_{R/2}(x_0)}|\nabla u|^{t}\dif x\bigg)^\frac{1}{t} &   \leq \exp\bigg(c\,t\,\bigg(1+\bigg(\dashint_{\ball_{R}(x_0)}|\nabla u|\dif x \bigg)^{\mathtt{d}}\bigg)\bigg).
\end{split}
\end{align}
\end{enumerate}
\end{theorem}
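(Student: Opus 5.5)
We plan to use regularization followed by a priori estimates that are uniform in the regularization parameter. First, fix a relaxed minimizer $u$. Using Theorem~\ref{thm:consistency} and the no-gap identity \eqref{eq:nogap!}, we pick a sequence $(u_j)\subset\sobo^{1,q}_{u_0}(\Omega;\R^N)$ with $u_j\stackrel{*}{\rightharpoonup}u$ and $\mathscr{F}[u_j;\Omega]\to\overline{\mathscr{F}}^*_{u_0}[u;\Omega]$. We then define perturbed functionals
\[
\mathscr{F}_j[v]\coloneqq \int_\Omega F(\nabla v)\dif x+\frac{1}{2j A_j}\int_\Omega(1+|\nabla v|^2)^{\frac{q'}{2}}\dif x+\int_\Omega|v-u|\dif x ,
\]
where $q'>q$ is chosen so that the added term dominates. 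We minimize $\mathscr{F}_j$ over $u_j+\sobo^{1,q'}_0$, after mollifying $u_j$ and choosing $A_j$ large enough that $A_j^{-1}\int(1+|\nabla u_j|^2)^{q'/2}\to0$. The penalization by $|v-u|$ (in the Ekeland/Bildhauer style) is included to force the minimizers $v_j$ to converge to the given $u$, and not merely to some other minimizer. The perturbed integrand is uniformly elliptic with $q'$-growth, so standard $(p,q)$-theory (Marcellini) gives $v_j\in\sobo^{1,\infty}_{\locc}\cap\sobo^{2,2}_{\locc}$. Lower semicontinuity of $\overline{\mathscr{F}}^*_{u_0}$ together with minimality of $u$ then yields $v_j\stackrel{*}{\rightharpoonup}u$ and $\sup_j\|\nabla v_j\|_{\lebe^1}<\infty$.

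The core step is a Caccioppoli inequality for second derivatives. We differentiate the Euler--Lagrange system, test with $\eta^2\partial_s v_j\,\Phi(|\nabla v_j|)$, and sum over $s$. The lower bound in \eqref{eq:muell1q} controls $\int\eta^2(1+|\nabla v_j|^2)^{-\mu/2}|\nabla^2 v_j|^2$, while the upper bound produces terms of the form $\int|\nabla\eta|^2(1+|\nabla v_j|^2)^{\frac{q}{2}}$. Since $|\nabla(\eta\,h(\nabla v_j))|^2$ with $h(z)=(1+|z|^2)^{\frac{2-\mu}{4}}$ is dominated by the left-hand side, the Sobolev embedding gives
\[
\Big(\dashint_{\ball_\rho}|\nabla v_j|^{\frac{(2-\mu)n}{n-2}}\Big)^{\frac{n-2}{n}}\lesssim \frac{1}{(r-\rho)^2}\dashint_{\ball_r}(1+|\nabla v_j|)^{q}\, .
\]
For $n=2$ we use Trudinger's embedding instead, which gives the $\exp\lebe$ bound, and in the case $\mu=2$ the logarithmic version. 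Here the condition $\mu<1+\frac2n$ (respectively $\mu\le 2$ when $n=2$) is exactly what makes $\frac{(2-\mu)n}{n-2}>1$, i.e., it guarantees a genuine gain over $\lebe^1$.

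The main obstacle is closing the estimate, because the right-hand side involves the $q$-th power while only the $\lebe^1$ norm is uniformly controlled. We plan to interpolate $\lebe^q$ between $\lebe^1$ and $\lebe^{\chi}$ with $\chi=\frac{(2-\mu)n}{n-2}$. The exponent obtained on the $\lebe^\chi$ factor is strictly smaller than $2-\mu$ precisely when $q+\mu<2+\frac{2}{n-1}$; this is the step where \eqref{eq:maincondition} is needed, and the arithmetic must be checked carefully. We then absorb via Young's inequality together with a standard iteration lemma on shrinking radii $\rho<r$ (hole-filling for nonnegative functions bounded on intervals), which yields \eqref{eq:finalboundA1} for $v_j$ with some exponent $\mathtt{d}$. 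If this interpolation is not sharp enough, we would first try a test function with an extra power $(1+|\nabla v_j|^2)^{\alpha}$ and optimize over $\alpha$.

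Finally, the bounds are uniform in $j$, so $\nabla v_j$ is bounded in $\lebe^{\chi}_{\locc}$ with $\chi>1$. Weak compactness and $v_j\stackrel{*}{\rightharpoonup}u$ then show that $\D u\mres\Omega$ is absolutely continuous, so $u\in\sobo^{1,1}_{\locc}$. Lower semicontinuity of the norms gives \eqref{eq:finalboundA1}--\eqref{eq:finalboundA1log}, with $\dashint|\nabla v_j|$ replaced by $\dashint|\nabla u|$ in the limit; this replacement requires area-strict convergence, which we get from the energy convergence. To obtain $u\in\sobo^{1,1}(\Omega)$ up to the boundary, we use the fact that the only possible singular part lives on $\partial\Omega$, which is excluded by considering $\Omega$ as a subset of a larger domain with $u=u_0$ outside.
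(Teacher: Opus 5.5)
\textbf{Gap 1: the closing step does not reach the range \eqref{eq:maincondition}.} The arithmetic you flagged produces the classical range instead. Put $\chi=\frac{(2-\mu)n}{n-2}$. Interpolating $\lebe^{q}$ between $\lebe^{1}$ and $\lebe^{\chi}$ puts the power $\frac{(q-1)\chi}{\chi-1}$ on $\|\nabla v_j\|_{\lebe^{\chi}}$, while your left-hand side carries the power $2-\mu$. Absorption therefore needs $(q-1)n<2+n(1-\mu)$, i.e.\ $q+\mu<2+\frac{2}{n}$, which is \eqref{ineq:muqold} and not \eqref{eq:maincondition}.

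The case $n=2$ fails the same way. Letting $\chi\to\infty$ gives $q+\mu<3$ instead of $q+\mu<4$. For $\mu=n=2$ and $q>1$, the bad term is $\int e^{qV}|\nabla\eta|^{2}\dif x$ with $V=\log(1+|\nabla v_j|^{2})^{1/2}$. Moser--Trudinger bounds it only by a quantity exponential in $\|\nabla V\|_{\lebe^{2}}^{2}$, so nothing can be absorbed.

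The improvement from $n$ to $n-1$ comes from the cut-off, not the test function; this is Lemma~\ref{L:optim}. One builds a radial $\eta$ whose gradient lives only on radii $r$ where the traces of $V_{\mu}(\nabla v_j)$ on $\partial\ball_r$ satisfy averaged $\sobo^{1,2}(\mathbb{S}^{n-1})$- and $\lebe^{p}$-bounds, with $p=\frac{2}{2-\mu}$. The term $\int(1+|\nabla v_j|^{2})^{q/2}|\nabla\eta|^{2}\dif x$ only sees these spheres, so one may use:
\begin{itemize}
\item Sobolev on the $(n-1)$-dimensional sphere, with $Q=\frac{2(n-1)}{n-3}$, or any $Q<\infty$ if $n=3$;
\item for $n=2$, the $\lebe^{\infty}$-bounds on circles from Lemma~\ref{L:1dinterpolation}, also applied to $e^{V}$ when $\mu=2$.
\end{itemize}
One then interpolates $\lebe^{pq}$ between $\lebe^{p}$ and $\lebe^{Q}$. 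The absorption condition \eqref{eq:thetapqcond1} (resp.\ $\frac{\theta pq}{p+2}<1$, resp.\ $q-1<1$) is exactly $q+\mu<2+\frac{2}{n-1}$. The $n$-dimensional Sobolev inequality enters only afterwards, to turn the uniform $\sobo^{1,2}$-bound on $V_\mu(\nabla v_j)$ into \eqref{eq:finalboundA1}.

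\textbf{Gap 2: the approximation scheme breaks down for $q>1$.} Since $u_0\notin\sobo^{1,q'}$ in general, minimizing over $u_j+\sobo^{1,q'}_0$ forces you to mollify the boundary datum. Then $v_j$ is not admissible in \eqref{eq:LSM}, and the lower semicontinuity argument you sketch does not identify the limit with $u$.

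Shifting back by $u_0-u_{0,j}$ costs $L(1+\|\nabla v_j\|_{\lebe^{q}})^{q-1}\|u_0-u_{0,j}\|_{\sobo^{1,q}}$ by Lemma~\ref{lem:liptype}. Your only $\lebe^{q}$-control is $\|\nabla v_j\|_{\lebe^{q'}}^{q'}\lesssim jA_j$. But $A_j$ must dominate the $\lebe^{q'}$-energy of the mollified datum, which blows up as the mollification is refined. So this product need not tend to zero.

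This is exactly the obstruction in Remark~\ref{rem:strategy}. The paper resolves it as follows:
\begin{itemize}
\item it stabilizes with the exact-$q$-growth term $\langle\cdot\rangle_q$, normalized by $S_j$;
\item $S_j$ is computed from a recovery element carrying the original datum $u_0$, so almost minimizers get an $\lebe^{q}$-bound independent of the mollification;
\item only afterwards does it choose $u_{0,j}$ with $\|u_0-u_{0,j}\|_{\sobo^{1,q}}<1/\kappa_j$.
\end{itemize}

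\textbf{Further points.}
\begin{itemize}
\item The unweighted penalty $\int|v-u|\dif x$ only yields $|\int\langle\nabla F_j(\nabla v_j),\nabla\varphi\rangle\dif x|\le\|\varphi\|_{\lebe^{1}}$. After differentiation this leaves the second-order error $\|\partial_\ell(\eta^{2}\partial_\ell v_j)\|_{\lebe^{1}}$. Absorbing it produces $\int\eta^{2}(1+|\nabla v_j|^{2})^{\mu/2}\dif x$, which is uncontrolled when $\mu>q$. The Ekeland perturbation in $\sobo^{-1,1}$ gives only a first-order error, by \eqref{eq:W-11cancel}.
\item Energy convergence does not give area-strict convergence here: there is no integral representation and no local splitting of the energy. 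Use instead strong $\lebe^{1}_{\locc}$-convergence of gradients, obtained from the $\sobo^{1,2}_{\locc}$-bound on $V_\mu(\nabla v_j)$ via Rellich and Vitali, as in Proposition~\ref{Lem:stronglimit}.
\item Your last sentence is wrong but unnecessary. Relaxed minimizers need not attain $u_0$. Meanwhile $u\in\bv(\Omega;\R^{N})$ with $\D^{s}u\mres\Omega=0$ already lies in $\sobo^{1,1}(\Omega;\R^{N})$.
\end{itemize}
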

Note that, if one writes the left-hand side of \eqref{eq:finalboundA1} as 
\begin{align*}
{\;\;-}{\!\!\!\|\,|\nabla u|{^{2-\mu}}\|}_{\lebe^{\frac{n}{n-2}}(\ball_{R/2}(x_{0}))}^{\frac{1}{2-\mu}} = \bigg(\dashint_{\ball_{R/2}(x_0)}|\nabla u|^{\frac{(2-\mu)n}{n-2}}\dif x\bigg)^\frac{n-2}{(2-\mu)n}, 
\end{align*}
then \eqref{eq:finalboundA2} can be understood as the sharp substitute of \eqref{eq:finalboundA1} for $n=2$. In the two-dimensional case and subject to the stronger condition \eqref{eq:2dstrongercond} below, the previous result can be sharpened to yield the local $\hold^{1,\alpha}$-regularity of relaxed minimizers: 
\begin{theorem}[Universal  $\hold^{1,\alpha}$-regularity in $n=2$ dimensions]\label{thm:main2}
Let $\Omega\subset\R^{2}$ be open and bounded with Lipschitz boundary, and let $F\in\hold^{2}(\R^{N\times 2})$  be a variational integrand satisfying \eqref{eq:1qgrowth} and \eqref{eq:muell1q}, where 
\begin{align}\label{eq:2dstrongercond}
\max\{2,q\}+3\mu < 6. 
\end{align}
Moreover, let $u_{0}\in\sobo^{1,q}(\Omega;\R^{N})$. Then, for any $0<\alpha<1$, every relaxed minimizer $u\in\bv(\Omega;\R^{N})$ of $\mathscr{F}$ subject to the Dirichlet datum $u_{0}$ belongs to $(\sobo^{1,1}\cap\hold_{\locc}^{1,\alpha})(\Omega;\R^{N})$. 
\end{theorem}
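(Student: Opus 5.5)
First I would reduce to Theorem~\ref{thm:main}. Condition \eqref{eq:2dstrongercond} gives $3\mu<4$, so $\mu<4/3\le 2$, and $q+\mu<6-2\mu<4=2+\frac{2}{n-1}$ for $n=2$, so \eqref{eq:maincondition} holds. Hence every relaxed minimizer $u$ lies in $\sobo^{1,1}(\Omega;\R^{N})$. By \eqref{eq:finalboundA2}, $|\nabla u|^{2-\mu}\in\exp\lebe^{1}_{\locc}$, so $\nabla u\in\lebe^{t}_{\locc}$ for every $t<\infty$. Since $u\in\sobo^{1,1}$, I expect (via the minimality and consistency results, Theorem~\ref{thm:consistency} combined with a locality argument on balls $\ball_R\Subset\Omega$) that $u$ is a local minimizer of $\mathscr{F}$ among $\sobo^{1,q}$-perturbations with compact support in the ball. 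This holds because $q<\infty$ and $\nabla u\in\lebe^{q}_{\locc}$. Thus the problem becomes a $\hold^{1,\alpha}$ theory for genuine local $\sobo^{1,q}_{\locc}$ minimizers of a $(\mu,q)$-elliptic functional in two dimensions.

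I would then show $\nabla u\in\lebe^\infty_{\locc}$. I would work with a regularized problem: $F_\varepsilon=F+\varepsilon(1+|z|^2)^{q/2}$, or a $\sigma$-perturbation, with Dirichlet datum a mollification of $u$ on $\ball_R$. The difference-quotient/second-variation estimate used for Theorem~\ref{thm:main} gives, uniformly in $\varepsilon$,
\[
\int_{\ball_{r}}\Gamma_\varepsilon^{\frac{-\mu}{2}}|\nabla^2 u_\varepsilon|^2\,\eta^2\dif x\lesssim \int \Gamma_\varepsilon^{\frac{q}{2}}|\nabla\eta|^2\dif x,\qquad \Gamma_\varepsilon=1+|\nabla u_\varepsilon|^2 .
\]
The right-hand side is controlled uniformly thanks to the higher integrability from the first step. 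Testing with $\Gamma_\varepsilon^{s}$-powers gives a Moser iteration for $V=\Gamma_\varepsilon^{\frac{2-\mu}{4}+s}$. In $n=2$ the Sobolev embedding $\sobo^{1,1}\hookrightarrow\lebe^2$, or $\sobo^{1,2}\hookrightarrow\lebe^{t}$ for all $t$, gives a gain at each step. The loss from the ellipticity ratio is $\Gamma^{(q+\mu-2)/2}$, and it must be absorbed with H\"older's inequality using $\nabla u\in\lebe^t$ for large $t$. The balance condition $\max\{2,q\}+3\mu<6$ is exactly what I expect to be needed to close the iteration with a uniform $\lebe^\infty$ bound on $\nabla u_\varepsilon$. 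This is the main obstacle, and I would first attempt it via the De Giorgi/Moser scheme in the Bildhauer--Fuchs style. The $3\mu$ term arises because the weight $\Gamma^{-\mu/2}$ enters three times: in the Caccioppoli inequality, when converting to a gradient of $V$, and in the Sobolev step.

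Once I have a uniform Lipschitz bound, I would pass to the limit $\varepsilon\to0$. Strict convexity on bounded sets identifies the limit with $u$ on $\ball_{R/2}$, because minimizers of the strictly convex problem with gradients in a bounded set are unique given the same boundary data. Then $F$ is uniformly elliptic on the range $\{|z|\le M\}$, and the classical theory applies. In two dimensions, the Morrey/Bernstein-type results for uniformly elliptic convex $\hold^2$ systems (the vectorial case $n=2$, where $\nabla u\in\sobo^{1,2}$ and the system for $\partial_k u$ has bounded measurable coefficients) give $\nabla u\in\hold^{0,\alpha}_{\locc}$ for every $\alpha<1$ via the standard two-dimensional Widman hole-filling and Campanato argument. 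This yields $u\in\hold^{1,\alpha}_{\locc}$ for all $0<\alpha<1$.
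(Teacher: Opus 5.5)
\textbf{The Lipschitz bound in your plan does not work for systems.} Your whole proof rests on a uniform $\lebe^{\infty}$-bound for $\nabla u_{\varepsilon}$, obtained by Moser iteration on powers of $\Gamma_{\varepsilon}=1+|\nabla u_{\varepsilon}|^{2}$. That step is not available in the generality of the statement, which concerns systems ($N\geq 1$) and arbitrary convex $F$ without Uhlenbeck structure. To iterate, you must test the differentiated system with $\eta^{2}\Gamma_{\varepsilon}^{s}\partial_{\ell}u_{\varepsilon}$ and sum over $\ell$. This produces the cross term $\eta^{2}\sum_{\ell}\langle\nabla^{2}F(\nabla u_{\varepsilon})\partial_{\ell}\nabla u_{\varepsilon},\partial_{\ell}u_{\varepsilon}\otimes\nabla(\Gamma_{\varepsilon}^{s})\rangle$. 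For $N=1$ it equals $\frac{s}{2}\eta^{2}\Gamma_{\varepsilon}^{s-1}\langle\nabla^{2}F(\nabla u_{\varepsilon})\nabla\Gamma_{\varepsilon},\nabla\Gamma_{\varepsilon}\rangle\geq 0$, and it also has a sign when $F(z)=f(|z|)$. For general vectorial $F$, however, it has no sign. Cauchy--Schwarz only bounds it by $s$ times the square root of the ellipticity ratio times the coercive term, so it can be neither discarded nor absorbed for the large $s$ the iteration needs. For this reason, vectorial gradient regularity in two dimensions comes from second-order estimates plus (logarithmic) hole-filling rather than Moser's method. This is true already for uniformly elliptic integrands, and it is also how the Bildhauer--Fuchs argument the paper builds on proceeds. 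Consistently, your proposal never derives $\max\{2,q\}+3\mu<6$; the heuristic that the weight enters three times corresponds to no actual estimate. The core step is therefore missing.

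\textbf{What the paper does instead, and what else needs repair.} The paper skips the Lipschitz bound and proves continuity of $\nabla u$ directly.
\begin{enumerate}
\item Start from \eqref{eq:weighted0} with $H_{j,\ell}^{2}=\langle\nabla^{2}F(\nabla u_{j})\partial_{\ell}\nabla u_{j},\partial_{\ell}\nabla u_{j}\rangle$ and $\xi_{\ell}$ the annular mean of $\partial_{\ell}u_{j}$.
\item Use $\sobo^{1,1}\hookrightarrow\lebe^{2}$ in $n=2$ together with $|\partial_{\ell}\nabla u_{j}|\lesssim H_{j,\ell}(1+|V_{\mu}(\nabla u_{j})|)^{\mu/(2-\mu)}$. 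This gives \eqref{eq1:FreSe99variant} with $h=1+|V_{\mu}(\nabla u_{j})|$, which is bounded in $\sobo^{1,2}$ by \eqref{eq:summary1n2}, and $\alpha=\frac{\mu}{2-\mu}$. For $q>2$ one uses $V_{2-q}$ and gets $\alpha=\frac{q+\mu-2}{2-\mu}$.
\item The requirement $\alpha<2$ in Lemma~\ref{lem:FRESER98} is exactly $\max\{2,q\}+3\mu<6$. The lemma gives logarithmic decay of $\int_{\ball_{r}}H_{j,\ell}^{2}$, hence of the Campanato quantity of $V_{\mu}(\nabla u)$ after $j\to\infty$, and Lemma~\ref{lem:campanato} yields continuity.
\item Only now is $\nabla^{2}F(\nabla u)$ continuous and locally uniformly elliptic. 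Corollary~\ref{cor:EL2}, the Ancona--Brezis duality argument and $\lebe^{p}$-theory then give every $\alpha<1$; hole-filling alone would give only some small exponent.
\end{enumerate}
Reducing to local minimizers and regularizing locally is a sensible alternative to carrying the Ekeland sequence into this step, but two points need repair.
\begin{itemize}
\item Local minimality does not follow from $\nabla u\in\lebe^{q}_{\locc}$ alone. Without an integral representation you need a recovery sequence converging strongly in $\lebe^{q}$ near the support of the perturbation. Cite Corollary~\ref{cor:locmin}, which rests on Proposition~\ref{Lem:stronglimit}.
\item With a mollified datum you get neither uniform bounds on $\nabla u_{\varepsilon}$ nor $u_{\varepsilon}\to u$, since the boundary values differ. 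Prescribe $u$ itself on $\ball_{R}$ instead, which is admissible since $\nabla u\in\lebe^{t}_{\locc}$ for all $t$. Stabilize with $\varepsilon(1+|z|^{2})^{s/2}$, $s=\max\{2,q\}$; your $q$-power stabilization does not regularize when $q=1$. Comparison with the local minimality of $u$ then gives $\int_{\ball_{R}}(1+|\nabla u_{\varepsilon}|^{2})^{s/2}\dif x\leq\int_{\ball_{R}}(1+|\nabla u|^{2})^{s/2}\dif x$, and strict convexity identifies the limit with $u$.
\end{itemize}
Combined with the Frehse--Seregin argument above, this would be a valid alternative route.
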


We now briefly comment on the proof of Theorems~\ref{thm:main} and~\ref{thm:main2}. To this end, it is instructive to 
firstly recall the available results in the purely linear growth case, meaning that $q=1$ in the upper bound in~\eqref{eq:1qgrowth}. Owing to convexity of $F$, which gives us access to Reshetnyak's lower semicontinuity theorem \cite{RESHETNYAK68}, one then has the classical integral representation 
\begin{align}\label{eq:relaxintegral}
\begin{split}
\overline{\mathscr{F}}_{u_{0}}^{*}[u;\Omega] & = \int_{\Omega}F(\nabla u)\dif x   + \int_{\Omega}F^{\infty}\Big(\frac{\dif\D^{s}u}{\dif|\D^{s} u|}\Big)\dif|\D^{s}u|  \\ & \;\;\;\;\;\;\;\;\;\;\;\; \;\;\;\;\;\;\;\; + \int_{\partial\Omega}F^{\infty}(\mathrm{tr}_{\partial\Omega}(u_{0}-u)\otimes\nu_{\partial\Omega})\dif\mathscr{H}^{n-1}
\end{split}
\end{align}
for $u\in\bv(\Omega;\R^{N})$. 
In \eqref{eq:relaxintegral},  $\D u = \D^{a}u + \D^{s}u = \nabla u\mathscr{L}^{n}+\frac{\dif\D^{s}u}{\dif|\D^{s}u|}|\D^{s}u|$ is the Lebesgue-Radon-Nikod\'{y}m decomposition of the measure derivative~$\D u$, $\mathrm{tr}_{\partial\Omega}\colon\bv(\Omega;\R^{N})\to\lebe^{1}(\partial\Omega;\R^{N})$ is the boundary trace operator on $\bv(\Omega;\R^{N})$, and $\nu_{\partial\Omega}\colon\partial\Omega\to\mathbb{S}^{n-1}$ is the outer unit normal to $\partial\Omega$; see Section~\ref{sec:funcspaces} for more detail. 

In this setting and moreover assuming \eqref{eq:muell1q} with $q=1$, the first $\sobo^{1,1}$-regularity results are due to Bildhauer \cite{BILDHAUER2002MUBOUNDED,BILDHAUER03}. More precisely, if $1<\mu<1+\frac{2}{n}$, then one relaxed minimizer belongs to $\sobo^{1,1}(\Omega;\R^{N})$ and, if $1<\mu\leq 3$, one locally bounded relaxed minimizer belongs to $\sobo^{1,1}(\Omega;\R^{N})$. The first author and Schmidt \cite{BECSCH13} extended these results to \emph{any} relaxed minimizer and not only a specific one obtained by pure viscosity approximations. To underline this point, assume for simplicity that $u_{0}\in\sobo^{1,2}(\Omega;\R^{N})$; then one may consider viscosity stabilizations 
\begin{align}\label{eq:viscstabs}
\mathscr{F}_{j}[v;\Omega] \coloneqq  \int_{\Omega}F(\nabla v)\dif x + \frac{1}{j}\int_{\Omega}|\nabla v|^{2}\dif x, \qquad v\in\sobo_{u_{0}}^{1,2}(\Omega;\R^{N}).  
\end{align}
Denoting by $v_{j}$ the unique minimizer of \eqref{eq:viscstabs}, the strategy of \cite{BILDHAUER2002MUBOUNDED,BILDHAUER03} is centered around establishing uniform higher integrability estimates on the functions $v_{j}$. The sequence $(v_{j})$, in turn, is shown to converge to \emph{some} minimizer of \eqref{eq:relaxintegral}. However, since $F^{\infty}$ is positively $1$-homogeneous (whereby $F^{\infty}$ is not strictly convex) and acts on a different part of $\D u$ than the one governed by the strictly convex function $F$, the relaxed functional~\eqref{eq:relaxintegral} fails to be strictly convex. As a consequence, minimizers of \eqref{eq:relaxintegral} might be non-unique; see \cite{BECSCH13,SANTI72}. Thus, even though the minimizer found by the approach following \eqref{eq:viscstabs} is $\sobo^{1,1}$-regular, this does not rule out the existence of other, more irregular minimizers. This issue can be circumvented by an approximation strategy based on the Ekeland variational principle \cite{EKELAND74}, see \cite{BECSCH13} for its first use in the context of linear growth functionals and \cite{BECEITGME,EITLERLEWINTAN25,FUSPRISTE25,GMEINEDER16,GMEKRI19,GMEINEDER20,SCHMIDTHabil,WOZNIAK23} for related implementations. Based on this discussion, we now highlight some key points and novelties in the overall proof.\\ 
\vspace{-0.25cm}

\namedlabel{item:(a)}{(a)} \textbf{LSM-relaxations.} To the best of our knowledge, the present paper seems to be the first Sobolev regularity contribution in the vectorial BV-context which completely avoids integral representations. All previous approaches in the purely linear growth situation rely at some point on the representation \eqref{eq:relaxintegral} and  refined continuity properties for e.g. the area-strict topology. From a lower semicontinuity viewpoint, \eqref{eq:LSM} is indeed the canonical extension of the functional $\mathscr{F}[-;\Omega]$ to $\bv(\Omega;\R^{N})$, comes with no Lavrentiev gap (see Theorem~\ref{thm:consistency}) but is also technically favourable: The existence of recovery sequences is relatively easy to establish (see Lemma~\ref{lem:recovery}), letting us avoid the detour over the integral representation \eqref{eq:relaxintegral} and the continuity part of Reshetnyak's theorem; see Remark~\ref{rem:recovery} for more detail. However, in the exponent regime considered here, the requisite integral representations are currently not available anyway. This issue is essentially due to the appearance of the Dirichlet data in \eqref{eq:LSM}, and is briefly addressed for the reader's convenience in the Appendix, Section~\ref{sec:appendix}. As one of the key points, however, the present paper furnishes the metaprinciple that \emph{integral representations are not required} for Sobolev regularity assertions, \emph{even for classical linear growth problems $q=1$} as a special case. Working exclusively with \eqref{eq:LSM}, however, comes to the effect that minimality itself is hard to be localised in general and if so, such local minimality results or localised estimates as \eqref{eq:finalboundA1} or \eqref{eq:finalboundA2} are a \emph{consequence} of regularity statements; see,  e.g., Remark~\ref{rem:convex} and Corollary~\ref{cor:locmin}. \\   
\vspace{-0.25cm}

(b) \textbf{Universal gradient estimates: Ekeland approximations versus gaps.} In our situation, which includes classical linear growth as a special case, the ghost of non-uniqueness still persists (apart from very special situations, see, e.g. \cite{BOUSQUETLLEDOS25}). Following the discussion after \eqref{eq:viscstabs}, this necessitates suitable Ekeland-type viscosity approximations $u_{j}$ based on the perturbation space $\sobo^{-1,1}$, see Section~\ref{sec:negsob} for  the latter. This approximation step is crucial in order to arrive at \emph{universal} regularity results, that is, for \emph{all} relaxed minimizers. However, despite being finely adjusted to the underlying scenarios in the purely linear growth context, previous approaches as developed in \cite{BECEITGME,BECSCH13,FUSPRISTE25,GMEKRI19,GMEINEDER20,SCHMIDTHabil} do not suffice here. In essence, this is due to the \emph{Lipschitz-type} (but not Lipschitz) estimate from Lemma~\ref{lem:liptype}. Without further refinement, this estimate entails that the auxiliary approximating problems might become \emph{decoupled} from the original ones during the Ekeland approximation procedure. Indeed, on a more technical level, a straightforward adaptation of the methods established in \cite{BECSCH13,BECEITGME,GMEINEDER20} might lead to intermediate Lavrentiev gaps during the approximation process. Because of $q>1$, however, a direct $q$-growth uniformization of approximating integrands such as, e.g., in \cite{ESPLEOMIN99,CarozzaKristensenPasarelli}    might lead to $\mathscr{F}_{j}[u_{j};\Omega]\not\to\overline{\mathscr{F}}_{u_{0}}^{*}[u;\Omega]$, see Remark~\ref{rem:strategy}. In particular, the connection to the original functional might get lost. Since we are bound to employ an Ekeland-type argument due to the possible phenomenon of non-uniqueness of relaxed minimizers, this means that non-uniqueness and the deteriorated growth behaviour are two \emph{coupled effects} which have a considerably worse impact than in the purely linear growth case. 

Thus, the construction of a suitable approximating sequence whose members share on the one hand a useful local $\sobo^{-1,1}$-almost minimality amenable to subsequent regularity estimates and on the other hand satisfy $\mathscr{F}_{j}[u_{j};\Omega]\to\overline{\mathscr{F}}_{u_{0}}^{*}[u;\Omega]$, requires a fine-tuned and conceptually novel set-up. Its precise implementation, which relies on a quantitative handling of the potential $\lebe^{q}$-gradient blow-ups of $\sobo^{1,q}$-almost minimizers for each $j\in\mathbb{N}$, is given in Section~\ref{sec:Ekelandstart}; see also  Remarks \ref{rem:strategy}--\ref{rem:recovery}.  \\
\vspace{-0.25cm}

(c) \textbf{Universal gradient estimates and exponent ranges.} 
As a consequence of this somewhat different procedure, the subsequent perturbed Euler--Lagrange inequality takes a slightly weaker form than in previous contributions, partially leading to the non-admissibility of natural test maps; see Section~\ref{sec:Cacc}. While comparatively weak, the resulting estimates are still robust enough to allow an optimisation of cut-offs  in the spirit of \cite{BELSCH20,BELSCH24,CIASCH24,DEFKOCKRI24}; in this regard, Section~\ref{sec:goodcut} provides the requisite background results. In combination with Reshetnyak's lower semicontinuity theorem, we prove in Sections~\ref{sec:highint} and~\ref{sec:higher} that -- despite the additional difficulties due to the $q$-growth from above in \eqref{eq:1qgrowth} -- the $\sobo^{1,1}$-regularity assertion from Theorem~\ref{thm:main} holds for \emph{any} relaxed minimizer. Since we do not rely on integral or measure representations of $\overline{\mathscr{F}}_{u_{0}}^{*}[-;\Omega]$, the precise form of the estimates \eqref{eq:finalboundA1}--\eqref{eq:finalboundA1log} requires another argument. As a \emph{consequence} of an intermediate universal regularity result (Proposition~\ref{prop:semimain}) and thereby improving the very weak convergence properties of the approximating sequence, the latter is ultimately shown to have weak gradients converging $\mathscr{L}^{n}$-a.e.. This requires a restructuring of the proof, but comes to the effect that the estimates can be localised to appear in their natural forms \eqref{eq:finalboundA1}--\eqref{eq:finalboundA1log}. 

Theorem~\ref{thm:main2}, in turn, strongly hinges on Theorem~\ref{thm:main}\ref{item:main2b} and advances an argument due to Bildhauer \& Fuchs  \cite{BILFUC03} towards $p=1$ and its applicability in the Ekeland approximation scheme. The latter, being necessary to obtain universal estimates, forces us to perform the underlying limit passages differently. In particular, they must be accomplished in an order to get \emph{useful} access to the available a priori estimates. This requires a variant of the Frehse--Seregin lemma \cite{FRESER98}, see Lemma~\ref{lem:FRESER98}. We thereby arrive at  gradient continuity, from where an adaptation of an argument due Ancona and Brezis \cite{ANCONA09} implies Theorem~\ref{thm:main2}. \\
\vspace{-0.25cm}

In view of the available  results for the related functionals of $(p,q)$-growth, the exponent ranges of Theorems~\ref{thm:main} and~\ref{thm:main2} seem to be close to optimal. In this sense, our results provide a sharp bridging between the case $p=1$ and $p>1$; see also Remark~\ref{rem:compamain} below. Moreover, integrands whose growth can be located at the interface of both scenarios are equally included in our setting as special cases. For instance, this concerns  non-uniformly elliptic integrands of critical Orlicz growth (e.g., $\mathrm{L}\log\mathrm{L}$-growth from below), a class that has lately attracted attention, see \cite{DEFILIPPIS23} for an overview and \cite{ELEMARMASPER22,DEFMIN23,DEFPIC24,DEFILIPPISDEFILIPPISPICCININI24} for some recent results in this direction. 
Since the condition \eqref{eq:muell1q} is typically satisfied in this situation for $\mu=1$, Theorems~\ref{thm:main} and~\ref{thm:main2} immediately apply in the underlying ranges of $q$.

\color{black}
\begin{rem}[Comparison with $(p,q)$- and $(\mu,s,q)$-growth conditions]\label{rem:compamain}
We briefly compare the precise assumptions on the exponents $\mu$ and $q$ with related previous  results in the superlinear context, meaning that $F(z)/|z|\to \infty$ as $|z|\to \infty$. In \cite{BILFUC01,FUCMIN00}, various regularity results are established under the assumption
%
\begin{equation}\label{ineq:muqold}
\mu+q<2+\frac{2}{n},
\end{equation}
which, in general, is more restrictive than \eqref{eq:maincondition}. Condition \eqref{ineq:muqold} has a nice interpretation with earlier regularity results under $(p,q)$-growth conditions, in which the lower bound in \eqref{eq:1qgrowth} is replaced by $\gamma|z|^p$ for some $p>1$ and \eqref{eq:muell1q} holds with $\mu=2-p$: Classical results in this context (see, e.g.,  \cite{CarozzaKristensenPasarelli,MARCELLINI89,MARCELLINI91}), ensure gradient regularity under the condition  
\begin{equation}\label{ineq:pqold}
\frac{q}{p}<1+\frac{2}{n},
\end{equation}
which perfectly matches \eqref{ineq:muqold} in the limit $p\searrow 1$. More recently, the condition \eqref{ineq:pqold} was improved in \cite{BELSCH24,SCHAEFFNER21,SCHAEFFNER24} to
\begin{equation}\label{ineq:pqnew}
\frac{q}{p}<1+\frac{2}{n-1},
\end{equation}
which is consistent with condition  \eqref{eq:maincondition} of the present paper in the limit $p\searrow1$. In fact, with methods of the present paper it seems possible to extend the higher integrability results of \cite{SCHAEFFNER21} to the subquadratic case and $n=2$ dimensions. Finally, we remark that \eqref{ineq:pqnew} reads $q<3p$ if $n=2$, which is also consistent with condition \eqref{eq:2dstrongercond} with the choice $\mu=2-p$; note that \eqref{eq:2dstrongercond} trivially holds if $1\leq p\leq q\leq 2$ and $\mu=2-p$. 
\end{rem}

Lastly, if $q=1$ in \eqref{eq:1qgrowth} and \eqref{eq:muell1q}, purely linear growth integrands share some similarities with $(p,q)$-growth integrands on the level of \emph{second} derivatives. Still, the landmark for $\sobo^{1,1}$-regularity in the realm of the unconstrained Dirichlet problem on $\bv$ is $1<\mu<1+\frac{2}{n}$ (see also  \cite{BILDHAUER03,GMEINEDER20}), which is slightly extended by  Theorem~\ref{thm:main}~\ref{item:main3}, also covering the endpoint case $\mu=1+\frac{2}{n}$ in $n=2$ dimensions. Further improvements are only available for a related Neumann problem \cite{BECBULGME20} or under additional $\lebe^{\infty}$-hypotheses \cite{BECEITGME,BECSCH13,BILDHAUER2002MUBOUNDED}. In the latter case, the dimension-independent range $1<\mu\leq 3$ implies the  $\sobo^{1,1}$-regularity of relaxed minimizers. While we shall pursue such $\lebe^{\infty}$-constrained results as companions of Theorem~\ref{thm:main} in future work, improvements for $q=1$ in the unconstrained case seem unlikely. In particular, it is not clear to us whether the methods of this paper can be adapted to yield $\sobo^{1,1}$-regularity with $q=1$ and $1<\mu<1+\frac{2}{n-1}$ in the vectorial case. Indeed, in view of the currently available strategies, the results of the present paper indicate that -- even though purely linear growth problems have a strong resemblance to $(p,q)$-growth problems on the level of second derivatives -- both classes are different on a more fundamental level; see Remark~\ref{rem:unconditional}. 

\subsection{Structure of the paper} We now briefly comment on the organization of the paper. In Section~\ref{sec:prelims}, we fix notation and collect the key background facts and definitions for the proofs of Theorems~\ref{thm:main} and~\ref{thm:main2}. After recording an analytic lemma on good cut-off functions in Section~\ref{sec:goodcut}, Section~\ref{sec:relax} gathers various fundamental properties of the relaxed functionals. In particular, this comprises the existence of relaxed minimizers and a no-gap-result, identifying~\eqref{eq:LSM} as the \emph{correct} extension. Along with the more involved construction of the underlying Ekeland-type vanishing viscosity sequence as a central point, Section~\ref{sec:proofmain} is devoted to the proof of Theorem~\ref{thm:main} and a related dimension reduction result for the singular set. Section~\ref{sec:proofmain2} serves to establish Theorem~\ref{thm:main2}. The Appendix, Section~\ref{sec:appendix}, gives a quick discussion of integral representations and provides the proofs of several auxiliary results that enter the main part. 
 
\section{Preliminaries}\label{sec:prelims}
\subsection{General notation}\label{sec:notation}
Throughout the entire paper and unless stated otherwise, $\Omega\subset\R^{n}$ denotes an open and bounded set with Lipschitz boundary. For $x_{0}\in\R^{n}$ and $r>0$, we put $\ball_{r}(x_{0}) \coloneqq \{x\in\R^{n}\colon\;|x-x_{0}|<r\}$ and, accordingly,  $\mathbb{S}^{n-1}\coloneqq \partial\!\ball_{1}(0)$. The $n$-dimensional Lebesgue and $(n-1)$-dimensional Hausdorff measures are denoted by $\mathscr{L}^{n}$ and $\mathscr{H}^{n-1}$. For two matrices $z,z'\in\R^{N\times n}$, we write $\langle z,z'\rangle$ for their Hilbert--Schmidt inner product. 

For a finite dimensional inner product space $V$, we denote by $\mathrm{RM}(\Omega;V)$ the $V$-valued Radon measures on $\Omega$, whereas we write $\mathrm{RM}_{\mathrm{fin}}(\Omega;V)$ for the finite, $V$-valued Radon measures on $\Omega$ endowed with the total variation norm $|\mu|(\Omega)$. For a non-negative Radon measure, in formulae $\mu\in\mathrm{RM}(\Omega)$, a Borel set $U\subset\Omega$ with $\mathscr{L}^{n}(U)>0$ and a $\mu$-integrable map $u\colon \Omega\to V$, we moreover set 
\begin{align*}
\dashint_{U}u\dif\mu \coloneqq \frac{1}{\mathscr{L}^{n}(U)}\int_{U}u\dif\mu\;\;\;\text{and}\;\;\;(u)_{\ball_{r}(x_{0})}\coloneqq \dashint_{\ball_{r}(x_{0})}u\dif x. 
\end{align*}
To obtain inequalities involving Orlicz norms which obey the correct scaling, it is convenient to denote for a Young function $\Phi\colon \R_{\geq 0}\to\R_{\geq 0}$, a ball $\ball_{r}(x_{0})$ and $u\in\lebe_{\locc}^{\Phi}(\R^{n};V)$
\begin{align*}
{\;\;-}{\!\!\!\|u\|}_{\lebe^{\Phi}(\ball_{r}(x_{0}))} \coloneqq \inf\Big\{\lambda>0\colon\;\dashint_{\ball_{r}(x_{0})}\Phi\Big(\frac{|u|}{\lambda}\Big)\dif x \leq 1 \Big\}.
\end{align*}
Given a measurable map $u\colon\Omega\to\R^{N}$ and $x\in\Omega$, we denote for $\ell\in\{1,\ldots ,n\}$ and $0<h<\mathrm{dist}(x,\partial\Omega)$ the associated difference quotient in $\ell$-th direction by
\begin{align*}
\Delta_{\ell,h}^{\pm}u(x) \coloneqq \frac{1}{h}(u(x\pm h e_{\ell})-u(x)),
\end{align*}
where $e_{\ell}$ is the $\ell$-th standard unit vector, and sometimes write $\Delta_{\ell,h}\coloneqq \Delta_{\ell,h}^{+}$ for brevity. Finally, we use $c,C>0$ to denote generic constants which may change from one line to the other, and we only specify them if their precise values are required in the sequel. 
\subsection{Function spaces}\label{sec:funcspaces} We now collect some background definitions and results for various function spaces to enter the main part. With few exceptions, the material is discussed in greater in detail in \cite{AMBFUSPAL00,EVANSGARIEPY}. 
\subsubsection{Functions of bounded variation.} We say that  $u\in\lebe^{1}(\Omega;\R^{N})$ is of \emph{bounded variation}, denoted by $u\in\bv(\Omega;\R^{N})$, if its total variation
\begin{align*}
|\D u|(\Omega)\coloneqq \sup\Big\{\int_{\Omega}\langle u,\mathrm{div}(\varphi)\rangle\dif x\colon\;\varphi\in\hold_{c}^{\infty}(\Omega;\R^{N\times n}),\;\| \varphi \|_{\lebe^\infty(\Omega)} \leq 1 \Big\}
\end{align*}
is finite; here, $\mathrm{div}(\varphi)$ is the row-wise divergence. The space $\bv_{\locc}(\Omega;\R^{N})$ then is defined in the obvious way. As already mentioned in the introduction, the Lebesgue--Radon--Nikod\'{y}m decomposition of $\D u$ for $u\in\bv(\Omega;\R^{N})$ reads as
\begin{align}\label{eq:gradientdecomp}
\D u = \D^{a} u + \D^{s} u = \nabla u\,\mathscr{L}^{n} + \frac{\dif \D^s u}{\dif|\D^{s} u|}|\D^{s} u|,\qquad\text{where}\;\D^{a} u\ll\mathscr{L}^{n}\;\text{and}\;\D^{s} u\perp\mathscr{L}^{n}. 
\end{align}
Here, $\nabla u\in\lebe^{1}(\Omega;\R^{N\times n})$ is the approximate gradient of $u$. For future reference, we recall that Alberti's celebrated rank-one theorem \cite{ALBERTI93} asserts that 
\begin{align}\label{eq:alberti}
\mathrm{rk}\Big(\frac{\dif \D^{s} u}{\dif|\D^{s} u|}\Big) = 1\qquad |\D^{s} u|\text{-a.e. in $\Omega$}. 
\end{align}
Given $u,u_{1},\ldots \in\bv(\Omega;\R^{N})$, we say that the sequence $(u_{j})$ converges in the \emph{weak*-sense} to $u$ and write $u_{j}\stackrel{*}{\rightharpoonup} u$ provided that $u_{j}\to u$ strongly in $\lebe^{1}(\Omega;\R^{N})$ and $\D u_{j}\stackrel{*}{\rightharpoonup} \D u$ in the sense of weak*-convergence in $\mathrm{RM}_{\mathrm{fin}}(\Omega;\R^{N\times n})$. 

If a sequence $(u_{j})$ in $\bv(\Omega;\R^{N})$ is bounded with respect to the $\bv$-norm $\|v\|_{\bv(\Omega)}\coloneqq \|v\|_{\lebe^{1}(\Omega)}+|\D v|(\Omega)$, then there exists $u\in\bv(\Omega;\R^{N})$ and a (non-relabelled) subsequence such that $u_{j}\stackrel{*}{\rightharpoonup} u$ in $\bv(\Omega;\R^{N})$. In the following, we shall refer to this fact as \emph{weak*-compactness theorem} on $\bv(\Omega;\R^{N})$. The next lemma is certainly clear to the experts, but we have not found a precise reference and so we include it for the reader's convenience: 

\begin{lem}\label{lem:nonempty} 
 Let $u_{0},u\in\bv(\Omega;\R^{N})$. Then there exists a sequence $(u_{j})$ in $u_{0}+\hold_{c}^{\infty}(\Omega;\R^{N})$ such that $u_{j}\stackrel{*}{\rightharpoonup}u$ in the weak*-sense on $\bv(\Omega;\R^{N})$. 
\end{lem}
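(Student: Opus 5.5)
The plan is to reduce to the case $u_{0}=0$ and then build the approximating sequence by cutting off near $\partial\Omega$ and mollifying. Put $w\coloneqq u-u_{0}\in\bv(\Omega;\R^{N})$. It suffices to find $\varphi_{j}\in\hold_{c}^{\infty}(\Omega;\R^{N})$ such that
\begin{align*}
\varphi_{j}\to w \;\text{in}\;\lebe^{1}(\Omega;\R^{N})\qquad\text{and}\qquad \sup_{j}|\D\varphi_{j}|(\Omega)<\infty .
\end{align*}
Then $u_{j}\coloneqq u_{0}+\varphi_{j}$ converges to $u$ in $\lebe^{1}$ and has uniformly bounded total variation. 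Convergence in $\lebe^{1}$ gives $\int\langle u_{j},\mathrm{div}\,\psi\rangle\dif x\to\int\langle u,\mathrm{div}\,\psi\rangle\dif x$ for every $\psi\in\hold_{c}^{\infty}(\Omega;\R^{N\times n})$. By the uniform mass bound and density of $\hold_{c}^{\infty}$ in $\hold_{0}$, this upgrades to $\D u_{j}\stackrel{*}{\rightharpoonup}\D u$ in $\mathrm{RM}_{\mathrm{fin}}(\Omega;\R^{N\times n})$, which is the required weak*-convergence.

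\emph{Cut-off step.} Let $\Omega_{t}\coloneqq\{x\in\Omega\colon \mathrm{dist}(x,\partial\Omega)>t\}$. Choose $\eta_{k}\in\hold_{c}^{\infty}(\Omega;[0,1])$ with $\eta_{k}=1$ on $\Omega_{2/k}$, $\spt\eta_{k}\subset\Omega_{1/k}$ and $|\nabla\eta_{k}|\leq 2k$. Then $\eta_{k}w\to w$ in $\lebe^{1}$ by dominated convergence. Moreover,
\begin{align*}
|\D(\eta_{k}w)|(\Omega)\leq |\D w|(\Omega) + 2k\int_{\Omega\setminus\Omega_{2/k}}|w|\dif x .
\end{align*}
So everything hinges on the \emph{boundary strip estimate}
\begin{align*}
\int_{\Omega\setminus\Omega_{t}}|w|\dif x\leq C\,t\,\|w\|_{\bv(\Omega)}\qquad\text{for all }0<t<t_{0},
\end{align*}
with $C,t_{0}$ depending only on $\Omega$. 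I expect this to be the main (though classical) obstacle, and it is where the Lipschitz regularity of $\partial\Omega$ enters.

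To prove the strip estimate, first take $w\in\hold^{\infty}(\Omega)\cap\bv(\Omega)$. Cover $\partial\Omega$ by finitely many cylinders in which $\Omega$ is the supergraph $\{x_{n}>g(x')\}$ of a Lipschitz function $g$. Along vertical segments, for $0<s<t$ and $\delta<s'<2\delta$ (with $\delta$ fixed and small), the fundamental theorem of calculus gives
\begin{align*}
|w(x',g(x')+s)|\leq|w(x',g(x')+s')|+\int_{0}^{2\delta}|\partial_{n}w(x',g(x')+r)|\dif r .
\end{align*}
Average in $s'\in(\delta,2\delta)$, then integrate in $x'$ and $s\in(0,c\,t)$. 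The set $\Omega\setminus\Omega_{t}$ lies in such strips with $c$ depending on $\mathrm{Lip}(g)$. This yields the estimate with $C=C(\delta,\Omega)$, and summing over the finitely many cylinders completes the smooth case. For general $w\in\bv$, I would use strict approximation by $\hold^{\infty}(\Omega)$ functions (Anzellotti--Giaquinta). Both $\|w_{m}\|_{\lebe^{1}}$ and $|\D w_{m}|(\Omega)$ converge, so the left-hand side passes to the limit by $\lebe^{1}$-convergence and the right-hand side by strict convergence.

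\emph{Mollification and diagonalisation.} With the strip estimate, $\sup_{k}|\D(\eta_{k}w)|(\Omega)\leq(1+4C)\|w\|_{\bv(\Omega)}$. Each $\eta_{k}w$ has compact support in $\Omega$. Mollify it with $\rho_{\varepsilon_{k}}$, choosing $\varepsilon_{k}<\mathrm{dist}(\spt\eta_{k},\partial\Omega)$ so small that $\|\rho_{\varepsilon_{k}}*(\eta_{k}w)-\eta_{k}w\|_{\lebe^{1}}<1/k$. Mollification does not increase total variation, so $\varphi_{k}\coloneqq\rho_{\varepsilon_{k}}*(\eta_{k}w)\in\hold_{c}^{\infty}(\Omega;\R^{N})$ has uniformly bounded variation and $\varphi_{k}\to w$ in $\lebe^{1}$. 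By the first paragraph this concludes the proof.
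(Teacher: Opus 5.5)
Your construction is the paper's: subtract $u_{0}$, cut off between distance $1/j$ and $2/j$ from $\partial\Omega$, mollify at a scale below $1/j$, and add $u_{0}$ back. The difference lies in what gets verified, and there your argument is the more complete one.

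The paper checks only $\lebe^{1}$-convergence and the convergence of $\int_{\Omega}\langle\varphi,\nabla v_{j}\rangle\dif x$ for $\varphi\in\hold_{c}^{\infty}(\Omega;\R^{N\times n})$, i.e.\ distributional convergence. It says nothing about $\sup_{j}|\D v_{j}|(\Omega)<\infty$. Weak*-convergence in $\mathrm{RM}_{\mathrm{fin}}(\Omega;\R^{N\times n})$, the dual of $\hold_{0}(\Omega;\R^{N\times n})$, requires this bound by the uniform boundedness principle, and it is also what makes the sequence admissible in the LSM extension.

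You isolate exactly this point. You reduce it, via $|\D(\eta_{k}w)|(\Omega)\leq|\D w|(\Omega)+2k\int_{\Omega\setminus\Omega_{2/k}}|w|\dif x$, to the boundary-strip estimate. You then prove that estimate by the fundamental theorem of calculus along vertical segments in Lipschitz charts, followed by strict approximation. All of this is correct, and it is precisely where the Lipschitz regularity of $\partial\Omega$ enters; the paper's proof of this lemma never uses it.

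The step is not cosmetic. Take $u_{0}=0$ and $u\equiv 1$ on a bounded open set of infinite perimeter. The paper's two computations go through verbatim, yet no sequence in $\hold_{c}^{\infty}(\Omega)$ converging to $1$ in $\lebe^{1}$ can have bounded variation. Otherwise the zero extensions would bound the perimeter of $\Omega$, by lower semicontinuity.

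The only minor point in your write-up: the bound $(1+4C)\|w\|_{\bv(\Omega)}$ holds only for $2/k<t_{0}$. The finitely many smaller indices should be discarded or bounded separately.
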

\begin{proof}
Put $v\coloneqq u-u_{0}$ and define, for sufficiently large $j\in\mathbb{N}$, $v_{j}\coloneqq \rho_{\varepsilon_{j}}*(\eta_{j}v)$. Here, $\rho_{\varepsilon_{j}}$ is the $0<\varepsilon_{j}<\frac{1}{j}$-rescaled variant of a standard mollifier, and $\eta_{j}\in\hold_{c}^{1}(\Omega;[0,1])$ satisfies $\eta_{j}(x)=0$ whenever $0<\mathrm{dist}(x,\partial\Omega)<\frac{1}{j}$ as well as $\eta_{j}(x)=1$ whenever $x\in\Omega$ is such that $\frac{2}{j}<\mathrm{dist}(x,\partial\Omega)$. Clearly, $v_{j}\to v$ strongly in $\lebe^{1}(\Omega;\R^{N})$, and we have for all $\varphi\in\hold_{c}^{\infty}(\Omega;\R^{N\times n})$ that 
\begin{align*}
\int_{\Omega}\langle\varphi,\nabla v_{j}\rangle\dif x & = -\int_{\Omega}\langle\mathrm{div}(\varphi),v_{j}\rangle\dif x \to - \int_{\Omega}\langle\mathrm{div}(\varphi),v\rangle\dif x = \int_{\Omega}\langle\varphi,\D v\rangle\qquad \text{as } j\to\infty. 
\end{align*}
The sequence $(u_{j})$ with $u_{j}\coloneqq u_{0}+v_{j}$ then has the desired properties. 
\end{proof}
We note that the previous lemma is in line with the fact that the boundary trace operator $\mathrm{tr}_{\partial\Omega}\colon\bv(\Omega;\R^{N})\to\lebe^{1}(\partial\Omega;\R^{N})$ is continuous with respect to norm convergence, but not with respect to weak*-convergence on $\bv(\Omega;\R^{N})$.
\subsubsection{Negative Sobolev spaces}\label{sec:negsob}
Following \cite{BECSCH13,GMEINEDER20}, it is convenient to perform Ekeland perturbations in the negative Sobolev space $\sobo^{-1,1}(\Omega;\R^{N})$ in the main regularity proof. We recall that $T\in\mathscr{D}'(\Omega;\R^{N})$ belongs to $\sobo^{-1,1}(\Omega;\R^{N})$ if there exist maps $T_{0},\ldots ,T_{n}\in\lebe^{1}(\Omega;\R^{N})$ with 
\begin{align}\label{eq:W-1,1A}
T = T_{0} + \sum_{k=1}^{n}\partial_{k}T_{k} \qquad\text{as an identity on}\;\mathscr{D}'(\Omega;\R^{N}). 
\end{align}
The $\sobo^{-1,1}$-norm of $T$ is given  by 
\begin{align*}
\|T\|_{\sobo^{-1,1}(\Omega)} \coloneqq \inf\bigg\{\sum_{k=0}^{n}\|T_{k}\|_{\lebe^{1}(\Omega)}\colon\;T_{0},\ldots,T_{n}\in\lebe^{1}(\Omega;\R^{N})\;\text{satisfy}\;\eqref{eq:W-1,1A}\bigg\}, 
\end{align*}
and makes $\sobo^{-1,1}(\Omega;\R^{N})$ into a Banach space. For future reference, we record that 
\begin{align}\label{eq:W-11cancel}
\|\partial_{j} u\|_{\sobo^{-1,1}(\Omega)}\leq \|u\|_{\lebe^{1}(\Omega)}\qquad\text{for all}\;u\in\lebe^{1}(\Omega;\R^{N})\;\text{and all}\;j\in\{1,\ldots ,n\}. 
\end{align}
Moreover, if $u\in\lebe^{1}(\Omega;\R^{N})$ is compactly supported in $\Omega$, then we have the estimate 
\begin{align}\label{eq:diffquotW-11}
\|\Delta_{\ell,h}^{\pm}u\|_{\sobo^{-1,1}(\Omega)} \leq \|u\|_{\lebe^{1}(\Omega)}\qquad\text{for all}\;\ell\in\{1,\ldots ,n\}\;\text{and all}\;0<h<\mathrm{dist}(\spt(u),\partial\Omega) 
\end{align}
for the difference quotients; see \cite[Sec.~3.2.3]{BECEITGME} and \cite[Lem.~2.5]{GMEINEDER20} for more detail.

\subsubsection{A Campanato-type lemma} Next, we recall a logarithmic Campanato-type embedding which can be found in the classical paper of Frehse \cite{FREHSE75}, see also Kovats \cite[Main Theorem]{KOVATS99}. 

\begin{lem}\label{lem:campanato}
Suppose that, for some $x_0\in\R^n$, $R>0$ and a function $u\in \lebe^2(\ball_{2R}(x_{0}))$ there exists constants $K>0$ and $p>1$ such that, for all $y\in \ball_{R}(x_{0})$ and all $0<r<R$, there holds
\begin{align*}
    \bigg(\dashint_{\ball_r(y)}|u(x)-(u)_{y,r}|^2\dif x\bigg)^\frac{1}{2}\leq K\log(2R/r)^{-p}.
\end{align*}
Then, $u$ is continuous in $\ball_{R}(x_{0})$ and there exists $K'=K'(p,K)>0$ with
\begin{align*}
|u(x)-u(x')|\leq K'\log(2R /|x-x'|)^{1-p}\qquad\text{for all}\;x,x'\in \ball_{R}(x_{0}). 
\end{align*}
\end{lem}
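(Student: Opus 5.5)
The plan is to follow the classical Campanato scheme, replacing the geometric decay of oscillations by the logarithmic modulus $\omega(r)\coloneqq K\log(2R/r)^{-p}$. Since $p>1$, the series $\sum_{k\ge0}\omega(2^{-k}r)$ converges, and its tail behaves like $\log(2R/r)^{1-p}$.

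First, compare averages on dyadic balls. Fix $y\in\ball_{R}(x_{0})$ and $0<r<R$, and set $r_k\coloneqq 2^{-k}r$. Since $\ball_{r_{k+1}}(y)\subset\ball_{r_k}(y)$ with volume ratio $2^{-n}$, Jensen's inequality gives
\begin{align*}
|(u)_{y,r_{k+1}}-(u)_{y,r_k}|\le 2^{n/2}\bigg(\dashint_{\ball_{r_k}(y)}|u-(u)_{y,r_k}|^2\dif x\bigg)^{\frac12}\le 2^{n/2}K\log(2R/r_k)^{-p}.
\end{align*}
Write $L\coloneqq\log(2R/r)\ge\log 2$. Then $\log(2R/r_k)=L+k\log 2$, and comparing the sum with an integral yields
\begin{align*}
\sum_{k\ge0}(L+k\log2)^{-p}\le L^{-p}+\frac{L^{1-p}}{(p-1)\log 2}\le C(p)L^{1-p},
\end{align*}
using that $L^{-p}\le (\log 2)^{-1}L^{1-p}$. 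Hence $(u)_{y,r_k}$ is Cauchy. At every Lebesgue point its limit is $u(y)$, so
\begin{align*}
|u(y)-(u)_{y,r}|\le C\,K\log(2R/r)^{1-p}\qquad\text{for all}\;0<r<R.
\end{align*}
The convergence of these averages is uniform in $y\in\ball_R(x_0)$, because the bound does not depend on $y$. Moreover, $y\mapsto(u)_{y,r}$ is continuous for fixed $r$. The uniform limit therefore gives a continuous representative of $u$ on $\ball_{R}(x_{0})$.

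For the modulus of continuity, take $x,x'\in\ball_R(x_0)$ with $\rho\coloneqq|x-x'|$. In the main case $\rho<R/2$, set $r\coloneqq2\rho<R$. Then $\ball_{\rho}(x')\subset\ball_{r}(x)$ with volume ratio $2^{-n}$, so by Jensen's inequality again
\begin{align*}
|(u)_{x',\rho}-(u)_{x,r}|\le 2^{n/2}K\log(2R/r)^{-p}.
\end{align*}
Combine this with the two pointwise estimates at $x$ (radius $r$) and at $x'$ (radius $\rho$). The logarithms $\log(R/\rho)$ and $\log(2R/\rho)$ are comparable to one another up to constants, because $\rho<R/2$. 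This gives $|u(x)-u(x')|\le K'\log(2R/\rho)^{1-p}$.

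In the remaining case $R/2\le\rho<2R$, the quantity $\log(2R/\rho)$ lies between $0$ and $\log 4$. Because $1-p<0$, the right-hand side is then bounded below by the constant $(\log4)^{1-p}$. It therefore suffices to bound $|u(x)-u(x')|$ by a constant multiple of $K$. To do this, connect $x$ and $x'$ by a bounded number of points, each consecutive pair less than $R/2$ apart, and apply the first case to each pair; each step contributes at most $CK(\log4)^{1-p}$.

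Strictly speaking, $K'$ depends on $n$ as well as on $p$ and $K$. This dependence is harmless and is implicit in the statement.

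The only genuinely delicate point is the summation of the logarithmic series and the resulting exponent $1-p$. This is precisely where $p>1$ is needed: for $p\le1$ the series diverges and the argument breaks down.
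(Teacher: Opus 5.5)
Your proof is correct. The paper gives no proof of its own; it quotes the lemma from Frehse and from Kovats' Main Theorem. Your dyadic telescoping is the standard Campanato-type argument behind such results: the series $\sum_k\omega(2^{-k}r)$ is the discrete form of the Dini integral $\int_0^r\omega(t)\,\frac{\mathrm{d}t}{t}=\frac{K}{p-1}\log(2R/r)^{1-p}$ for $\omega(t)=K\log(2R/t)^{-p}$. Two points deserve to be stated explicitly. First, once the continuous representative is constructed, every point is a Lebesgue point, so $|u(y)-(u)_{y,r}|\le CK\log(2R/r)^{1-p}$ holds for every $y\in\ball_R(x_0)$ and every $0<r<R$. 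Second, the intermediate points in the case $R/2\le|x-x'|<2R$ should be taken on the segment joining $x$ and $x'$, which lies in the convex ball $\ball_R(x_0)$.
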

\subsubsection{Weighted Lebesgue spaces} In order to establish the admissibility of certain test maps in the main part, see Section~\ref{sec:Cacc}, we finally collect the following lemma from \cite[Lem. 3.4]{BECEITGME}; for related results, see \cite[Lem~II.1.18]{GAJKROZAC74} and \cite[Ex. 12.12(f)]{LukesMaly}.
\begin{lem}\label{lem:weighted}
Let $m\in\mathbb{N}$,  $\theta\in\lebe^{1}(\Omega)$ with $\theta\geq 1$ $\mathscr{L}^{n}$-a.e. in $\Omega$ and put $\mu\coloneqq \theta\mathscr{L}^{n}\mres\Omega$. Denoting the corresponding weighted $\lebe^{2}$-space by $\lebe_{\mu}^{2}(\Omega;\R^{m})$, suppose that $(u_{j})$ in $\lebe_{\mu}^{2}(\Omega;\R^{m})$ converges
\begin{enumerate}
\item weakly to some $u\in\lebe_{\mu}^{2}(\Omega;\R^{m})$, and 
\item pointwisely $\mathscr{L}^{n}$-a.e. to some measurable $v\colon\Omega\to\R^{m}$. 
\end{enumerate}
Then $u=v$. 
\end{lem}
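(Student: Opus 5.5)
The plan is to identify the weak limit $u$ with the pointwise limit $v$ by testing against a suitable class of functions and using Egorov's theorem on sets where $\theta$ is bounded. The weight enters only through the fact that $\lebe_{\mu}^{2}(\Omega;\R^{m})$ is a Hilbert space with inner product $\int_{\Omega}\langle f,g\rangle\theta\dif x$.

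First, by uniform boundedness, weakly convergent sequences are bounded, so $M\coloneqq\sup_j\int_\Omega|u_j|^2\theta\dif x<\infty$. Since $\theta\geq1$, this gives $\sup_j\|u_j\|_{\lebe^2(\Omega)}^2\leq M$. Fatou's lemma then yields $\int_\Omega|v|^2\theta\dif x\leq M$, so $v\in\lebe^2_\mu(\Omega;\R^m)$ as well.

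Next, fix $k\in\N$ and set $A_k\coloneqq\{\theta\leq k\}$; these sets exhaust $\Omega$ up to a null set, since $\theta\in\lebe^1$ is finite a.e. Fix $\varepsilon>0$. By Egorov's theorem, applicable because $\mathscr{L}^n(\Omega)<\infty$, there is a measurable $E\subset A_k$ with $\mathscr{L}^n(A_k\setminus E)<\varepsilon$ and $u_j\to v$ uniformly on $E$. Take any $\varphi\in\lebe^\infty(\Omega;\R^m)$ supported in $E$. Then $\varphi\in\lebe^2_\mu$, because $\theta\leq k$ there, so weak convergence gives $\int\langle u_j,\varphi\rangle\theta\dif x\to\int\langle u,\varphi\rangle\theta\dif x$. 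On the other hand, uniform convergence on $E$ together with $\int_E\theta\leq k\mathscr{L}^n(\Omega)$ gives convergence of the same integrals to $\int\langle v,\varphi\rangle\theta\dif x$. Hence $\int_E\langle u-v,\varphi\rangle\theta\dif x=0$ for all such $\varphi$. Choosing $\varphi=\mathbbm{1}_{E\cap\{|u-v|\leq L\}}(u-v)$ and letting $L\to\infty$ shows that $u=v$ a.e. on $E$.

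Finally, letting $\varepsilon\searrow0$ along a sequence gives $u=v$ a.e. on $A_k$, and the union over $k$ gives $u=v$ $\mathscr{L}^n$-a.e. in $\Omega$.

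The only step needing care is the passage to the limit in $\int\langle u_j,\varphi\rangle\theta$ on $E$. This is exactly why we restrict to $\{\theta\leq k\}$: there the weight is bounded, so uniform convergence suffices without any integrability of $\theta|u_j|$ beyond $\lebe^1$. An alternative route is Mazur's lemma, which produces convex combinations converging strongly in $\lebe^2_\mu$, hence a subsequence converging a.e. to $u$. Those convex combinations of tails still converge a.e. to $v$, which gives $u=v$ directly. I would present this Mazur argument as the main proof, since it is shorter, and keep the Egorov argument as a check.
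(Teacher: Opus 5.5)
Your proposal is correct, and both of your arguments work. The paper gives no proof of its own here. It quotes the lemma from \cite[Lem.~3.4]{BECEITGME} and points to the classical results \cite[Lem.~II.1.18]{GAJKROZAC74} and \cite[Ex.~12.12(f)]{LukesMaly}. These are of the type ``on a finite measure space, an $\lebe^{p}$-bounded ($1<p<\infty$), a.e.\ convergent sequence converges weakly to its a.e.\ limit''. Applied on $(\Omega,\mu)$, which is finite because $\theta\in\lebe^{1}$, together with uniqueness of weak limits and $\theta\geq 1$, this gives the lemma.

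The proof of that classical fact is essentially your Egorov argument. The one difference is that the complement of the Egorov set is controlled by the uniform $\lebe^{2}_{\mu}$-bound and absolute continuity of the integral. This yields weak convergence to $v$ without presupposing (a), so that route gives slightly more: hypothesis (a) only serves to name the limit. That fits the proof of Proposition~\ref{prop:cacc}, where boundedness and a.e.\ convergence are what is given and the weak limit is only extracted afterwards.

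Your Mazur argument is genuinely different and leaner. Take $w_{k}\in\mathrm{conv}\{u_{j}\colon j\geq k\}$ with $w_{k}\to u$ in $\lebe^{2}_{\mu}$. A subsequence then converges $\mu$-a.e., hence $\mathscr{L}^{n}$-a.e.\ since $\mathscr{L}^{n}(N)\leq\mu(N)$, to $u$. At the same time $|w_{k}-v|\leq\sup_{j\geq k}|u_{j}-v|\to 0$ a.e. This route needs neither Egorov, nor $\theta\in\lebe^{1}$, nor boundedness of $\Omega$, and it works verbatim in $\lebe^{p}_{\mu}$ for all $1\leq p<\infty$. In exchange, it genuinely uses the weak convergence assumption (a).

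Two minor points:
\begin{itemize}
\item State Mazur's lemma in the tail form above, since that is exactly what makes the pointwise bound work.
\item In the Egorov argument, the localisation to $\{\theta\leq k\}$ is unnecessary under the stated hypotheses. Any bounded $\varphi$ already lies in $\lebe^{2}_{\mu}$, and $\int_{E}\theta\dif x\leq\|\theta\|_{\lebe^{1}(\Omega)}$. The localisation only shows that the argument survives for weights that are merely finite a.e.
\end{itemize}
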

\subsubsection{Exponential Orlicz classes}\label{sec:expOrliczclasses}
In the main part, see Section~\ref{sec:highint}, we shall need Orlicz spaces adapted to the function $z\mapsto \exp(|z|^{2-\mu})-1$ with $1\leq \mu <2$. This function is only convex for sufficiently large values of the arguments, which is why we require a modification and comparison estimates. Put $\theta_{\mu}(t)\coloneqq \exp(t^{2-\mu})-1$. We compute 
\begin{align*}
\theta''_{\mu}(t) = (2-\mu)\Big((1-\mu)t^{-\mu}+(2-\mu)t^{2-2\mu}\Big)\exp(t^{2-\mu})
\end{align*}
and, recalling that $1\leq \mu<2$, note that 
\begin{align*}
t>t_{0}\coloneqq \Big(\frac{\mu-1}{2-\mu}\Big)^{\frac{1}{2-\mu}}\qquad \Longrightarrow \qquad \theta''_{\mu}(t)>0,  
\end{align*}
whereby $\theta_{\mu}$ is convex on $(t_{0},\infty)$. We then consider, for a value $t_{0}<t_{1}<\infty$ to be fixed later on, the continuous function $\Phi_{\mu}\colon \R_{\geq 0}\to\R_{\geq 0}$ defined by 
\begin{align}\label{eq:escobar}
\Phi_{\mu}(t)\coloneqq \begin{cases}
    \frac{1}{t_{1}}(\exp(t_{1}^{2-\mu})-1)t&\;\text{if}\;t\leq t_{1},\\ 
    \theta_{\mu}(t)&\;\text{if}\;t\geq t_{1}. 
\end{cases}
\end{align}
Moreover, we note for the left and right derivative of $\Phi_\mu$ at $t_1$ that 
\begin{equation*}
\frac{1}{t_{1}}(\exp(t_{1}^{2-\mu})-1)\leq (2-\mu)t_{1}^{1-\mu}\exp(t_{1}^{2-\mu}) \qquad \stackrel{s\coloneqq t_{1}^{2-\mu}}{\Longleftrightarrow} \qquad \exp(s)-1\leq (2-\mu)s\exp(s), 
\end{equation*}
and from here, it is clear that there exists $t_{1}=t_{1}(\mu)>t_{0}$ such that $\Phi_{\mu}$ is a convex function. In the following, we fix such a choice of $t_{1}$ and we state a comparison estimate which is proved in the  Appendix, Section~\ref{sec:AppendixExponential}.

\begin{lem}\label{lem:exponential}
Let $1\leq \mu<2$ and let $t_{1}>0$ be as above. Then there exists a constant $c=c(t_{1},\mu)>1$ such that for every  $v\in \lebe^{1}(\ball_{r}(x_{0}))$ with $x_{0}\in\R^{n}$ and $r>0$ there holds
\begin{align}\label{eq:tenpenny}
\frac{1}{c}\!\!\!{\;\;-}{\!\!\!\|v\|}_{\lebe^{\Phi_{\mu}}(\ball_{r}(x_{0}))} \leq \!\!\!{\;\;-}{\!\!\!\||v|^{2-\mu}\|}_{\exp\lebe^{1}(\ball_{r}(x_{0}))}^{\frac{1}{2-\mu}} \leq c\, \!\!\!{\;\;-}{\!\!\!\|v\|}_{\lebe^{\Phi_{\mu}}(\ball_{r}(x_{0}))}. 
\end{align}
\end{lem}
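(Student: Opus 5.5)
The plan is to compare the two Luxemburg-type norms by comparing the Young functions $\Phi_{\mu}(t)$ and $\psi(t)\coloneqq\exp(t^{2-\mu})-1$ pointwise, up to rescaling of the argument. First note that the middle quantity in \eqref{eq:tenpenny} is itself a Luxemburg norm. Indeed, since $(\lambda^{2-\mu})^{1/(2-\mu)}=\lambda$ and $|v|^{2-\mu}/\lambda^{2-\mu}=(|v|/\lambda)^{2-\mu}$, we have
\begin{align*}
{\;\;-}{\!\!\!\||v|^{2-\mu}\|}_{\exp\lebe^{1}(\ball_{r}(x_{0}))}^{\frac{1}{2-\mu}} = \inf\Big\{\lambda>0\colon\;\dashint_{\ball_{r}(x_{0})}\psi\Big(\frac{|v|}{\lambda}\Big)\dif x\leq 1\Big\},
\end{align*}
even though $\psi$ need not be convex. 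So it suffices to show that there is $c=c(t_{1},\mu)\geq 1$ with
\begin{align*}
\psi(t/c)\leq \Phi_{\mu}(t)\qquad\text{and}\qquad \Phi_{\mu}(t/c)\leq \psi(t)\qquad\text{for all}\;t\geq 0.
\end{align*}
Granting these two inequalities, take any admissible $\lambda$ for $\Phi_{\mu}$. Then $\dashint\psi(|v|/(c\lambda))\leq\dashint\Phi_{\mu}(|v|/\lambda)\leq1$, so $c\lambda$ is admissible for $\psi$. Symmetrically, any admissible $\lambda$ for $\psi$ makes $c\lambda$ admissible for $\Phi_{\mu}$. Taking infima yields both sides of \eqref{eq:tenpenny}.

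For the first inequality, $\psi(t/c)\leq\Phi_{\mu}(t)$, consider the region $t\geq t_{1}$. There $\Phi_{\mu}=\psi$, and $\psi$ is increasing, so $\psi(t/c)\leq\psi(t)$ for any $c\geq1$. For $t\leq t_{1}$, $\Phi_{\mu}(t)=a t$ with $a\coloneqq(\exp(t_{1}^{2-\mu})-1)/t_{1}>0$. Here we need $\psi(s)\leq C s$ for $s\in[0,t_{1}]$, which will follow from the elementary bound below, and then we take $c$ large.

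The elementary bound is that on bounded intervals $\exp(x)-1\leq e^{X}x$ for $0\leq x\leq X$. Hence for $s\leq t_{1}$ we get $\psi(s)\leq e^{t_{1}^{2-\mu}}s^{2-\mu}$. This is the main obstacle. When $\mu>1$, the power $s^{2-\mu}$ is not bounded by a multiple of $s$ near $0$, so $\psi$ is not dominated by a linear function at the origin and the simple scaling argument fails near $0$. I would try to resolve this by a case distinction on the size of $\lambda$ rather than by a purely pointwise estimate. The case $\mu=1$ is trivial, since then $\psi(t)=e^{t}-1$ is convex and comparable to $\Phi_{1}$.

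For $\mu>1$ I would argue as follows. Suppose $\lambda$ is admissible for $\Phi_{\mu}$. On the set $\{|v|/\lambda\geq t_{1}\}$ the integrands agree. On the complementary set, $\psi(|v|/(c\lambda))\leq\psi(t_{1}/c)$, and this constant tends to $0$ as $c\to\infty$. So choosing $c$ with $\psi(t_{1}/c)\leq\frac{1}{2}$ gives $\dashint\psi(|v|/(c\lambda))\leq \frac12+\dashint_{\{|v|\geq t_{1}\lambda\}}\psi(|v|/(c\lambda))$. The last term is at most $\dashint\Phi_{\mu}(|v|/\lambda)$ times a factor that we make $\leq\frac12$ using a quasi-convexity property of $\psi$ on $[t_{0},\infty)$: for $s\geq t_{1}$, $\psi(s/c)\leq \psi(s)/c'$ with $c'\to\infty$ as $c\to\infty$. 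This property can be checked from $\exp(s^{2-\mu}c^{\mu-2})-1\leq c^{\mu-2}(\exp(s^{2-\mu})-1)$, which follows from convexity of $x\mapsto e^{x}-1$ with value $0$ at $0$.

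The reverse direction is easier because $\Phi_{\mu}$ is convex with $\Phi_{\mu}(0)=0$. We have $\Phi_{\mu}(t/c)\leq\Phi_{\mu}(t)/c$ for $c\geq1$. Moreover $\Phi_{\mu}\leq C\psi$ holds on $[0,t_{1}]$, since $at\leq C(\exp(t^{2-\mu})-1)$ there: the ratio is bounded because $\exp(t^{2-\mu})-1\geq t^{2-\mu}\geq t_{1}^{1-\mu}t$ for $t\leq t_{1}$. On $[t_{1},\infty)$ the two functions coincide. Taking $c\geq C$ concludes the argument.
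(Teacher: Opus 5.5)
Your proof is correct, but it proves the lower bound by a different route from the paper's.

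\textbf{Reduction.} You first rewrite the middle quantity in \eqref{eq:tenpenny} as the Luxemburg-type functional of the non-convex function $\psi(t)=\exp(t^{2-\mu})-1$. The paper uses the same rewriting in \eqref{eq:rasenmaeher}.

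\textbf{Lower bound: where you differ.} You prove this direction by a purely pointwise comparison. For $t\le t_{1}$, the estimate $\exp(t^{2-\mu})-1\geq t^{2-\mu}\geq t_{1}^{1-\mu}t$ gives $\Phi_{\mu}\leq C\psi$ on $[0,\infty)$. Convexity of $\Phi_{\mu}$ together with $\Phi_{\mu}(0)=0$ then yields $\Phi_{\mu}(t/c)\leq\psi(t)$.

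The paper's appendix proves this direction by splitting instead. It introduces $t_{2}$ with $\Phi_{\mu}(t_{2})=\frac12$ and two scales $\lambda_{1}<\lambda_{2}$ tuned so that $t_{2}\lambda_{2}=t_{1}\lambda_{1}$. The set $\{|v|<t_{2}\lambda_{2}\}$ contributes at most $\frac12$. On its complement, $\Phi_{\mu}=\psi$, and convexity of $x\mapsto e^{x}-1$ gains a factor $\frac12$.

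\textbf{Upper bound: the direction the paper omits.} The paper dismisses this direction as ``analogous'', and your argument is exactly that analogous splitting, written out. The set $\{|v|<t_{1}\lambda\}$ contributes at most $\psi(t_{1}/c)\leq\frac12$. On its complement you use $\psi(s/c)\leq c^{\mu-2}\psi(s)=c^{\mu-2}\Phi_{\mu}(s)$.

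\textbf{What each route buys.} Your route gives a short proof of the direction where pointwise domination holds. It also shows precisely why splitting cannot be avoided in the other direction: near $0$, $\psi$ behaves like $t^{2-\mu}$, which no linear function dominates when $\mu>1$. The paper's route uses one template for both directions, at the cost of the auxiliary constants $t_{2}$ and $\Theta$.

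\textbf{Points to fix in the write-up.}
\begin{enumerate}
\item Drop your opening claim that it suffices to prove two pointwise inequalities. You later note, correctly, that $\psi(t/c)\leq\Phi_{\mu}(t)$ fails near $0$.
\item Normalise the integral over $\{|v|\geq t_{1}\lambda\}$ by $\mathscr{L}^{n}(\ball_{r}(x_{0}))$. In the paper's notation, $\dashint$ over a subset divides by the measure of that subset, and then the comparison with $\dashint_{\ball_{r}(x_{0})}\Phi_{\mu}(|v|/\lambda)\dif x\leq1$ no longer follows.
\item The sub-homogeneity $\psi(s/c)\leq c^{\mu-2}\psi(s)$ holds for every $s\geq0$; it is not a property special to $[t_{0},\infty)$.
\end{enumerate}
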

\subsection{Functionals of measures}\label{sec:funmeas}
Even though our main result, Theorem~\ref{thm:main}, will be established by not referring to integral representations, it is yet useful to record some background facts on convex functionals of measures; see, e.g.,  \cite{DEMTEM24,GOFSER64} for more detail. We begin with: 

\begin{definition}[Recession function]\label{def:recfct}
Let $f\colon \R^{m}\to (-\infty,\infty]$ be a convex function. The \emph{recession function} $f^{\infty}\colon\R^{m}\to(-\infty,\infty]$ of $f$ then is defined by 
\begin{align}
f^{\infty}(z)\coloneqq \lim_{t\searrow 0}tf\Big(\frac{z}{t}\Big) \qquad \text{for all } z\in\R^{m}. 
\end{align}
\end{definition}
The recession function captures the behaviour of the function at infinity. Moreover, $f^{\infty}(z)$ exists in $(-\infty,\infty]$ for every $ z\in\R^{m}$ by the convexity of $f$; this is a consequence of the fact that convex functions can always be minorised by affine-linear maps and that difference quotients of convex functions are increasing. However, since $f$ does not need to be of linear growth, it is indeed possible that $f^{\infty}(z)=+\infty$ for some $z\in\R^{m}$. 
We now proceed to Reshetnyak's lower semicontinuity theorem \cite{RESHETNYAK68}, which we give in the form of \cite[Thm. 2.4]{BECSCH13}.
\begin{prop}[Reshetnyak's lower semicontinuity theorem]\label{prop:resh}
Let $\Omega\subset\R^{n}$ be open and bounded. Moreover, suppose that $\mu,\mu_{1},\mu_{2},\ldots \in\mathrm{RM}_{\mathrm{fin}}(\Omega;\R^{m})$ take values in a closed, convex cone $K\subset\R^{m}$ and are such that $\mu_{j}\stackrel{*}{\rightharpoonup}\mu$ in $\mathrm{RM}_{\mathrm{fin}}(\Omega;\R^{m})$. Then, if $f\colon K\to[0,\infty]$ is lower semicontinuous, convex and $1$-homogeneous, then there holds 
\begin{align}
\int_{\Omega}f\Big(\frac{\dif\mu}{\dif|\mu|}\Big)\dif|\mu| \leq \liminf_{j\to\infty}\int_{\Omega}f\Big(\frac{\dif\mu_{j}}{\dif|\mu_{j}|}\Big)\dif|\mu_{j}|.
\end{align}
\end{prop}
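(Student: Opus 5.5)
The plan is to write $f$ as a supremum of countably many linear functionals. Each linear functional passes to the weak*-limit, so a variational (dual) formula for the left-hand side will yield the inequality. First I extend $f$ by $+\infty$ on $\R^{m}\setminus K$. Since $K$ is a closed convex cone, the extension is still convex, lower semicontinuous, positively $1$-homogeneous and $[0,\infty]$-valued. By standard convex duality (Fenchel--Moreau), such an $f$ is the support function of the closed convex set $C\coloneqq\{w\in\R^{m}\colon \langle w,z\rangle\le f(z)\ \forall z\}$. Choosing a countable dense subset $\{w_k\}\subset C$ and using $f\ge0$, I get
\begin{align*}
f(z)=\sup_{k\in\mathbb{N}} h_k(z),\qquad h_k(z)\coloneqq \langle w_k,z\rangle^{+},\qquad z\in K .
\end{align*}
Here each $h_k$ satisfies $h_k\le f$ (since $0\le f$ and $\langle w_k,\cdot\rangle\le f$), and the supremum is increasing if we pass to $g_m\coloneqq\max_{k\le m}h_k$. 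Because $\mu$ takes values in $K$, the density $\nu\coloneqq \frac{\dif\mu}{\dif|\mu|}$ lies in $K\cap\mathbb{S}^{m-1}$ for $|\mu|$-a.e.\ $x$, and the same holds for each $\mu_j$.

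Second, I establish the localisation formula
\begin{align*}
\int_\Omega f(\nu)\dif|\mu| = \sup\Big\{\sum_{k=1}^{m}\int_\Omega\varphi_k\,\langle w_k,\dif\mu\rangle\colon m\in\mathbb{N},\ \varphi_k\in\hold_c(\Omega;[0,1]),\ \sum_{k}\varphi_k\le1\Big\}.
\end{align*}
The inequality ``$\ge$'' holds because $\langle w_k,\nu\rangle\le f(\nu)$ pointwise and the $\varphi_k$ are non-negative with sum at most one. For ``$\le$'' I proceed in three steps. By monotone convergence, $\int g_m(\nu)\dif|\mu|\to\int f(\nu)\dif|\mu|$, which includes the value $+\infty$. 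Next, $\int g_m(\nu)\dif|\mu|=\sum_{k\le m}\int_{E_k}\langle w_k,\nu\rangle\dif|\mu|$ for the disjoint Borel sets $E_k\coloneqq\{x\colon g_m(\nu(x))=h_k(\nu(x))>0\}$, with ties resolved by taking the smallest index. Finally, by inner regularity of $|\mu|$, I pick disjoint compacts $K_k\subset E_k$ carrying almost all of the mass and cut-offs $\varphi_k$ with disjoint supports and $\varphi_k=1$ on $K_k$. The error is controlled by $|w_k|\,|\mu|(E_k\setminus K_k)$ together with the small $|\mu|$-mass of the transition regions of the $\varphi_k$.

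Third, I pass to the limit. For fixed admissible $(\varphi_k)_{k\le m}$, the map $\lambda\mapsto\sum_k\int\varphi_k\langle w_k,\dif\lambda\rangle$ is continuous under weak*-convergence. By the ``$\ge$'' direction applied to $\mu_j$, it is bounded by $\int f(\frac{\dif\mu_j}{\dif|\mu_j|})\dif|\mu_j|$. Hence
\begin{align*}
\sum_k\int\varphi_k\langle w_k,\dif\mu\rangle=\lim_{j\to\infty}\sum_k\int\varphi_k\langle w_k,\dif\mu_j\rangle\le\liminf_{j\to\infty}\int_\Omega f\Big(\frac{\dif\mu_j}{\dif|\mu_j|}\Big)\dif|\mu_j|,
\end{align*}
and taking the supremum over admissible families gives the claim.

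The main obstacle is the ``$\le$'' part of the localisation formula in the second step: approximating the Borel partition by continuous cut-offs while keeping the error small. The difficulty is that $f$ may be $+\infty$ off a subcone and the $w_k$ are unbounded. I would handle this by working with the finite truncations $g_m$, where only finitely many $w_k$ appear, and passing $m\to\infty$ by monotone convergence only at the end.
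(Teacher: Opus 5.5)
The paper does not prove Proposition~\ref{prop:resh}. It only quotes the result from \cite{RESHETNYAK68}, in the form of \cite[Thm.~2.4]{BECSCH13}. Your argument is a correct, self-contained proof: it is the standard duality proof, as in \cite{AMBFUSPAL00}.

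You write $f=\sup_k\langle w_k,\cdot\rangle^{+}$ and identify $\lambda\mapsto\int_\Omega f(\frac{\dif\lambda}{\dif|\lambda|})\dif|\lambda|$ with a supremum of the weak*-continuous linear functionals $\lambda\mapsto\sum_k\int\varphi_k\langle w_k,\dif\lambda\rangle$. Lower semicontinuity then follows automatically. The delicate ``$\le$'' part of your localisation formula is handled correctly:
\begin{itemize}
\item for a fixed truncation only finitely many $w_k$ occur;
\item the error is at most $\sum_k|w_k|\bigl(|\mu|(E_k\setminus K_k)+|\mu|(U_k\setminus K_k)\bigr)$, where the $U_k\supset K_k$ are pairwise disjoint open sets carrying the supports of the $\varphi_k$;
\item monotone convergence covers the value $+\infty$.
\end{itemize}
Two minor points. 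Your truncation index clashes with the dimension $m$. Also, the only non-proper case in Fenchel--Moreau, namely $f\equiv+\infty$ on $K$, is harmless, since $\sigma_{\R^m}=f$ on $\mathbb{S}^{m-1}$, which is all you use.

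Compared with the citation, your route makes two things visible. After the $+\infty$-extension, the cone $K$ is only needed to give meaning to $f(\frac{\dif\mu}{\dif|\mu|})$. And since only test maps in $\hold_c(\Omega;\R^{m})$ enter, neither boundedness of $\Omega$ nor a uniform mass bound on $(\mu_j)$ is used.

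What your route does not give is the continuity companion in Remark~\ref{rem:continuitypartResh}. The duality argument is intrinsically one-sided. Continuity under $|\mu_j|(\Omega)\to|\mu|(\Omega)$ for finite continuous $f$ needs a different argument, e.g.\ lifting $|\mu_j|$ to $\Omega\times\mathbb{S}^{m-1}$ via $x\mapsto(x,\frac{\dif\mu_j}{\dif|\mu_j|}(x))$. Example~\ref{ex:reshnon} shows that such a continuity statement cannot hold for the extended-valued integrands your proof allows.
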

For future reference, we single out the following remark: 
\begin{rem}\label{rem:continuitypartResh}
Proposition~\ref{prop:resh} usually comes with a variant concerning continuity (see, e.g. \cite[Thm. 2.4]{BECSCH13}): If, in the situation of Proposition~\ref{prop:resh}, $f\colon K\to[0,\infty)$ is continuous, $1$-homogeneous and we moreover have that $|\mu_{j}|(\Omega)\to|\mu|(\Omega)$, then there holds 
\begin{align}\label{eq:reshcontinuity}
\int_{\Omega}f\Big(\frac{\dif\mu}{\dif|\mu|}\Big)\dif|\mu| = \lim_{j\to\infty}\int_{\Omega}f\Big(\frac{\dif\mu_{j}}{\dif|\mu_{j}|}\Big)\dif|\mu_{j}|.
\end{align}
In our applications, however, $f\colon K\to[0,\infty]$ is typically neither continuous (but only lower semicontinuous) and extended real-valued. Under these assumptions, the continuity assertion \eqref{eq:reshcontinuity} does not hold true, see Example~\ref{ex:reshnon} below. 
\end{rem}
\begin{rem}\label{rem:linearperspective}
Let $F\colon\R^{N\times n}\to\R$ be convex and put $K\coloneqq \R_{\geq 0}\times\R^{N\times n}$. 
We introduce the linear perspective integrand $F^{\#}\colon K\to\R\cup\{\infty\}$ by 
\begin{align}\label{eq:linearperspective}
F^{\#}(t,z)\coloneqq \begin{cases}
    tF(\frac{z}{t})&\;\text{if}\;t>0, \\ 
    F^{\infty}(z)&\;\text{if}\;t=0, 
\end{cases}\qquad z\in\R^{N\times n},
\end{align}
whereby $F^{\#}$ is a continuous, convex  function which is homogeneous of degree $1$.  For a Radon measure $\mu\in\mathrm{RM}_{\mathrm{fin}}(\Omega;\R^{N\times n})$, we put $\nu\coloneqq (\mathscr{L}^{n},\mu)$ and observe that 
\begin{align*}
\int_{\Omega}F^{\#}\Big(\frac{\dif\nu}{\dif|\nu|}\Big)\dif|\nu| = \int_{\Omega}F\Big(\frac{\dif\mu}{\dif\mathscr{L}^{n}}\Big)\dif x + \int_{\Omega}F^{\infty}\Big(\frac{\dif\mu}{\dif|\mu^{s}|}\Big)\dif|\mu^{s}|, 
\end{align*}
where $\mu=\mu^{a}+\mu^{s}$ is the  Radon-Nikod\'{y}m decomposition of $\mu$ such that $\mu^{a}\ll\mathscr{L}^{n}$ and $\mu^{s}\bot\mathscr{L}^{n}$.
\end{rem}
\begin{example}\label{ex:reshnon} Let $1<q<\infty$. We  define a convex function $f\colon\R\to \R$ by $f(z)\coloneqq z$ for $z\geq 0$ and $f(z)\coloneqq |z|^{q}$ for $z<0$. Following Remark~\ref{rem:linearperspective}, we record that $f^{\#}\colon\R_{\geq 0}\times\R\to\R\cup\{\infty\}$ is lower semicontinuous and satisfies $f^{\#}\equiv \infty$ on $\{0\}\times \R_{<0}$, whereas $f^{\#}(0,z)=z$ for $z\in \R_{\geq 0}$. For $j\geq 2$, define  $\mu_{j}\coloneqq -\frac{1}{j}\delta_{-1/j}$. Then $\mu_{j}\stackrel{*}{\rightharpoonup}0$ in $\mathrm{RM}_{\mathrm{fin}}((-1,1);\R)$ together with $|\mu_{j}|((-1,1))\to 0$ as $j\to\infty$. Adopting the notation of Remark~\ref{rem:linearperspective} with $n=N=1$, we have $\nu_{j}\stackrel{*}{\rightharpoonup} \nu \coloneqq (\mathscr{L}^{1},0)$ and $|\nu_{j}|((-1,1))\to|\nu|((-1,1))$. However, we compute 
\begin{align*}
\int_{(-1,1)}f^{\#}\Big(\frac{\dif\nu_{j}}{\dif|\nu_{j}|}\Big)\dif|\nu_{j}| = \frac{1}{j}\int_{(-1,1)}f^{\infty}(-1)\dif\delta_{-1/j} = +\infty \stackrel{j\to\infty}{\not\longrightarrow} 0 = \int_{(-1,1)}f^{\#}\Big(\frac{\dif\nu}{\dif|\nu|}\Big)\dif|\nu|. 
\end{align*}
In particular, Reshetnyak's continuity theorem is not available for lower semicontinuous, extended real-valued convex integrands; note that $f$ however matches the chief assumption \eqref{eq:1qgrowth}.
\end{example}
\subsection{Ekeland's variational principle} Next, we recall Ekeland's variational principle \cite{EKELAND74} (see also \cite[Thm. 5.6, Rem. 5.5]{GIUSTI03}) which shall prove crucial for our approach. 
\begin{lem}[Ekeland]\label{lem:Ekeland} Let $(X,d_{X})$ be a complete metric space, and let $\mathcal{F}\colon X\to\R\cup\{\infty\}$ be a functional which is lower semicontinuous with respect to $d_{X}$, bounded from below and with $\inf_X \mathcal{F} < \infty$. If $x\in X$ and $\varepsilon>0$ are such that $\mathcal{F}[x]<\infty$ and 
\begin{align*}
\mathcal{F}[x] \leq \inf_{X}\mathcal{F} + \varepsilon^{2}, 
\end{align*}
then there exists $y\in X$ such that 
\begin{align*}
d_{X}(x,y)<\varepsilon\;\;\;\text{and}\;\;\;\mathcal{F}[y] \leq \mathcal{F}[z] + {\varepsilon}d_{X}(y,z)\qquad\text{for all}\;z\in X. 
\end{align*}
\end{lem}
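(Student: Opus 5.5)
The plan is to first establish the standard (non-strict) form of Ekeland's principle with slope $\varepsilon$ and a built-in energy decrease, and then remove the non-strictness by a case distinction that restarts the construction from a better point whenever $x$ itself does not already work.

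\emph{Step 1 (standard Ekeland with decrease).} I will show that for every $w\in X$ with $\mathcal{F}[w]<\infty$ there exists $y\in X$ with
\begin{align*}
\mathcal{F}[y]+\varepsilon\,d_X(w,y)\leq \mathcal{F}[w]\qquad\text{and}\qquad \mathcal{F}[y]\leq \mathcal{F}[z]+\varepsilon\,d_X(y,z)\quad\text{for all}\;z\in X.
\end{align*}
This is the classical iteration. Put $S(v)\coloneqq\{z\in X\colon \mathcal{F}[z]+\varepsilon d_X(v,z)\leq \mathcal{F}[v]\}$, which is closed by lower semicontinuity of $\mathcal{F}$ and contains $v$. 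Starting from $w_0\coloneqq w$, choose $w_{k+1}\in S(w_k)$ with $\mathcal{F}[w_{k+1}]\leq \inf_{S(w_k)}\mathcal{F}+2^{-k}$. The triangle inequality gives $S(w_{k+1})\subset S(w_k)$. Since $\mathcal{F}[w_k]$ is nonincreasing and bounded from below, and $\varepsilon d_X(w_k,w_{k+1})\leq \mathcal{F}[w_k]-\mathcal{F}[w_{k+1}]$, the sequence is Cauchy. Moreover, $\mathrm{diam}\,S(w_{k+1})\leq 2\cdot 2^{-k}/\varepsilon$, because every $z\in S(w_{k+1})$ satisfies $\varepsilon d_X(w_{k+1},z)\leq \mathcal{F}[w_{k+1}]-\inf_{S(w_k)}\mathcal{F}\leq 2^{-k}$. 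By completeness, $\bigcap_k S(w_k)=\{y\}$. Then $y\in S(w)$, which is the decrease inequality. Furthermore, $S(y)\subset\bigcap_k S(w_k)=\{y\}$, which yields $\mathcal{F}[y]<\mathcal{F}[z]+\varepsilon d_X(y,z)$ for all $z\neq y$. This step is routine, and the paper may alternatively just cite \cite{EKELAND74,GIUSTI03} for it.

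\emph{Step 2 (case distinction for the strict inequality).} If $x$ itself satisfies $\mathcal{F}[x]\leq\mathcal{F}[z]+\varepsilon d_X(x,z)$ for all $z$, take $y\coloneqq x$, so that $d_X(x,y)=0<\varepsilon$.

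Otherwise, there is $z_0$ with $\mathcal{F}[z_0]+\varepsilon d_X(x,z_0)<\mathcal{F}[x]$, which in particular gives $\mathcal{F}[z_0]<\infty$. Apply Step 1 with $w\coloneqq z_0$ to get $y$ with the perturbed minimality and $\mathcal{F}[y]+\varepsilon d_X(z_0,y)\leq\mathcal{F}[z_0]$. The triangle inequality then gives
\begin{align*}
\mathcal{F}[y]+\varepsilon d_X(x,y)\leq \mathcal{F}[y]+\varepsilon d_X(z_0,y)+\varepsilon d_X(x,z_0)\leq \mathcal{F}[z_0]+\varepsilon d_X(x,z_0)<\mathcal{F}[x].
\end{align*}
Hence $\varepsilon d_X(x,y)<\mathcal{F}[x]-\mathcal{F}[y]\leq \mathcal{F}[x]-\inf_X\mathcal{F}\leq\varepsilon^2$, that is, $d_X(x,y)<\varepsilon$.

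The only delicate point is the borderline case $\mathcal{F}[x]-\inf_X\mathcal{F}=\varepsilon^2$, where the direct estimate from Step 1 yields only $d_X(x,y)\leq\varepsilon$. This is exactly what the restart from a strictly better point $z_0$ in Step 2 circumvents. I expect no further obstacles.
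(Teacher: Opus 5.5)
Your proof is correct, and it differs from the paper mainly in being a proof at all: the paper gives no argument here and simply quotes the principle from \cite{EKELAND74} and \cite[Thm.~5.6, Rem.~5.5]{GIUSTI03}.

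Your Step~1 is the classical nested-sets (Cantor intersection) proof found in those references. On its own it only gives $d_X(x,y)\leq\varepsilon$, because the bound $\varepsilon\,d_X(x,y)\leq\mathcal{F}[x]-\mathcal{F}[y]\leq\varepsilon^{2}$ can be an equality. A two-point space shows this: take $d_X(a,b)=\varepsilon$, $\mathcal{F}[a]=\varepsilon^{2}$, $\mathcal{F}[b]=0$ and $x=a$. Then $S(a)=\{a,b\}$, the iteration may select $b$, and one ends with $y=b$ at distance exactly $\varepsilon$.

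Your Step~2 is a genuine addition, and it is what yields the strict inequality $d_X(x,y)<\varepsilon$ as the lemma states it. You keep $x$ if it already satisfies the perturbed minimality. Otherwise you restart Step~1 from a point $z_{0}$ with $\mathcal{F}[z_{0}]+\varepsilon\,d_X(x,z_{0})<\mathcal{F}[x]$. The usual formulations, with scaling parameter $\lambda=\varepsilon$, deliver only the non-strict bound.

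For the paper's purposes the non-strict version would already be enough, since the principle is only used through \eqref{eq:Ekeland1a}, i.e.\ $\|u_{j}-\widetilde{v}_{j}\|_{\sobo^{-1,1}(\Omega)}\leq\frac{1}{j}$. Your route gives a self-contained proof of the exact statement; the citation gives brevity.
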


\subsection{Miscellaneous estimates} In this section, we collect some elementary estimates which shall enter the main part. We begin with the following Lipschitz-type estimate:
\begin{lem}[{\cite[Lem. 5.2]{GIUSTI03}}]\label{lem:liptype}
Let $1\leq q<\infty$, and let $F\in\hold(\R^{N\times n})$ be a  \emph{convex} function such that there exists a constant $c>0$ with  
\begin{align*}
0\leq F(z)\leq c(1+|z|^{q})\qquad\text{for all}\;z\in\R^{N\times n}. 
\end{align*}
Then there exists a constant $L>0$ such that
\begin{align}\label{eq:liptype}
|F(z)-F(z')| \leq L\big(1+|z|+|z'|\big)^{q-1}|z-z'|\qquad\text{for all}\;z,z'\in\R^{N\times n}. 
\end{align}
\end{lem}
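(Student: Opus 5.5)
The estimate follows from convexity alone: write $F(z)-F(z')$ as a difference quotient of a convex function of one variable along the segment from $z'$ to $z$, and bound the slope of a convex function through its oscillation on a larger region. By symmetry we may assume $|z|\geq|z'|$ and $z\neq z'$. Put $R\coloneqq 1+|z|+|z'|$ and consider the affine line $g(t)\coloneqq F(z'+t(z-z')/|z-z'|)$, $t\in\R$. Then $g$ is convex and continuous, and $F(z)-F(z')=g(|z-z'|)-g(0)$.

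\textbf{Slopes controlled by oscillation.} For a convex function of one variable, difference quotients are monotone. Hence, for $0\leq s<t\leq|z-z'|$,
\[
\frac{g(0)-g(-R)}{R}\leq\frac{g(t)-g(s)}{t-s}\leq\frac{g(|z-z'|+R)-g(|z-z'|)}{R}.
\]
In particular, $|g(|z-z'|)-g(0)|$ is at most $|z-z'|$ times the maximum of the absolute values of the two outer quotients.

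\textbf{Bounding the outer quotients.} All points involved lie in the ball $\ball_{3R}(0)$ of $\R^{N\times n}$: indeed $|z'\pm R\,e|\leq|z'|+R\leq 2R$ and $|z+R\,e|\leq 2R$ for a unit vector $e$. The assumed bounds $0\leq F\leq c(1+|\cdot|^{q})$ then give
\[
|g(a)-g(b)|\leq\sup_{\ball_{3R}(0)}F\leq c\bigl(1+(3R)^{q}\bigr)\leq c\,(1+3^{q})R^{q},
\]
using $R\geq1$. Dividing by $R$, each outer quotient is bounded by $c(1+3^{q})R^{q-1}$. This yields \eqref{eq:liptype} with $L\coloneqq c(1+3^{q})$.

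The only subtle point is to choose the enlargement $R$ proportional to $1+|z|+|z'|$ rather than to a fixed constant, so that the ratio $\sup F/R$ produces exactly the power $R^{q-1}$. The nonnegativity of $F$ is what lets us bound oscillations by $\sup F$ without needing a separate lower bound. For $q=1$ the argument gives a global Lipschitz bound, consistent with the statement.
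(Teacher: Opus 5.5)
Your proof is correct and is essentially the standard convexity argument behind the cited result; the paper gives no proof of its own and only refers to Giusti's Lemma~5.2. You restrict $F$ to the line through $z'$ and $z$, compare the chord slope on $[0,|z-z'|]$ with outer chords of length $R=1+|z|+|z'|$, and bound those by $R^{-1}\sup_{\ball_{2R}(0)}F\leq c(1+2^{q})R^{q-1}$. The normalisation $|z|\geq|z'|$ is never used, and since all points actually lie in $\ball_{2R}(0)$, your constant improves slightly.
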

Next, we record an elementary but fundamental lemma: 
\begin{lem}[{\cite[Lem. 6.1]{GIUSTI03}}]\label{lem:holefilling}
Let $0<\rho<\sigma<\infty$ and let $Z\colon [\rho,\sigma]\to\R_{\geq 0}$ be a bounded, non-negative function. Suppose that, for all $\rho\leq s<t\leq\sigma$, there holds 
\begin{align}\label{eq:iter1}
Z(s)\leq \theta Z(t) + (t-s)^{-\alpha}A_{1}+ (t-s)^{-\beta}A_{2} + B, 
\end{align}
where $A_{1},A_{2},B\geq 0$, $\alpha,\beta>0$ and $0\leq\theta <1$ are constants. Then there exists $c=c(\alpha,\beta,\theta)\in [1,\infty)$ such that 
\begin{align}\label{eq:iter2}
Z(s)\leq c\,((t-s)^{-\alpha}A_{1}+(t-s)^{-\beta}A_{2}+B). 
\end{align}
\end{lem}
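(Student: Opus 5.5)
The plan is the standard iteration argument, applied along a geometric sequence of radii accumulating at $t$. We fix $\rho\leq s<t\leq\sigma$ and choose $\tau\in(0,1)$, depending only on $\alpha,\beta,\theta$, such that
\begin{align*}
\theta\,\tau^{-\max\{\alpha,\beta\}}<1 .
\end{align*}
This is possible because $\theta<1$: take for instance $\tau\coloneqq \big(\frac{1+\theta}{2}\big)^{-1/\max\{\alpha,\beta\}}\theta^{1/\max\{\alpha,\beta\}}$ if $\theta>0$, and any $\tau\in(0,1)$ if $\theta=0$. Then we set
\begin{align*}
s_{0}\coloneqq s,\qquad s_{i+1}\coloneqq s_{i}+(1-\tau)\tau^{i}(t-s),\qquad i\geq 0.
\end{align*}
This sequence is increasing and satisfies $s_{i}<t$ and $s_{i}\to t$ as $i\to\infty$, so every pair $(s_{i},s_{i+1})$ is admissible in \eqref{eq:iter1}.

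The key step is to apply \eqref{eq:iter1} with $(s,t)$ replaced by $(s_{i},s_{i+1})$ and to iterate $k$ times. This gives
\begin{align*}
Z(s)\leq \theta^{k}Z(s_{k})+\sum_{i=0}^{k-1}\theta^{i}\Big(\big((1-\tau)\tau^{i}(t-s)\big)^{-\alpha}A_{1}+\big((1-\tau)\tau^{i}(t-s)\big)^{-\beta}A_{2}+B\Big).
\end{align*}
Next we let $k\to\infty$. Since $Z$ is bounded on $[\rho,\sigma]$ and $\theta<1$, the term $\theta^{k}Z(s_{k})$ tends to $0$; this is precisely where the boundedness hypothesis on $Z$ enters. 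The remaining sums are geometric series:
\begin{itemize}
\item the $A_{1}$-series has ratio $\theta\tau^{-\alpha}<1$,
\item the $A_{2}$-series has ratio $\theta\tau^{-\beta}<1$,
\item the $B$-series has ratio $\theta<1$.
\end{itemize}
All three therefore converge. This yields \eqref{eq:iter2} with
\begin{align*}
c\coloneqq (1-\tau)^{-\alpha}\big(1-\theta\tau^{-\alpha}\big)^{-1}+(1-\tau)^{-\beta}\big(1-\theta\tau^{-\beta}\big)^{-1}+(1-\theta)^{-1},
\end{align*}
which depends only on $\alpha,\beta,\theta$, as claimed.

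There is no real obstacle here. The only point requiring care is the choice of $\tau$: the geometric shrinking of the increments must be slow enough that the factors $\tau^{-i\alpha}$ and $\tau^{-i\beta}$ are beaten by $\theta^{i}$. This is exactly what the condition $\theta\tau^{-\max\{\alpha,\beta\}}<1$ guarantees.
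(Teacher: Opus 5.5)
Your proof is correct and is the standard iteration argument of \cite[Lem.~6.1]{GIUSTI03}, which the paper cites instead of proving the lemma itself: geometric points $s_{i+1}=s_i+(1-\tau)\tau^{i}(t-s)$, the choice $\theta\tau^{-\max\{\alpha,\beta\}}<1$, and boundedness of $Z$ to make $\theta^{k}Z(s_k)\to 0$. For $\theta=0$, fix a concrete value such as $\tau=\frac{1}{2}$, so that your explicit constant $c\geq 1$ depends only on $\alpha,\beta,\theta$.
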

Lastly, in view of the Lipschitz bounds in the two-dimensional case (see Section~\ref{sec:proofmain2}), we require the following variant of an estimate due to Frehse and Seregin \cite[Lem. 2.4]{FRESER98}.
\begin{lem}[of Frehse--Seregin-type]\label{lem:FRESER98}
Let $n=2$, $\alpha\in[1,2)$, $L>0$ and $\theta\geq 0$. For $x_0\in\R^2$ and $r>0$, set $\mathcal{A}_r(x_0) \coloneqq \ball_{2r}(x_0)\setminus \overline{\ball}_r(x_0)$. Suppose that, for some $x_0\in\R^2$ and $0<R<\infty$, a pair of functions $H\in \lebe^2(\ball_{2R}(x_0))$ and $h\in \sobo^{1,2}(\ball_{2R}(x_0)))$ satisfies the inequality
\begin{equation}\label{eq1:FreSe99variant}
\int_{\ball_r(x_0)}H^2\,{\mathrm d}x\leq \frac{L}{r} \biggl(\int_{\mathcal{A}_r(x_0)}H^2\,{\mathrm d}x\biggr)^\frac12\int_{\mathcal{A}_r(x_0)}|h|^\alpha |H|\,{\mathrm d}x + \theta\qquad\text{for all } 0<r<R. 
\end{equation}
Then, for any $1\leq p<\infty$, there exists a constant 
\begin{align*}
c=c(p,R^{-1}\|h\|_{\lebe^{2}(\ball_{2R}(x_0))}+\|\nabla h\|_{\lebe^{2}(\ball_{2R}(x_0))},L,\alpha)>0
\end{align*}
such that
\begin{align*}
\int_{\ball_r(x_0)}H^2\,\mathrm dx\leq \frac{c}{\big(\log_2(\frac{2R}r)\big)^p}\int_{\ball_{2R}(x_0)}H^2\,{\mathrm d}x + \theta\qquad\text{for all } 0<r<R.
\end{align*}
\end{lem}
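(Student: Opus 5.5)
The plan is to follow the classical Frehse--Seregin iteration. Work with dyadic radii $r_k\coloneqq 2^{-k}R$ and put $a_k\coloneqq\int_{\ball_{r_k}(x_0)}H^2\dif x$. Note that $\mathcal{A}_{r_{k+1}}(x_0)\subset \ball_{r_k}\setminus\ball_{r_{k+1}}$, so the annular integral of $H^2$ is at most $a_k-a_{k+1}$. The aim is a difference inequality of the form
\[
a_{k+1}\le C\,\varepsilon_k\,(a_k-a_{k+1})+\theta,
\]
where $\varepsilon_k\to0$ and $\sum_k\varepsilon_k^{\,2}$ is finite. From such an inequality we will extract the logarithmic decay.

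\emph{Bounding the right-hand side.} Apply \eqref{eq1:FreSe99variant} with $r=r_{k+1}$. On the annulus $\mathcal{A}\coloneqq\mathcal{A}_{r_{k+1}}$, Cauchy--Schwarz gives
\[
\int_{\mathcal{A}}|h|^\alpha|H|\dif x\le \Big(\int_{\mathcal{A}}|h|^{2\alpha}\dif x\Big)^{1/2}\Big(\int_{\mathcal{A}}H^2\dif x\Big)^{1/2}.
\]
Hence
\[
a_{k+1}\le \frac{L}{r_{k+1}}\,\||h|^\alpha\|_{\lebe^2(\mathcal{A})}\,(a_k-a_{k+1})+\theta.
\]
The key quantity is therefore $\delta_k\coloneqq r_k^{-1}\||h|^{\alpha}\|_{\lebe^2(\mathcal{A}_{r_{k+1}})}$.

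For $\alpha=1$, a Poincar\'e/Sobolev argument on annuli bounds this (after subtracting means) by $\|\nabla h\|_{\lebe^2}$ on the annulus. For $\alpha\in(1,2)$ we use the two-dimensional Sobolev embedding $\sobo^{1,2}\hookrightarrow\lebe^{s}$ for every $s<\infty$, with scaling. This gives
\[
r^{-1}\||h|^\alpha\|_{\lebe^2(\mathcal{A}_r)}\le r^{-1}|\mathcal{A}_r|^{\frac12-\frac{\alpha}{s}}\|h\|^\alpha_{\lebe^{s}(\mathcal{A}_r)}.
\]
Choosing $s$ large makes the exponent of $r$ equal to $-\frac{2\alpha}{s}\cdot$ small. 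Consequently $\delta_k$ is not bounded in general, but it grows at most like $r_k^{-(\alpha-1)-\epsilon}$.

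\emph{The main obstacle.} This scaling loss is the step I expect to be hardest. It must be handled by replacing $h$ with $h-(h)_{\ball_{2r}}$ and exploiting that $\sum_k\|\nabla h\|^2_{\lebe^2(\mathcal{A}_{r_k})}\le\|\nabla h\|^2_{\lebe^2(\ball_{2R})}$. In the original Frehse--Seregin lemma the factor is linear in $h$. Here I would instead use the hypothesis only in the form actually needed: presumably the intended reading is that the mean-oscillation part gives a summable sequence $\delta_k$ with $\sum_k\delta_k^2\le c(\|\nabla h\|_{\lebe^2}^2+R^{-2}\|h\|_{\lebe^2}^2)^{\alpha}$, up to a Moser--Trudinger control of $|h|^{\alpha-1}$. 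That control is available since $h\in\sobo^{1,2}$ in two dimensions and $\exp(c|h|^2)$ is integrable. I would first try to prove the summability of $\delta_k^2$ via Moser--Trudinger combined with Hölder.

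\emph{Iteration.} Given the difference inequality $a_{k+1}\le L\delta_k(a_k-a_{k+1})+\theta$ with $\sum\delta_k^2\le M$, set $b_k\coloneqq a_k-\theta$ where it is positive. The inequality gives $b_{k+1}\le L\delta_k(b_k-b_{k+1})$, which rearranges to
\[
b_{k+1}\le \frac{L\delta_k}{1+L\delta_k}\,b_k .
\]
Taking logarithms yields $\log(b_m/b_0)\le -\sum_{k<m}\log(1+(L\delta_k)^{-1})$. Since $\sum\delta_k^2\le M$, at most $M/\eta^{2}$ indices have $\delta_k\ge\eta$. A refined pigeonhole argument works in blocks: among $m$ indices, the number with $\delta_k$ below $\sqrt{2M/m}$ is at least $m/2$. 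This gives
\[
b_m\le \Big(\frac{c\sqrt{M/m}}{1+c\sqrt{M/m}}\Big)^{m/2}b_0 ,
\]
which in particular decays faster than any power $m^{-p}$. Since $m\approx\log_2(2R/r)$, and using monotonicity of $a$ in $r$ for intermediate radii, we conclude
\[
\int_{\ball_r}H^2\dif x\le c\,(\log_2(2R/r))^{-p}\int_{\ball_{2R}}H^2\dif x+\theta,
\]
with $c$ depending on $p$, $L$, $\alpha$ and the stated norm of $h$.
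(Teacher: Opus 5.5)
Your recursion is sound. With $r_k=2^{-k}R$ and $b_k=a_k-\theta$, the inequality $b_{k+1}\le \frac{L\delta_k}{1+L\delta_k}\,b_k$ is the paper's hole-filling step, and your $\theta$-bookkeeping works. The gap is in the bound on $\delta_k$, which up to a constant equals $\big(\fint_{\mathcal{A}_{r_{k+1}}(x_0)}|h|^{2\alpha}\,\mathrm{d}x\big)^{1/2}$.

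The hypothesis \eqref{eq1:FreSe99variant} contains $|h|^\alpha$ itself, not an oscillation. So you may not replace $h$ by $h-(h)_{\ball_{2r}}$: the local means stay inside $\delta_k$, and your ``intended reading'' proves a different lemma. Under the actual hypothesis, $\sum_k\delta_k^2<\infty$ is false:
\begin{itemize}
\item $h\equiv 1$ gives $\delta_k\approx 1$ for every $k$;
\item $h(x)=(\log(4R/|x-x_0|))^{\gamma}$ with $0<\gamma<\frac12$ lies in $\sobo^{1,2}(\ball_{2R}(x_0))$ and gives $\delta_k\approx k^{\alpha\gamma}\to\infty$.
\end{itemize}
Your Sobolev-embedding bound is far too weak to feed the recursion: it gives polynomial growth in $r_k^{-1}$, i.e.\ exponential growth in $k$. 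The appeal to Moser--Trudinger yields no decay of $\delta_k$ either. Note also that nothing in your argument uses $\alpha<2$, although the lemma needs it.

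The missing idea is the \emph{logarithmic} growth bound $\delta_k\le c\,(1+k)^{\alpha/2}$. To get it:
\begin{itemize}
\item Compare $h$ on $\mathcal{A}_\rho(x_0)$ with the \emph{fixed} mean $(h)_{\mathcal{A}_R(x_0)}$, whose size is at most $cR^{-1}\|h\|_{\lebe^2(\ball_{2R}(x_0))}$.
\item Telescope the means over the roughly $\log_2(R/\rho)$ dyadic annuli between $\rho$ and $R$.
\item Bound each jump by the scale-invariant two-dimensional Sobolev--Poincar\'e inequality on $\ball_{2^{j+2}\rho}\setminus\ball_{2^{j}\rho}$, and sum with Cauchy--Schwarz.
\end{itemize}
This gives $\big(\fint_{\mathcal{A}_\rho}|h-(h)_{\mathcal{A}_R}|^{Q}\,\mathrm{d}x\big)^{1/Q}\le C(Q)\sqrt{\log_2(2R/\rho)}\,\|\nabla h\|_{\lebe^2(\ball_{2R}(x_0))}$.

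Feeding this into your recursion gives contraction factors $1-\frac{1}{1+c_1(k+1)^{\beta}}$ with $\beta=\alpha/2<1$. Their product over $k\le m\approx\log_2(2R/r)$ is at most $\exp(-c\,m^{1-\beta})\le c(p)\,m^{-p}$ for every $p$. This is exactly where $\alpha<2$ enters: for $\beta=1$ the product decays only like a fixed power of $m$.
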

Since the statement of the lemma is not quite that of  \cite[Lem. 2.4]{FRESER98}, a self-contained proof is offered in the Appendix, Section~\ref{sec:proofofFS}.  
\section{Good cut-offs}\label{sec:goodcut}
Towards the main regularity proofs in Section~\ref{sec:proofmain} and~\ref{sec:proofmain2} below, we now isolate a lemma on good cut-offs in the spirit of \cite{BELSCH20,BELSCH24}. 
\begin{lem}\label{L:optim}
Fix $p\in[2,\infty)$ and suppose that $Q\geq p$ is such that
\begin{equation*}
	\begin{cases}
		Q\leq 2\frac{n-1}{n-3} &\mbox{if $n\geq4$,}
		\\
		Q<+\infty &\mbox{if $n=3$,}\\
		Q=\infty&\mbox{if $n=2$.}
	\end{cases}
\end{equation*}
Then there exists a constant $C=C(n,Q)$ such that for any ball $\ball_R=\ball_R(x_0)$, any $v\in (\sobo^{1,2}\cap \lebe^p)(\ball_R)$ and all radii $0< \rho<\sigma<R$, there exists a cut-off function $\eta\in \sobo_0^{1,\infty}(\ball_R)$ satisfying
\begin{equation}\label{L:optim:eta}
0\leq \eta\leq 1,\qquad \eta=1\qquad\mbox{in $\ball_{\rho}$},\qquad\|\nabla \eta\|_{\lebe^\infty(\ball_R)}\leq \frac{2}{\sigma-\rho}
\end{equation}
such that the following holds for all $\alpha\geq 0$: If $n\geq3$, we have 
\begin{align}\label{L:optim:claim}
\|v|\nabla \eta|^\alpha\|_{\lebe^Q(\ball_R)}\leq C(\sigma-\rho)^{\frac1Q-\alpha}\biggl(\frac{\|\sigma\nabla v\|_{\lebe^2(\ball_\sigma\setminus \ball_\rho)}}{((\sigma-\rho)\rho^{\gamma_1})^\frac12}+\frac{\| v\|_{\lebe^p(\ball_\sigma\setminus \ball_\rho)}}{((\sigma-\rho)\rho^{\gamma_2})^\frac1p}\biggr)
\end{align}
where 
\begin{align}\label{eq:gammaadjust}
\gamma_1 \coloneqq (n-1)\Big(1-\frac{2}Q\Big)\;\;\;\text{and}\;\;\;\gamma_2 \coloneqq (n-1)\Big(1-\frac{p}Q\Big).
\end{align}
For $n=2$, we have 
\begin{align}\label{L:optim:claimn2}
\|v|\nabla \eta|^\alpha\|_{\lebe^\infty(\ball_R)}\leq C(\sigma-\rho)^{-\alpha}\biggl(\frac{\| v\|_{\lebe^p(\ball_\sigma\setminus \ball_\rho)}}{((\sigma-\rho)\rho)^\frac1p}+\frac{\| v\|_{\lebe^p(\ball_\sigma\setminus \ball_\rho)}^\frac{p}{p+2}\|\sigma\nabla v\|_{\lebe^2(\ball_\sigma\setminus \ball_\rho)}^\frac{2}{p+2}}{((\sigma-\rho)^2\rho^{2})^\frac1{p+2}}\biggr) 
\end{align}
and 
\begin{align}\label{L:optim:claimn2:exp}
\begin{split}
& \|\exp( |v|) |\nabla \eta|^\alpha\|_{\lebe^\infty(\ball_R)} \\
& \;\;\;\;\;\;\;\leq C(\sigma-\rho)^{-\alpha}\biggl(\frac{\| \exp(|v|)\|_{\lebe^1(\ball_\sigma\setminus \ball_\rho)}}{(\sigma-\rho)\rho}+\frac{\|\exp(|v|)\|_{\lebe^1(\ball_\sigma\setminus \ball_\rho)}\|\sigma\nabla v\|_{\lebe^2(\ball_\sigma\setminus \ball_\rho)}^2}{(\sigma-\rho)^2\rho^{2}}\biggr).
\end{split}
\end{align}
\end{lem}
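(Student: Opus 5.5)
We would prove Lemma~\ref{L:optim} with the radial cut-off construction of \cite{BELSCH20,BELSCH24}: the cut-off is a function of $|x-x_0|$ alone, and its profile is chosen by a pigeonhole argument over the radii in $(\rho,\sigma)$. We may assume $x_0=0$ and pass to polar coordinates. For $r\in(\rho,\sigma)$ set
\begin{align*}
g(r)\coloneqq \int_{\partial\ball_r}\Big(\sigma^2\frac{|\nabla v|^2}{\rho^{\gamma_1}\cdot\|\sigma\nabla v\|^2_{\lebe^2(\ball_\sigma\setminus\ball_\rho)}}+\frac{|v|^p}{\rho^{\gamma_2}\|v\|^p_{\lebe^p(\ball_\sigma\setminus\ball_\rho)}}\Big)\dif\mathscr{H}^{n-1}.
\end{align*}
By Fubini, $\int_\rho^\sigma g\,\dif r\lesssim \rho^{-\gamma_1}+\rho^{-\gamma_2}$. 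Hence the set $E$ of radii with $g(r)\le \frac{4}{\sigma-\rho}\int_\rho^\sigma g$ has measure at least $\frac34(\sigma-\rho)$. (In practice we would control the two terms separately, so that each of the two spheres-integrals is at most four times its average.) We then define $\eta(x)\coloneqq \psi(|x|)$ with
\begin{align*}
\psi(r)\coloneqq 1-\frac{1}{\mathscr{L}^1(E)}\int_\rho^{r}\mathbbm{1}_E\,\dif t .
\end{align*}
This $\eta$ equals $1$ on $\ball_\rho$, vanishes outside $\ball_\sigma$, and satisfies $|\nabla\eta|\le \frac{1}{\mathscr{L}^1(E)}\mathbbm{1}_E(|x|)\le\frac{2}{\sigma-\rho}$. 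This gives \eqref{L:optim:eta}, and $\eta$ does not depend on $\alpha$.

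For the main estimate, note that $|\nabla\eta|^\alpha\le (2/(\sigma-\rho))^\alpha$ is supported on $\{|x|\in E\}$. Therefore
\begin{align*}
\|v|\nabla\eta|^\alpha\|_{\lebe^Q}^Q\le C(\sigma-\rho)^{-\alpha Q}\int_E\|v\|^Q_{\lebe^Q(\partial\ball_r)}\dif r .
\end{align*}
On each sphere $\partial\ball_r$ with $r\ge\rho$ we apply a Sobolev inequality on $\mathbb{S}^{n-1}$ rescaled to radius $r$. This holds for $n\ge 4$ with $Q\le 2\frac{n-1}{n-3}$, the Sobolev exponent of the $(n-1)$-sphere, and for any $Q<\infty$ when $n=3$:
\begin{align*}
\|v\|_{\lebe^Q(\partial\ball_r)}\le C\big(r^{(n-1)(\frac1Q-\frac12)}\,r\|\nabla_\tau v\|_{\lebe^2(\partial\ball_r)}+r^{(n-1)(\frac1Q-\frac1p)}\|v\|_{\lebe^p(\partial\ball_r)}\big).
\end{align*}
The powers of $r$ appear with nonpositive exponents, so we bound them from above by replacing $r$ with $\rho$, and we bound $r\le\sigma$ in the gradient factor. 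The exponents $-\gamma_1/2$ and $-\gamma_2/p$ from \eqref{eq:gammaadjust} emerge exactly here. Since $r\in E$, the sphere-integrals are at most $\frac{C}{\sigma-\rho}$ times the annulus norms. Integrating over $E$ then contributes a factor $\mathscr{L}^1(E)^{1/Q}\le(\sigma-\rho)^{1/Q}$. Taking $Q$-th roots yields \eqref{L:optim:claim}.

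For $n=2$ the spheres are circles, and the target is $\sup_{|x|\in E}|v|$. We use the one-dimensional interpolation
\begin{align*}
\|w\|_{\lebe^\infty(\partial\ball_r)}\lesssim r^{-1/p}\|w\|_{\lebe^p}+\|w\|_{\lebe^p}^{\frac{p}{p+2}}\|\partial_\tau w\|_{\lebe^2}^{\frac{2}{p+2}}\,r^{\ldots}.
\end{align*}
It follows from $|w|^{(p+2)/2}$ being controlled by its mean plus $\int|w|^{p/2}|\partial_\tau w|$, together with Hölder. Combining this with the good-radius bounds gives \eqref{L:optim:claimn2}. For \eqref{L:optim:claimn2:exp} we apply the same argument to $w=\exp(|v|)$. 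On a circle,
\begin{align*}
\sup w\le \dashint w+\int|\partial_\tau w|\le \dashint w+\Big(\int w\Big)^{1/2}\Big(\int w|\nabla v|^2\Big)^{1/2}.
\end{align*}
This product structure is why the good-radius selection must involve the quantity $\exp(|v|)|\nabla v|^2$. Young's inequality then produces the two terms on the right-hand side.

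The main obstacle is the bookkeeping of powers of $\rho$, $\sigma$ and $\sigma-\rho$, so that the stated homogeneities come out exactly. The most delicate point is the exponential estimate. There the good-radius condition has to be imposed on $\exp(|v|)|\nabla v|^2$, while the right-hand side only features $\|\exp|v|\|_{\lebe^1}\|\nabla v\|_{\lebe^2}^2$. We would first try to select radii where both $\int_{\partial\ball_r}\exp|v|$ and $\int_{\partial\ball_r}|\nabla v|^2$ are controlled. We would then estimate $\int|\partial_\tau w|\le \sup w^{1/2}\big(\int w\big)^{1/2}\big(\int|\nabla v|^2\big)^{1/2}$ and absorb the factor $\sup w^{1/2}$ into the left-hand side.
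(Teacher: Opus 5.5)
Your proposal is correct and follows essentially the same route as the paper. It uses a radial cut-off whose gradient is spread uniformly over a set of good radii, selected separately for the tangential $\lebe^2$-gradient and the $\lebe^p$-norm on spheres; this is followed by the rescaled Sobolev embedding on $\mathbb{S}^{n-1}$ for $n\geq 3$ and the one-dimensional interpolation of Lemma~\ref{L:1dinterpolation} on circles for $n=2$. Your intermediate claim that the exponential case requires selecting radii via $\exp(|v|)|\nabla v|^2$ is not needed. Your final fix—selecting on $\exp(|v|)$ and $|\nabla v|^2$ separately and absorbing $(\sup w)^{1/2}$ by Young's inequality—is exactly the paper's argument for \eqref{est:1dinterpolation:exp}.
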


\begin{proof}
We follow the strategy of \cite[Lemma 5.1]{CIASCH24} combined with improvements in dimension $n=2$ inspired by \cite{SCHAEFFNER24}. Let $v\in (\sobo^{1,2}\cap \lebe^p)(\ball_R)$. Define, for $r\in[0,R]$, the map $v_r:\mathbb S^{n-1}\to\R$ by $v_r(z)\coloneqq v(rz)$ for $z\in \mathbb S^{n-1}$. Then there exists a set $\mathscr{N}\subset [0,R]$ such that $\mathscr{L}^{1}(\mathscr{N})=0$ and $v_r\in \sobo^{1,2}(\mathbb S^{n-1})$ for every $r\in[0,R]\setminus \mathscr{N}$. We set
\begin{equation}\label{def:U}
U_1\coloneqq \biggl\{r\in[\rho,\sigma]\setminus \mathscr{N}\colon\int_{\mathbb S^{n-1}}\frac{|\nabla_{\tau} v_r(z)|^2}{r^{2}}\,\mathrm{d}\mathscr H^{n-1}(z)\leq \frac{4}{(\sigma-\rho) r^{n-1}}\int_{\ball_\sigma\setminus \ball_{\rho}}|\nabla v|^2\dx\biggr\}, 
\end{equation}
where $\nabla_{\tau}$ denotes the weak tangential gradient along $\mathbb{S}^{n-1}$. 
In consequence, Fubini's  theorem, the elementary inequality $|\nabla_\tau v_r(z)|\leq |r\nabla v(rz)|$ and the definition of $U_1$ in the form
\begin{align*}
\int_{\ball_\sigma \setminus \ball_{\rho}}|\nabla v|^2\dx=&\int_{\rho}^\sigma r^{n-1}\int_{\mathbb S^{n-1}}|\nabla v(r z)|^2\,\mathrm{d}\mathscr H^{n-1}(z)\,\mathrm{d}r\\
\geq&\int_{(\rho,\sigma)\setminus U_1}r^{n-1}\int_{\mathbb S^{n-1}}|r^{-1}\nabla_{\tau} v_r(z)|^2\,\mathrm{d}\mathscr H^{n-1}(z)\,\mathrm{d}r\\
>&\frac{4(\sigma-\rho -\mathscr{L}^{1}(U_1))}{\sigma-\rho}\int_{\ball_\sigma  \setminus \ball_{\rho}}|\nabla v|^2\dx
\end{align*}
imply that $\mathscr{L}^{1}(U_1)\geq \frac34 (\sigma-\rho)$. By analogous means, we find that 
\begin{equation}\label{def:U2}
U_2\coloneqq \biggl\{r\in[\rho,\sigma ]\setminus \mathscr{N} \colon \int_{\mathbb S^{n-1}}|v_r(z)|^p\,\mathrm{d}\mathscr H^{n-1}(z)\leq \frac{4}{(\sigma-\rho) r^{n-1}}\int_{\ball_\sigma\setminus \ball_{\rho}}|v|^p\dx\biggr\}
\end{equation}
satisfies $\mathscr{L}^{1}(U_2)\geq \frac34 (\sigma-\rho)$. Hence, putting $U\coloneqq U_1\cap U_2$, we arrive at 
\begin{equation}\label{bound:lowerU}
\mathscr{L}^{1}(U)\geq \frac{\sigma-\rho}2. 
\end{equation}
Next, we define a Lipschitz cut-off function $\eta\in \sobo^{1,\infty}(\ball_R;[0,1])$ by
$$
\eta(x)\coloneqq \widetilde \eta(|x|),\qquad\mbox{where}\qquad \widetilde \eta(r)\coloneqq \begin{cases}1&\text{if $r\in(0,\rho)$,}\\\displaystyle \frac{1}{\mathscr{L}^{1}(U)}\int_{r}^\sigma{\mathbbm{1}_{U}}(s)\, \mathrm{d} s&\text{if $r\in (\rho,\sigma)$,}\\0&\text{if $r\in(\sigma,R)$.}\end{cases}
$$
By definition, we have that $0\leq \eta\leq1$, $\eta=1$ in $\ball_{\rho}$ and $\eta\in \sobo_0^{1,\infty}(\ball_\sigma)$, and by the Lebesgue differentiation theorem there holds, for $x=rz$ with $r\in[0,R]$ and $z\in \mathbb S^{n-1}$,  
\begin{equation}\label{lemma:optim:esnableeta}
|\nabla \eta(rz)|=\begin{cases}0&\mbox{for $\mathscr{L}^{1}$-a.e.\ $r\notin U$,}\\ \displaystyle\frac1{\mathscr{L}^{1}(U)}&\mbox{for $\mathscr{L}^{1}$-a.e.\ $r\in U$.}
\end{cases}
\end{equation}
Hence, recalling \eqref{bound:lowerU}, the map $\eta$ satisfies all the properties claimed in \eqref{L:optim:eta}.  For $r\in[0,R]\setminus \mathscr{N}$, we set
\begin{equation}
E(v_r)\coloneqq \|\nabla_{\tau} v_r\|_{\lebe^2(\mathbb S^{n-1})}+\|v_r\|_{\lebe^p(\mathbb S^{n-1})}
\end{equation}
and observe that the choice of $U$ ensures that 
\begin{align}\label{est:rEvr}
\begin{split}
\sup_{r\in U} \Big\{r^\frac{n-1}QE(v_r)\Big\} & \leq \sup_{r\in U} \bigg\{ r^\frac{n-1}Q\biggl(\biggl(\frac{4r^2}{(\sigma-\rho)r^{n-1}}\biggr)^\frac12\|\nabla v\|_{\lebe^2(\ball_\sigma\setminus \ball_\rho)}\Big. \\ & \Big.\;\;\;\;\;\;\;\;\;\;\;\;\;\;\;\;\;\;\;\;\;\;\;\;\;\;\;\;\;\;\;\;\;\;\;\;\;\;\;\; + \biggl(\frac{4}{(\sigma-\rho)r^{n-1}}\biggr)^\frac1p\| v\|_{\lebe^p(\ball_\sigma\setminus \ball_\rho)}\biggr) \bigg\}\\
& \leq \biggl(\frac{4}{(\sigma-\rho)\rho^{(n-1)(1-\frac{2}Q)}}\biggr)^\frac12\|\sigma\nabla v\|_{\lebe^2(\ball_\sigma\setminus \ball_\rho)} \\ & \;\;\;\;\;\;\;\;\;\;\;\;\;\;\;\;\;\;\;\;\;\;\;\;\;\;\;\;\;\;\;\;\;\;\;\;\;\;\;\; +\biggl(\frac{4}{(\sigma-\rho)\rho^{(n-1)(1-\frac{p}Q)}}\biggr)^\frac1p\| v\|_{\lebe^p(\ball_\sigma\setminus \ball_\rho)}.
\end{split}
\end{align}
Let us first consider the case $n\geq3$. We note that the exponent $Q\in[1,\infty)$ is such that $\sobo^{1,2}(\mathbb S^{n-1})\hookrightarrow \lebe^Q(\mathbb S^{n-1})$, and there exists $C=C(n,Q)>0$ such that 
\begin{equation*}
\|v_r\|_{\lebe^Q(\mathbb S^{n-1})}\leq C E(v_r)\qquad\text{for all}\;r\in U.
\end{equation*} 
Hence, we deduce from \eqref{lemma:optim:esnableeta} that
\begin{align*}
\|v|\nabla \eta|^\alpha\|_{\lebe^Q(\ball_R)}\leq \mathscr{L}^{1}(U)^{\frac1Q - \alpha} \sup_{r \in U}\Big\{r^\frac{n-1}Q\|v_r\|_{\lebe^Q(\mathbb S^{n-1})}\Big\},
\end{align*}
and the claimed estimate \eqref{L:optim:claim} follows by employing $\tfrac{ \sigma-\rho}{2} \leq \mathscr{L}^{1}(U)\leq \sigma-\rho$ from \eqref{bound:lowerU} together with \eqref{est:rEvr}. 

It remains to consider the case $n=2$. Here, we use the following consequence of Lemma~\ref{L:1dinterpolation} below: There exists $C=C(p)>0$ such that
\begin{equation*}
 \|v_r\|_{\lebe^\infty(\mathbb S^{1})}\leq C \Big(\|v_r\|_{\lebe^p(\mathbb S^1)}+\| v_r\|_{\lebe^p(\mathbb S^1)}^\frac{p}{p+2}\|\nabla v_r\|_{\lebe^2(\mathbb S^1)}^\frac{2}{p+2}\Big)\qquad\text{for all}\;r\in U.
\end{equation*} 
The above estimate in combination with the properties of $\eta$ in the form
\begin{align*}
\|v|\nabla \eta|^\alpha\|_{\lebe^\infty(B_R)}\leq&\Big(\frac{2}{(\sigma-\rho)}\Big)^\alpha\sup_{r\in U}\|v_r\|_{\lebe^\infty(\mathbb S^1)}
\end{align*}
and the choice of $U$ imply  \eqref{L:optim:claimn2}.

Lastly, the claimed inequality \eqref{L:optim:claimn2:exp} follows by a similar argument with minor modifications. Firstly, in this case, we recall that $n=2$ and  define  $U_2$ by
\begin{equation}\label{def:U2exp}
U_2\coloneqq \biggl\{r\in[\rho,\sigma ]\setminus \mathscr{N}\,:\,\int_{\mathbb S^{1}}\exp(|v_r(z)|)\,\mathrm{d}\mathscr H^{1}(z)\leq \frac{4}{(\sigma-\rho) r}\int_{\ball_\sigma\setminus \ball_{\rho}}\exp(|v|)\dx\biggr\}.
\end{equation}
Clearly, we still have \eqref{bound:lowerU}. Moreover, using \eqref{est:1dinterpolation:exp} from below, we find a constant $C>0$ such that
\begin{equation*}
\|\exp( |v_r|)\|_{\lebe^\infty(\mathbb S^{1})}\leq C \Big(\|\exp(|v_r|)\|_{\lebe^1(\mathbb S^1)}+\|\exp(|v_r|)\|_{\lebe^1(\mathbb S^1)}\|\nabla v_r\|_{\lebe^2(\mathbb S^1)}^2\Big) 
\end{equation*} 
holds for all $r\in U$. 
The claimed inequality \eqref{L:optim:claimn2:exp} then follows by inserting the properties of $\eta$, $U_1$ defined in \eqref{def:U} and $U_2$ defined in \eqref{def:U2exp}. This completes the proof. 
\end{proof}
\begin{lem}\label{L:1dinterpolation} Let $I\subset \R$ be a bounded interval, and let $1\leq p<\infty$. Then we have 
\begin{equation}\label{est:1dinterpolation}
\|u- (u)_I\|_{\lebe^\infty(I)}\leq \Big(\frac{p+2}2\Big)^\frac{2}{p+2}\|u-(u)_I\|_{\lebe^p(I)}^\frac{p}{p+2}\|u'\|_{\lebe^2(I)}^\frac{2}{p+2}
\end{equation}
for all $u\in\sobo^{1,2}(I)$. Moreover, we have 
\begin{equation}\label{est:1dinterpolation:exp}
 \|\exp(|u|)\|_{\lebe^\infty(I)}\leq \| \exp(|u|)\|_{\lebe^1(I)}\|u'\|_{\lebe^2(I)}^2+2\mathscr{L}^{1}(I)^{-1}\|\exp(|u|)\|_{\lebe^1(I)}. 
\end{equation}
\end{lem}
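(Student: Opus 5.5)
For \eqref{est:1dinterpolation}, set $w\coloneqq u-(u)_I$, which is continuous on $\overline I$ with mean zero, so $w$ vanishes at some $x_*\in\overline I$. For $x\in I$, apply the fundamental theorem of calculus to $\Psi(w)\coloneqq |w|^{\frac{p+2}{2}}$ (which is $\hold^1$ for $p\geq1$) between $x_*$ and $x$:
\begin{align*}
|w(x)|^{\frac{p+2}{2}} \leq \frac{p+2}{2}\int_I |w|^{\frac p2}|w'|\dif t \leq \frac{p+2}{2}\|w\|_{\lebe^p(I)}^{\frac p2}\|u'\|_{\lebe^2(I)}.
\end{align*}
The last step is Cauchy--Schwarz, using $\||w|^{p/2}\|_{\lebe^2}=\|w\|_{\lebe^p}^{p/2}$. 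Taking the $\frac{2}{p+2}$-th power and the supremum over $x$ gives exactly \eqref{est:1dinterpolation}, with constant $(\frac{p+2}{2})^{\frac{2}{p+2}}$.

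For \eqref{est:1dinterpolation:exp}, use the same device with $g\coloneqq \exp(|u|)$. Since $|u|\in\sobo^{1,2}(I)$, we have $g\in\sobo^{1,1}(I)$ with $|g'|\leq g|u'|$ a.e. By the mean value property there is $x_*\in\overline I$ with $g(x_*)\leq \mathscr{L}^1(I)^{-1}\|g\|_{\lebe^1(I)}$. To obtain a product of $\lebe^1$ and $\lebe^2$ norms, write $g=(g^{1/2})^2$, so that $(g^{1/2})'=\tfrac12 g^{1/2}(|u|)'$. Then $\sqrt{g}$ satisfies, for all $x\in I$,
\begin{align*}
\sqrt{g(x)}\leq \sqrt{g(x_*)}+\frac12\int_I \sqrt g\,|u'|\dif t\leq \sqrt{g(x_*)}+\frac12\|g\|_{\lebe^1(I)}^{\frac12}\|u'\|_{\lebe^2(I)}.
\end{align*}
Squaring with $(a+b)^2\leq 2a^2+2b^2$ yields
\begin{align*}
g(x)\leq 2\mathscr{L}^1(I)^{-1}\|g\|_{\lebe^1(I)}+\tfrac12\|g\|_{\lebe^1(I)}\|u'\|_{\lebe^2(I)}^2,
\end{align*}
which is even slightly better than \eqref{est:1dinterpolation:exp}.

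The only technical point is justifying the chain rule for $|w|^{(p+2)/2}$ and $\exp(|u|)^{1/2}$ for $u\in\sobo^{1,2}(I)$. In one dimension $u$ is absolutely continuous, and composing with a locally Lipschitz $\hold^1$ function keeps it absolutely continuous, so the chain rule holds directly; alternatively one can approximate by smooth functions. No further obstacle is expected.

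In the application to $\mathbb{S}^1$ one parametrizes the circle by an interval of length $2\pi$. For the $\lebe^\infty$ bound on $v_r$ in Lemma~\ref{L:optim}, one adds back $|(u)_I|\leq c\|u\|_{\lebe^p}$ by H\"older's inequality.
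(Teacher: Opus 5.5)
Your proof is correct. For \eqref{est:1dinterpolation} it is exactly the paper's argument: pick a point where $u-(u)_I$ vanishes, apply the fundamental theorem of calculus to $|w|^{\frac{p+2}{2}}$, then use Cauchy--Schwarz.

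For \eqref{est:1dinterpolation:exp} your route differs slightly from the paper's.
\begin{itemize}
\item The paper applies the fundamental theorem of calculus to $g=\exp(|u|)$ itself. This produces the product $\|g\|_{\lebe^2(I)}\|u'\|_{\lebe^2(I)}$.
\item It then interpolates $\|g\|_{\lebe^2}\leq\|g\|_{\lebe^1}^{1/2}\|g\|_{\lebe^\infty}^{1/2}$ and uses Young's inequality. This generates a term $\frac12\|g\|_{\lebe^\infty}$, which is absorbed into the left-hand side. The absorption is legitimate because $u$ is continuous on $\overline I$, so $g$ is bounded.
\end{itemize}
You instead work with $\sqrt g=\exp(|u|/2)$, choosing the power so that Cauchy--Schwarz directly yields $\|g\|_{\lebe^1}^{1/2}$. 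This is the same device as your choice of $|w|^{\frac{p+2}{2}}$ in the first part. As a result, no interpolation or absorption step is needed. You also obtain the better constant $\frac12$ in front of $\|g\|_{\lebe^1}\|u'\|_{\lebe^2}^2$, instead of $1$.

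Both arguments give the lemma as stated. Yours is slightly shorter and treats the two estimates in a unified way.
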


\begin{proof}
By density, it suffices to show \eqref{est:1dinterpolation} for $u\in \hold^1(I)$ and, without loss of generality, we may assume that $(u)_I=0$. By the integral mean value theorem, there exists $y\in I$ such that $u(y)=(u)_I=0$. By the fundamental theorem of calculus, we have for every $x\in I$ that 
\begin{equation*}
|u(x)|^\frac{p+2}2 \leq \frac{p+2}2\biggl|\int_y^x|u(s)|^\frac{p}2 |u'(s)|\,\dif s\biggr|\leq \frac{p+2}2 \|u\|_{\lebe^p(I)}^\frac{p}2\|u'\|_{\lebe^2(I)}.
\end{equation*}
We obtain \eqref{est:1dinterpolation} by taking the supremum over all $x \in I$ and raising the resulting inequality to the power $\tfrac{2}{p+2}$. The argument for \eqref{est:1dinterpolation:exp} is similar. We here choose $y \in I$ such that $\exp(|u(y)|) = (\exp(|u(\cdot)|))_I$ and we then obtain 
\begin{equation*}
\exp(|u(x)|)-\exp(|u(y)|)\leq \biggl| \int_y^x \exp(|u(s)|) |u'(s)|\,\dif s\leq \biggr| \|\exp(|u|)\|_{\lebe^2(I)}\|u'\|_{\lebe^2(I)},
\end{equation*} 
which implies
\begin{equation}\label{est:1dinterpolation:exp:1}
 \|\exp(|u|)\|_{\lebe^\infty(I)}\leq \| \exp(|u|)\|_{\lebe^2(I)}\|u'\|_{\lebe^2(I)}+\fint_I \exp(|u|)\dx.
\end{equation}
Inserting the elementary inequality 
$$
\| \exp(|u|)\|_{\lebe^2(I)}\leq \|\exp(|u|)\|_{\lebe^1(I)}^\frac{1}{2}\|\exp(|u|)\|_{\lebe^\infty(I)}^\frac{1}{2}
$$
into \eqref{est:1dinterpolation:exp:1}, we obtain with Young's inequality that
\begin{equation*}
 \|\exp(|u|)\|_{\lebe^\infty(I)}\leq \frac{1}{2}\|\exp(|u|)\|_{\lebe^\infty(I)}+ \frac{1}{2}\| \exp(|u|)\|_{\lebe^1(I)}\|u'\|_{\lebe^2(I)}^2+\fint_I \exp(|u|)\dx.
\end{equation*}
This completes the proof.
\end{proof}

\section{On the relaxed functional}\label{sec:relax}
In this section, we collect some elementary properties of the Lebesgue--Serrin--Marcellini extension \eqref{eq:LSM}, where we recall that $\Omega\subset\R^{n}$ is open and bounded  with Lipschitz boundary. Moreover, we assume that $1\leq q<\infty$, $u_{0}\in\sobo^{1,q}(\Omega;\R^{N})$, and that the convex function  $F\in\hold(\R^{N\times n})$ satisfies  \eqref{eq:1qgrowth}. We begin with the following routine, yet crucial observation: 
\begin{lem}[Recovery sequence]\label{lem:recovery}
Let $u\in\bv(\Omega;\R^{N})$ be such that $\overline{\mathscr{F}}_{u_{0}}^{*}[u;\Omega]<\infty$. Then there exists a \emph{recovery sequence}  $(u_{j})$ in $ u_{0}+\hold_{c}^{\infty}(\Omega;\R^{N})$ for $u$, meaning that $u_{j}\stackrel{*}{\rightharpoonup} u$ in the weak*-sense on $\bv(\Omega;\R^{N})$ and 
\begin{align}\label{eq:recoverymain}
\lim_{j\to\infty} \int_{\Omega}F(\nabla u_{j})\dif x = \overline{\mathscr{F}}_{u_{0}}^{*}[u;\Omega]. 
\end{align}
\end{lem}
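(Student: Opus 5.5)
By definition of $\overline{\mathscr{F}}_{u_{0}}^{*}[u;\Omega]$ as an infimum over sequences, the plan is a diagonal argument. I first produce a recovery sequence in $\sobo_{u_0}^{1,q}(\Omega;\R^N)$ and then replace each member by an element of $u_{0}+\hold_{c}^{\infty}(\Omega;\R^{N})$ with almost the same energy and almost the same position in a metric for weak*-convergence.

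\textbf{Step 1 (near-optimal sequences).} Set $M\coloneqq\overline{\mathscr{F}}_{u_{0}}^{*}[u;\Omega]<\infty$. For each $k\in\mathbb{N}$ the definition \eqref{eq:LSM} gives a sequence $(u^{k}_{j})_{j}$ in $\sobo_{u_0}^{1,q}(\Omega;\R^N)$ with $u^{k}_{j}\stackrel{*}{\rightharpoonup}u$ and $\liminf_{j}\int_\Omega F(\nabla u^k_j)\dif x\le M+\frac1k$. Passing to a subsequence, I may assume the $\liminf$ is a limit.

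\textbf{Step 2 (metrizing weak*-convergence on bounded sets).} Weak*-convergent sequences in $\bv$ are bounded in the $\bv$-norm. Since $F(z)\ge\gamma|z|$, the gradient norms $\|\nabla u^k_j\|_{\lebe^1}$ along such sequences are controlled by $\gamma^{-1}(M+1)$, at least eventually in $j$. The total variations therefore stay bounded uniformly in $k$, say by $C_{*}$. On $\{v\in\bv\colon |\D v|(\Omega)\le C_*+1\}$, weak*-convergence is equivalent to convergence in the metric
\[
d(v,w)\coloneqq\|v-w\|_{\lebe^1(\Omega)}+\sum_{i}2^{-i}\Big|\int_\Omega\langle\varphi_i,\dif(\D v-\D w)\rangle\Big|,
\]
where $(\varphi_i)$ is a countable family dense in $\hold_0(\Omega;\R^{N\times n})$ with $\|\varphi_i\|_{\lebe^\infty}\le1$. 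For each $k$, I pick $j_k$ with $d(u^k_{j_k},u)<\frac1k$, $\int_\Omega F(\nabla u^k_{j_k})\dif x\le M+\frac2k$ and $|\D u^k_{j_k}|(\Omega)\le C_*$.

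\textbf{Step 3 (smoothing).} Write $w_k\coloneqq u^k_{j_k}-u_0\in\sobo_0^{1,q}(\Omega;\R^N)$ and choose $\phi_k\in\hold_c^\infty(\Omega;\R^N)$ with $\|\phi_k-w_k\|_{\sobo^{1,q}}$ as small as desired. This is where the upper bound in \eqref{eq:1qgrowth} enters: $F$ is convex with $0\le F\le\Gamma(1+|z|^q)$, so Lemma~\ref{lem:liptype} and Hölder's inequality give
\[
\Big|\int_\Omega F(\nabla u_0+\nabla\phi_k)-F(\nabla u_0+\nabla w_k)\dif x\Big|\le L\,\big\|1+|\nabla u_0+\nabla\phi_k|+|\nabla u_0+\nabla w_k|\big\|_{\lebe^q}^{q-1}\|\nabla\phi_k-\nabla w_k\|_{\lebe^q}.
\]
The right-hand side tends to zero as $\phi_k\to w_k$ in $\sobo^{1,q}$, because $\nabla u_0,\nabla w_k\in\lebe^q$. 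I choose $\phi_k$ so that this difference is below $\frac1k$ and also $d(u_0+\phi_k,u^k_{j_k})<\frac1k$. The latter is possible since $\sobo^{1,q}$-convergence implies $\sobo^{1,1}$-convergence, and $d$ is controlled by the $\sobo^{1,1}$-norm on a bounded domain.

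\textbf{Step 4 (conclusion).} Set $u_k\coloneqq u_0+\phi_k$. Then $d(u_k,u)\le\frac2k$ and the total variations of the $u_k$ stay bounded, so $u_k\stackrel{*}{\rightharpoonup}u$ with $\limsup_k\int_\Omega F(\nabla u_k)\dif x\le M$. Since $(u_k)$ is itself admissible in \eqref{eq:LSM}, also $\liminf_k\int_\Omega F(\nabla u_k)\dif x\ge M$, which gives \eqref{eq:recoverymain}. The only delicate step is Step 2: the diagonal selection must be justified through a metric valid on bounded sets, which in turn needs the uniform bound on total variations coming from the lower growth bound. Everything else is routine.
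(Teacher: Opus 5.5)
Your proposal is correct and follows essentially the same route as the paper. Both arguments make a diagonal selection from near-optimal sequences, replace each selected member by an element of $u_{0}+\hold_{c}^{\infty}(\Omega;\R^{N})$ that is close in $\sobo^{1,q}$ with the energy change controlled by Lemma~\ref{lem:liptype} and H\"{o}lder's inequality, and get the matching lower bound from admissibility of the resulting sequence in \eqref{eq:LSM}. The one difference is that you metrize weak*-convergence on bounded sets, whereas the paper tracks only the $\lebe^{1}$-distance to $u$. It recovers weak*-convergence at the end from the uniform $\sobo^{1,1}$-bound (coming from $F\geq\gamma|\cdot|$ and the energy bound) together with uniqueness of the $\lebe^{1}$-limit, so the metric in your Step~2, which you flag as the delicate point, can be dropped.
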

\begin{proof}
For each $k\in\mathbb{N}$, there exists a sequence $(w_{j}^{k})$ in $u_{0}+\sobo_{0}^{1,q}(\Omega;\R^{N})$ such that $w_{j}^{k}\stackrel{*}{\rightharpoonup} u$ as $j\to\infty$ and 
\begin{align}\label{eq:recov1}
\liminf_{j\to\infty}\int_{\Omega}F(\nabla w_{j}^{k})\dif x \leq \overline{\mathscr{F}}_{u_{0}}^{*}[u;\Omega] +\frac{1}{k}. 
\end{align}
We find a subsequence $(w_{j_{i}}^{k})$ of $(w_{j}^{k})$ such that 
\begin{align}\label{eq:recov2a}
\lim_{i\to\infty}\int_{\Omega}F(\nabla w_{j_{i}}^{k})\dif x = \liminf_{j\to\infty}\int_{\Omega}F(\nabla w_{j}^{k})\dif x \stackrel{\eqref{eq:recov1}}{\leq}  \overline{\mathscr{F}}_{u_{0}}^{*}[u;\Omega] +\frac{1}{k}.
\end{align}
Next, for each $k\in\mathbb{N}$, we choose an index $i_{k}\in\mathbb{N}$ such that 
\begin{align}\label{eq:recov2}
\|w_{j_{i_{k}}}^{k}-u\|_{\lebe^{1}(\Omega)}<\frac{1}{k}\;\;\;\text{and}\;\;\;\int_{\Omega}F(\nabla w_{j_{i_{k}}}^{k})\dif x \leq \overline{\mathscr{F}}_{u_{0}}^{*}[u;\Omega] +\frac{2}{k}.
\end{align}
By our assumptions on $F$, we have the estimate \eqref{eq:liptype} with some $L\geq 0$. Recalling that $w_{j_{i_{k}}}^{k}\in u_{0}+\sobo_{0}^{1,q}(\Omega;\R^{N})$ and  mollifying $w_{j_{i_{k}}}^{k}-u_{0}$, we find $v_{k}\in u_{0}+\hold_{c}^{\infty}(\Omega;\R^{N})$ such that 
\begin{align}\label{eq:fire}
\|\nabla v_{k}\|_{\lebe^{q}(\Omega)}\leq \|\nabla w_{j_{i_{k}}}^{k}\|_{\lebe^{q}(\Omega)} + 2\|\nabla u_{0}\|_{\lebe^{q}(\Omega)}+1\;\;\;\text{and}\;\;\;\|v_{k}-w_{j_{i_{k}}}^{k}\|_{\sobo^{1,q}(\Omega)}<\frac{1}{kM_{k}},
\end{align}
where 
\begin{align*}
M_{k} \coloneqq  L\Big(\mathscr{L}^{n}(\Omega)^{\frac{1}{q}}+2\|\nabla w_{j_{i_{k}}}\|_{\lebe^{q}(\Omega)}+2\|\nabla u_{0}\|_{\lebe^{q}(\Omega)}+1\Big)^{q-1}. 
\end{align*}
In conclusion, we arrive at 
\begin{align*}
 \lefteqn{ \!\!\!\!\!\!\!\! \left\vert \int_{\Omega}F(\nabla v_{k})\dif x - \int_{\Omega}F(\nabla w_{j_{i_{k}}}^{k})\dif x  \right\vert \stackrel{\eqref{eq:liptype}}{\leq} L  \int_{\Omega}(1+|\nabla v_{k}|+|\nabla w_{j_{i_{k}}}^{k}|)^{q-1}|\nabla(v_{k}-w_{j_{i_{k}}}^{k})|\dif x} \\ 
& \leq L\bigg(\int_{\Omega}(1+|\nabla w_{j_{i_{k}}}^{k}|+|\nabla v_{k}|)^{q}\dif x \bigg)^{\frac{q-1}{q}}\bigg(\int_{\Omega}|\nabla(v_{k}-w_{j_{i_{k}}}^{k})|^{q}\dif x \bigg)^{\frac{1}{q}}  \\ 
& \!\!\!\! \stackrel{\eqref{eq:fire}_{1}}{\leq} L\Big(\mathscr{L}^{n}(\Omega)^{\frac{1}{q}}+2\|\nabla w_{j_{i_{k}}}^{k}\|_{\lebe^{q}(\Omega)}+ 2\|\nabla u_{0}\|_{\lebe^{q}(\Omega)}+1 \Big)^{q-1}\|\nabla(v_{k}-w_{j_{i_{k}}}^{k})\|_{\lebe^{q}(\Omega)} \\ 
& \!\!\!\! \stackrel{\eqref{eq:fire}_{2}}{\leq} \frac{1}{k}, 
\end{align*}
and so 
\begin{align}\label{eq:recovvo1}
\int_{\Omega}F(\nabla v_{k})\dif x \stackrel{\eqref{eq:recov2}_{2}}{\leq} \overline{\mathscr{F}}_{u_{0}}^{*}[u;\Omega] + \frac{3}{k}.  
\end{align}
On the other hand, $\eqref{eq:recov2}_{1}$ and  $\eqref{eq:fire}_{2}$ directly give us $v_{k}\to u$ strongly in $\lebe^{1}(\Omega;\R^{N})$. Moreover, by \eqref{eq:recovvo1} and \eqref{eq:1qgrowth}, the sequence $(v_{k})$ is bounded in $\sobo^{1,1}(\Omega;\R^{N})$, and so there exists a subsequence $(v_{k_{l}})$ of $(v_{k})$ and $v\in\bv(\Omega;\R^{N})$ such that $v_{k_{l}}\stackrel{*}{\rightharpoonup} v$ in $\bv(\Omega;\R^{N})$ as $l\to\infty$. Since $v_{k}\to u$ strongly in $\lebe^{1}(\Omega;\R^{N})$, we have $v=u$. Hence, setting $u_{l}\coloneqq v_{k_{l}}$, $u_{l}\stackrel{*}{\rightharpoonup} u$ in $\bv(\Omega;\R^{N})$ implies
\begin{align*}
\overline{\mathscr{F}}_{u_{0}}^{*}[u;\Omega] \leq \liminf_{l\to\infty} \mathscr{F}[u_{l};\Omega] \stackrel{\eqref{eq:recovvo1}}{\leq}  \limsup_{l\to\infty}\Big(\overline{\mathscr{F}}_{u_{0}}^{*}[u;\Omega] + \frac{3}{k_{l}}\Big)= \overline{\mathscr{F}}_{u_{0}}^{*}[u;\Omega]. 
\end{align*}
This is \eqref{eq:recoverymain}, and the proof of the lemma is complete. 
\end{proof}
\begin{lem}[Existence of relaxed minimizers]\label{lem:existmin}
There exists a minimizer $u\in\bv(\Omega;\R^{N})$ of $\overline{\mathscr{F}}_{u_{0}}^{*}[-;\Omega]$, meaning that $u$ satisfies 
\begin{align}\label{eq:minimizer}
\overline{\mathscr{F}}_{u_{0}}^{*}[u;\Omega] \leq \overline{\mathscr{F}}_{u_{0}}^{*}[v;\Omega]\qquad\text{for all}\;v\in\bv(\Omega;\R^{N}). 
\end{align}
\end{lem}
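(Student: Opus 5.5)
We use the direct method, with the recovery sequences of Lemma~\ref{lem:recovery} as the main tool.

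\emph{Step 1: finite infimum.} Put $m\coloneqq\inf_{\bv(\Omega;\R^{N})}\overline{\mathscr{F}}_{u_{0}}^{*}[-;\Omega]$. Testing the definition \eqref{eq:LSM} with $u=u_{0}$ and the constant sequence $u_{j}=u_{0}$ gives $\overline{\mathscr{F}}_{u_{0}}^{*}[u_{0};\Omega]\leq\int_\Omega F(\nabla u_0)\dif x\leq\Gamma\int_{\Omega}(1+|\nabla u_{0}|^{q})\dif x<\infty$. Since $F\geq 0$, we get $0\leq m<\infty$.

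\emph{Step 2: a minimizing sequence inside the Dirichlet class.} Take $(v_{k})$ in $\bv(\Omega;\R^{N})$ with $\overline{\mathscr{F}}_{u_{0}}^{*}[v_{k};\Omega]\to m$; we may assume every term is finite. For each $k$, Lemma~\ref{lem:recovery} gives $w_{k}\in u_{0}+\hold_{c}^{\infty}(\Omega;\R^{N})$ with
\[
\|w_{k}-v_{k}\|_{\lebe^{1}(\Omega)}<\tfrac1k \quad\text{and}\quad \int_\Omega F(\nabla w_k)\dif x\leq \overline{\mathscr{F}}_{u_{0}}^{*}[v_{k};\Omega]+\tfrac1k .
\]
Hence $\int_\Omega F(\nabla w_k)\dif x\to m$. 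In fact the $v_k$ are no longer needed: $(w_k)$ is itself a sequence in $u_0+\sobo_0^{1,q}(\Omega;\R^N)$ with energies converging to $m$.

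\emph{Step 3: compactness.} This is the only step requiring care. The lower bound in \eqref{eq:1qgrowth} bounds $\|\nabla w_{k}\|_{\lebe^{1}(\Omega)}$ uniformly. An $\lebe^{1}$ bound comes from the Poincar\'e inequality on $\sobo_{0}^{1,1}$ (valid since $\Omega$ is bounded) applied to $w_{k}-u_{0}$:
\[
\|w_{k}\|_{\lebe^{1}(\Omega)}\leq\|u_{0}\|_{\lebe^{1}(\Omega)}+c\big(\|\nabla w_{k}\|_{\lebe^{1}(\Omega)}+\|\nabla u_{0}\|_{\lebe^{1}(\Omega)}\big).
\]
So $(w_{k})$ is bounded in $\bv(\Omega;\R^N)$. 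By the weak*-compactness theorem, a subsequence satisfies $w_{k}\stackrel{*}{\rightharpoonup}u$ for some $u\in\bv(\Omega;\R^{N})$.

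\emph{Step 4: conclusion.} The subsequence $(w_{k})$ lies in $u_{0}+\sobo_{0}^{1,q}(\Omega;\R^{N})$ and converges weak* to $u$, so it is an admissible sequence in \eqref{eq:LSM}. Therefore
\[
\overline{\mathscr{F}}_{u_{0}}^{*}[u;\Omega]\leq\liminf_{k\to\infty}\int_\Omega F(\nabla w_k)\dif x=m .
\]
Since $m$ is the infimum, equality holds and \eqref{eq:minimizer} follows.

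Because the definition \eqref{eq:LSM} is itself a lower semicontinuous envelope, the recovery-sequence diagonalization makes lower semicontinuity immediate. No Reshetnyak-type argument and no integral representation is needed.
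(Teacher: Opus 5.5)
Your proof is correct and follows essentially the same route as the paper: you take a minimizing sequence in $\bv$, diagonalize via the recovery sequences of Lemma~\ref{lem:recovery} to land in $u_{0}+\hold_{c}^{\infty}(\Omega;\R^{N})$, obtain a $\bv$-bound from \eqref{eq:1qgrowth} and Poincar\'{e}'s inequality, apply weak*-compactness, and then use the definition \eqref{eq:LSM} itself as the lower semicontinuity step. Your explicit remark that the finitely many $v_k$ with infinite energy can be discarded, which is needed to invoke Lemma~\ref{lem:recovery}, is a small point the paper leaves implicit.
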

\begin{proof}
We firstly note that  $\overline{\mathscr{F}}_{u_{0}}^{*}[u_{0};\Omega]<\infty$ implies  $\overline{\mathscr{F}}_{u_{0}}^{*}[-;\Omega]\not\equiv\infty$ on $\bv(\Omega;\R^{N})$. It is clear by \eqref{eq:1qgrowth} that $\overline{\mathscr{F}}_{u_{0}}^{*}[-;\Omega]$ is bounded from below, and we denote $m\coloneqq \inf_{\bv(\Omega)}\overline{\mathscr{F}}_{u_{0}}^{*}[-;\Omega]$. We may thus pick a minimizing sequence $(v_{k})$ in $\bv(\Omega;\R^{N})$ such that 
$\overline{\mathscr{F}}_{u_{0}}^{*}[v_{k};\Omega]\to m$. By \eqref{eq:1qgrowth}, $(\D v_{k})$ is bounded in $\mathrm{RM}_{\mathrm{fin}}(\Omega;\R^{N\times n})$. By Lemma~\ref{lem:recovery}, for each $k\in\mathbb{N}$, there exists a sequence $(v_{k}^{j})$ in $u_{0}+\hold_{c}^{\infty}(\Omega;\R^{N})$  such that  $v_{k}^{j}\stackrel{*}{\rightharpoonup} v_{k}$ in $\bv(\Omega;\R^{N})$ as $j\to\infty$ together with 
\begin{align*}
\|v_{k}^{j}-v_{k}\|_{\lebe^{1}(\Omega)}<\frac{1}{j}\;\;\;\text{and}\;\;\;
\int_{\Omega}F(\nabla v_{k}^{j})\dif x  \leq \overline{\mathscr{F}}_{u_{0}}^{*}[v_{k};\Omega] + \frac{1}{j}\qquad\text{for all}\;j\in\mathbb{N}. 
\end{align*}
Hence, for each $k\in\mathbb{N}$, $w_{k}\coloneqq v_{k}^{k}$ satisfies 
\begin{align}\label{eq:brandner}
\|w_{k}-v_{k}\|_{\lebe^{1}(\Omega)}<\frac{1}{k}\;\;\;\text{and}\;\;\;
\int_{\Omega}F(\nabla w_{k})\dif x \leq \overline{\mathscr{F}}_{u_{0}}^{*}[v_{k};\Omega] + \frac{1}{k}. 
\end{align}
By \eqref{eq:1qgrowth} and Poincar\'{e}'s inequality, the sequence $(w_{k})$ in $u_{0}+\sobo_{0}^{1,q}(\Omega;\R^{N})$ is bounded in $\bv(\Omega;\R^{N})$. Thus, by the weak*-compactness theorem on $\bv(\Omega;\R^{N})$, there exists a subsequence $(w_{k_{i}})$ of $(w_{k})$ and $u\in\bv(\Omega;\R^{N})$ such that $w_{k_{i}}\stackrel{*}{\rightharpoonup} u$ in $\bv(\Omega;\R^{N})$. Then 
\begin{align*}
\overline{\mathscr{F}}_{u_{0}}^{*}[u;\Omega] \leq \liminf_{i\to\infty} \mathscr{F}[w_{k_{i}};\Omega] \stackrel{\eqref{eq:brandner}_{2}}{\leq} \liminf_ {i\to\infty} \Big(\overline{\mathscr{F}}_{u_{0}}^{*}[v_{k_{i}};\Omega] + \frac{1}{k_{i}}\Big) = m. 
\end{align*}
This implies \eqref{eq:minimizer}, and the proof is complete. 
\end{proof}

\begin{lem}\label{lem:RollingStones}
Let $u,u_{1},u_{2},\ldots \in u_{0}+\sobo_{0}^{1,q}(\Omega;\R^{N})$ be such that $u_{j}\to u$ strongly in $\lebe^{1}(\Omega;\R^{N})$. Then there holds 
\begin{align}\label{eq:LSC}
\mathscr{F}[u;\Omega] \leq \liminf_{j\to\infty}\mathscr{F}[u_{j};\Omega]. 
\end{align}
\end{lem}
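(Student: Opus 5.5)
The plan is to reduce everything to the classical weak lower semicontinuity of convex integral functionals on $\sobo^{1,1}$. We may assume that the right-hand side of \eqref{eq:LSC} is finite, say equal to $M<\infty$. After passing to a (non-relabelled) subsequence, we may also assume that $\mathscr{F}[u_{j};\Omega]\to M$ and that $\mathscr{F}[u_{j};\Omega]\leq M+1$ for all $j$.

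The lower bound in \eqref{eq:1qgrowth} then gives $\|\nabla u_{j}\|_{\lebe^{1}(\Omega)}\leq (M+1)/\gamma$. Since $u_{j}-u_{0}\in\sobo_{0}^{1,q}(\Omega;\R^{N})\subset\sobo_{0}^{1,1}(\Omega;\R^{N})$, Poincar\'{e}'s inequality shows that $(u_{j})$ is bounded in $\sobo^{1,1}(\Omega;\R^{N})$. This alone is not enough to extract a weakly convergent subsequence in $\sobo^{1,1}$, because of possible concentration. To get around this, I will not use compactness but argue via distributional convergence together with convexity.

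The key step is a Mazur/Hahn--Banach representation of $F$. Since $F$ is convex, continuous and finite, it is the supremum of countably many affine functions $F(z)=\sup_{k}(\langle a_{k},z\rangle+b_{k})$. Then, for every finite family $k_{1},\dots,k_{m}$ and every partition of $\Omega$ into Borel sets, or better for any $\varphi_{i}\in\hold_{c}^{\infty}(\Omega;[0,1])$ with $\sum_{i}\varphi_{i}\leq 1$, we obtain
\begin{align*}
\int_{\Omega}F(\nabla u_{j})\dif x\geq \sum_{i}\int_{\Omega}\varphi_{i}\big(\langle a_{k_{i}},\nabla u_{j}\rangle+b_{k_{i}}\big)\dif x+\int_{\Omega}\Big(1-\sum_{i}\varphi_{i}\Big)\min\{F\}\dif x.
\end{align*}
The last term is handled by the fact that $F\geq 0$, so we simply drop it. Integrating by parts, the term $\int\varphi_{i}\langle a_{k_{i}},\nabla u_{j}\rangle$ becomes $-\int\langle a_{k_{i}},u_{j}\otimes\nabla\varphi_{i}\rangle$, which converges as $j\to\infty$ by the strong $\lebe^{1}$-convergence to the same expression with $u$ in place of $u_{j}$. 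Integrating back by parts (note that $u\in\sobo^{1,q}$), we get
\begin{align*}
M\geq \sum_{i}\int_{\Omega}\varphi_{i}\big(\langle a_{k_{i}},\nabla u\rangle+b_{k_{i}}\big)\dif x.
\end{align*}

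It remains to take the supremum over all admissible families. The functions $g_{m}\coloneqq\max_{k\leq m}(\langle a_{k},\nabla u\rangle+b_{k})^{+}$ increase to $F(\nabla u)$, since $F\geq 0$. For fixed $m$, I choose the measurable set on which index $k$ realizes the maximum, approximate its indicator by smooth $\varphi_{i}$ in $\lebe^{1}$ while keeping $\sum_{i}\varphi_{i}\leq1$ (via Lusin's theorem and compactly supported approximants), and use that $\langle a_{k},\nabla u\rangle+b_{k}\in\lebe^{1}$. This yields $M\geq\int_{\Omega}g_{m}\dif x$. Monotone convergence then gives \eqref{eq:LSC}. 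I expect the main technical point to be this approximation of the indicator partition by smooth nonnegative functions with sum at most one; it is standard but needs care with the positive parts.

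An alternative shortcut would be to apply Reshetnyak (Proposition~\ref{prop:resh}) to $\nu_{j}=(\mathscr{L}^{n},\D u_{j})$ with $F^{\#}$. This requires only weak*-convergence $\D u_{j}\stackrel{*}{\rightharpoonup}\D u$ on $\Omega$, which follows from boundedness in $\bv$ and $\lebe^{1}$-convergence for the whole sequence. Since $\D u=\nabla u\mathscr{L}^{n}$ has no singular part, the limit functional is exactly $\mathscr{F}[u;\Omega]$. I would actually present this second route, as it is shorter given the tools already at hand.
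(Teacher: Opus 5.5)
The route you say you would actually present (Reshetnyak) is the paper's proof. The paper also passes to a subsequence realizing the $\liminf$ and derives $\sobo^{1,1}$-bounds from the lower bound in \eqref{eq:1qgrowth} and Poincar\'{e}'s inequality. It then extracts a further weak*-convergent subsequence in $\bv$ and identifies its limit with $u$; this is equivalent to your observation that the whole bounded subsequence has $\D u_{j}\stackrel{*}{\rightharpoonup}\nabla u\,\mathscr{L}^{n}$. Finally it applies Proposition~\ref{prop:resh} to $(\mathscr{L}^{n},\D u_{j})$ with $F^{\#}$, using that $u\in\sobo^{1,q}$ has no singular part.

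Your first route is also correct, and it is genuinely different. It is the classical affine-minorant argument, and it needs neither the $\sobo^{1,1}$-bound, nor $\bv$-compactness, nor Reshetnyak's theorem. For each fixed finite family of $\varphi_{i}$ you only use:
\begin{itemize}
\item convexity of $F$;
\item $F\geq 0$;
\item $\lebe^{1}$-convergence after integrating by parts against $\varphi_{i}$;
\item $u\in\sobo^{1,1}$, which is exactly where the hypothesis that the limit lies in the Sobolev class enters when you integrate back.
\end{itemize}

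Two cosmetic points on that route:
\begin{itemize}
\item The dropped term is simply $\int_{\Omega}(1-\sum_{i}\varphi_{i})F(\nabla u_{j})\dif x\geq 0$; no $\min F$ is involved.
\item For the smooth partition you do not need Lusin's theorem. Let $E_{k}$ be the set where $k$ is the smallest maximizing index and the maximum is positive. Inner regularity gives compacts $K_{k}\subset E_{k}$ with small complement. These compacts are pairwise disjoint and hence at positive distance, so cut-offs with disjoint supports automatically satisfy $\sum_{i}\varphi_{i}\leq 1$. The error is controlled by absolute continuity of $\int|\langle a_{k},\nabla u\rangle+b_{k}|\dif x$.
\end{itemize}

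As for the trade-off, the paper's route is shorter given Proposition~\ref{prop:resh}. The same perspective-integrand machinery is also reused where limits genuinely lie in $\bv$ (Proposition~\ref{prop:finiteness}, \eqref{eq:finalbound1}). Your first route is self-contained and shows the lemma is just lower semicontinuity of a nonnegative convex functional under distributional convergence of gradients.
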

\begin{proof}
We may assume that $\liminf_{j\to\infty}\mathscr{F}[u_{j};\Omega]<\infty$, as otherwise there is nothing to prove. Moreover, we let $(u_{j_{i}})$ be a subsequence of $(u_{j})$ such that
\begin{align}\label{eq:woodenroom}
\lim_{i\to\infty}\mathscr{F}[u_{j_{i}};\Omega]=\liminf_{j\to\infty}\mathscr{F}[u_{j};\Omega]. 
\end{align}
By our growth assumption \eqref{eq:1qgrowth} and Poincar\'{e}'s inequality, $(u_{j_{i}})$ is bounded in $\sobo^{1,1}(\Omega;\R^{N})$. Hence, there exists another subsequence $(u_{j_{i_{k}}})$ of $(u_{j_{i}})$ and $v\in\bv(\Omega;\R^{N})$ such that $u_{j_{i_{k}}}\stackrel{*}{\rightharpoonup} v$ in the weak*-sense on  $\bv(\Omega;\R^{N})$. Since $u_{j_{i_{k}}}\to u$ strongly in $\lebe^{1}(\Omega;\R^{N})$, we have $u=v$ and therefore $\D v=\nabla u\mathscr{L}^{n}$; in particular, $\D u_{j_{i_{k}}}\stackrel{*}{\rightharpoonup} \nabla u\mathscr{L}^{n}$ in $\mathrm{RM}_{\mathrm{fin}}(\Omega;\R^{N\times n})$.  Setting $\nu_{k}\coloneqq (\mathscr{L}^{n},\nabla  u_{j_{i_{k}}}\mathscr{L}^{n})$ and $\nu\coloneqq (\mathscr{L}^{n},\nabla u\mathscr{L}^{n})$, we thus have $\nu_{k}\stackrel{*}{\rightharpoonup}\nu$ in $\mathrm{RM}_{\mathrm{fin}}(\Omega;\R\times\R^{N\times n})$. Hence, recalling the linear perspective integrand $F^{\#}$ from \eqref{eq:linearperspective} and the discussion afterwards, Reshetnyak's lower semicontinuity theorem (see Proposition~\ref{prop:resh}) implies that 
\begin{align*}
\mathscr{F}[u;\Omega] = \int_{\Omega}F^{\#}\Big(\frac{\dif\nu}{\dif|\nu|} \Big)\dif|\nu| \leq \liminf_{k\to\infty}\int_{\Omega}F^{\#}\Big(\frac{\dif\nu_{k}}{\dif|\nu_{k}|} \Big)\dif|\nu_{k}|  =\liminf_{k\to\infty}\mathscr{F}[u_{j_{i_{k}}};\Omega].  
\end{align*}
In view of \eqref{eq:woodenroom}, this yields \eqref{eq:LSC}, and the proof is complete. 
\end{proof}
It is important to note that, different from the quasiconvex case, the previous lemma holds without restrictions on $q$; in the quasiconvex case, we would require $1\leq q<\frac{n}{n-1}$, see \cite{CHEKRI17,GMEKRI24}. We next provide a consistency result, and we point out that it is the identity \eqref{eq:consistency1} below which allows us to call $\overline{\mathscr{F}}_{u_{0}}^{*}[-;\Omega]$ an \emph{extension} of $\mathscr{F}[-;\Omega]$. Moreover, the relaxed functional does not feature a relaxation or Lavrentiev gap:
\begin{theorem}[Consistency and no-gap-result]\label{thm:consistency}
We have  
\begin{align}\label{eq:consistency1}
\mathscr{F}[u;\Omega] = \overline{\mathscr{F}}_{u_{0}}^{*}[u;\Omega] \qquad\text{for all}\;u\in u_{0}+\sobo_{0}^{1,q}(\Omega;\R^{N}),
\end{align}
and 
\begin{align}\label{eq:consistency2}
\inf_{ u_{0}+\sobo_{0}^{1,q}(\Omega;\R^{N})}\mathscr{F}[-;\Omega] = \min_{\bv(\Omega;\R^{N})}\overline{\mathscr{F}}_{u_{0}}^{*}[-;\Omega] \eqqcolon m \in (-\infty,\infty). 
\end{align}
\end{theorem}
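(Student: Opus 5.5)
For \eqref{eq:consistency1} we prove two inequalities. Fix $u\in u_{0}+\sobo_{0}^{1,q}(\Omega;\R^{N})$. Taking the constant sequence $u_{j}\coloneqq u$ in \eqref{eq:LSM} is admissible, since it lies in the Dirichlet class and trivially converges weakly* to $u$ in $\bv(\Omega;\R^{N})$. Hence $\overline{\mathscr{F}}_{u_{0}}^{*}[u;\Omega]\leq\mathscr{F}[u;\Omega]$, which is finite by the upper bound in \eqref{eq:1qgrowth}.

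For the reverse inequality, let $(u_{j})$ be any sequence in $u_{0}+\sobo_{0}^{1,q}(\Omega;\R^{N})$ with $u_{j}\stackrel{*}{\rightharpoonup}u$ in $\bv(\Omega;\R^{N})$. By the definition of weak*-convergence, $u_{j}\to u$ strongly in $\lebe^{1}(\Omega;\R^{N})$. Both $u$ and all $u_{j}$ belong to $u_{0}+\sobo_{0}^{1,q}(\Omega;\R^{N})$, so Lemma~\ref{lem:RollingStones} gives $\mathscr{F}[u;\Omega]\leq\liminf_{j}\mathscr{F}[u_{j};\Omega]$. Taking the infimum over all such sequences yields $\mathscr{F}[u;\Omega]\leq\overline{\mathscr{F}}_{u_{0}}^{*}[u;\Omega]$. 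This Reshetnyak-based lower semicontinuity is the only nontrivial ingredient, and it is exactly where the requirement that $u$ lie in the Dirichlet class is used.

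For \eqref{eq:consistency2}, Lemma~\ref{lem:existmin} shows that the minimum $m$ of $\overline{\mathscr{F}}_{u_{0}}^{*}[-;\Omega]$ over $\bv(\Omega;\R^{N})$ is attained at some $u$. We have $m\geq0>-\infty$ by $F\geq\gamma|z|\geq0$, and $m\leq\overline{\mathscr{F}}_{u_{0}}^{*}[u_{0};\Omega]\leq\mathscr{F}[u_{0};\Omega]<\infty$. Moreover, $\sobo^{1,q}_{u_{0}}\subset\bv$, so \eqref{eq:consistency1} gives
\begin{align*}
m\leq\inf_{u_{0}+\sobo_{0}^{1,q}(\Omega;\R^{N})}\overline{\mathscr{F}}_{u_{0}}^{*}[-;\Omega]=\inf_{u_{0}+\sobo_{0}^{1,q}(\Omega;\R^{N})}\mathscr{F}[-;\Omega].
\end{align*}
Conversely, $m<\infty$, so Lemma~\ref{lem:recovery} provides a recovery sequence $(u_{j})\subset u_{0}+\hold_{c}^{\infty}(\Omega;\R^{N})\subset u_{0}+\sobo_{0}^{1,q}(\Omega;\R^{N})$ with $\mathscr{F}[u_{j};\Omega]\to\overline{\mathscr{F}}_{u_{0}}^{*}[u;\Omega]=m$. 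Therefore $\inf_{\sobo_{u_{0}}^{1,q}}\mathscr{F}\leq m$, and equality follows. No step is a real obstacle: all the substantive work sits in the previously established Lemmas~\ref{lem:recovery}, \ref{lem:existmin} and \ref{lem:RollingStones}.
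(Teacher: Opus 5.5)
Your proof is correct and follows the paper's argument essentially verbatim. The constant sequence together with Lemma~\ref{lem:RollingStones} gives \eqref{eq:consistency1}, and Lemma~\ref{lem:existmin} combined with a recovery sequence from Lemma~\ref{lem:recovery} gives \eqref{eq:consistency2}, with your explicit check that $m\in[0,\infty)$ spelling out what the paper leaves implicit.
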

\begin{proof}
For  \eqref{eq:consistency1},  we consider a function $u\in u_{0}+\sobo_{0}^{1,q}(\Omega;\R^{N})$ and an arbitrary sequence $(u_{j})$ in $u_{0}+\sobo_{0}^{1,q}(\Omega;\R^{N})$ with $u_{j}\stackrel{*}{\rightharpoonup} u$ in $\bv(\Omega;\R^{N})$. Then, in particular, $u_{j}\to u$ strongly in $\lebe^{1}(\Omega;\R^{N})$, and so Lemma~\ref{lem:RollingStones} yields $\mathscr{F}[u;\Omega]\leq \overline{\mathscr{F}}_{u_{0}}^{*}[u;\Omega]$. On the other hand, in the present situation, the constant sequence $(u_{j})=(u)$ is admissible in \eqref{eq:LSM}, whereby $\overline{\mathscr{F}}_{u_{0}}^{*}[u;\Omega]\leq \mathscr{F}[u;\Omega]$. Combining both inequalities, \eqref{eq:consistency1} follows. 

As to \eqref{eq:consistency2}, we note that '$\geq$' is clear by \eqref{eq:consistency1}. On the other hand, Lemma~\ref{lem:existmin} yields the existence of some function $u\in\bv(\Omega;\R^{N})$ with $\overline{\mathscr{F}}_{u_{0}}^{*}[u;\Omega] = m\in (-\infty,\infty)$. We then consider a recovery sequence $(u_{j})$ in $u_{0}+\hold_{c}^{\infty}(\Omega;\R^{N})$ as in Lemma~\ref{lem:recovery}. In conclusion, 
\begin{align*}
m = \overline{\mathscr{F}}_{u_{0}}^{*}[u;\Omega] \stackrel{\eqref{eq:recoverymain}}{=} \lim_{j\to\infty}\mathscr{F}[u_{j};\Omega] \geq \inf_{u_{0}+\sobo_{0}^{1,q}(\Omega;\R^{N})}\mathscr{F}[-;\Omega], 
\end{align*}
and so '$\leq$' follows too. The proof is complete. 
\end{proof}
As alluded to in the introduction, we conclude the present section with a discussion of elementary properties of maps $u\in\bv(\Omega;\R^{N})$ for which the relaxed functional is finite. Again, we stress that no integral representation of the functional~\eqref{eq:LSM} is required for this conclusion:

\begin{prop}[On finiteness of {$\overline{\mathscr{F}}_{u_{0}}^{*}[u;\Omega]$}]\label{prop:finiteness}
Suppose that $F\in\hold(\R^{N\times n})$
 satisfies \eqref{eq:1qgrowth}, and let 
 \begin{align}\label{eq:Csdefine}
\mathscr{C}_{s}\coloneqq \Big\{z=a\otimes b\colon\;a\in\mathbb{S}^{N-1},\,b\in\mathbb{S}^{n-1},\;\liminf_{t\to\infty}\frac{F(tz)}{t}=\infty\Big\}
 \end{align}
 be the set of all rank-one directions on which $F$ has \emph{proper superlinear growth}. If $u\in\bv(\Omega;\R^{N})$ satisfies $\overline{\mathscr{F}}_{u_{0}}^{*}[u;\Omega]<\infty$, then
 \begin{align}\label{eq:trivialise1}
|\D^{s}u|(A_{u}^{1})\coloneqq |\D^{s}u|\Big(\Big\{x\in\Omega\colon\;\frac{\dif \D u}{\dif|\D u|}(x)\in\mathscr{C}_{s} \Big\}\Big)=0 
 \end{align}
 and 
 \begin{align}\label{eq:trivialise2}
\mathscr{H}^{n-1}(A_{u}^{2})\coloneqq \mathscr{H}^{n-1}\Big(\Big\{x\in\partial\Omega\colon\;\mathrm{tr}_{\partial\Omega}(u_{0}-u)(x)\otimes\nu_{\partial\Omega}\in\mathscr{C}_{s}\Big\}\Big)=0.
 \end{align}
 \end{prop}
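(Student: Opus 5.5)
Take a recovery sequence from Lemma~\ref{lem:recovery}, $u_{j}\in u_{0}+\hold_{c}^{\infty}(\Omega;\R^{N})$ with $u_{j}\stackrel{*}{\rightharpoonup}u$ and $\mathscr{F}[u_{j};\Omega]\to\overline{\mathscr{F}}_{u_{0}}^{*}[u;\Omega]<\infty$. We want to pass to the limit with Reshetnyak's lower semicontinuity theorem (Proposition~\ref{prop:resh}) applied to the linear perspective $F^{\#}$ from Remark~\ref{rem:linearperspective}. The plan is to treat interior and boundary together by extension. Fix a bounded open Lipschitz set $\Omega'\Supset\Omega$ and extend $u_{0}$ to $\tilde u_{0}\in\sobo^{1,q}(\Omega';\R^{N})$. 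Set $\tilde u_{j}\coloneqq u_{j}$ in $\Omega$ and $\tilde u_{j}\coloneqq\tilde u_{0}$ in $\Omega'\setminus\Omega$, and define $\tilde u$ in the same way. Since $u_{j}-u_{0}$ has compact support, $\tilde u_{j}\in\sobo^{1,q}(\Omega')$. Because $\tilde u_{j}\to\tilde u$ in $\lebe^{1}(\Omega')$ and $|\D\tilde u_{j}|(\Omega')$ is bounded by \eqref{eq:1qgrowth}, we get $\tilde u_{j}\stackrel{*}{\rightharpoonup}\tilde u$ in $\bv(\Omega')$. By the standard gluing formula,
\[
\D\tilde u=\D u\mres\Omega+\nabla\tilde u_{0}\mathscr{L}^{n}\mres(\Omega'\setminus\overline\Omega)+\mathrm{tr}_{\partial\Omega}(u_{0}-u)\otimes\nu_{\partial\Omega}\,\mathscr{H}^{n-1}\mres\partial\Omega .
\]

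Next, put $\nu_{j}\coloneqq(\mathscr{L}^{n},\D\tilde u_{j})$ and $\nu\coloneqq(\mathscr{L}^{n},\D\tilde u)$ on $\Omega'$, so that $\nu_{j}\stackrel{*}{\rightharpoonup}\nu$ and these measures take values in the cone $K=\R_{\geq0}\times\R^{N\times n}$. Since $F\geq0$ is convex and continuous, $F^{\#}$ is convex, $1$-homogeneous and lower semicontinuous on $K$ with values in $[0,\infty]$. Proposition~\ref{prop:resh} then gives
\[
\int_{\Omega'}F(\nabla\tilde u)\dif x+\int_{\Omega'}F^{\infty}\Big(\frac{\dif\D^{s}\tilde u}{\dif|\D^{s}\tilde u|}\Big)\dif|\D^{s}\tilde u|\leq\liminf_{j\to\infty}\Big(\mathscr{F}[u_{j};\Omega]+\int_{\Omega'\setminus\Omega}F(\nabla\tilde u_{0})\dif x\Big)<\infty,
\]
where the finiteness uses the upper bound in \eqref{eq:1qgrowth}. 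Because $\mathscr{L}^{n}(\partial\Omega)=0$, the singular part of $\D\tilde u$ is $\D^{s}u\mres\Omega$ plus the $\mathscr{H}^{n-1}\mres\partial\Omega$ jump term. Hence $F^{\infty}(\frac{\dif\D^{s}u}{\dif|\D^{s}u|})<\infty$ holds $|\D^{s}u|$-a.e. in $\Omega$. Likewise, $F^{\infty}(\mathrm{tr}_{\partial\Omega}(u_{0}-u)\otimes\nu_{\partial\Omega})<\infty$ holds $\mathscr{H}^{n-1}$-a.e. on the set where the trace difference is nonzero.

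It then remains to note that $F^{\infty}(z)=\infty$ for $z\in\mathscr{C}_{s}$, and more generally for every positive multiple $z=s(a\otimes b)$ of an element of $\mathscr{C}_{s}$. Indeed, by Definition~\ref{def:recfct}, $F^{\infty}(z)=\lim_{t\to\infty}F(tz)/t$, and this equals $\infty$ by the definition \eqref{eq:Csdefine} of $\mathscr{C}_{s}$ together with homogeneity. On the singular set, $\frac{\dif\D u}{\dif|\D u|}=\frac{\dif\D^{s}u}{\dif|\D^{s}u|}$ holds $|\D^{s}u|$-a.e., and by Alberti \eqref{eq:alberti} this density is a unit rank-one matrix. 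So the set $A^{1}_{u}$ has $|\D^{s}u|$-measure zero, which is \eqref{eq:trivialise1}. For \eqref{eq:trivialise2}, the set $A^{2}_{u}$ is read via the normalized direction $\frac{a}{|a|}\otimes\nu_{\partial\Omega}$ with $a=\mathrm{tr}_{\partial\Omega}(u_{0}-u)\neq0$, since zero traces do not lie in $\mathscr{C}_{s}$. On this set the boundary part of the singular measure is $|a|\,\mathscr{H}^{n-1}$ with $|a|>0$, so $\mathscr{H}^{n-1}(A^{2}_{u})=0$.

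The step needing most care is the extension and identification of $\D\tilde u$ on $\partial\Omega$: one must check that the weak* limit of the glued functions is exactly the glued limit, including the jump term, which uses the Lipschitz regularity of $\partial\Omega$ and the BV trace theory. A second point needing care is applying Proposition~\ref{prop:resh} with the extended-valued integrand $F^{\#}$. Here only lower semicontinuity is needed, and Remark~\ref{rem:continuitypartResh} warns that the continuity part would fail.
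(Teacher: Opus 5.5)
Your proposal is correct and follows the paper's proof essentially step by step. Both arguments extend by $u_{0}$ to a larger Lipschitz domain $\Omega'$, take a recovery sequence from Lemma~\ref{lem:recovery}, apply Reshetnyak's lower semicontinuity theorem to the linear perspective $F^{\#}$ evaluated on $(\mathscr{L}^{n},\D\tilde u_{j})$, and then use the definition of $\mathscr{C}_{s}$ (together with Alberti's theorem) to conclude that the sets $A^{1}_{u}$ and $A^{2}_{u}$ are null. Your extra remarks are accurate refinements of that same argument: $F^{\#}$ is only lower semicontinuous, and $A^{2}_{u}$ is naturally read through the normalized direction $\frac{a}{|a|}\otimes\nu_{\partial\Omega}$, a reading your $1$-homogeneity argument covers.
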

 \begin{proof}
Let $\Omega'\subset\R^{n}$ be open and bounded with Lipschitz boundary such that $\Omega\Subset\Omega'$. Extending $u_{0}$ to $\Omega'$, it is no loss of generality to directly assume that $u_{0}\in\sobo^{1,q}(\Omega';\R^{N})$. For an arbitrary function $w\in\bv(\Omega;\R^{N})$, we denote the extension of $w$ by the values of $u_0$ outside of~$\Omega$ by
\begin{align*}
\overline{w}\coloneqq \begin{cases} 
w&\;\text{in}\;\Omega,\\ 
u_{0}&\;\text{in}\;\Omega'\setminus\overline{\Omega}. 
\end{cases}
\end{align*}
We notice that, for any sequence $(w_{j})$ in $\bv(\Omega;\R^{N})$, there holds $w_{j}\stackrel{*}{\rightharpoonup} w$ in the weak*-sense on $\bv(\Omega;\R^{N})$ if and only if $\overline{w}_{j}\stackrel{*}{\rightharpoonup}\overline{w}$ in the weak*-sense on $\bv(\Omega';\R^{N})$. Since $u\in\bv(\Omega;\R^{N})$ satisfies  $\overline{\mathscr{F}}_{u_{0}}^{*}[u;\Omega]<\infty$ by assumption, Lemma~\ref{lem:recovery} provides us with a sequence $(u_{j})$ in $\sobo_{u_{0}}^{1,q}(\Omega;\R^{N})$ such that $u_{j}\stackrel{*}{\rightharpoonup} u$ in $\bv(\Omega;\R^{N})$ and $\mathscr{F}[u_{j};\Omega]\to\overline{\mathscr{F}}_{u_{0}}^{*}[u;\Omega]$ as $j\to\infty$. As a consequence, we have $\overline{u}_{j}\stackrel{*}{\rightharpoonup}\overline{u}$ in $\bv(\Omega';\R^{N})$, and so, in particular,  $(\mathscr{L}^{n},\D \overline{u}_{j})\stackrel{*}{\rightharpoonup}(\mathscr{L}^{n},\D \overline{u})$ in $\mathrm{RM}_{\mathrm{fin}}(\Omega';\R\times\R^{N\times n})$. The function $\overline{u}$ (and likewise the functions $\overline{u}_{j}$) have distributional gradients 
\begin{align}\label{eq:distgradcompute}
\begin{split}
&\D \overline{u} = \D u\mres\Omega + (\mathrm{tr}_{\partial\Omega}(u_{0}-u)\otimes\nu_{\partial\Omega})\mathscr{H}^{n-1}\mres\partial\Omega + \nabla u_{0}\mathscr{L}^{n}\mres(\Omega'\setminus\overline{\Omega}), \\ 
& \D \overline{u}_{j} = \nabla u_{j}\mathscr{L}^{n}\mres\Omega + \nabla u_{0}\mathscr{L}^{n}\mres(\Omega'\setminus\overline{\Omega}).
\end{split}
\end{align}
Based on Remark~\ref{rem:linearperspective}, see \eqref{eq:linearperspective}, these formulas in conjunction with Reshetnyak's lower semicontinuity theorem from  Proposition~\ref{prop:resh} give us 
\begin{align}
\int_{\Omega}F(\nabla u)\dif x  + \int_{\Omega}F^{\infty}\Big(\frac{\dif \D^{s}u}{\dif|\D^{s}u|}\Big)& \dif|\D^{s}u| + \int_{\partial\Omega}F^{\infty}(\mathrm{tr}_{\partial\Omega}(u_{0}-u)\otimes\nu_{\partial\Omega})\dif\mathscr{H}^{n-1} \notag \\ & + \int_{\Omega'\setminus\overline{\Omega}}F(\nabla u_{0})\dif x \stackrel{\eqref{eq:distgradcompute}_{1}}{=} \int_{\Omega'}F^{\#}\Big(\frac{\dif\,(\mathscr{L}^{n},\D \overline{u})}{\dif|(\mathscr{L}^{n},\D \overline{u})|}\Big)\dif|(\mathscr{L}^{n},\D \overline{u})| \notag \\ 
& \!\!\!\!\!\!\!\!\stackrel{\text{Prop.~\ref{prop:resh}}}{\leq} \liminf_{j\to\infty}\int_{\Omega'}F^{\#}\Big(\frac{\dif\,(\mathscr{L}^{n},\D \overline{u}_{j})}{\dif|(\mathscr{L}^{n},\D \overline{u}_{j})|}\Big)\dif|(\mathscr{L}^{n},\D \overline{u}_{j})| \label{eq:lowerbound1}\\ 
& \!\!\!\!\!\stackrel{\eqref{eq:distgradcompute}_{2}}{=} \liminf_{j\to\infty}\int_{\Omega}F(\nabla u_{j})\dif x + \int_{\Omega'\setminus\overline{\Omega}}F(\nabla u_{0})\dif x \notag\\ 
& = \overline{\mathscr{F}}_{u_{0}}^{*}[u;\Omega] + \int_{\Omega'\setminus\overline{\Omega}}F(\nabla u_{0})\dif x, \notag
\end{align}
where we have used in the last step that $(u_{j})$ is a recovery sequence for $\overline{\mathscr{F}}_{u_{0}}^{*}[u;\Omega]$. By assumption, we have $\overline{\mathscr{F}}_{u_{0}}^{*}[u;\Omega] < \infty$ and $u_{0}\in\sobo^{1,q}(\Omega';\R^{N})$, whereby the right-hand side of the previous inequality is finite. Since $F,F^{\infty}\geq 0$, the previous estimate particularly entails that 
\begin{align}\label{eq:havana}
\int_{A_{u}^{1}} F^{\infty}\Big(\frac{\dif \D^{s}u}{\dif|\D^{s}u|}\Big)\dif|\D^{s}u| + \int_{A_{u}^{2}}F^{\infty}(\mathrm{tr}_{\partial\Omega}(u_{0}-u)\otimes\nu_{\partial\Omega})\dif\mathscr{H}^{n-1} < \infty. 
\end{align}
By Alberti's rank-one-theorem, see \eqref{eq:alberti}, the density $\frac{\dif \D^{s}u}{\dif|\D^{s}u|}$ is contained in the rank-one-cone $|\D^{s}u|$-everywhere. By the very definition of $\mathscr{C}_{s}$, see \eqref{eq:Csdefine}, and those of $A_{u}^{1}$, $A_{u}^{2}$, \eqref{eq:havana} immediately yields $|\D^{s} u|(A_{u}^{1})=\mathscr{H}^{n-1}(A_{u}^{2})=0$. This is \eqref{eq:trivialise1} and \eqref{eq:trivialise2}, thereby completing the proof of the proposition. 
 \end{proof}
\begin{rem}[Proper superlinear growth from below]
If $1<p\leq q <\infty$ and \eqref{eq:1qgrowth} is modified to $\widetilde{\gamma}|z|^{p}\leq F(z)\leq \widetilde{\Gamma}(1+|z|^{q})$ for all $z\in\R^{N\times n}$ with $0<\widetilde{\gamma}\leq\widetilde{\Gamma}<\infty$, then 
\begin{align*}
    \mathscr{C}_{s} = \{z=a\otimes b\colon\; a\in\mathbb{S}^{N-1},\;b\in\mathbb{S}^{n-1}\}. 
\end{align*}
Moreover, if $u\in\bv(\Omega;\R^{N})$ is such that  $\overline{\mathscr{F}}_{u_{0}}^{*}[u;\Omega]<\infty$, then the lower growth bound on~$F$ implies that $u\in\sobo^{1,p}(\Omega;\R^{N})$, and there exists a sequence $(u_{j})$ in $\sobo_{u_0}^{1,q}(\Omega;\R^{N})$ such that $u_{j}\rightharpoonup u$ weakly in $\sobo^{1,p}(\Omega;\R^{N})$. It then follows that $\D^{s}u\equiv 0$, and so \eqref{eq:trivialise1} is trivially satisfied. Moreover, by the continuity properties of the trace operator with respect to weak convergence on $\sobo^{1,p}(\Omega;\R^{N})$, we have  $\mathrm{tr}_{\partial\Omega}(u)=\mathrm{tr}_{\partial\Omega}(u_{0})$ $\mathscr{H}^{n-1}$-a.e. on $\partial\Omega$, whereby \eqref{eq:trivialise2} follows too. This is in line with the natural fact that, in this situation, the relaxation to the larger space $\bv(\Omega;\R^{N})$ instead of $\sobo^{1,p}(\Omega;\R^{N})$ is not necessary.  
\end{rem}

\section{Proof of Theorem~\ref{thm:main}: Sobolev regularity in all dimensions}\label{sec:proofmain}
In this section, we establish Theorem~\ref{thm:main}, and so we briefly pause to clarify the structure of the proof. As discussed in the introduction, our strategy requires a finely adjusted Ekeland-type viscosity approximation strategy, to be given in Section~\ref{sec:Ekelandstart}. Section~\ref{sec:Cacc} proceeds to collect degenerate weighted second order bounds, which form the key background estimates for the higher integrability of relaxed minimizers in Section~\ref{sec:highint}. In order to obtain the localised form of the estimates from Theorem~\ref{thm:main}\ref{item:main2}--\ref{item:main3}, Section~\ref{sec:higher} establishes the pointwise convergence $\mathscr{L}^{n}$-a.e. for the gradients of the Ekeland sequence, thereby completing the proof of Theorem~\ref{thm:main}. Since we avoid integral representations, we finally prove a local minimality result in Section~\ref{sec:ELdimreduc} as a consequence of uniform higher integrability estimates. 
\subsection{Ekeland-type viscosity approximations}\label{sec:Ekelandstart}
Throughout this subsection, we tacitly assume that the convex variational integrand $F\in\hold^{1}(\R^{N\times n})$ satisfies the growth bound \eqref{eq:1qgrowth} with $1 \leq q < \infty$. Moreover, we recall that $\Omega\subset\R^{n}$ is open and bounded with Lipschitz boundary, and that $u_{0}\in\sobo^{1,q}(\Omega;\R^{N})$. 

Let  $u\in\bv(\Omega;\R^{N})$ be a relaxed minimizer of $\mathscr{F}[-;\Omega]$ so that, in particular,  
\begin{align}\label{eq:LSM1}  \overline{\mathscr{F}}_{u_{0}}^{*}[u;\Omega] \leq \overline{\mathscr{F}}_{u_{0}}^{*}[v;\Omega]\qquad\text{for all}\;v\in\bv(\Omega;\R^{N}). 
\end{align}
By Lemma~\ref{lem:recovery}, we find a sequence $(v_{j})$ in $u_{0}+\hold_{c}^{\infty}(\Omega;\R^{N})$ such that 
\begin{align}\label{eq:approx1}
v_{j}\stackrel{*}{\rightharpoonup}u\;\text{in}\;\bv(\Omega;\R^{N})\;\;\;\text{and}\;\;\;\overline{\mathscr{F}}_{u_{0}}^{*}[u;\Omega] = \lim_{j\to\infty}\int_{\Omega}F(\nabla v_{j})\dif x. 
\end{align}
Passing to a non-relabelled subsequence if required, we thus may assume that 
\begin{align}\label{eq:approx2}
\overline{\mathscr{F}}_{u_{0}}^{*}[u;\Omega] \leq \int_{\Omega}F(\nabla v_{j})\dif x < \overline{\mathscr{F}}_{u_{0}}^{*}[u;\Omega] + \frac{1}{10 j^{2}}\qquad\text{for all}\;j\in\mathbb{N}. 
\end{align}
In view of our stabilization approach, we define a convex $\hold^{2}$-function $\langle\cdot\rangle_{q} \colon  \R^{N\times n}\to \R_{\geq 0}$ by
\begin{align*}
\langle z \rangle_{q}\coloneqq \big((1+|z|^{2})^{\frac{1}{2}}-1\big)^{q}\qquad \text{for all } z\in\R^{N\times n}, 
\end{align*}
and briefly collect some of its properties in a form that shall be convenient later on.
\begin{lem}
There exist constants $L_{q},\Lambda_{q}>0$ such that the following hold for all $z,z'\in\R^{N\times n}$:
\begin{align}\label{eq:elementaryq}
\langle z \rangle_{q} \leq |z|^{q}, 
\end{align}
\begin{align}\label{eq:elementaryq1}
|z|\geq 1 \Longrightarrow (\sqrt{2}-1)^{q}|z|^{q}\leq \langle z\rangle_{q},
\end{align}
\begin{align}\label{eq:elementaryq2}
|\langle z\rangle_{q}-\langle z'\rangle_{q}|\leq L_{q}(1+|z|+|z'|)^{q-1}|z-z'|, 
\end{align}
\begin{align}\label{eq:q2ndderiv}
|\nabla^{2}\langle z\rangle_{q}|\leq \Lambda_{q}(1+|z|)^{q-2}, 
\end{align}
\begin{align}\label{eq:q2ndderivlb}
\langle \nabla^{2}(\langle z\rangle_{q}) z', z'\rangle\geq q\frac{((1+|z|^{2})^{\frac{1}{2}}-1)^{q-1}}{(1+|z|^{2})^{\frac{1}{2}}}|z'|^2\qquad\mbox{if $q\geq2$}. 
\end{align}
\end{lem}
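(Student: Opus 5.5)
We write $\langle z\rangle_q=\phi(\rho(z))$, where $\rho(z)\coloneqq(1+|z|^2)^{1/2}$ and $\phi(s)\coloneqq(s-1)^q$ for $s\geq1$. We derive each estimate from elementary one-variable bounds on $\phi$ and $\rho$, using $0\leq\rho(z)-1\leq|z|$ and $\rho(z)-1=\frac{|z|^2}{\rho(z)+1}$. Estimate \eqref{eq:elementaryq} is then immediate from $(1+|z|^2)^{1/2}-1\leq|z|$, valid since $(1+|z|)^2\geq1+|z|^2$. For \eqref{eq:elementaryq1}, note that $t\mapsto((1+t^2)^{1/2}-1)/t$ is nondecreasing on $(0,\infty)$; its value at $t=1$ is $\sqrt2-1$. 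Hence $\rho(z)-1\geq(\sqrt2-1)|z|$ for $|z|\geq1$, and we raise this to the power $q$.

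For \eqref{eq:elementaryq2} we apply the mean value theorem along the segment joining $z'$ and $z$. The gradient is $\nabla\langle z\rangle_q=q(\rho(z)-1)^{q-1}\frac{z}{\rho(z)}$, and $|z|/\rho(z)\leq1$. This gives $|\nabla\langle w\rangle_q|\leq q|w|^{q-1}\leq q(1+|z|+|z'|)^{q-1}$ for every $w$ on the segment. Here $q\geq1$ is used, so that $t\mapsto t^{q-1}$ is nondecreasing. Alternatively, convexity together with \eqref{eq:elementaryq} lets us invoke Lemma~\ref{lem:liptype} directly.

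For the second derivatives we compute
\begin{align*}
\nabla^2\langle z\rangle_q=q(q-1)(\rho-1)^{q-2}\frac{z\otimes z}{\rho^2}+q(\rho-1)^{q-1}\Big(\frac{\mathrm{Id}}{\rho}-\frac{z\otimes z}{\rho^3}\Big),
\end{align*}
with $\rho=\rho(z)$. For \eqref{eq:q2ndderiv} we split into the regimes $|z|\leq1$ and $|z|\geq1$.
\begin{itemize}
\item[] For $|z|\geq1$ we have $\rho-1\sim|z|\sim\rho\sim1+|z|$, so both terms are bounded by $c(1+|z|)^{q-2}$.
\item[] For $|z|\leq1$ we use $\rho-1\sim|z|^2$. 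The first term is bounded by $c|z|^{2(q-2)}|z|^2=c|z|^{2q-2}$, and the second by $c|z|^{2q-2}$. Both are bounded on the unit ball because $q\geq1$, and $(1+|z|)^{q-2}\sim1$ there.
\end{itemize}
The main obstacle is the case $1\leq q<2$, where $(\rho-1)^{q-2}$ blows up at $z=0$. This blow-up is exactly compensated by the factor $|z|^2/\rho^2$, so $\nabla^2\langle\cdot\rangle_q$ is in fact bounded near the origin. For $q=1$ the first term vanishes identically. (This also confirms that the function is $\hold^2$ when $q\geq1$; strictly speaking one should check continuity of the Hessian at $0$, and for $1<q<2$ the bound $|z|^{2q-2}\to0$ gives it.)

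Finally, for \eqref{eq:q2ndderivlb} with $q\geq2$, the first term is a positive semidefinite matrix. The second term, when tested against $z'$, equals $q\frac{(\rho-1)^{q-1}}{\rho}\big(|z'|^2-\frac{\langle z,z'\rangle^2}{\rho^2}\big)$. On its own this only yields $|z'|^2/\rho^2$ in place of $|z'|^2$, since $\langle z,z'\rangle^2\leq|z|^2|z'|^2$. To recover the full constant we use the first term, splitting $z'$ into its components parallel and orthogonal to $z$. In the direction of $z$, with $|z|=t$, the quadratic form equals the one-dimensional second derivative $\frac{d^2}{dt^2}\phi(\rho(t))=q(q-1)(\rho-1)^{q-2}\frac{t^2}{\rho^2}+q(\rho-1)^{q-1}\rho^{-3}$. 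We must show that this is at least $q(\rho-1)^{q-1}/\rho$. Using $t^2=(\rho-1)(\rho+1)$, this reduces to $(q-1)(\rho+1)+\rho^{-2}\geq\rho$. This holds for $q\geq2$, since then $(q-1)(\rho+1)\geq\rho+1>\rho$. In the orthogonal directions the form equals $q(\rho-1)^{q-1}/\rho$ exactly. There are no cross terms, because both matrices are built from $\mathrm{Id}$ and $z\otimes z$, and therefore the bound holds for all $z'$.
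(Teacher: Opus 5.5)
Your proposal is correct and follows essentially the paper's route: (a) and (b) come from elementary one-variable bounds on $\rho-1$, (c) from a gradient bound or equivalently Lemma~\ref{lem:liptype} (which is what the paper cites), and (d) and (e) from the explicit Hessian of $\phi\circ\rho$. The only real difference is in (e): the paper regroups the quadratic form as $q\frac{(\rho-1)^{q-1}}{\rho}|z'|^2+\frac{q(\rho-1)^{q-2}}{\rho^{3}}\big((q-2)\rho+1\big)\langle z,z'\rangle^2$, which gives \eqref{eq:q2ndderivlb} at once for $q\geq2$ without your parallel/orthogonal splitting; in your splitting, the reduced inequality should read $(q-1)(\rho+1)+\rho^{-1}\geq\rho$ rather than $\rho^{-2}$, a harmless slip.
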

\begin{proof}
Assertion \eqref{eq:elementaryq} directly follows from the definition of $\langle\cdot\rangle_{q}$, and \eqref{eq:elementaryq2} follows from Lemma~\ref{lem:liptype}. For \eqref{eq:elementaryq1}, note that if $|z|=1$, then $
\langle z\rangle_{q}=(\sqrt{2}-1)^{q}$. The function $[0,\infty)\ni t \mapsto \langle t\rangle_{1}$ is convex, and 
\begin{align*}
\frac{\dif}{\dif t}\langle t\rangle_{1} = \frac{t}{\sqrt{1+t^{2}}} \stackrel{t\geq 1}{\geq} \frac{1}{\sqrt{2}} \geq \sqrt{2}-1 = \frac{\dif}{\dif t}((\sqrt{2}-1)t)\qquad\text{for all } t\geq 1. 
\end{align*}
From here, \eqref{eq:elementaryq1} follows. 
Finally, we compute $\nabla^2(\langle z\rangle_q)$. Noting that $\langle z\rangle_q=f(v)$ with $f(s)=(s-1)^q$ and $v=\sqrt{1+|z|^2}$, we find
\begin{align*}
\langle \nabla^2(\langle z\rangle_q )z',z'\rangle & = q(q-1)(v-1)^{q-2}\frac{(\langle z,z'\rangle)^2}{v^2}+q(v-1)^{q-1}\Big(\frac{|z'|^2}{v}-\frac{(\langle z,z'\rangle)^2}{v^3}\Big)\\
& =q\frac{(v-1)^{q-1}}{v}|z'|^2+\frac{q(v-1)^{q-2}}{v^3}((q-2)v+1)(\langle z,z'\rangle)^2.
\end{align*}
From this identity, it is easy to deduce \eqref{eq:q2ndderiv} and \eqref{eq:q2ndderivlb}. The proof is complete. 
\end{proof}
We now define a  stabilized integrand via 
\begin{align}\label{eq:stabber_1}
F_{j}(z)\coloneqq  F(z) +  \frac{1}{10 \Cstabv j^{2}  }\langle z \rangle_{q} \qquad \text{for all } z\in\R^{N\times n},
\end{align}
where $S_j$ is defined in terms of $v_j$ as 
\begin{equation*}
\Cstabv \coloneqq  1 + \int_{\Omega}\langle\nabla v_j\rangle_{q}\dif x.
\end{equation*}
In view of our future purposes, we note that the key reason for working with $\langle\cdot\rangle_{q}$ instead of $|\cdot|^{q}$ in the second term in \eqref{eq:stabber_1} is that $\langle\cdot\rangle_{q}$ is of class $\hold^{2}$ regardless of the specific choice of $1\leq q<\infty$. We then consider the related stabilized functional given by
\begin{align}\label{eq:Fjstab1}
\mathscr{F}_{j}[v;\Omega] & \coloneqq \int_{\Omega}F_{j}(\nabla v)\dif x  \qquad \text{for } v \in \sobo_{u_{0}}^{1,q}(\Omega;\R^{N}) \coloneqq  u_{0}+\sobo_{0}^{1,q}(\Omega;\R^{N}).
\end{align}
By use of  $v_{j}\in \sobo_{u_{0}}^{1,q}(\Omega;\R^{N})$ in the third step, we obtain 
\begin{align*}
\inf_{\bv(\Omega;\R^{N})}\overline{\mathscr{F}}_{u_{0}}^{*}[-;\Omega] \stackrel{\eqref{eq:consistency2}}{=} \inf_{ \sobo_{u_{0}}^{1,q}(\Omega;\R^{N})}\mathscr{F}[-;\Omega] & \leq \inf_{ \sobo_{u_{0}}^{1,q}(\Omega;\R^{N})}\mathscr{F}_{j}[-;\Omega] \leq \mathscr{F}[v_{j};\Omega] + \frac{1}{10j^{2}} \\ & \!\!\!\stackrel{\eqref{eq:approx2}}{\leq} \overline{\mathscr{F}}_{u_{0}}^{*}[u;\Omega] + \frac{1}{5j^{2}}\\ & = \inf_{ \sobo_{u_{0}}^{1,q}(\Omega;\R^{N})}\mathscr{F}[-;\Omega] + \frac{1}{5j^{2}}. 
\end{align*}
We now smoothly approximate $u_{0}\in\sobo^{1,q}(\Omega;\R^{N})$ in the $\sobo^{1,q}$-norm. More precisely, let $\overline{u}_{0}\in\sobo^{1,q}(\R^{n};\R^{N})$ be an arbitrary but fixed extension of $u_{0}$ with $\|\overline{u}_{0}\|_{\sobo^{1,q}(\R^{n})}\leq c_{n,\Omega}\|u_{0}\|_{\sobo^{1,q}(\Omega)}$, where $c_{n,\Omega}\geq 1$ is a constant. With $\rho_{\varepsilon}(x)\coloneqq \varepsilon^{-n}\rho(\frac{x}{\varepsilon})$ denoting an $\varepsilon$-rescaled standard mollifier, we have for all $0<\varepsilon<1$ that 
\begin{align}\label{eq:adjust}
\|\rho_{\varepsilon}*\overline{u}_{0}\|_{\sobo^{1,q}(\Omega)} \leq \|\rho_{\varepsilon}*\overline{u}_{0}\|_{\sobo^{1,q}(\R^{n})} \leq \|\overline{u}_{0}\|_{\sobo^{1,q}(\R^{n})}\leq c_{n,\Omega}\|u_{0}\|_{\sobo^{1,q}(\Omega)} \eqqcolon M. 
\end{align}
Recalling the constant $\Gamma>0$ from  \eqref{eq:1qgrowth}, we define
\begin{align}\label{eq:Cdef}
\mathtt{C} \coloneqq  (\Gamma+1) (\mathscr{L}^{n}(\Omega) + M^{q})\qquad \text{and}\qquad 
\mathtt{C}' \coloneqq 5^q (M^q+ \mathscr{L}^{n}(\Omega) +1) (\mathtt{C}+1).
\end{align}
We then choose a constant $L_{F}>0$ such that \eqref{eq:liptype} holds with $L=L_{F}$. For each $j\in\mathbb{N}$, we let $0<\varepsilon_{j}<1$ be so small such that $u_{0,j}\coloneqq (\rho_{\varepsilon_{j}}*\overline{u}_{0})|_{\Omega}\in\hold^{\infty}(\overline{\Omega};\R^{N})$ satisfies 
\begin{equation}
\label{eq:berghammer}
\|u_{0}-u_{0,j}\|_{\sobo^{1,q}(\Omega)} < \frac{1}{\kappa_{j}},
\end{equation}
where
\begin{align*}
\kappa_{j} & \coloneqq  50j^{2}\Big[ L_F\big(3\mathscr{L}^{n}(\Omega)^{\frac{1}{q}}+2M+6 (10 \Cstabv j^{2}   (\mathtt{C}+1))^{\frac{1}{q}} \big)^{q-1}\Big]\\
& \qquad + 50j^{2}\Big[\Big(L_{F} + \frac{L_{q}}{10\Cstabv j^{2} }\Big) (\mathscr{L}^{n}(\Omega)^{\frac{1}{q}}+1+2\|\nabla v_{j}\|_{\lebe^{q}(\Omega)})^{q-1}\Big] \\ 
& \qquad + 50j^{2} \\
&  \eqqcolon \kappa_{j}^{(1)}+ \kappa_{j}^{(2)} +\kappa_{j}^{(3)}.
\end{align*}
The constant $\kappa_{j}$ is precisely adjusted in a way such that several emerging terms below take a particularly convenient form, and we shall comment on the entering of its single summands in detail throughout. Recalling that $v_{j}\in u_{0}+\hold_{c}^{\infty}(\Omega;\R^{N})$ for each $j \in \N$, we now  put $\psi_{j} \coloneqq v_{j}-u_{0}\in\hold_{c}^{\infty}(\Omega;\R^{N})$
 and define 
\begin{align}\label{eq:DirichletClassApprox}
\widetilde{v}_{j}\coloneqq  u_{0,j}+\psi_{j}\in\mathfrak{D}_{j}\coloneqq u_{0,j}+\sobo_{0}^{1,\max\{2,q\}}(\Omega;\R^{N}).
\end{align}
Based on $\eqref{eq:berghammer}$, we record that  
\begin{equation}\label{eq:compa}
\|v_{j}-\widetilde{v}_{j}\|_{\sobo^{1,q}(\Omega)} = \|v_{j}-u_{0,j} - \psi_{j} \|_{\sobo^{1,q}(\Omega)} = \|u_{0}-u_{0,j}\|_{\sobo^{1,q}(\Omega)} \leq \frac{1}{\kappa_{j}}. 
\end{equation}
In an intermediate step, we consider for fixed $j\in\mathbb{N}$ the variational principle 
\begin{align}\label{eq:varprininter}
\text{to minimize}\;\mathscr{F}_{j}[-;\Omega]\;\text{over}\;\mathfrak{D}_{j}. 
\end{align}
We observe that 
\begin{align}\label{eq:chiefo}
\begin{split}
\inf_{\mathfrak{D}_{j}}\mathscr{F}_{j}[-;\Omega] \stackrel{u_{0,j}\in\mathfrak{D}_{j}}{\leq} \mathscr{F}_{j}[u_{0,j};\Omega] & \stackrel{\eqref{eq:Fjstab1}}{=} \int_{\Omega}F(\nabla u_{0,j})\dif x + \frac{1}{10\Cstabv j^{2} }\int_{\Omega}\langle\nabla u_{0,j}\rangle_{q}\dif x \\
& \!\!\!\!\!\stackrel{\eqref{eq:1qgrowth}, \eqref{eq:elementaryq}}{\leq} (\Gamma+1)\int_{\Omega} (1+|\nabla u_{0,j}|^{q}) \dif x  \\ 
& \stackrel{\eqref{eq:adjust}}{\leq} (\Gamma+1) (\mathscr{L}^{n}(\Omega)+M^{q}) \stackrel{\eqref{eq:Cdef}}{=} \mathtt{C},
\end{split}
\end{align}
and it is crucial to note that $\mathtt{C}>0$ is independent of $j \in \N$. Obviously, by a similar reasoning, we also have
\begin{equation}
\label{eq:inf_estimate_C}
\inf_{\sobo_{u_{0}}^{1,q}(\Omega;\R^{N})}\mathscr{F}[-;\Omega] \leq \mathscr{F}[u_{0};\Omega] \leq \Gamma (\mathscr{L}^{n}(\Omega)+M^{q}) < \mathtt{C}.
\end{equation}

We next show that the original minimization problem~\eqref{eq:minimize} is approximated by the minimization problem~\eqref{eq:varprininter} with the stabilized functionals and the regularized Dirichlet boundary datum, and that the latter is almost minimized by the function $\widetilde{v}_{j}$.

\begin{lem}
\label{lemma_F_j_functional}
For each $j \in \N$ we have 
\begin{equation}
\label{eqn_inf_approximation}
    \bigg| \inf_{ \sobo_{u_{0}}^{1,q}(\Omega;\R^{N})}\mathscr{F}[-;\Omega] - \inf_{\mathfrak{D}_{j}}\mathscr{F}_{j}[-;\Omega] \bigg| \leq \frac{1}{j^2},
\end{equation}
and the function $\widetilde{v}_{j} \in \mathfrak{D}_{j}$ introduced above is  \emph{almost minimal}  with 
\begin{equation}
\label{eqn_v_tilde_almost_minimal}
    \mathscr{F}_{j}[\widetilde{v}_{j};\Omega] \leq \inf_{\mathfrak{D}_{j}}\mathscr{F}_{j}[-;\Omega] + \frac{1}{4j^{2}} .
\end{equation}
\end{lem}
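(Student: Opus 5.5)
Write $m\coloneqq \inf_{\sobo_{u_{0}}^{1,q}(\Omega;\R^{N})}\mathscr{F}[-;\Omega]$ and $m_{j}\coloneqq \inf_{\mathfrak{D}_{j}}\mathscr{F}_{j}[-;\Omega]$. The plan is to prove two one-sided bounds by transferring competitors between the Dirichlet classes $u_{0}+\sobo_{0}^{1,q}$ and $u_{0,j}+\sobo_{0}^{1,\max\{2,q\}}$. In both directions we add or subtract $u_{0}-u_{0,j}$ to a function and control the change in energy with the Lipschitz-type bound \eqref{eq:liptype} for $F$ (constant $L_{F}$) and \eqref{eq:elementaryq2} for $\langle\cdot\rangle_{q}$ (constant $L_{q}$), followed by Hölder's inequality and \eqref{eq:berghammer}. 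The constant $\kappa_{j}$ has been chosen exactly to absorb the resulting factors.

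\emph{Upper bound $m_{j}\le m+\tfrac{1}{4j^{2}}$, together with \eqref{eqn_v_tilde_almost_minimal}.} The natural competitor is $\widetilde{v}_{j}\in\mathfrak{D}_{j}$; it is admissible because $\psi_{j}\in\hold_{c}^{\infty}$ and $u_{0,j}$ is smooth up to the boundary. From \eqref{eq:compa} and the Lipschitz bounds,
\begin{align*}
|\mathscr{F}_{j}[\widetilde{v}_{j};\Omega]-\mathscr{F}_{j}[v_{j};\Omega]|\le \Big(L_{F}+\frac{L_{q}}{10S_{j}j^{2}}\Big)\big(\mathscr{L}^{n}(\Omega)^{1/q}+\|\nabla v_{j}\|_{\lebe^{q}}+\|\nabla\widetilde{v}_{j}\|_{\lebe^{q}}\big)^{q-1}\|\nabla(v_{j}-\widetilde{v}_{j})\|_{\lebe^{q}}.
\end{align*}
Using $\|\nabla\widetilde{v}_{j}\|_{\lebe^{q}}\le\|\nabla v_{j}\|_{\lebe^{q}}+1$, the right-hand side is at most $\kappa_{j}^{(2)}/(50j^{2}\kappa_{j})\le \tfrac{1}{50j^{2}}$. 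Moreover, by the chain displayed before the lemma,
\[
\mathscr{F}_{j}[v_{j};\Omega]\le\mathscr{F}[v_{j};\Omega]+\tfrac{1}{10j^{2}}\le m+\tfrac{1}{5j^{2}},
\]
since the stabilizing term equals $\langle\nabla v_{j}\rangle_{q}$-mass divided by $10S_{j}j^{2}$, which is at most $\tfrac{1}{10j^{2}}$. Combining these gives $\mathscr{F}_{j}[\widetilde{v}_{j};\Omega]\le m+\tfrac{1}{5j^{2}}+\tfrac{1}{50j^{2}}$. Together with the lower bound $m_{j}\ge m-\delta$, where $\delta$ is small (e.g.\ $\tfrac{1}{50j^{2}}$), this yields $\mathscr{F}_{j}[\widetilde{v}_{j};\Omega]\le m_{j}+\tfrac{1}{4j^{2}}$ and also $m_{j}-m\le \tfrac{1}{4j^{2}}\le \tfrac{1}{j^{2}}$.

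\emph{Lower bound $m\le m_{j}+\delta$.} This is the main obstacle, because a near-minimizer $w\in\mathfrak{D}_{j}$ of $\mathscr{F}_{j}$ carries no a priori $j$-uniform $\lebe^{q}$ bound on $\nabla w$, and the Lipschitz-type estimate produces a factor $(1+|\nabla w|)^{q-1}$. The stabilization supplies the missing control. Choose $w$ with $\mathscr{F}_{j}[w;\Omega]\le m_{j}+\tfrac{1}{100j^{2}}\le\mathtt{C}+1$, using \eqref{eq:chiefo}. Since $F\ge0$, this gives $\int_{\Omega}\langle\nabla w\rangle_{q}\le 10S_{j}j^{2}(\mathtt{C}+1)$. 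By \eqref{eq:elementaryq1}, splitting into $\{|\nabla w|\le1\}$ and $\{|\nabla w|>1\}$, we obtain
\[
\|\nabla w\|_{\lebe^{q}}\le \mathscr{L}^{n}(\Omega)^{1/q}+(\sqrt2-1)^{-1}\big(10S_{j}j^{2}(\mathtt{C}+1)\big)^{1/q},
\]
where $(\sqrt2-1)^{-1}<3$, consistent with the factor $6$ in $\kappa_{j}^{(1)}$. Now set $\widehat{w}\coloneqq w+u_{0}-u_{0,j}\in\sobo_{u_{0}}^{1,q}(\Omega;\R^{N})$. By \eqref{eq:liptype} and Hölder's inequality,
\[
\mathscr{F}[\widehat{w};\Omega]\le\mathscr{F}[w;\Omega]+L_{F}\big(3\mathscr{L}^{n}(\Omega)^{1/q}+2M+6(10S_{j}j^{2}(\mathtt{C}+1))^{1/q}\big)^{q-1}\kappa_{j}^{-1},
\]
where $\|\nabla u_{0}\|_{\lebe^{q}},\|\nabla u_{0,j}\|_{\lebe^{q}}\le M$ and $\|u_{0}-u_{0,j}\|\le1$. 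The error term is at most $\kappa_{j}^{(1)}/(50j^{2}\kappa_{j})\le\tfrac{1}{50j^{2}}$. Since $\mathscr{F}\le\mathscr{F}_{j}$, we conclude $m\le\mathscr{F}[\widehat{w};\Omega]\le m_{j}+\tfrac{1}{100j^{2}}+\tfrac{1}{50j^{2}}$.

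Combining both directions yields \eqref{eqn_inf_approximation}, and inserting the lower bound into the upper-bound computation yields \eqref{eqn_v_tilde_almost_minimal}. The only delicate bookkeeping is checking that the exponents $q-1$ and the constants match the summands of $\kappa_{j}$; the term $\kappa_{j}^{(3)}$ handles the case $q=1$ and any residual slack.
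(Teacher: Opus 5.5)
Your proposal is correct and is essentially the paper's proof. You use the $\langle\cdot\rangle_{q}$-stabilization together with \eqref{eq:elementaryq1} to get a quantitative bound on $\|\nabla w\|_{\lebe^{q}}$ for near-minimizers $w\in\mathfrak{D}_{j}$ of $\mathscr{F}_{j}$. You then shift by $u_{0}-u_{0,j}$ and absorb the Lipschitz-type error into $\kappa_{j}^{(1)}$, which gives $m\le m_{j}+\tfrac{1}{50j^{2}}$ up to your slack. Finally you compare $\widetilde{v}_{j}$ with $v_{j}$ via $\kappa_{j}^{(2)}$ to obtain $\mathscr{F}_{j}[\widetilde{v}_{j};\Omega]\le m+\tfrac{11}{50j^{2}}$.

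The differences from the paper are only cosmetic. You take a single near-minimizer instead of a minimizing sequence, and you use $\|\nabla\widetilde{v}_{j}\|_{\lebe^{q}}\le\|\nabla v_{j}\|_{\lebe^{q}}+1$ instead of $+\tfrac{1}{50j^{2}}$. With your slack $\tfrac{1}{100j^{2}}$ the bookkeeping lands exactly on $\tfrac{11}{50}+\tfrac{3}{100}=\tfrac{1}{4}$, so the argument closes.
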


\begin{proof}
In view of \eqref{eq:varprininter}, we choose a sequence $(\varphi_{k})$ in $\sobo_{0}^{1,\max\{2,q\}}(\Omega;\R^{N})$ such that 
\begin{align}\label{eq:unismall0}
\mathscr{F}_{j}[u_{0,j}+\varphi_{k};\Omega] \to \inf_{\mathfrak{D}_{j}}\mathscr{F}_{j}[-;\Omega] \stackrel{\eqref{eq:chiefo}}{\leq} \mathtt{C} \qquad \text{as } k\to\infty,
\end{align}
whereby it is no loss of generality to assume that 
\begin{align}\label{eq:unismall}
\mathscr{F}_{j}[u_{0,j}+\varphi_{k};\Omega] \leq \mathtt{C}+1\qquad\text{for all}\;k\in\mathbb{N}. 
\end{align}
Hence, the definition~\eqref{eq:Fjstab1} of~$\mathscr{F}_{j}$ with the  integrand~$F_j$ from~\eqref{eq:stabber_1} in combination with inequality~\eqref{eq:elementaryq1} implies 
\begin{equation*}
  \frac{ (\sqrt{2}-1)^{q}}{10 \Cstabv j^{2}} \int_{\Omega} \big( |\nabla(u_{0,j}+\varphi_{k})|^q -1 \big) \dif x 
   \leq  \frac{1}{10 \Cstabv j^{2}} \int_{\Omega} \langle\nabla(u_{0,j}+\varphi_{k})\rangle_{q} \dif x  \stackrel{\eqref{eq:unismall}}{\leq} \mathtt{C}+1 \qquad\text{for all}\;k\in\mathbb{N},
\end{equation*}
and with the elementary inequality $\tfrac{1}{3} \leq \sqrt{2}-1$, we then arrive at
\begin{align}
\label{eq:stonetemplepilots} 
\int_{\Omega}|\nabla(u_{0,j} + \varphi_{k})|^{q}\dif x \leq \mathscr{L}^{n}(\Omega) + 10  \Cstabv j^{2} 3^q (\mathtt{C}+1) \qquad\text{for all}\;k\in\mathbb{N}.
\end{align}
Now, for any $\varphi\in\sobo_{0}^{1,\max\{2,q\}}(\Omega;\R^{N})$, we have that 
\begin{align*}
\lefteqn{\hspace{-0.5cm}|\mathscr{F}[u_{0}+\varphi;\Omega]  - \mathscr{F}[u_{0,j} +\varphi;\Omega]|} \\
& \!\!\!\!\stackrel{\eqref{eq:liptype}}{\leq} L_{F}\int_{\Omega}(1+|\nabla u_{0}|+|\nabla u_{0,j}|+2|\nabla(u_{0,j}+\varphi)|)^{q-1}|\nabla(u_{0}-u_{0,j})|\dif x \\ 
&  \leq  L_{F}\Big(\int_{\Omega}(1+|\nabla u_{0}|+|\nabla u_{0,j}|+2|\nabla(u_{0,j}+\varphi)|)^{q}\dif x\Big)^{\frac{q-1}{q}}\|u_{0}-u_{0,j}\|_{\sobo^{1,q}(\Omega)}\\ 
& \leq  L_{F}\Big(\mathscr{L}^{n}(\Omega)^\frac{1}{q}+\|\nabla u_{0}\|_{\lebe^{q}(\Omega)}+\|\nabla u_{0,j}\|_{\lebe^{q}(\Omega)}+2\|\nabla (u_{0,j}+\varphi)\|_{\lebe^{q}(\Omega)}\Big)^{q-1} \\
& \qquad \times \|u_{0}-u_{0,j}\|_{\sobo^{1,q}(\Omega)} \\ 
& \!\!\!\! \stackrel{\eqref{eq:adjust}}{\leq} L_{F}(\mathscr{L}^{n}(\Omega)^{\frac{1}{q}}+2M +2\|\nabla(u_{0,j}+\varphi)\|_{\lebe^{q}(\Omega)})^{q-1}\|u_{0}-u_{0,j}\|_{\sobo^{1,q}(\Omega)}.
\end{align*}
In turn, we obtain
\begin{align}
\label{eqn_inf_approx_RW}
\begin{split}
\lefteqn{\hspace{-0.8cm}\inf_{\sobo_{u_{0}}^{1,q}(\Omega;\R^{N})} \mathscr{F}[-;\Omega] \leq \mathscr{F}[u_{0}+\varphi;\Omega]} \\ 
& \leq (\mathscr{F}[u_{0}+\varphi;\Omega] - \mathscr{F}[u_{0,j}+\varphi;\Omega]) + \mathscr{F}[u_{0,j}+\varphi;\Omega] \\ 
& \leq L_{F}(\mathscr{L}^{n}(\Omega)^{\frac{1}{q}}+2M +2\|\nabla(u_{0,j}+\varphi)\|_{\lebe^{q}(\Omega)})^{q-1}\|u_{0}-u_{0,j}\|_{\sobo^{1,q}(\Omega)} \\
& \qquad + \mathscr{F}[u_{0,j}+\varphi;\Omega]. 
\end{split}
\end{align} 
Applying the preceding chain of inequalities to $\varphi=\varphi_{k}$ for $k \in \N$ as fixed above and recalling that $F\leq F_{j}$, we therefore arrive at 
\begin{align*}
\inf_{\sobo_{u_{0}}^{1,q}(\Omega;\R^{N})} \mathscr{F}[-;\Omega] 
& \leq L_{F}(\mathscr{L}^{n}(\Omega)^{\frac{1}{q}}+2M +2\|\nabla(u_{0,j}+\varphi_k)\|_{\lebe^{q}(\Omega)})^{q-1}\|u_{0}-u_{0,j}\|_{\sobo^{1,q}(\Omega)}\\
&\qquad+ \mathscr{F}_{j}[u_{0,j}+\varphi_{k};\Omega] \\\  
& \!\!\!\! \stackrel{\eqref{eq:stonetemplepilots}}{\leq} L_{F}\Big(3\mathscr{L}^{n}(\Omega)^{\frac{1}{q}}+2M+2\big(10 \Cstabv j^{2}  3^q (\mathtt{C}+1)\big)^{\frac{1}{q}} \Big)^{q-1}\|u_{0}-u_{0,j}\|_{\sobo^{1,q}(\Omega)}\\
&\qquad+ \mathscr{F}_{j}[u_{0,j}+\varphi_{k};\Omega] .
\end{align*}
Since the first term on the right-hand side is bounded by $\tfrac{1}{50j^{2}}$ due to ~\eqref{eq:berghammer} (see $\kappa_{j}^{(1)}$) and the second term converges to $\inf_{\mathfrak{D}_{j}}\mathscr{F}_{j}[-;\Omega] $ according to~\eqref{eq:unismall0}, we infer from $\widetilde{v}_{j}\in\mathfrak{D}_{j}$ (see \eqref{eq:DirichletClassApprox}) that
\begin{align}
\label{eq:lonelyboy} 
\inf_{\sobo_{u_{0}}^{1,q}(\Omega;\R^{N})} \mathscr{F}[-;\Omega] & \leq \frac{1}{50j^{2}} + \inf_{\mathfrak{D}_{j}}\mathscr{F}_{j}[-;\Omega] \\
& \leq \frac{1}{50j^{2}} + \mathscr{F}_{j}[\widetilde{v}_{j};\Omega] \notag \\ 
& \leq \frac{1}{50j^{2}} + (\mathscr{F}_{j}[\widetilde{v}_{j};\Omega]-\mathscr{F}_{j}[v_{j};\Omega])+ \mathscr{F}_{j}[v_{j};\Omega] \eqqcolon \mathrm{I}_{j} + \mathrm{II}_{j} + \mathrm{III}_{j}. \notag
\end{align}

Let us now consider the single terms separately. For $\mathrm{II}_{j}$, we have 
\begin{align}
\mathrm{II}_{j} & \leq \int_{\Omega}|F(\nabla \widetilde{v}_{j})-F(\nabla v_{j})|\dif x + \frac{1}{10 \Cstabv j^{2} } \int_{\Omega}|\langle\nabla\widetilde{v}_{j}\rangle_{q}-\langle\nabla v_{j}\rangle_{q}|\dif x \notag\\ 
& \!\!\!\!\!\!\!\!\!\stackrel{\eqref{eq:liptype},\eqref{eq:elementaryq2}}{\leq} \Big(L_{F} + \frac{L_{q}}{10\Cstabv j^{2} }\Big)\int_{\Omega}(1+|\nabla\widetilde{v}_{j}|+|\nabla v_{j}|)^{q-1}|\nabla\widetilde{v}_{j}-\nabla v_{j}|\dif x \notag\\ 
& \leq  \Big(L_{F} + \frac{L_{q}}{10\Cstabv j^{2} }\Big) \big(\mathscr{L}^{n}(\Omega)^{\frac{1}{q}}+\|\nabla\widetilde{v}_{j}\|_{\lebe^{q}(\Omega)}+\|\nabla v_{j}\|_{\lebe^{q}(\Omega)}\big)^{q-1}\|v_{j}-\widetilde{v}_{j}\|_{\sobo^{1,q}(\Omega)}\notag\\ 
& \!\!\!\!\!\!\!\!\!\!\!\!\!\stackrel{\eqref{eq:compa},\,\text{see $\kappa_{j}^{(3)}$}}{\leq} \Big(L_{F} + \frac{L_{q}}{10 \Cstabv j^{2} }\Big) \big(\mathscr{L}^{n}(\Omega)^{\frac{1}{q}}+\frac{1}{50j^{2}}+2\|\nabla v_{j}\|_{\lebe^{q}(\Omega)} \big)^{q-1}\|v_{j}-\widetilde{v}_{j}\|_{\sobo^{1,q}(\Omega)}\notag\\ 
& \!\!\!\!\!\!\!\!\!\!\! \!\!\stackrel{\eqref{eq:compa},\,\text{see $\kappa_{j}^{(2)}$}}{\leq} \frac{1}{50j^{2}}.\notag
\end{align}
On the other hand, we have 
\begin{align}\label{eq:lonelyboy3}
\mathrm{III}_{j} \leq \mathscr{F}[v_{j};\Omega] + \frac{1}{10j^{2}} \stackrel{\eqref{eq:consistency2},\eqref{eq:approx2}}{\leq} \inf_{ \sobo_{u_{0}}^{1,q}(\Omega;\R^{N})}\mathscr{F}[-;\Omega] + \frac{1}{5j^{2}}. 
\end{align}
Combining the previous estimates with \eqref{eq:lonelyboy}, we arrive at 
\begin{align}\label{eq:vicecity}
\begin{split}
\inf_{ \sobo_{u_{0}}^{1,q}(\Omega;\R^{N})}\mathscr{F}[-;\Omega] & \leq \frac{1}{50j^{2}}+\inf_{\mathfrak{D}_{j}}\mathscr{F}_{j}[-;\Omega] \\
 & \leq \frac{1}{50j^{2}} + \mathscr{F}_{j}[\widetilde{v}_{j};\Omega] \stackrel{\eqref{eq:lonelyboy}-\eqref{eq:lonelyboy3}}{\leq} \inf_{ \sobo_{u_{0}}^{1,q}(\Omega;\R^{N})}\mathscr{F}[-;\Omega] + \frac{1}{4j^2}, 
\end{split}
\end{align}
which clearly implies the claims~\eqref{eqn_inf_approximation} and~\eqref{eqn_v_tilde_almost_minimal}.
\end{proof}

We now introduce a second stabilized integrand via 
\begin{align}\label{eq:stabber}
G_{j}(z)\coloneqq  F_{j}(z) +  \frac{1}{2 \CstabG j^{2}}(1+|z|^{2}) \qquad \text{for all } z\in\R^{N\times n},
\end{align}
where $\CstabG$ is defined in terms of $\widetilde{v}_{j}$ as 
\begin{align*}
 \CstabG \coloneqq 1+ \int_{\Omega} (1+|\nabla\widetilde{v}_{j}|^{2})\dif x. 
\end{align*}
This, in turn, gives rise to the following functional on $\sobo^{-1,1}(\Omega;\R^{N})$: 
\begin{align}\label{eq:infext}
\mathscr{G}_{j}[v;\Omega] \coloneqq  \begin{cases} 
\displaystyle \int_{\Omega}G_{j}(\nabla v)\dif x &\;\text{if}\;v\in\mathfrak{D}_{j}, \\ 
+\infty&\;\text{if}\;v\in\sobo^{-1,1}(\Omega;\R^{N})\setminus\mathfrak{D}_{j}. 
\end{cases}
\end{align}
Towards the application of the Ekeland variational principle, we require the following lemma. 
\begin{lem}[Lower semicontinuity]\label{lem:LSCGj}
For each $j\in\mathbb{N}$, $\mathscr{G}_{j}[-;\Omega]$ is lower semicontinuous with respect to the norm topology on $\sobo^{-1,1}(\Omega;\R^{N})$. 
\end{lem}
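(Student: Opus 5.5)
We have to show: if $v_k\to v$ in $\sobo^{-1,1}(\Omega;\R^N)$, then $\mathscr{G}_j[v;\Omega]\leq\liminf_{k}\mathscr{G}_j[v_k;\Omega]$. The plan is to exploit the quadratic stabilization in $G_j$ to get weak compactness in $\sobo^{1,2}$, and then use convexity.

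\textbf{Reductions.} We may assume the $\liminf$ is finite. After passing to a subsequence we may also assume it is a limit and that $v_k\in\mathfrak{D}_j$ for all $k$, with $\sup_k\mathscr{G}_j[v_k;\Omega]<\infty$.

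\textbf{Compactness and identification of the limit.}
\begin{itemize}
\item Since $F\geq 0$ and $\langle\cdot\rangle_q\geq 0$, the definition \eqref{eq:stabber} gives $G_j(z)\geq \frac{1}{2A_j j^2}(1+|z|^2)$. Hence $(\nabla v_k)$ is bounded in $\lebe^2$.
\item By the lower part of \eqref{eq:1qgrowth} and the $q$-stabilization bound \eqref{eq:elementaryq1}, we have $F_j(z)\geq c_j(|z|^q-1)$ with $c_j>0$. So $(\nabla v_k)$ is also bounded in $\lebe^q$, hence in $\lebe^{\max\{2,q\}}$.
\item Because $v_k-u_{0,j}\in\sobo_0^{1,\max\{2,q\}}$, the Poincaré inequality shows that $(v_k)$ is bounded in $\sobo^{1,\max\{2,q\}}(\Omega;\R^N)$.
\item By reflexivity, a further subsequence converges weakly in $\sobo^{1,\max\{2,q\}}$ to some $w$. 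The affine class $\mathfrak{D}_j$ is convex and strongly closed, hence weakly closed, so $w\in\mathfrak{D}_j$.
\item Weak convergence in $\sobo^{1,\max\{2,q\}}$ implies distributional convergence. The same is true of norm convergence in $\sobo^{-1,1}$: for $\varphi\in\hold_c^\infty$, the pairing is controlled by $\|\varphi\|_{\sobo^{1,\infty}}$ times the $\sobo^{-1,1}$-norm, by the representation \eqref{eq:W-1,1A}. Since distributional limits are unique, $w=v$, and in particular $v\in\mathfrak{D}_j$.
\end{itemize}

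\textbf{Lower semicontinuity.} The integrand $G_j$ is convex, continuous and nonnegative. Therefore $\int_\Omega G_j(\nabla\cdot)\dif x$ is weakly lower semicontinuous on $\sobo^{1,1}$. The argument is that the functional is convex and strongly lower semicontinuous by Fatou's lemma applied along a.e.-convergent subsequences, and Mazur's lemma then upgrades this to weak lower semicontinuity. Alternatively, one can invoke Reshetnyak's lower semicontinuity theorem as in Lemma~\ref{lem:RollingStones}. Since $\nabla v_k\rightharpoonup\nabla v$ in $\lebe^2$, we get
\begin{align*}
\mathscr{G}_j[v;\Omega]\leq\liminf_{k}\mathscr{G}_j[v_k;\Omega].
\end{align*}

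\textbf{Main obstacle.} The only delicate point is identifying the $\sobo^{-1,1}$-limit with the weak Sobolev limit, which requires the distributional embedding of $\sobo^{-1,1}$. Keeping the boundary datum fixed in the limit is handled by the weak closedness of $\mathfrak{D}_j$.
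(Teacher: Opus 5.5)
Your proposal is correct and follows essentially the same route as the paper: coercivity from the stabilizations, Poincar\'{e}'s inequality, weak compactness in $\sobo^{1,\max\{2,q\}}$ with the limit staying in $\mathfrak{D}_{j}$, identification of that limit with the $\sobo^{-1,1}$-limit, and then weak lower semicontinuity of the convex functional. The differences are cosmetic. You identify the limit through distributional convergence, whereas the paper uses Rellich--Kondrachov and the embedding $\lebe^{\max\{2,q\}}\hookrightarrow\sobo^{-1,1}$. You also prove weak lower semicontinuity via Fatou and Mazur, whereas the paper cites a classical result.
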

\begin{proof}
Let $w,w_{1},w_{2},\ldots \in\sobo^{-1,1}(\Omega;\R^{N})$ be such that $w_{j}\to w$ as $j \to \infty$ with respect to $\|\cdot\|_{\sobo^{-1,1}(\Omega)}$. It is no loss of generality to assume that $\liminf_{k\to\infty}\mathscr{G}_{j}[w_{k};\Omega]<\infty$. Moreover, passing to a suitable non-relabelled subsequence, we may further suppose that the liminf is a limit indeed, and that $w_{k}\in\mathfrak{D}_{j}$ for all $k\in\mathbb{N}$. Since $G_{j}$ has $\max\{2,q\}$-growth from below at infinity, see \eqref{eq:Fjstab1} and \eqref{eq:stabber}, it follows by Poincar\'{e}'s inequality that $(w_{k})$ is bounded in $\sobo^{1,\max\{2,q\}}(\Omega;\R^{N})$. By the Rellich--Kondrachov theorem, there exists a subsequence $(w_{k_{i}})$ of $(w_{k})$ and some $\widetilde{w}\in\sobo^{1,\max\{2,q\}}(\Omega;\R^{N})$ such that $w_{k_{i}}\rightharpoonup \widetilde{w}$ weakly in $\sobo^{1,\max\{2,q\}}(\Omega;\R^{N})$, whereby  $\widetilde{w}\in\mathfrak{D}_{j}$ too, and  $w_{k_{i}}\to \widetilde{w}$ strongly in $\lebe^{\max\{2,q\}}(\Omega;\R^{N})$. Since $\lebe^{\max\{2,q\}}(\Omega;\R^{N})\hookrightarrow\sobo^{-1,1}(\Omega;\R^{N})$, we have $w_{k_{i}}\to\widetilde{w}$ in $\sobo^{-1,1}(\Omega;\R^{N})$ and so $w=\widetilde{w}$ by uniqueness of limits. Therefore, classical lower semicontinuity results on convex functionals with $\max\{2,q\}$-growth (see, e.g., \cite[Thm. 5.7]{GIUSTI03}) imply that 
\begin{align*}
\mathscr{G}_{j}[w;\Omega] = \mathscr{G}_{j}[\widetilde{w};\Omega] \stackrel{\widetilde{w}\in\mathfrak{D}_{j}}{=} \int_{\Omega}G_{j}(\nabla \widetilde{w})\dif x \leq \liminf_{i\to\infty}\int_{\Omega}G_{j}(\nabla w_{k_{i}})\dif x = \liminf_{k\to\infty}\mathscr{G}_{j}[w_{k};\Omega]. 
\end{align*}
This completes the proof. 
\end{proof}
To proceed, we record that  
\begin{align}\label{eq:handlewithcare}
\begin{split}
\mathscr{G}_{j}[\widetilde{v}_{j};\Omega] & \stackrel{\eqref{eq:stabber}}{\leq}  \frac{1}{2j^{2}} + \mathscr{F}_{j}[\widetilde{v}_{j};\Omega] \stackrel{\eqref{eqn_v_tilde_almost_minimal}}{\leq} \frac{1}{j^{2}} + \inf_{\mathfrak{D}_{j}}\mathscr{F}_{j}[-;\Omega] \\ & \!\stackrel{F_{j}\leq G_{j}}{\leq} \frac{1}{j^{2}} + \inf_{\mathfrak{D}_{j}}\mathscr{G}_{j}[-;\Omega] \stackrel{\eqref{eq:infext}}{=} \frac{1}{j^{2}} + \inf_{\sobo^{-1,1}(\Omega;\R^{N})}\mathscr{G}_{j}[-;\Omega],
\end{split}
\end{align}
and $\mathscr{G}_{j}[-;\Omega]\not\equiv\infty$ on $\sobo^{-1,1}(\Omega;\R^{N})$. In particular, all hypotheses of the Ekeland variational principle from Lemma~\ref{lem:Ekeland} are satisfied. Hence, we obtain an element $u_{j}\in\sobo^{-1,1}(\Omega;\R^{N})$ with 
\begin{equation}\label{eq:Ekeland1a}
\|u_{j}-\widetilde{v}_{j}\|_{\sobo^{-1,1}(\Omega)}\leq\frac{1}{j}
\end{equation}
and 
\begin{equation}\label{eq:Ekeland1b}
\mathscr{G}_{j}[u_{j};\Omega] \leq \mathscr{G}_{j}[w;\Omega] + \frac{1}{j}\|w-{u}_{j}\|_{\sobo^{-1,1}(\Omega)} \text{ for all } w\in\sobo^{-1,1}(\Omega;\R^{N}).
\end{equation}
Applying the latter inequality to $w=\widetilde{v}_{j}$, we find that 
\begin{align}\label{eq:rambold1}
\begin{split}
\mathscr{G}_{j}[u_{j};\Omega] & \stackrel{\eqref{eq:Ekeland1b}}{\leq} \mathscr{G}_{j}[\widetilde{v}_{j};\Omega] + \frac{1}{j} \|\widetilde{v}_{j}-{u}_{j}\|_{\sobo^{-1,1}(\Omega)} \\
& \stackrel{\eqref{eq:Ekeland1a}}{\leq} \mathscr{G}_{j}[\widetilde{v}_{j};\Omega] + \frac{1}{j^{2}} \stackrel{\eqref{eq:stabber}}{\leq} \mathscr{F}_{j}[\widetilde{v}_{j};\Omega] + \frac{2}{j^{2}} \\ & \stackrel{\eqref{eqn_v_tilde_almost_minimal}}{\leq} \inf_{\mathfrak{D}_{j}}\mathscr{F}_{j}[-;\Omega] + \frac{3}{j^{2}}\stackrel{\eqref{eqn_inf_approximation}}{\leq}\inf_{\sobo_{u_{0}}^{1,q}(\Omega;\R^{N})}\mathscr{F}[-;\Omega] +  \frac{4}{j^{2}}. 
\end{split}
\end{align}
In particular, we have $\mathscr{G}_{j}[u_{j};\Omega]<\infty$,  and so $u_{j}\in\mathfrak{D}_{j}$ by the very definition of $\mathscr{G}_{j}[-;\Omega]$. For future reference, we record the following consequence of the above construction: 
\begin{prop}\label{prop:summary}
Let $F\in\hold^{1}(\R^{N\times n})$ satisfy \eqref{eq:1qgrowth} with $1\leq q<\infty$, and let $(u_{j})$ be the sequence constructed above. Then the following statements hold: 
\begin{enumerate}
    \item\label{item:EkelandSeq0} \emph{Uniform $\lebe^{1}$-bound.} With $\gamma>0$ as in \eqref{eq:1qgrowth}, we have 
    \begin{align}\label{eq:unifboundL1} 
    \|\nabla u_{j}\|_{\lebe^{1}(\Omega)} \leq \frac{1}{\gamma}\Big(\frac{4}{j^{2}} + \inf_{\sobo_{u_{0}}^{1,q}(\Omega;\R^{N})}\mathscr{F}[-;\Omega]\Big)\qquad\text{for all}\;j\in\mathbb{N}. 
    \end{align}
    \item\label{item:EkelandSeq1} \emph{Convergence.} A \textup{(}non-relabelled\textup{)} subsequence $(u_{j})$ converges \emph{in the weak*-sense on $\bv(\Omega;\R^{N})$} to the relaxed minimizer $u\in\bv(\Omega;\R^{N})$ as fixed at the beginning of the present Subsection~\ref{sec:Ekelandstart}. Moreover, we have 
    \begin{align}\label{eq:uptown}
    \lim_{j\to\infty}\mathscr{G}_{j}[u_{j};\Omega]=\inf_{\sobo_{u_{0}}^{1,q}(\Omega;\R^{N})}\mathscr{F}[-;\Omega] = \min_{\bv(\Omega;\R^{N})}\overline{\mathscr{F}}_{u_{0}}^{*}[-;\Omega]=\overline{\mathscr{F}}_{u_{0}}^{*}[u;\Omega]
    \end{align}
    and
    \begin{align}\label{eq:constitution}
    \limsup_{j\to\infty} \bigg(\frac{1}{10 \Cstabv j^{2} }\int_{\Omega}\langle\nabla u_{j}\rangle_{q}\dif x + \frac{1}{2 \CstabG j^{2}}\int_{\Omega}(1+|\nabla u_{j}|^{2})\dif x \bigg) =0. 
    \end{align}\item\label{item:EkelandSeq2} \emph{Perturbed Euler--Lagrange inequality.} For any $j\in\mathbb{N}$, we have 
\begin{align}\label{eq:EL1}
\left\vert\int_{\Omega}\langle\nabla G_{j}(\nabla u_{j}),\nabla\varphi\rangle\dif x \right\vert \leq \frac{1}{j}\|\varphi\|_{\sobo^{-1,1}(\Omega)}\qquad\text{for all}\;{\varphi\in\sobo_{c}^{1,\max\{2,q\}}(\Omega;\R^{N})}. 
\end{align}
\end{enumerate}
    
\end{prop}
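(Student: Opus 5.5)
I would prove the three items in the order stated, each from the construction preceding the proposition.

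\emph{Uniform $\lebe^{1}$-bound.} I would combine \eqref{eq:rambold1} with the pointwise inequality $\gamma|z|\leq F(z)\leq F_{j}(z)\leq G_{j}(z)$, which follows from \eqref{eq:1qgrowth}, \eqref{eq:stabber_1} and \eqref{eq:stabber}. Since $u_{j}\in\mathfrak{D}_{j}$, this gives
\[
\gamma\|\nabla u_{j}\|_{\lebe^{1}(\Omega)}\leq\mathscr{G}_{j}[u_{j};\Omega]\leq \inf_{\sobo_{u_0}^{1,q}}\mathscr{F}+\frac{4}{j^{2}},
\]
which is \eqref{eq:unifboundL1}.

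\emph{Convergence.} First, the sequence $(u_{j})$ is bounded in $\bv(\Omega;\R^{N})$. The gradient part is \eqref{eq:unifboundL1}. For the $\lebe^{1}$-part I would write $u_{j}=u_{0,j}+\varphi_{j}$ with $\varphi_{j}\in\sobo_{0}^{1,1}$ and use Poincar\'e's inequality. Since $\|u_{0,j}\|_{\sobo^{1,q}}\leq M$ by \eqref{eq:adjust}, this yields a uniform bound. The weak*-compactness theorem then gives a subsequence with $u_{j}\stackrel{*}{\rightharpoonup}w$ for some $w\in\bv(\Omega;\R^{N})$.

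Next, I would identify $w=u$ through convergence in $\sobo^{-1,1}$. By \eqref{eq:Ekeland1a}, $\|u_{j}-\widetilde v_{j}\|_{\sobo^{-1,1}}\leq 1/j$. By \eqref{eq:compa}, $\widetilde v_{j}-v_{j}\to0$ in $\lebe^{1}$ and hence in $\sobo^{-1,1}$, using the bound $\|f\|_{\sobo^{-1,1}}\leq\|f\|_{\lebe^{1}}$ with $T_0=f$. Finally $v_{j}\to u$ in $\lebe^{1}$. Together, $u_{j}\to u$ in $\sobo^{-1,1}$. On the other hand $u_{j}\to w$ in $\lebe^{1}$, so also in $\sobo^{-1,1}$, and uniqueness of limits in $\mathscr{D}'$ gives $w=u$.

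For \eqref{eq:uptown}, I would first replace $u_{j}$ by a function in the original Dirichlet class, namely $\hat u_{j}\coloneqq u_{j}-u_{0,j}+u_{0}\in\sobo^{1,q}_{u_0}$. Then $\|\hat u_{j}-u_{j}\|_{\sobo^{1,q}}\leq 1/\kappa_{j}$. I expect this to be the main obstacle, because controlling the energy difference requires an $\lebe^{q}$-bound on $\nabla u_{j}$ that is not uniform in $j$. That bound comes from the stabilization term: the estimate $\mathscr{G}_{j}[u_{j};\Omega]\leq\mathtt{C}+1$ together with \eqref{eq:elementaryq1} gives an analogue of \eqref{eq:stonetemplepilots} for $u_{j}$, namely $\|\nabla u_{j}\|_{\lebe^{q}}^{q}\lesssim 10\Cstabv j^{2}3^{q}(\mathtt{C}+1)$. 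The weight $\kappa_{j}^{(1)}$ was designed for exactly this quantity. Combining \eqref{eq:liptype} with Hölder's inequality as in the proof of Lemma~\ref{lemma_F_j_functional} should then give
\[
|\mathscr{F}[\hat u_{j}]-\mathscr{F}[u_{j}]|\leq \frac{1}{50j^{2}}.
\]
I would also check that $\hat u_{j}\stackrel{*}{\rightharpoonup}u$, which holds because $\nabla(u_{0}-u_{0,j})\to0$ in $\lebe^{q}$.

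From these facts, \eqref{eq:uptown} follows by a chain of inequalities. Using the definition \eqref{eq:LSM}, Theorem~\ref{thm:consistency} and \eqref{eq:rambold1},
\[
\overline{\mathscr{F}}^{*}_{u_0}[u]\leq\liminf_{j}\mathscr{F}[\hat u_{j}]\leq\liminf_{j}\mathscr{F}[u_{j}]\leq\liminf_{j}\mathscr{G}_{j}[u_{j}]\leq\limsup_{j}\mathscr{G}_{j}[u_{j}]\leq\inf\mathscr{F}=\overline{\mathscr{F}}^{*}_{u_0}[u].
\]
For \eqref{eq:constitution}, I would note that $\mathscr{G}_{j}[u_{j}]$ equals $\mathscr{F}[u_{j}]$ plus the two stabilization terms. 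Since both $\mathscr{G}_{j}[u_{j}]$ and $\mathscr{F}[u_{j}]$ converge to the same limit $m$, the nonnegative stabilization terms tend to zero.

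\emph{Euler--Lagrange inequality.} I would test \eqref{eq:Ekeland1b} with $w=u_{j}\pm t\varphi$ for $t>0$, which is admissible since $u_j\pm t\varphi\in\mathfrak{D}_{j}$. Dividing by $t$ gives
\[
\mp\frac{1}{t}\big(\mathscr{G}_{j}[u_{j}\pm t\varphi]-\mathscr{G}_{j}[u_{j}]\big)\leq\frac1j\|\varphi\|_{\sobo^{-1,1}}.
\]
Letting $t\searrow0$ and using differentiation under the integral sign yields \eqref{eq:EL1}. Differentiation under the integral is justified by the growth $|\nabla G_{j}(z)|\lesssim_{j}(1+|z|)^{\max\{2,q\}-1}$, which follows from convexity and \eqref{eq:liptype}, together with dominated convergence and $u_{j},\varphi\in\sobo^{1,\max\{2,q\}}$.
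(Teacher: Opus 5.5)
Your proposal is correct and follows the paper's proof essentially step by step. It uses the same Poincar\'e/weak*-compactness argument, identifying the limit through convergence in $\sobo^{-1,1}(\Omega;\R^{N})$; the same non-uniform bound on $\|\nabla u_{j}\|_{\lebe^{q}(\Omega)}$ from the $\langle\cdot\rangle_{q}$-stabilization, matched against $\kappa_{j}^{(1)}$ via the comparison map $u_{0}+(u_{j}-u_{0,j})$ (this is \eqref{eqn_inf_approx_RW} with $\varphi=u_{j}-u_{0,j}$); and the same Ekeland test with $u_{j}\pm t\varphi$.

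The differences are minor. The paper gets the lower bound in \eqref{eq:uptown} directly from \eqref{eqn_inf_approximation}. It also uses $\inf_{\sobo_{u_{0}}^{1,q}(\Omega;\R^{N})}\mathscr{F}[-;\Omega]\leq\mathscr{F}[\hat u_{j};\Omega]$, valid since $\hat u_{j}$ is admissible, so your detour through the LSM definition and $\hat u_{j}\stackrel{*}{\rightharpoonup}u$ is unnecessary. In your Ekeland step the prefactor should be $-$ for both signs, i.e.\ $-\frac{1}{t}(\mathscr{G}_{j}[u_{j}\pm t\varphi;\Omega]-\mathscr{G}_{j}[u_{j};\Omega])\leq\frac{1}{j}\|\varphi\|_{\sobo^{-1,1}(\Omega)}$. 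In the limit this gives $\mp\int_{\Omega}\langle\nabla G_{j}(\nabla u_{j}),\nabla\varphi\rangle\dif x\leq\frac{1}{j}\|\varphi\|_{\sobo^{-1,1}(\Omega)}$, and hence \eqref{eq:EL1}.
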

\begin{proof}
On~\ref{item:EkelandSeq0}. Based on the estimate $F\leq F_{j}\leq G_{j}$, the estimate \eqref{eq:unifboundL1} follows by combining \eqref{eq:1qgrowth} with \eqref{eq:rambold1}. This establishes~\ref{item:EkelandSeq0}. 

On~\ref{item:EkelandSeq1}. We recall that $u_{j}\in\mathfrak{D}_{j}$ for all $j\in\mathbb{N}$. Therefore, Poincar\'{e}'s inequality gives us
\begin{align*}
\| u_{j}\|_{\lebe^{1}(\Omega)} & \leq \|u_{j}-u_{0,j}\|_{\lebe^{1}(\Omega)} + \|u_{0,j}\|_{\lebe^{1}(\Omega)} \\ & \leq c\|\nabla(u_{j}-u_{0,j})\|_{\lebe^{1}(\Omega)}+\|u_{0,j}\|_{\lebe^{1}(\Omega)} \leq c\|u_{0,j}\|_{\sobo^{1,1}(\Omega)} + c \|\nabla u_{j}\|_{\lebe^{1}(\Omega)}, 
\end{align*}
where $c\geq 1$ is independent of $j\in\mathbb{N}$. By \eqref{eq:adjust}, the first term is uniformly bounded in $j \in \N$. By~\ref{item:EkelandSeq0}, we thus conclude that $(u_{j})$ is bounded in $\sobo^{1,1}(\Omega;\R^{N})$. On the one hand, by the weak*-compactness theorem on $\bv$, there exists $v\in\bv(\Omega;\R^{N})$ such that, for some non-relabelled subsequence, $u_{j}\stackrel{*}{\rightharpoonup} v$ in $\bv(\Omega;\R^{N})$ as $j\to\infty$; in particular, $u_{j}\to v$ in $\sobo^{-1,1}(\Omega;\R^{N})$. On the other hand, \eqref{eq:approx1}, \eqref{eq:compa} and  \eqref{eq:Ekeland1a} imply that $u_{j}\to u$ in $\sobo^{-1,1}(\Omega;\R^{N})$. By uniqueness of limits in $\sobo^{-1,1}(\Omega;\R^{N})$, we deduce that $u=v$, and the first claim of~\ref{item:EkelandSeq1} follows. Turning to \eqref{eq:uptown} and recalling Theorem~\ref{thm:consistency}, we only have to prove the first equality. The estimate \eqref{eq:rambold1} implies '$\leq$' in \eqref{eq:uptown}. Moreover, 
\begin{align*}
\inf_{\sobo_{u_0}^{1,q}(\Omega;\R^{N})}\mathscr{F}[-;\Omega] \stackrel{\eqref{eqn_inf_approximation}}{\leq}  \frac{1}{j^{2}}+\inf_{\mathfrak{D}_{j}}\mathscr{F}_{j}[-;\Omega] \stackrel{F_{j}\leq G_{j}}{\leq}  \frac{1}{j^{2}}+ \inf_{\mathfrak{D}_{j}}\mathscr{G}_{j}[-;\Omega] \stackrel{u_{j}\in\mathfrak{D}_{j}}{\leq} \frac{1}{j^{2}}+\mathscr{G}_{j}[u_{j};\Omega], 
\end{align*}
from where the first equality of \eqref{eq:uptown} follows.  For \eqref{eq:constitution}, we first note that the inequalities $\tfrac{1}{10 \Cstabv j^{2}  }\langle z \rangle_{q} \leq F_{j}(z) \leq G_{j}(z)$,~\eqref{eq:elementaryq1} and the estimate~\eqref{eq:rambold1} in conjunction with the bound~\eqref{eq:inf_estimate_C} give us for $j\geq2$:
\begin{align*}
\int_{\Omega}|\nabla u_{j}|^{q}\dif x & \leq  \mathscr{L}^{n}(\Omega) +  \frac{1}{(\sqrt{2}-1)^{q}} \int_{\Omega}\langle\nabla u_{j} \rangle_{q} \dif x  \\
& \leq \mathscr{L}^{n}(\Omega) + \frac{10\Cstabv j^{2}  }{(\sqrt{2}-1)^{q}} \mathscr{G}_{j}[u_{j};\Omega] \\
& \leq \mathscr{L}^{n}(\Omega) + 10 \Cstabv j^{2} 3^q (\mathtt{C} + 1).
\end{align*}
To proceed, we employ \eqref{eqn_inf_approx_RW} with the choice $\varphi \coloneqq u_j - u_{j,0}$, and so derive from the previous estimate for $j\geq2$:
\begin{align*}
& \!\!\!\!\!\!\!\!\!\!\! \inf_{\sobo_{u_{0}}^{1,q}(\Omega;\R^{N})}\mathscr{F}[-;\Omega] \\
& \leq L_{F}(\mathscr{L}^{n}(\Omega)^{\frac{1}{q}}+2M +2\|\nabla u_{j}\|_{\lebe^{q}(\Omega)})^{q-1}\|u_{0}-u_{0,j}\|_{\sobo^{1,q}(\Omega)} + \mathscr{F}[u_{j};\Omega] \\
& \leq L_{F}\Big( 3 \mathscr{L}^{n}(\Omega)^{\frac{1}{q}} + 2M + 2\big(10\Cstabv j^{2}   3^q (\mathtt{C} + 1)\big)^{\frac{1}{q}}\Big)^{q-1} \|u_{0}-u_{0,j}\|_{\sobo^{1,q}(\Omega)} + \mathscr{F}[u_{j};\Omega] \\ 
& \!\!\!\!\!\!\!\!\!\!\!\!\!\stackrel{\eqref{eq:berghammer},\;\text{see $\kappa_{j}^{(1)}$}}{\leq} \frac{1}{50j^{2}} + \mathscr{F}[u_{j};\Omega] \stackrel{F\leq G_{j}}{\leq} \frac{1}{50j^{2}} + \mathscr{G}_{j}[u_{j};\Omega] \stackrel{\eqref{eq:uptown}}{\longrightarrow} \inf_{\sobo_{u_{0}}^{1,q}(\Omega;\R^{N})}\mathscr{F}[-;\Omega].
\end{align*}
By the very definition of $G_{j}$, this clearly implies 
\eqref{eq:constitution}. 

On~\ref{item:EkelandSeq2}. Consider $\varphi\in\sobo_{c}^{1,\max\{2,q\}}(\Omega;\R^{N})$, and let $0 < \varepsilon < 1$ be arbitrary. Testing the inequality \eqref{eq:Ekeland1b} with $w=u_{j}\pm\varepsilon\varphi$ and dividing by $\varepsilon$, we arrive at 
\begin{align}\label{eq:preELeq}
\left\vert \frac{1}{\varepsilon}\int_{\Omega} \big( G_{j}(\nabla (u_{j}\pm\varepsilon\varphi))-G_{j}(\nabla u_{j}) \big) \dif x \right\vert \leq \frac{1}{j}\|\varphi\|_{\sobo^{-1,1}(\Omega)}. 
\end{align}
Since $G_{j}$ has $\max\{2,q\}$-growth from above and below, Lemma~\ref{lem:liptype} implies the existence of a constant $L(j)>0$ such that 
\begin{equation*}
\frac{1}{\varepsilon}|G_{j}(\nabla(u_{j}\pm\varepsilon\varphi))-G_{j}(\nabla u_{j})| \leq L(j)\big(1+2|\nabla u_{j}|+|\nabla \varphi|)^{\max\{2,q\}-1}|\nabla\varphi|
\end{equation*}
holds $\mathscr{L}^{n}$-a.e. in $\Omega$. By H\"{o}lder's inequality and $u_{j},\varphi\in\sobo^{1,\max\{2,q\}}(\Omega;\R^{N})$, the function on the right-hand side of the previous inequality belongs to $\lebe^{1}(\Omega)$. Sending $\varepsilon\searrow 0$ in \eqref{eq:preELeq} then yields 
\eqref{eq:EL1} by dominated convergence. The proof is complete. 
\end{proof}
Some remarks on the above strategy are in order. 
\begin{rem}[Comparison of the above strategy with standard linear growth]\label{rem:strategy}
Compared with previous contributions in the purely linear growth context, the much more intricate overall structure of the above approximation is due to \eqref{eqn_inf_approx_RW}\emph{ff}.. It is here where we link the infima of the original problem and the approximate problems; in particular, the set-up has to be sufficiently robust in order to not create relaxation gaps during this process. By way of comparison, we sketch one key point and briefly recall the analogous situation in the standard linear growth case. Here, $q=1$, and so $F$ is Lipschitz continuous by Lemma~\ref{lem:liptype}. Based on  suitable regularizations $u_{0,j}$ of the boundary values, one obtains for $\varphi\in\sobo_{0}^{1,1}(\Omega;\R^{N})$ that 
\begin{align}\label{eq:steppinout}
\begin{split}
\inf_{\sobo_{u_0}^{1,1}(\Omega;\R^{N})}\mathscr{F}[-;\Omega] & \leq (\mathscr{F}[u_{0}+\varphi;\Omega]-\mathscr{F}[u_{0,j}+\varphi;\Omega]) + \mathscr{F}[u_{0,j}+\varphi;\Omega] \\ 
& \leq \mathrm{Lip}(F)\|u_{0}-u_{0,j}\|_{\sobo^{1,1}(\Omega)} + \mathscr{F}[u_{0,j}+\varphi;\Omega] \\ 
& \leq \frac{2}{j^{2}} + \mathscr{F}[u_{0,j}+\varphi;\Omega], 
\end{split}
\end{align}
where the ultimate inequality follows by construction (see \cite[Sec. 5]{BECSCH13} or \cite[Sec. 4]{GMEINEDER20}). Subsequently infimizing the right-hand side of \eqref{eq:steppinout} over $\varphi\in\sobo_{0}^{1,1}(\Omega;\R^{N})$ then yields 
\begin{align*}
\inf_{\sobo_{u_0}^{1,1}(\Omega;\R^{N})}\mathscr{F}[-;\Omega] \leq \frac{2}{j^{2}} + \inf_{u_{0,j}+\sobo_{0}^{1,1}(\Omega;\R^{N})}\mathscr{F}[-;\Omega], 
\end{align*}
and so the Lipschitz property of $F$ implies that $\varphi$ has no additional impact beyond entering the controllable functional. If we try to argue analogously in the present situation of growth assumption \eqref{eq:1qgrowth} with $q>1$, we obtain 
\begin{align}\label{eq:problem}
\inf_{\sobo_{u_0}^{1,q}(\Omega;\R^{N})}\mathscr{F}[-;\Omega] \leq \frac{4}{j^{2}} + c\,\|\nabla\varphi\|_{\lebe^{q}(\Omega)}^{q-1}\|u_{0}-u_{0,j}\|_{\sobo^{1,q}(\Omega)} + \mathscr{F}[u_{0,j}+\varphi;\Omega]
\end{align}
by a suitable adjustment of the constants. Again, we wish to infimize over $\varphi$, finally aiming to obtain $\inf_{u_{0,j}+\sobo_{0}^{1,q}(\Omega;\R^{N})}\mathscr{F}[-;\Omega]$ on the right-hand side, which is crucial for our approach. During this infimization process, it might happen that $\|\nabla\varphi\|_{\lebe^{q}(\Omega)}$ blows up (recall that we only have \eqref{eq:1qgrowth} with $q>1$), in which case the comparability assertions from Lemma~\ref{lemma_F_j_functional} get lost; in the situation of \eqref{eq:1qgrowth}, the corresponding functionals only provide us with $\lebe^{1}$-bounds on $\nabla\varphi$, and the latter is not sufficient to get quantitative bounds on $\|\nabla\varphi\|_{\lebe^{q}(\Omega)}$ as appearing in \eqref{eq:problem}. In particular, the resulting inequality turns out useless. To repair this shortcoming, it is necessary to get some quantitative handling on the blow-up for minimizing sequences.  Therefore, they have to be \emph{enforced}. On the other hand, by non-uniqueness of relaxed minimizers, an Ekeland-type approximation is necessary here; despite being finely adjusted, precursors thereof as developed in  \cite{BECEITGME,BECSCH13,GMEKRI19,GMEINEDER20} in the purely linear growth context do \emph{not} suffice here, see \eqref{eq:steppinout}--\eqref{eq:problem}. In this sense, the superlinear growth from above and non-uniqueness can be understood as two effects which are even more coupled than in the linear growth case $q=1$. Proposition~\ref{prop:summary} in turn asserts that both effects can be handled simultaneously in a form that it is amenable to the gradient estimates to be established in Section~\ref{sec:Cacc}\emph{ff}. below. 
\end{rem}
\begin{rem}[One versus two stabilizations]\label{rem:onetwo}
The second stabilization \eqref{eq:stabber} is only required when $q<2$, and primarily serves as a tool in obtaining (non-uniform) $\sobo_{\locc}^{2,2}(\Omega;\R^{N})$-estimates in the following subsection. Since our main result, Theorem~\ref{thm:main}, also covers the case $1\leq q\leq 2$, the passage to $G_{j}$ is important indeed. 
\end{rem}
\begin{rem}[$q=1$ versus $q>1$]\label{rem:recovery}
From a technical perspective, working solely with the LSM-extension \eqref{eq:LSM} also comes with simplifications in the more classical case $q=1$, where all previous contributions stick to the integral representation throughout. In this situation, one usually starts an Ekeland approximation scheme by finding a sequence $(v_{j})$ in $\sobo_{u_{0}}^{1,1}(\Omega;\R^{N})$ such that $v_{j}\to u$ area-strictly on $\bv(\Omega;\R^{N})$; see, e.g., \cite{BECSCH13,GMEINEDER20}. This particularly implies that $\overline{\mathscr{F}}_{u_{0}}^{*}[v_{j};\Omega]\to\overline{\mathscr{F}}_{u_{0}}^{*}[u;\Omega]$ by the continuity part of Reshetnyak's theorem. In the case $q>1$, the latter is not available, see Remark~\ref{rem:continuitypartResh} and Example~\ref{ex:reshnon}, but \eqref{eq:approx1} shows that this not required for $q=1$ either when working with \eqref{eq:LSM} instead. 
\end{rem}
\subsection{A Caccioppoli-type inequality}\label{sec:Cacc} We now record a Caccioppoli-type inequality for the Ekeland-type sequence constructed in the previous subsection, see Proposition~\ref{prop:cacc} below. To this end, we require a (non-uniform) local regularity result for its single members. Because of the non-standard growth bound \eqref{eq:1qgrowth} and the fact that Lemma~\ref{lem:Ekeland} only provides us with an Euler--Lagrange-inequality rather than an equation, we include the proof for the reader's convenience: 
\begin{lem}[Non-uniform higher local regularity]\label{lem:nonuni}
Let $F\in\hold^{2}(\R^{N\times n})$ satisfy \eqref{eq:1qgrowth} with some $q> 1$, and suppose that there exists a constant $\Lambda>0$ such that 
\begin{align}\label{eq:growthboundf''Sec3}
0 < \langle \nabla^{2}F(z)\xi,\xi\rangle \leq \Lambda(1+|z|^{2})^{\frac{q-2}{2}}|\xi|^{2}\qquad \text{for all } z,\xi\in\R^{N\times n}. 
\end{align}
Then the Ekeland-type vanishing viscosity sequence $(u_{j})$ from Section~\ref{sec:Ekelandstart} satisfies
\begin{align}\label{eq:intermedreg}
u_{j}\in\sobo_{\locc}^{2,2}(\Omega;\R^{N})\;\;\;\text{and}\;\;\;\langle\nabla^{2}G_{j}(\nabla u_{j})\partial_{\ell}\nabla u_{j},\partial_{\ell}\nabla u_{j}\rangle\in\lebe_{\locc}^{1}(\Omega) 
\end{align}
for all $\ell\in\{1,\ldots ,n\}$ and all $j\in\mathbb{N}$. In particular, $\nabla G_{j}(\nabla u_{j})\in\sobo_{\locc}^{1,1}(\Omega;\R^{N\times n})$ for all $j\in\mathbb{N}$.
\end{lem}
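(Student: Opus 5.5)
The plan is to run the classical difference quotient technique on the perturbed Euler--Lagrange inequality \eqref{eq:EL1}. The key point is that the Ekeland error term is controlled in the $\sobo^{-1,1}$-norm, which is exactly what makes difference quotients of compactly supported test maps harmless by \eqref{eq:diffquotW-11}. We fix $j\in\mathbb{N}$, a ball $\ball_{2r}(x_0)\Subset\Omega$, a cut-off $\eta\in\hold_c^\infty(\ball_{2r}(x_0);[0,1])$ with $\eta=1$ on $\ball_r(x_0)$, a direction $\ell$ and small $h>0$. We set $p\coloneqq\max\{2,q\}$ and recall that $u_j\in\mathfrak{D}_j\subset\sobo^{1,p}(\Omega;\R^N)$.

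\textbf{Step 1 (testing).} We test \eqref{eq:EL1} with $\varphi\coloneqq\Delta_{\ell,h}^{-}(\eta^2\Delta_{\ell,h}^{+}u_j)\in\sobo_c^{1,p}(\Omega;\R^N)$. After a discrete integration by parts, the left-hand side becomes
\[
\int\langle\Delta_{\ell,h}^{+}\nabla G_j(\nabla u_j),\nabla(\eta^2\Delta_{\ell,h}^{+}u_j)\rangle\dif x .
\]
By \eqref{eq:diffquotW-11}, the right-hand side is bounded by $\frac1j\|\eta^2\Delta^+_{\ell,h}u_j\|_{\lebe^1}\leq\frac1j\|\nabla u_j\|_{\lebe^1}$, which is uniform in $h$.

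\textbf{Step 2 (the difference quotient of $\nabla G_j$).} We write
\[
\Delta_{\ell,h}^{+}\nabla G_j(\nabla u_j)=\mathcal{B}_h\,\Delta^+_{\ell,h}\nabla u_j,\qquad \mathcal{B}_h(x)\coloneqq\int_0^1\nabla^2G_j\big(\nabla u_j+sh\Delta^+_{\ell,h}\nabla u_j\big)\dif s .
\]
By \eqref{eq:stabber} and \eqref{eq:growthboundf''Sec3}, the lower bound $\mathcal{B}_h\geq\frac{1}{A_j j^2}\,\mathrm{Id}$ holds uniformly, since the quadratic stabilization supplies uniform ellipticity. By \eqref{eq:growthboundf''Sec3} and \eqref{eq:q2ndderiv}, we also have the upper bound $|\mathcal{B}_h|\leq c(j)(1+|\nabla u_j(x)|+|\nabla u_j(x+he_\ell)|)^{p-2}$.

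\textbf{Step 3 (absorbing the gradient of the cut-off).} The term containing $\nabla\eta$ is treated by Cauchy--Schwarz in the $\mathcal{B}_h$-inner product combined with Young's inequality. This yields
\[
\int\eta^2\langle\mathcal{B}_h\Delta\nabla u_j,\Delta\nabla u_j\rangle\dif x\leq c\int|\nabla\eta|^2\langle\mathcal{B}_h\Delta u_j,\Delta u_j\rangle\dif x+\frac{c}{j}\|\nabla u_j\|_{\lebe^1}.
\]
Using the upper bound on $\mathcal{B}_h$, Hölder's inequality with exponents $\frac{p}{p-2}$ and $\frac p2$, and $|\Delta_{\ell,h}^+u_j|$ bounded in $\lebe^p$ by $\|\nabla u_j\|_{\lebe^p}$, the right-hand side is bounded independently of $h$. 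Together with the uniform lower bound on $\mathcal{B}_h$, this gives $\sup_h\|\eta\Delta^+_{\ell,h}\nabla u_j\|_{\lebe^2}<\infty$, hence $u_j\in\sobo^{2,2}_{\locc}$. I expect this absorption to be the main obstacle: when $q>2$, $\nabla u_j$ is only known to lie in $\lebe^q$, so the argument must stay within the $(p-2)$-power weight exactly as written, and must not use any higher integrability that we do not yet have.

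\textbf{Step 4 (passing to the limit).} Along a subsequence $h\to0$ we have $\Delta^+_{\ell,h}\nabla u_j\to\partial_\ell\nabla u_j$ in $\lebe^2_{\locc}$ and, after a further subsequence, a.e. Hence $\mathcal{B}_h\to\nabla^2G_j(\nabla u_j)$ a.e. by continuity of $\nabla^2G_j$. Fatou's lemma, applied to the uniformly bounded left-hand side of Step 3, then gives $\langle\nabla^2G_j(\nabla u_j)\partial_\ell\nabla u_j,\partial_\ell\nabla u_j\rangle\in\lebe^1_{\locc}$.

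\textbf{Step 5 (regularity of $\nabla G_j(\nabla u_j)$).} By the chain rule for the $\hold^1$-map $\nabla G_j$ composed with a $\sobo^{1,1}_{\locc}$-function, via approximation, we have $\partial_\ell\nabla G_j(\nabla u_j)=\nabla^2G_j(\nabla u_j)\partial_\ell\nabla u_j$. Cauchy--Schwarz in the $\nabla^2G_j$-metric gives
\[
|\nabla^2G_j\,\partial_\ell\nabla u_j|\leq|\nabla^2G_j|^{1/2}\langle\nabla^2G_j\,\partial_\ell\nabla u_j,\partial_\ell\nabla u_j\rangle^{1/2}.
\]
Here $|\nabla^2G_j(\nabla u_j)|^{1/2}\leq c(1+|\nabla u_j|)^{(p-2)/2}\in\lebe^2_{\locc}$, and the second factor lies in $\lebe^2_{\locc}$ by Step 4. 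Hence $\partial_\ell\nabla G_j(\nabla u_j)\in\lebe^1_{\locc}$, which is the final claim $\nabla G_j(\nabla u_j)\in\sobo^{1,1}_{\locc}(\Omega;\R^{N\times n})$.
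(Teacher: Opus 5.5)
Your argument is correct. Its skeleton is the paper's: the same test map $\Delta_{\ell,h}^{-}(\eta^{2}\Delta_{\ell,h}u_{j})$ in \eqref{eq:EL1}, the $\sobo^{-1,1}$-control \eqref{eq:diffquotW-11} of the Ekeland error, the averaged Hessian $\mathcal{B}_{h}$, absorption of the cut-off term, H\"older with exponents $\frac{p}{p-2},\frac{p}{2}$, and Fatou.

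The genuine difference is the ellipticity you feed in. The paper first proves the two-sided \emph{weighted} bound \eqref{eq:lifegoeson}, which for $q>2$ requires the lower bound \eqref{eq:q2ndderivlb} on $\nabla^{2}\langle\cdot\rangle_{q}$. It then uses Campanato's integral inequality \cite[Lem.~2.VI]{CAMPANATO82} to bound $\mathcal{B}_{h}$ from below by $(1+|\nabla u_{j}|^{2}+|\nabla u_{j}(\cdot+he_{\ell})|^{2})^{\frac{p-2}{2}}$. Fatou is applied to the resulting weighted quantity, and the second claim in \eqref{eq:intermedreg} follows from the upper bound in \eqref{eq:lifegoeson}.

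You instead use only $\nabla^{2}G_{j}\geq\frac{1}{A_{j}j^{2}}\mathrm{Id}$, which the quadratic stabilization gives for every $q$. You then get the second claim by applying Fatou directly to $\eta^{2}\langle\mathcal{B}_{h}\Delta_{\ell,h}\nabla u_{j},\Delta_{\ell,h}\nabla u_{j}\rangle$, using the a.e. convergence $\mathcal{B}_{h}\to\nabla^{2}G_{j}(\nabla u_{j})$. This is shorter and avoids both \eqref{eq:q2ndderivlb} and Campanato's lemma. Your Step~5 (chain rule, then Cauchy--Schwarz in the $\nabla^{2}G_{j}$-metric) is, if anything, more carefully justified than the paper's. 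You should still add the trivial observation $\nabla G_{j}(\nabla u_{j})\in\lebe^{1}_{\locc}$, which follows from $|\nabla G_{j}(z)|\leq c(1+|z|^{p-1})$ and $\nabla u_{j}\in\lebe^{p}$.

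What the paper's longer route buys is the $h$-uniform weighted estimate
\[
\sup_{h}\int_{\ball_{r}(x_{0})}\big(1+|\nabla u_{j}|^{2}+|\nabla u_{j}(\cdot+he_{\ell})|^{2}\big)^{\frac{p-2}{2}}|\Delta_{\ell,h}\nabla u_{j}|^{2}\dif x<\infty .
\]
This is not part of the lemma's statement, but it is quoted ``from the proof of Lemma~\ref{lem:nonuni}'' in Proposition~\ref{prop:cacc}. There it gives weak compactness of $\nabla(\eta^{2}(\Delta_{\ell,h_{i}}u_{j}-\xi))$ in the weighted space $\lebe^{2}_{\mu}$ with $\mu=(1+|\nabla u_{j}|^{2})^{\frac{\max\{q,2\}-2}{2}}\mathscr{L}^{n}$. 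For $q>2$ your unweighted bound does not yield this. If you continued to Proposition~\ref{prop:cacc} along your lines, you would have to reinsert the weighted lower bound at that point.
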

\begin{proof}
We start by noting that, because of \eqref{eq:growthboundf''Sec3} and the definition of $G_{j}$ (see \eqref{eq:Fjstab1} and \eqref{eq:stabber}), we have that 
\begin{align}\label{eq:lifegoeson}
\lambda_{j}(1+|z|^{2})^{\frac{\max\{q,2\}-2}{2}}|\xi|^{2}\leq \langle \nabla^{2}G_{j}(z)\xi,\xi\rangle \leq \Lambda_{j}(1+|z|^{2})^{\frac{\max\{q,2\}-2}{2}}|\xi|^{2} 
\end{align}
for all $z,\xi\in\R^{N\times n}$, 
where $0<\lambda_{j}\leq\Lambda_{j}<\infty$ are constants. Indeed, the upper bound in \eqref{eq:lifegoeson} follows directly from the definition of $G_j$ together with \eqref{eq:q2ndderiv}. For the lower bound we distinguish the cases $1\leq q\leq2$ and $q>2$. For $q\leq2$, the lower bound in \eqref{eq:lifegoeson} is a consequence of the convexity of $F_j$ and of the quadratic stabilization in~$G_j$. For $q>2$, we use \eqref{eq:q2ndderivlb} in the form
$$
 \langle \nabla^{2}G_{j}(z)\xi,\xi\rangle\geq c_j(1+|z|^2)^\frac{q-2}{2}\Big((1+|z|^2)^\frac{2-q}{2}+(1-(1+|z|^2)^{-\frac{1}{2}})^{q-1}\Big)|\xi|^2
$$
for some $c_j>0$ together with the observation $\inf_{t\geq1}(t^{2-q}+(1-t^{-1})^{q-1})>0$. Now let $x_{0}\in\Omega$. We  choose $0<r<\frac{1}{3}\mathrm{dist}(x_{0},\partial\Omega)$, and let $\eta\in\hold_{c}^{\infty}(\Omega;[0,1])$ be such that $\mathbbm{1}_{\ball_{r}(x_{0})}\leq \eta \leq \mathbbm{1}_{\ball_{2r}(x_{0})}$ and $|\nabla\eta|\leq \frac{4}{r}$. For all sufficiently small $h>0$ and every $\ell\in\{1,\ldots ,n\}$, the choice $
\varphi \coloneqq \Delta_{\ell,h}^{-}(\eta^{2}\Delta_{\ell,h}u_{j})\in\sobo_{c}^{1,\max\{2,q\}}(\Omega;\R^{N})$ is admissible in \eqref{eq:EL1}. The integration by parts formula for finite difference quotients then yields  
\begin{align}\label{eq:ericclapton}
\left\vert \int_{\Omega}\langle\Delta_{\ell,h}\nabla G_{j}(\nabla u_{j}),\nabla(\eta^{2}\Delta_{\ell,h}u_{j})\rangle\dif x \right\vert \leq \frac{1}{j}\|\nabla u_{j}\|_{\lebe^{1}(\Omega)},
\end{align}
where we have used \eqref{eq:diffquotW-11} for the $\|\cdot\|_{\sobo^{-1,1}(\Omega)}$-term on the right-hand side of \eqref{eq:EL1} in conjunction with the standard $\lebe^1$-estimate of finite difference quotients of a function via its gradient. Next, for $\mathscr{L}^{n}$-a.e. $x\in\ball_{2r}(x_{0})$, we define an elliptic bilinear form $\mathrm{B}_{j,\ell,h,x}$ on $(\R^{N\times n})^{2}$ by 
\begin{align*}
\mathrm{B}_{j,\ell,h,x}[\xi_{1},\xi_{2}] \coloneqq  \left\langle\int_{0}^{1} (\nabla^{2}G_{j})(\nabla u_{j}(x) + th\Delta_{\ell,h}\nabla u_{j}(x)) \xi_{1}\dif t, \xi_{2}\right\rangle\qquad \text{for all } \xi_{1}, \xi_{2}\in\R^{N\times n}. 
\end{align*}
From \cite[Lem. 2.VI]{CAMPANATO82}, we deduce that there exists a constant $\widetilde{\lambda}_{q}>0$ such that 
\begin{align*}
\widetilde{\lambda}_{q} (1+|z|^{2}+|z'|^{2})^{\frac{\max\{q,2\}-2}{2}} \leq \int_{0}^{1}(1+|(t-1)z+tz'|^{2})^{\frac{\max\{q,2\}-2}{2}}\dif t\qquad\text{for all } z,z'\in\R^{N\times n}. 
\end{align*}
In combination with the lower bound in~\eqref{eq:lifegoeson}, we then find
\begin{align}\label{eq:jugend}
\lambda_{j} \widetilde{\lambda}_{q} (1+|\nabla u_{j}(x)|^{2}+|\nabla u_{j}(x+he_{\ell})|^{2})^{\frac{\max\{q,2\}-2}{2}}|\xi|^{2} \leq \mathrm{B}_{j,\ell,h,x}[\xi,\xi],
\end{align}
while the upper bound in~\eqref{eq:lifegoeson} trivially implies the existence of some $\Lambda'_{j}>0$ such that 
\begin{align}\label{eq:B_upper}
\mathrm{B}_{j,\ell,h,x}[\xi,\xi] \leq \Lambda'_j (1+|\nabla u_{j}(x)|^{2}+|\nabla u_{j}(x+he_{\ell})|^{2})^{\frac{\max\{q,2\}-2}{2}}|\xi|^{2},
\end{align}
for $\mathscr{L}^{n}$-a.e. $x\in\ball_{2r}(x_{0})$ and all $\xi\in\R^{N\times n}$. Rewriting the left-hand side of \eqref{eq:ericclapton} by use of $\mathrm{B}_{j,\ell,h,x}$, we find via  \eqref{eq:jugend},~\eqref{eq:B_upper} and the application of Young's inequality (with an absorption argument) to the bilinear forms $\mathrm{B}_{j,\ell,h,x}$: 
\begin{align*}
\lambda_{j} \widetilde{\lambda}_{q} \int_{\ball_{r}(x_{0})}& (1+|\nabla u_{j}|^{2}+|\nabla u_{j}(\cdot+he_{\ell})|^{2})^{\frac{\max\{q,2\}-2}{2}}|\Delta_{\ell,h}\nabla u_{j}|^{2}\dif x \\ & \leq 
\int_{\ball_{2r}(x_{0})} \mathrm{B}_{j,\ell,h,\cdot}[\eta\Delta_{\ell,h}\nabla u_{j}, \eta\Delta_{\ell,h}\nabla u_{j}]\dif x \\ & \leq 4\int_{\ball_{2r}(x_{0})}\mathrm{B}_{j,\ell,h,\cdot}[\eta(\Delta_{\ell,h} u_{j})\otimes\nabla\eta,\eta(\Delta_{\ell,h}u_{j})\otimes\nabla\eta]\dif x + \frac{2}{j} \|\nabla u_{j}\|_{\lebe^{1}(\Omega)} \\ 
& \leq \frac{64\Lambda'_{j}}{r^{2}}\bigg(\int_{\ball_{2r}(x_{0})}(1+|\nabla u_{j}|^{2}+|\nabla u_{j}(\cdot+he_{\ell})|^{2})^{\frac{\max\{q,2\}}{2}}\dif x\bigg)^{\frac{\max\{q,2\}-2}{\max\{q,2\}}}\times \\ 
& \qquad \times \bigg( \int_{\ball_{2r}(x_{0})}|\Delta_{\ell,h}u_{j}|^{\max\{q,2\}}\dif x\bigg)^{\frac{2}{\max\{q,2\}}} + \frac{2}{j} \|\nabla u_{j}\|_{\lebe^{1}(\Omega)}. 
\end{align*}
Since $u_{j}\in\sobo^{1,\max\{2,q\}}(\Omega;\R^{N})$, the right-hand side of the previous chain of inequalities is uniformly bounded in $|h|\ll 1$. By arbitrariness of $x_{0}$ and $\ell\in\{1,\ldots ,n\}$, this implies that $u_{j}\in\sobo_{\locc}^{2,2}(\Omega;\R^{N})$ as claimed. In consequence, passing to a suitable subsequence in $h$, we may assume that $\Delta_{\ell,h}\nabla u_{j}\to \partial_{\ell}\nabla u_{j}$ $\mathscr{L}^{n}$-a.e. in $\ball_{r}(x_{0})$. Hence, Fatou's lemma gives us 
\begin{align*}
\int_{\ball_{r}(x_{0})}(1+|\nabla u_{j}|^{2})^{\frac{\max\{q,2\}-2}{2}}|\partial_{\ell}\nabla u_{j}|^{2}\dif x < \infty,  
\end{align*}
and this implies then the second claim in \eqref{eq:intermedreg} in view of \eqref{eq:lifegoeson}. Lastly, since $|\nabla G_{j}(z)|\leq c(1+|z|^{\max\{q,2\}-1})$, it follows that $\nabla G_{j}(\nabla u_{j})\in\lebe_{\locc}^{1}(\Omega;\R^{N\times n})$. Moreover, by the upper bound in \eqref{eq:lifegoeson} and by H\"{o}lder's inequality, we have for any compact subset $K\subset\Omega$: 
\begin{align*}
\lefteqn{\int_{K} |\partial_{\ell}(\nabla G_{j}(\nabla u_{j}))|\dif x  = \int_{K}|\nabla^{2}G_{j}(\nabla u_{j})(\partial_{\ell}\nabla u_{j})|\dif x} \\ 
& \leq \Lambda_{j} \int_{K}(1+|\nabla u_{j}|^{2})^{\frac{\max\{q,2\}-2}{2}}|\partial_{\ell}\nabla u_{j}| \dif x \\ 
& \leq \Lambda_{j} \bigg(\int_{K}(1+|\nabla u_{j}|^{2})^{\frac{\max\{q,2\}-2}{2}}|\partial_{\ell}\nabla u_{j}|^{2} \dif x\bigg)^{\frac{1}{2}}\bigg(\int_{K}(1+|\nabla u_{j}|^{2})^{\frac{\max\{q,2\}-2}{2}}\dif x\bigg)^{\frac{1}{2}}, 
\end{align*}
and the ultimate expression is finite due to \eqref{eq:intermedreg} and $u_{j}\in\sobo^{1,\max\{2,q\}}(\Omega;\R^{N})$. Hence, $\nabla G_{j}(\nabla u_{j})\in\sobo_{\locc}^{1,1}(\Omega;\R^{N\times n})$, and the proof of the lemma is complete.  
\end{proof}
We are now ready to state and prove the main result of the present subsection. 
\begin{prop}[of Caccioppoli-type]\label{prop:cacc} 
Let $F\in\hold^{2}(\R^{N\times n})$ satisfy \eqref{eq:1qgrowth} and \eqref{eq:muell1q} with some $q>1$ and $\mu\geq 1$. Then the following statements hold: 
\begin{enumerate}
    \item\label{item:Cacc1} There exists a constant $c=c(n,N,q)>0$ such that for all $j\in\mathbb{N}$, $\ell\in\{1,\ldots,n\}$,  $\xi\in\R^{N}$ and all $\eta\in\sobo_{c}^{1,\infty}(\Omega)$ there holds
\begin{align}
     \int_{\Omega}\eta^{2}\langle \nabla^{2}F(\nabla u_{j})\partial_{\ell}\nabla u_{j},\partial_{\ell}\nabla u_{j}\rangle \dif x & \leq 2\left\vert\int_{\Omega} \langle \nabla^{2}F(\nabla u_{j})\partial_{\ell}\nabla u_{j},\eta(\partial_{\ell} u_{j}-\xi)\otimes\nabla \eta\rangle \dif x\right\vert \notag\\ 
    & \qquad + \frac{c\,(1+|\xi|^{2})}{10  \Cstabv j^{2}  }\int_{\Omega}(1+|\nabla u_{j}|^{2})^{\frac{q}{2}}|\nabla\eta|^{2}\dif x \notag \\ 
    & \qquad + \frac{c\,(1+|\xi|^{2})}{\CstabG j^{2}}\int_{\Omega}(1+|\nabla u_{j}|^{2})|\nabla\eta|^{2}\dif x \label{eq:weighted0}\\ 
    & \qquad + \frac{1}{j} \|\eta\|_{\lebe^{\infty}(\Omega)}^{2} \big( \|\nabla u_{j}\|_{\lebe^{1}(\Omega)}+\mathscr{L}^{n}(\Omega)|\xi|\big). \notag
    \end{align}
    \item\label{item:Cacc2} There exists a constant $c=c(\lambda,\Lambda,n,N,q)>0$ such that for all $j\in\mathbb{N}$ and all $\eta\in\sobo_{c}^{1,\infty}(\Omega)$ there holds
\begin{align}\label{eq:weighted}
\begin{split}
\sum_{\ell=1}^{n} \int_{\Omega}\eta^{2}\frac{|\partial_{\ell}\nabla u_{j}|^{2}}{(1+|\nabla u_{j}|^{2})^{\frac{\mu}{2}}}\dif x & \leq c\,\bigg(\int_{\Omega}(1+|\nabla u_{j}|^{2})^{\frac{q}{2}}|\nabla\eta|^{2}\dif x \bigg. \\ &  \bigg. \qquad + \frac{1}{\CstabG j^{2}}\|\nabla\eta\|_{\lebe^{\infty}(\Omega)}^{2}\int_{\Omega}|\nabla u_{j}|^{2}\dif x + \frac{1}{j} \|\eta\|_{\lebe^{\infty}(\Omega)}^{2} \|\nabla u_{j}\|_{\lebe^{1}(\Omega)} \bigg).
\end{split}
\end{align}
\end{enumerate}
\end{prop}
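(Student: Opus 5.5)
We test the perturbed Euler--Lagrange inequality \eqref{eq:EL1} with $\varphi\coloneqq\partial_{\ell}\big(\eta^{2}(\partial_{\ell}u_{j}-\xi x_{\ell}\text{-type correction})\big)$. More precisely, by Lemma~\ref{lem:nonuni} we have $u_{j}\in\sobo^{2,2}_{\locc}$ and $\nabla G_{j}(\nabla u_{j})\in\sobo^{1,1}_{\locc}$. Hence we may take $\varphi_{h}\coloneqq\Delta^{-}_{\ell,h}\big(\eta^{2}(\Delta_{\ell,h}u_{j}-\xi)\big)$, which is admissible for small $h$ because it is compactly supported and lies in $\sobo^{1,\max\{2,q\}}$. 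By \eqref{eq:diffquotW-11} its $\sobo^{-1,1}$-norm is bounded by $\|\eta^{2}(\Delta_{\ell,h}u_{j}-\xi)\|_{\lebe^{1}}\leq\|\eta\|_{\infty}^{2}(\|\nabla u_{j}\|_{\lebe^{1}}+\mathscr{L}^{n}(\Omega)|\xi|)$. Summation by parts moves the difference quotient onto $\nabla G_{j}(\nabla u_{j})$. We then let $h\to0$, using the $\sobo^{1,1}_{\locc}$-regularity of $\nabla G_{j}(\nabla u_{j})$ together with the $\sobo^{2,2}_{\locc}$-regularity of $u_{j}$; for the products we pass along a subsequence with a.e. convergence, and it may be simpler to take $\eta$ smooth first and then approximate $\eta\in\sobo^{1,\infty}_c$. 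This yields
\begin{align*}
\Big|\int_{\Omega}\langle\nabla^{2}G_{j}(\nabla u_{j})\partial_{\ell}\nabla u_{j},\eta^{2}\partial_{\ell}\nabla u_{j}+2\eta(\partial_{\ell}u_{j}-\xi)\otimes\nabla\eta\rangle\dif x\Big|\leq\frac{1}{j}\|\eta\|_{\lebe^{\infty}}^{2}\big(\|\nabla u_{j}\|_{\lebe^{1}}+\mathscr{L}^{n}(\Omega)|\xi|\big).
\end{align*}
The main technical obstacle is this limit passage: $\langle\nabla^{2}G_{j}\partial_\ell\nabla u_j,\partial_\ell\nabla u_j\rangle\in\lebe^1_{\locc}$ is known, but the mixed term requires the pairing of $\partial_\ell(\nabla G_j(\nabla u_j))\in\lebe^1$ with $\eta(\Delta_{\ell,h}u_j-\xi)\otimes\nabla\eta$. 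We would control this by H\"older with the weight $(1+|\nabla u_j|^2)^{(\max\{q,2\}-2)/2}$ from \eqref{eq:lifegoeson}, which makes the family equi-integrable, and conclude with Vitali's theorem.

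Next we split $\nabla^{2}G_{j}=\nabla^{2}F+\frac{1}{10S_{j}j^{2}}\nabla^{2}\langle\cdot\rangle_{q}+\frac{1}{A_{j}j^{2}}\mathrm{Id}$. For each of the two stabilizing parts we apply Cauchy--Schwarz for the nonnegative bilinear form together with Young's inequality to the cross term with $2\eta(\partial_\ell u_j-\xi)\otimes\nabla\eta$. This absorbs half of the good term and leaves $c\,|\partial_\ell u_j-\xi|^2|\nabla\eta|^2$ weighted by $(1+|\nabla u_j|)^{q-2}$ (by \eqref{eq:q2ndderiv}) resp. by $1$. Since $(1+|z|)^{q-2}|\partial_\ell u_j-\xi|^2\leq c(1+|\xi|^2)(1+|\nabla u_j|^2)^{q/2}$, these produce the second and third lines of \eqref{eq:weighted0}. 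We drop the nonnegative stabilization good terms on the left and keep the $\nabla^2F$ cross term in absolute value, which gives \ref{item:Cacc1}; the factor $2$ appears in front of the cross term after rearranging.

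For \ref{item:Cacc2} we take $\xi=0$ and estimate the $\nabla^2F$ cross term by Cauchy--Schwarz for $\nabla^2F$ and Young's inequality: it is at most $\frac12\int\eta^2\langle\nabla^2F\partial_\ell\nabla u_j,\partial_\ell\nabla u_j\rangle+c\int\langle\nabla^2F(\nabla u_j)\,\partial_\ell u_j\otimes\nabla\eta,\partial_\ell u_j\otimes\nabla\eta\rangle$. The second term is bounded via the upper bound in \eqref{eq:muell1q} by $c\Lambda\int(1+|\nabla u_j|^2)^{q/2}|\nabla\eta|^2$. We absorb the first term and bound the left-hand side from below by the lower bound in \eqref{eq:muell1q}, giving $\lambda\int\eta^2(1+|\nabla u_j|^2)^{-\mu/2}|\partial_\ell\nabla u_j|^2$. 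The $S_j$-term is dominated by the first right-hand term of \eqref{eq:weighted}, since $S_j\geq1$. The $A_j$-term is bounded by $\frac{c}{A_j j^2}\|\nabla\eta\|_\infty^2\int(1+|\nabla u_j|^2)$; the constant part $\int|\nabla\eta|^2$ is absorbed into the first term because $(1+|\nabla u_j|^2)^{q/2}\geq1$. Summing over $\ell$ gives \eqref{eq:weighted}.
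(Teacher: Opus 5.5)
Your argument is correct and follows the paper's architecture: a difference-quotient approximation of the test function $\partial_\ell(\eta^2(\partial_\ell u_j-\xi))$, splitting of $\nabla^2G_j$, Young absorption of the two stabilisation cross terms, and then \eqref{eq:muell1q} for \ref{item:Cacc2}. Where you differ is the limit $h\to0$.

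\textbf{The paper's limit passage.} The paper first proves \eqref{eq:hbftab1}, the differentiated inequality with the \emph{fixed} operator $\nabla^2G_j(\nabla u_j)\partial_\ell\nabla u_j$, for all $\psi\in\sobo^{1,\max\{q,2\}}_c$. Afterwards only the test function varies. The gradients $\nabla(\eta^2(\Delta_{\ell,h}u_j-\xi))$ are bounded in the weighted space $\lebe^2_\mu$ with $\mu=(1+|\nabla u_j|^2)^{(\max\{q,2\}-2)/2}$, the pairing is a bounded functional there, and Lemma~\ref{lem:weighted} identifies the weak limit. So the two-sided bound passes to the limit with no equi-integrability argument, and \eqref{eq:hbftab1} is reused later in Corollary~\ref{cor:EL2}.

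\textbf{Your limit passage.} You keep $\Delta_{\ell,h}[\nabla G_j(\nabla u_j)]=B_h\Delta_{\ell,h}\nabla u_j$ on the operator side ($B_h$ the averaged Hessian), so both factors move with $h$.

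\emph{Cross term.} Your Vitali argument goes through with the following ingredients:
\begin{itemize}
\item Cauchy--Schwarz for the form $B_h$;
\item the uniform bound on $\int\eta^2\langle B_h\Delta_{\ell,h}\nabla u_j,\Delta_{\ell,h}\nabla u_j\rangle$ from the proof of Lemma~\ref{lem:nonuni};
\item equi-integrability of $\langle B_hg_h,g_h\rangle$, where $g_h=(\Delta_{\ell,h}u_j-\xi)\otimes\nabla\eta$. This follows from the $\lebe^{\max\{q,2\}}$-convergence of $\nabla u_j(\cdot+he_\ell)$ and of $\Delta_{\ell,h}u_j$ on $\spt\eta$.
\end{itemize}

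\emph{Diagonal term.} Here you give no reason for convergence when $q>2$: the weight is unbounded and only an $\lebe^1$-bound is known. So the two-sided inequality you display is not justified as stated. Its upper half is all that \eqref{eq:weighted0} needs, and it follows from Fatou's lemma, because $\langle\Delta_{\ell,h}[\nabla G_j(\nabla u_j)],\Delta_{\ell,h}\nabla u_j\rangle\geq0$ by monotonicity of $\nabla G_j$. You should replace the claimed convergence by this Fatou step.

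\textbf{Comparison.} Your route trades the weighted duality and Lemma~\ref{lem:weighted} for Vitali plus Fatou. It yields only a one-sided limit inequality, which suffices here, but does not produce \eqref{eq:hbftab1} as a by-product.

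\textbf{Minor point in \ref{item:Cacc2}.} Split $\int(1+|\nabla u_j|^2)|\nabla\eta|^2$ into its two parts before pulling out $\|\nabla\eta\|_{\lebe^\infty}^2$, as you evidently intend. Otherwise the constant part $\|\nabla\eta\|_{\lebe^\infty}^2\mathscr{L}^n(\Omega)$ is not controlled by the first term on the right of \eqref{eq:weighted}.
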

\begin{proof}
On~\ref{item:Cacc1}. Let $j \in \N$ and $\ell\in\{1,\ldots ,n\}$. Since we have $\nabla G_{j}(\nabla u_{j})\in\sobo_{\locc}^{1,1}(\Omega;\R^{N\times n})$ by Lemma~\ref{lem:nonuni}, we obtain for any $\psi\in\hold_{c}^{\infty}(\Omega;\R^{N})$ by an integration by parts that 
\begin{align}\label{eq:hbftab}
\begin{split}
 \left\vert \int_{\Omega}\langle \nabla^{2}G_{j}(\nabla u_{j})\partial_{\ell}\nabla u_{j},\nabla\psi\rangle\dif x \right\vert & = \left\vert \int_{\Omega}\langle \nabla G_{j}(\nabla u_{j}),\nabla\partial_{\ell}\psi\rangle\dif x \right\vert \\ 
& \!\!\!\! \stackrel{\eqref{eq:EL1}}{\leq}  \frac{1}{j}\|\partial_{\ell}\psi\|_{\sobo^{-1,1}(\Omega)} \stackrel{\eqref{eq:W-11cancel}}{\leq} \frac{1}{j}\|\psi\|_{\lebe^{1}(\Omega)}. 
\end{split}
\end{align}
Recalling $\nabla u_{j}\in(\sobo_{\locc}^{1,2}\cap\lebe^{q})(\Omega;\R^{N\times n})$ and the weighted estimate underlying  \eqref{eq:intermedreg}, we find via \eqref{eq:lifegoeson} and smooth approximation that 
\begin{align}\label{eq:hbftab1}
 \left\vert \int_{\Omega}\langle \nabla^{2}G_{j}(\nabla u_{j})\partial_{\ell}\nabla u_{j},\nabla\psi\rangle\dif x \right\vert \leq \frac{1}{j}\|\psi \|_{\lebe^{1}(\Omega)}\qquad\text{for all}\;\psi\in\sobo_{c}^{1,\max\{q,2\}}(\Omega;\R^{N}). 
 \end{align}
In order to justify the use of specific test functions for the derivation of the desired Caccioppoli-type inequalities, we let $\eta\in\sobo_{c}^{1,\infty}(\Omega)$ and $\xi\in\R^{N}$ be arbitrary. We choose a sequence $(h_{i})$ in $(0,1)$ with $h_{i}\searrow 0$ and the following properties: 
\begin{align}\label{eq:kriminaltechniker}
\begin{split}
 \eta^{2}(\Delta_{\ell,h_{i}}u_{j}-\xi) \to \eta^{2}(\partial_{\ell}u_{j}-\xi)&\;\qquad\text{strongly in $\lebe^{\max\{q,2\}}(\spt(\eta);\R^{N})$} \\ 
 \nabla(\eta^{2}(\Delta_{\ell,h_{i}}u_{j}-\xi))\to \nabla(\eta^{2}(\partial_{\ell}u_{j}-\xi)) &\;\qquad\text{$\mathscr{L}^{n}$-a.e. in $\Omega$}.
\end{split}
\end{align}
Put $\mu\coloneqq (1+|\nabla u_{j}|^{2})^{\frac{\max\{q,2\}-2}{2}}\mathscr{L}^{n}\mres\spt(\eta)$ and consider the following functional on the associated weighted Lebesgue space $\lebe_{\mu}^{2}(\Omega;\R^{N\times n})$: 
\begin{align*}
\Xi(w)\coloneqq \int_{\spt(\eta)}\langle\nabla^{2}G_{j}(\nabla u_{j})\partial_{\ell}\nabla u_{j},w\rangle\dif x \qquad \text{for all } w\in\lebe_{\mu}^{2}(\Omega;\R^{N\times n}).
\end{align*}
Because of \eqref{eq:intermedreg}, the growth bound \eqref{eq:lifegoeson} and the Cauchy--Schwarz inequality, it follows that $\Xi\in\lebe_{\mu}^{2}(\Omega;\R^{N\times n})'$. Moreover, it follows from the proof of Lemma~\ref{lem:nonuni} that 
\begin{align*}
\sup_{i\in\mathbb{N}} \|\nabla(\eta^{2}(\Delta_{\ell,h_{i}}u_{j}-\xi))\|_{\lebe_{\mu}^{2}(\Omega)} < \infty, 
\end{align*}
whereby the Banach--Alaoglu--Bourbaki theorem implies the existence of some $w\in\lebe_{\mu}^{2}(\Omega;\R^{N\times n})$ such that, for a non-relabelled subsequence,  
\begin{align}\label{eq:whatdoesnot}
\nabla(\eta^{2}(\Delta_{\ell,h_{i}}u_{j}-\xi))\rightharpoonup w \qquad\text{weakly in $\lebe_{\mu}^{2}(\Omega;\R^{N\times n})$}. 
\end{align}
Since $\max\{q,2\}\geq 2$, we then deduce $w=\nabla(\eta^{2}(\partial_{\ell}u_{j}-\xi))$ from $\eqref{eq:kriminaltechniker}_{2},\eqref{eq:whatdoesnot}$ and Lemma~\ref{lem:weighted}. Based on $\Xi\in\lebe_{\mu}^{2}(\Omega;\R^{N\times n})'$, we find that
\begin{align*}
\lefteqn{\int_{\Omega}\langle\nabla^{2}G_{j}(\nabla u_{j})\partial_{\ell}\nabla u_{j}, \nabla(\eta^{2}(\partial_{\ell}u_{j}-\xi))\rangle \dif x } \\ 
& = \lim_{i\to\infty} \int_{\Omega}\langle\nabla^{2}G_{j}(\nabla u_{j})\partial_{\ell}\nabla u_{j},\nabla(\eta^{2}(\Delta_{\ell,h_{i}}u_{j}-\xi))\rangle \dif x & \\ 
& \!\!\!\!  \stackrel{\eqref{eq:hbftab1}}{\leq} \lim_{i\to\infty} \frac{1}{j} \|\eta^{2}(\Delta_{\ell,h_{i}}u_{j}-\xi)\|_{\lebe^{1}(\Omega)} \stackrel{\eqref{eq:kriminaltechniker}_{1}}{=} \frac{1}{j}\|\eta^{2}(\partial_{\ell}u_{j}-\xi)\|_{\lebe^{1}(\Omega)}, & 
\end{align*}
where we have used that $\eta^{2}(\Delta_{\ell,h_{i}}u_{j}-\xi)\in\sobo_{c}^{1,\max\{q,2\}}(\Omega;\R^{N})$ is admissible in \eqref{eq:hbftab1}. Writing out the definition of $G_{j}$, this gives us 
\begin{align*}
\int_{\Omega}\eta^{2}\langle \nabla^{2}F(\nabla u_{j})& \partial_{\ell}\nabla u_{j},\partial_{\ell}\nabla u_{j}\rangle \dif x + \frac{1}{10\Cstabv j^{2}  }\int_{\Omega}\eta^{2}\langle\nabla^{2}\langle\nabla u_{j}\rangle_{q}\partial_{\ell}\nabla u_{j},\partial_{\ell}\nabla u_{j}\rangle \dif x \\ 
& \qquad + \frac{1}{2\CstabG j^{2}}\int_{\Omega}\eta^{2}|\partial_{\ell}\nabla u_{j}|^{2}\dif x \\ 
& \leq 2\left\vert\int_{\Omega} \langle \nabla^{2}F(\nabla u_{j})\partial_{\ell}\nabla u_{j},\eta(\partial_{\ell} u_{j}-\xi)\otimes\nabla \eta\rangle \dif x\right\vert \\ 
& \qquad + \frac{1}{5\Cstabv j^{2}  }\int_{\Omega}\eta\langle\nabla^{2}\langle\nabla u_{j}\rangle_{q}\partial_{\ell}\nabla u_{j},(\partial_{\ell}u_{j}-\xi)\otimes\nabla\eta\rangle \dif x \\ 
& \qquad + \frac{2}{\CstabG j^{2}}\int_{\Omega}\eta\langle \partial_{\ell}\nabla u_{j},(\partial_{\ell}u_{j}-\xi)\otimes\nabla\eta\rangle \dif x + \frac{1}{j}\|\eta^{2}(\partial_{\ell}u_{j}-\xi)\|_{\lebe^{1}(\Omega)}. 
\end{align*}
We organise the preceding inequality as 
\begin{align}\label{eq:splitto}
\mathrm{I}+\mathrm{II}+\mathrm{III} \leq \mathrm{IV} + \mathrm{V} + \mathrm{VI} + \mathrm{VII}. 
\end{align}
We start by considering $\mathrm{V}$ and $\mathrm{VI}$. Recalling that $\langle\cdot\rangle_{q}$ is convex and of class $\hold^{2}$, we may apply Young's inequality $\mathscr{L}^{n}$-a.e. to the positive definite bilinear forms 
\begin{align*}
(z_{1},z_{2})\mapsto \langle\nabla^{2}\langle\nabla v\rangle_{q}z_{1},z_{2}\rangle + \langle z_{1},z_{2}\rangle \qquad \text{for all }z_{1},z_{2}\in\R^{N\times n}
\end{align*}
and absorb all second derivatives quantities in $\mathrm{V}$ and $\mathrm{VI}$ into $\mathrm{II}$ and $\mathrm{III}$. Since the resulting terms on the left-hand side are non-negative, we may ignore them in the sequel. The remaining first order terms $\widetilde{\mathrm{V}}$ and $\widetilde{\mathrm{VI}}$ emerging from Young's inequality can be estimated as follows:
\begin{align}\label{eq:tabea1}
\begin{split}
\widetilde{\mathrm{V}}  +\widetilde{\mathrm{VI}} \; & \!\!\stackrel{\eqref{eq:q2ndderiv}}{\leq} \frac{c}{10\Cstabv j^{2}  }\int_{\Omega}(1+|\nabla u_{j}|^{2})^{\frac{q-2}{2}}|(\partial_{\ell}u_{j}-\xi)|^{2}|\nabla\eta|^{2}\dif x \\ 
& \qquad + \frac{c}{\CstabG j^{2}}\int_{\Omega}|(\partial_{\ell}u_{j}-\xi)|^{2}|\nabla\eta|^{2}\dif x \\ 
& \leq \frac{c\,(1+|\xi|^{2})}{10\Cstabv j^{2}  }\int_{\Omega}(1+|\nabla u_{j}|^{2})^{\frac{q}{2}}|\nabla\eta|^{2}\dif x \\ 
& \qquad + \frac{c\,(1+|\xi|^{2})}{\CstabG j^{2}}\int_{\Omega}(1+|\nabla u_{j}|^{2})|\nabla\eta|^{2}\dif x. 
\end{split}
\end{align}
Finally, we have 
\begin{align*}
\mathrm{VII} \leq \frac{1}{j} \|\eta\|_{\lebe^{\infty}(\Omega)}^{2} \big(\|\nabla u_{j}\|_{\lebe^{1}(\Omega)} + \mathscr{L}^{n}(\Omega)|\xi|\big). 
\end{align*}
Putting these estimates together, we then infer \eqref{eq:weighted0} from \eqref{eq:splitto}.
The assertion of~\ref{item:Cacc2} can be obtained from the statement in~\ref{item:Cacc1} by setting $\xi=0$, applying the Cauchy--Schwarz and Young inequality, absorbing all second order quantities into the left-hand side and using~\eqref{eq:muell1q}. This completes the proof. 
\end{proof}
\begin{rem}
The preceding lemma is formulated in a way such that it is simultaneously applicable in the cases $q\geq 2$ and $1\leq q\leq 2$. However, if $1\leq q\leq 2$, then the above proof simplifies considerably: In this case, Lemma~\ref{lem:nonuni} directly implies that $\nabla G_{j}(\nabla u_{j})\in\sobo_{\locc}^{1,2}(\Omega;\R^{N\times n})$. Thus, in this case, we may use $\psi=\eta^{2}(\partial_{\ell}u_{j}-\xi)$ as an admissible test function in \eqref{eq:hbftab}, without the approximation argument via finite difference quotients.
\end{rem}
\subsection{Uniform bounds and higher gradient integrability}\label{sec:highint}
We now establish the higher integrability assertions from Theorem~\ref{thm:main}. In order to maintain the linear structure of the proof and to avoid forward references, we firstly provide a variant of Theorem~\ref{thm:main} that avoids the appearance of $\nabla u$ on the right-hand sides of \eqref{eq:finalboundA1}--\eqref{eq:finalboundA1log}. The latter shall be a consequence of stronger assertions on the convergence of the weak gradients, see Section~\ref{sec:higher} below. 

\begin{prop}[Higher gradient integrability]\label{prop:semimain}
In the situation of Theorem~\ref{thm:main}, let $u\in\bv(\Omega;\R^{N})$ be a relaxed minimizer of $\mathscr{F}$ subject to the Dirichlet datum $u_{0}\in\sobo^{1,q}(\Omega;\R^{N})$. Then we have $u\in \sobo^{1,1}(\Omega;\R^{N})$, and there exists a constant $c=c(\gamma,\Gamma,\lambda,\Lambda,n,q,\mu)>0$ and an exponent $\mathtt{d}=\mathtt{d}(n,\mu,q)\in[1,\infty)$ such that the Ekeland-type approximation sequence $(u_{j})$ from Section~\ref{sec:Ekelandstart} satisfies the following estimates for all balls $\ball_{R}(x_{0})\Subset\Omega$: 
\begin{enumerate}
    \item\label{item:propmain1} If $1\leq\mu<1+\frac{2}{n}$ and $n\geq 3$, then 
    \begin{align*}
    \bigg(\dashint_{\ball_{R/2}(x_0)}|\nabla u|^{\frac{(2-\mu)n}{n-2}}\dif x\bigg)^\frac{n-2}{(2-\mu)n}     \leq c\,\limsup_{j\to\infty}\bigg(1+\bigg(\dashint_{{\ball_{R}(x_0)}}|\nabla u_{j}|\dif x \bigg)^{\mathtt{d}}\bigg) < \infty.
    \end{align*}
    \item\label{item:propmain2} If $1\leq \mu<2$ and $n=2$, then 
\begin{align*}
{\;\;-}{\!\!\!\|\,|\nabla u|^{2-\mu}\|}_{\exp\lebe^{1}(\ball_{R/2}(x_{0}))}^{\frac{1}{2-\mu}}
 &   \leq c\,\limsup_{j\to\infty}\bigg(1+\bigg(\dashint_{{\ball_{R}(x_0)}}|\nabla u_{j}|\dif x \bigg)^{\mathtt{d}}\bigg) < \infty.
\end{align*}
    \item\label{item:propmain3}  If $n=2$ and $\mu=2$, then we have for every $1\leq t <\infty$ that
\begin{align*}
\bigg(\dashint_{\ball_{R/2}(x_0)}|\nabla u|^{t}\dif x\bigg)^\frac{1}{t} &   \leq \limsup_{j\to\infty} \exp\bigg(c\,t\,\bigg(1+\bigg(\dashint_{\ball_{R}(x_0)}|\nabla u_{j}|\dif x \bigg)^{\mathtt{d}}\bigg)\bigg) < \infty.
\end{align*}
\end{enumerate}
\end{prop}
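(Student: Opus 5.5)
We work with the Ekeland sequence $(u_j)$ from Proposition~\ref{prop:summary} and introduce the auxiliary quantity
\[
V_j\coloneqq (1+|\nabla u_j|^{2})^{\frac{2-\mu}{4}},
\]
so that, pointwise,
\[
|\nabla V_j|^{2}\leq c\,(1+|\nabla u_j|^{2})^{-\frac{\mu}{2}}\,|\nabla^{2}u_j|^{2}.
\]
The first step is to apply Proposition~\ref{prop:cacc}\ref{item:Cacc2} on concentric balls $\ball_\rho\subset\ball_\sigma\subset\ball_R$. This controls $\|\nabla(\eta V_j)\|_{\lebe^2}^2$ by
\[
\int (1+|\nabla u_j|^2)^{q/2}|\nabla\eta|^2\dif x
\]
plus terms that vanish as $j\to\infty$; these go to zero by \eqref{eq:constitution}, $\CstabG\geq1$, and the uniform $\lebe^1$-bound \eqref{eq:unifboundL1}. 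In fact the Caccioppoli inequality \eqref{eq:weighted} is too crude, since it carries the full $q$-power. Instead we use \eqref{eq:weighted0} with the upper bound in \eqref{eq:muell1q}, applying Cauchy--Schwarz to the bilinear form $\nabla^2F(\nabla u_j)$. The critical term then becomes
\[
\int \eta^2|\nabla\eta|^2|\nabla u_j|^{2}(1+|\nabla u_j|^2)^{\frac{q-2}{2}}\dif x .
\]
Writing $|\nabla u_j|^{q}\approx V_j^{\frac{2q}{2-\mu}}$, the task is to bound $\|V_j|\nabla\eta|^{\alpha}\|_{\lebe^{Q}}$ with $Q=\frac{2q}{2-\mu}$, or a suitable interpolation exponent, by $\lebe^2$-norms of $\nabla V_j$ on the annulus.

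This is exactly what the good cut-off Lemma~\ref{L:optim} supplies. We apply it to $v=V_j$ with $p=2$ when $n\geq3$, using the sphere Sobolev embedding $\sobo^{1,2}(\mathbb S^{n-1})\hookrightarrow\lebe^{2\frac{n-1}{n-3}}$. The condition $q+\mu<2+\frac{2}{n-1}$ is exactly what places the required exponent strictly below the sphere-critical one. Interpolating with the $\lebe^{\frac{2}{2-\mu}}$-norm of $V_j$, which is $\lebe^1$ of $\nabla u_j$ and uniformly bounded, then makes the right-hand side sublinear in $\|\nabla V_j\|_{\lebe^2(\ball_\sigma\setminus\ball_\rho)}^2$. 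Young's inequality gives an inequality of the form \eqref{eq:iter1} with $\theta<1$, and the hole-filling Lemma~\ref{lem:holefilling} yields
\[
\|\nabla V_j\|^2_{\lebe^2(\ball_{R/2})}\leq c\Bigl(1+\bigl(\textstyle\dashint_{\ball_R}|\nabla u_j|\bigr)^{\mathtt d}\Bigr)+o(1).
\]
The condition $\mu<1+\frac2n$ is used to close this iteration: it guarantees that the $\lebe^1$-interpolation exponent makes the Young step subcritical. For $n\geq3$, Sobolev's inequality then gives $V_j\in\lebe^{\frac{2n}{n-2}}$, that is, $|\nabla u_j|^{\frac{(2-\mu)n}{n-2}}$ bounded uniformly, with scaling-correct constants after normalizing to $R=1$.

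For $n=2$ the same iteration runs with \eqref{L:optim:claimn2}, which replaces the sphere embedding by the one-dimensional $\lebe^\infty$ bound. Trudinger's inequality then turns the $\sobo^{1,2}$-bound on $V_j$ into an $\exp\lebe^{2}$-bound on $V_j$, hence an $\exp\lebe^1$-bound on $|\nabla u_j|^{2-\mu}$ via Lemma~\ref{lem:exponential}. In the endpoint case $\mu=2$ we use $V_j=\log(1+|\nabla u_j|^2)^{1/2}$ instead and \eqref{L:optim:claimn2:exp}. Trudinger then gives $\exp(c\,t\,V_j)$ integrable for every $t$, which produces the exponential form of \ref{item:propmain3}.

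The last step passes $j\to\infty$. The estimates above are uniform in $j$, so $(\nabla u_j)$ is bounded in $\lebe^{s}_{\locc}$ for some $s>1$. Since $u_j\stackrel{*}{\rightharpoonup}u$ by Proposition~\ref{prop:summary}, on each ball the measures $\D u_j$ converge weakly to $\D u$ while staying equi-integrable. Hence $\D u\mres\ball\ll\mathscr L^n$, and lower semicontinuity of convex norms gives the stated bounds for $\nabla u$, through Reshetnyak's theorem in the Orlicz cases. This gives $u\in\sobo^{1,1}_{\locc}$. Global $\sobo^{1,1}(\Omega)$ follows because $\D^s u$ is a finite measure that vanishes on every ball $\ball\Subset\Omega$, hence $|\D^s u|(\Omega)=0$. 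The main obstacle is the exponent bookkeeping in the cut-off step, i.e.\ verifying that the combination of Lemma~\ref{L:optim}, interpolation with the uniform $\lebe^1$-bound, and Young yields $\theta<1$ exactly under \eqref{eq:maincondition}. The cross term from \eqref{eq:weighted0} must be handled with the matrix Cauchy--Schwarz inequality so that only the $q-2$ (rather than $q$) power is lost.
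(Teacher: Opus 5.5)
Your architecture matches the paper's. It runs through the Caccioppoli inequality, a $V$-function with $V^{pq}\approx(1+|\nabla u_j|^2)^{q/2}$ for $p=\frac{2}{2-\mu}$, and the good cut-off of Lemma~\ref{L:optim}. It then interpolates $\lebe^{pq}$ between $\lebe^{p}$ and $\lebe^{Q}$, applies Young and hole-filling, and finishes with Sobolev/Trudinger and lower semicontinuity. The scalar $(1+|\nabla u_j|^2)^{\frac{2-\mu}{4}}$ is an acceptable substitute for $V_\mu(\nabla u_j)$ here. However, your account of \emph{why} the iteration closes is wrong in two places.

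First, \eqref{eq:weighted0} gains nothing over \eqref{eq:weighted}. With $\xi=0$, the matrix Cauchy--Schwarz step yields $\Lambda|\partial_\ell u_j|^2(1+|\nabla u_j|^2)^{\frac{q-2}{2}}|\nabla\eta|^2$. Summed over $\ell$, this is comparable to $(1+|\nabla u_j|^2)^{\frac q2}|\nabla\eta|^2$ wherever $|\nabla u_j|\geq1$. That is exactly how the paper derives \eqref{eq:weighted}, and you then (rightly) work with the full $q$-power anyway. So your closing remark about losing ``only the $q-2$ power'' is false.

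Second, for $n\geq3$ the two hypotheses play roles opposite to the ones you assign. Absorption needs $\theta pq<2$ with $\theta=\frac{1-1/q}{1-p/Q}$. This is \emph{equivalent} to $q+\mu<2+\frac{2}{n-1}$; see \eqref{eq:thetapqcond1}, and for $n=3$ take $Q$ large. By contrast, $pq<Q$, which you attribute to this condition, is strictly weaker. The condition $\mu<1+\frac2n$ cannot be what closes the iteration. For $q=1$ one has $\theta=0$ and nothing needs absorbing. Conversely, $n=4$, $\mu=1$, $q=1.7$ satisfies $\mu<1+\frac2n$ but gives $\theta pq=2.1$. The real role of $\mu<1+\frac2n$ is to make $\frac{(2-\mu)n}{n-2}>1$. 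That is precisely the unjustified ``bounded in $\lebe^s_{\locc}$ for some $s>1$'' in your last step, i.e.\ what rules out $\D^s u$.

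For $n=2$, two further details matter. In Lemma~\ref{L:optim} you must take $p=\frac{2}{2-\mu}$, not the $p=2$ you fix for $n\geq3$. (For $n\geq3$ that choice is harmless, since \eqref{L:optim:claim} is additive.) Because \eqref{L:optim:claimn2} is multiplicative, after interpolation with $\theta=1-\frac1q$ the norm $\|\nabla V_j\|_{\lebe^2}$ appears with power $\frac{2}{p+2}\cdot\frac{2(q-1)}{2-\mu}$. This is $<2$ iff $q+\mu<4$ when $p=\frac{2}{2-\mu}$, but only iff $q+2\mu<5$ when $p=2$. So $p=2$ loses part of the admissible range whenever $\mu>1$.

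Also, Lemma~\ref{lem:exponential} is not what gives the $\exp\lebe^1$-bound on $|\nabla u_j|^{2-\mu}$; that bound is just the identity \eqref{eq:rasenmaeher}. The lemma is needed in the limit passage. Since $t\mapsto\exp(t^{2-\mu})-1$ is not convex for $\mu>1$, Reshetnyak's theorem must be applied to the convexified $\Phi_\mu$, and the bound transferred back afterwards.

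For $\mu=2$, you should state how the $q$-term is handled. Write $(1+|\nabla u_j|^2)^{q/2}=e^{V}e^{(q-1)V}$ and apply H\"{o}lder in $\lebe^1\times\lebe^\infty$. Then \eqref{L:optim:claimn2:exp} produces $\|\nabla V\|_{\lebe^2}^{2(q-1)}$, which can be absorbed because $q<2$.
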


\begin{proof}[Proof of Proposition~\ref{prop:semimain}~\ref{item:propmain1} and~\ref{item:propmain2}]  Let $u\in\bv(\Omega;\R^{N})$ be a relaxed minimizer as in the proposition, and let $(u_{j})$ be the Ekeland-type approximation sequence constructed in Section~\ref{sec:Ekelandstart}. Moreover, let $\ball_{R}=\ball_{R}(x_{0})\Subset\Omega$ be an open ball. We now divide the proof of this part into four steps.

\emph{Step 1. $V$-function estimates.} Recalling that $\mu <2$, we introduce the auxiliary function 
\begin{align}\label{eq:Vfunction}
{V_{\mu} \colon \R^{N\times n}\to\R^{N\times n}\qquad\mbox{with}\qquad V_{\mu}(z)\coloneqq (1+|z|^{2})^{-\frac{\mu}{4}}z} \qquad \text{for all } z \in \R^{N\times n}
\end{align}
(see \cite{CARFUSMIN98}) and collect some preliminary observations: First, a direct computation yields that 
\begin{align}\label{eq:Vest}
|\nabla (V_{\mu}(\nabla w))|^{2} \leq (1+|\nabla w|^{2})^{-\frac{\mu}{2}} \sum_{\ell=1}^n|\partial_{\ell}\nabla w|^{2}\qquad\text{for all } w\in\sobo_{\locc}^{2,2}(\Omega;\R^{N}). 
\end{align}
We next record that~$V_\mu$ is a diffeomorphism on $\R^{N\times n}$. Indeed, the continuity is obvious and we can write for all $z\neq0$
\begin{align}\label{eq:homeo1}
V_\mu(z)=f_\mu(|z|)\frac{z}{|z|}\qquad\mbox{with}\qquad f_\mu(s) \coloneqq (1+s^{2})^{-\frac{\mu}{4}}s.
\end{align}
A direct computation yields $f_{\mu}'(s)=(1+s^2)^{-\frac{4+\mu}{4}}(1+s^2(1-\frac{\mu}{2}))>0$ and thus the claim easily follows from $\lim_{s\to\infty}f_{\mu}(s)=\infty$ because of $\mu<2$. Finally, we observe that 
\begin{equation}\label{bound:V}
 (1+|z|^2)^\frac{2-\mu}{4}-1\leq |V_\mu(z)|\leq |z|^\frac{2-\mu}{2}\qquad\text{for all}\; z\in\R^{N\times n}.
\end{equation}
\color{black}
Combining \eqref{eq:Vest} and \eqref{eq:weighted}, we obtain
\begin{align}\label{eq:portugal}
\begin{split}
\int_{\Omega}\eta^{2}|\nabla (V_{\mu}(\nabla u_{j}))|^{2}\dif x & \leq c\,\bigg(\int_{\ball_{R}}(1+|\nabla u_{j}|^{2})^{\frac{q}{2}}|\nabla\eta|^{2}\dif x \bigg. \\ &  \bigg. \qquad + \frac{1}{\CstabG j^{2}} \|\nabla\eta\|_{\lebe^{\infty}(\Omega)}^{2} \int_{\Omega}|\nabla u_{j}|^{2}\dif x + \frac{1}{j}\|\nabla u_{j}\|_{\lebe^{1}(\Omega)} \bigg) 
\end{split}
\end{align}
for any $\eta\in\sobo_{c}^{1,\infty}(\Omega)$ with $0\leq\eta\leq\mathbbm{1}_{\ball_{R}}$, with a constant $c>0$ which is independent of $j\in\mathbb{N}$ and $\eta$. Now, because of $\mu\in[1,2)$, the following exponents are well-defined: 
\begin{equation}\label{eq:choicepQ}
p\coloneqq \frac2{2-\mu},\qquad Q\coloneqq \begin{cases}2\frac{n-1}{n-3}&\mbox{if $n\geq4$},\\Q\in(\frac2{3-\mu-q},\infty)\;\text{arbitrary}&\mbox{if $n=3$},\\\infty&\mbox{if $n=2$.}\end{cases}
\end{equation}
The assumption $\mu\in[1,2)$ ensures that $p\in[2,\infty)$. Next, we observe that $pq< Q$. The case $n=2$ is obvious. If $n=3$, we first note that \eqref{eq:maincondition} gives us $3-\mu-q>0$. Then,  due to $\mu\in[1,2)$, the inequality $pq<Q$ follows from 
\begin{align*}
Q-pq>2\frac{2-\mu-q(3-\mu-q)}{(3-\mu-q)(2-\mu)}=2\frac{(q-1)(\mu+q-2)}{(3-\mu-q)(2-\mu)}\stackrel{\mu\geq 1}{\geq} 2\frac{(q-1)^{2}}{(3-\mu-q)(2-\mu)}\geq 0.
\end{align*}
Lastly, for $n\geq 4$, we first notice $(2-\mu-q)(n-1) > -2$ from \eqref{eq:maincondition}, which then yields
\begin{equation*}
Q-pq= \frac{2(n-1)}{n-3} -\frac{2q}{2-\mu}= 2 \frac{(n-1) (2-\mu-q) + 2q}{(n-3)(2-\mu)} > 2 \frac{-2 + 2q}{(n-3)(2-\mu)} \geq 0.
\end{equation*}
We fix $j\in\mathbb{N}$. We then use $p\leq pq<Q$, \eqref{bound:V} in the form $(1+|z|^2)^\frac{1}{2}\leq (1+|V_\mu(z)|)^p$ and H\"older's inequality to obtain for the first term on the right-hand side of \eqref{eq:portugal}
\begin{multline}\label{est:LqinVinterpolation}
\int_{\ball_{R}}(1+|\nabla u_{j}|^2)^\frac{q}2|\nabla \eta|^2\dx \leq \| (1+|V_{\mu}(\nabla u_{j})|) |\nabla \eta|^\frac2{pq}\|_{\lebe^{pq}(\ball_{R})}^{pq} \\ \leq \big(\| (1+|V_{\mu}(\nabla u_{j})|) |\nabla \eta|^\frac2{pq}\|_{\lebe^{p}(\ball_{R})}^{1-\theta}\| (1+|V_{\mu}(\nabla u_{j})|) |\nabla \eta|^\frac2{pq}\|_{\lebe^{Q}(\ball_{R})}^{\theta}\big)^{pq},
\end{multline}
where
\begin{align}\label{eq:Lyapunov}
\frac{1-\theta}{p}+\frac{\theta}Q=\frac1{pq} \qquad\mbox{and thus}\qquad \theta=\frac{1-\frac1q}{1-\frac{p}Q}.
\end{align}
For future reference, we note that  our assumption \eqref{eq:maincondition}  and the choice \eqref{eq:choicepQ} imply that
\begin{align}\label{eq:thetapqcond1}
\theta q p<2\qquad\text{provided that}\;n\geq 3.
\end{align}
Indeed, for $n\geq4$ we have
\begin{align*}
2-\theta qp=2-\frac{q-1}{\frac1p-\frac1Q}=2-\frac{q-1}{\frac{2-\mu}2-\frac{n-3}{2(n-1)}}=\frac{2-\mu-\frac{n-3}{n-1}-(q-1)}{\frac{2-\mu}2-\frac{n-3}{2(n-1)}}=\frac{2+\frac2{n-1}-q-\mu}{\frac{2-\mu}2-\frac{n-3}{2(n-1)}}\stackrel{\eqref{eq:maincondition}}>0.
\end{align*}
If $n=3$, a similar computation shows that $Q\in(\frac2{3-\mu-q},\infty)$ and $q+\mu<3=2+\frac2{3-1}$ combine to \eqref{eq:thetapqcond1} too. 

\emph{Step 2. Good cut-offs.} Appealing to Lemma~\ref{L:optim}, which fixes the constants $\gamma_{1},\gamma_{2}$ according to \eqref{eq:gammaadjust},  we find for all radii $\frac{R}{2} \leq \rho<\sigma \leq R$ a function $\eta \in \sobo_0^{1,\infty}(\ball_R)$ satisfying \eqref{L:optim:eta} with $\alpha=\frac{2}{pq}$ such that 
\begin{multline}\label{est:optimQ:largen}
\| (1+|V_{\mu}(\nabla u_{j})|) |\nabla \eta|^\frac2{pq}\|_{\lebe^{Q}(\ball_{R})} \stackrel{\eqref{L:optim:claim}}{\leq} c(\sigma-\rho)^{\frac1Q-\frac2{pq}}\Big(R^{1-\gamma_{1}/2}\frac{\|\nabla (V_{\mu}(\nabla u_{j}))\|_{\lebe^2(\ball_\sigma\setminus \ball_\rho)}}{(\sigma-\rho)^\frac12} \Big. \\ \Big. + R^{-\gamma_{2}/p}\frac{\| (1+|V_{\mu}(\nabla u_{j})|)\|_{\lebe^p(\ball_\sigma\setminus \ball_\rho)}}{(\sigma-\rho)^\frac1p}\Big) 
\end{multline}
in the case $n\geq 3$, and 
\begin{multline}\label{est:optimQ:2n}
\| (1+|V_{\mu}(\nabla u_{j})|) |\nabla \eta|^\frac2{pq}\|_{\lebe^{Q}(\ball_R)} \stackrel{\eqref{L:optim:claimn2}}{\leq}  c(\sigma-\rho)^{-\frac2{pq}}\Big(R^{-1/p}\frac{\| (1+|V_{\mu}(\nabla u_{j})|))\|_{\lebe^p(\ball_\sigma\setminus \ball_\rho)}}{(\sigma-\rho)^\frac1p} \Big. \\  \Big.  + \frac{\| (1+|V_{\mu}(\nabla u_{j})|)\|_{\lebe^p(\ball_\sigma\setminus \ball_\rho)}^\frac{p}{p+2}\|\nabla (V_{\mu}(\nabla u_{j}))\|_{\lebe^2(\ball_\sigma\setminus \ball_\rho)}^\frac{2}{p+2}}{(\sigma-\rho)^\frac2{p+2}}\Big)
\end{multline}
in the case $n=2$. In the following, we fix this choice of~$\eta$. We stress that the constant $c>0$ is independent of $j \in \N$ and we proceed to distinguish the cases $n\geq 3$ and $n=2$.  

\emph{Step 3. Proof of assertion~\ref{item:propmain1}.} If $n\geq 3$, we combine \eqref{eq:portugal}, \eqref{est:LqinVinterpolation} and  \eqref{est:optimQ:largen} to find
\begin{align*}
\|\nabla V_{\mu}(\nabla u_{j})\|_{\lebe^2(\ball_\rho)}^2 & \leq c\,R^{(1-\frac{\gamma_{1}}{2})\theta pq}\frac{\|\nabla (V_{\mu}(\nabla u_{j}))\|_{\lebe^2(\ball_\sigma)}^{\theta q p}\|(1+|V_{\mu}(\nabla u_{j})|)\|_{\lebe^p(B_\sigma)}^{(1-\theta) {q}p}}{(\sigma-\rho)^{2+pq\theta(\frac12-\frac1Q)}} \\ & \qquad + c\,R^{-\gamma_{2}q\theta}\frac{\|(1+|V_{\mu}(\nabla u_{j})|)\|_{\lebe^p(\ball_\sigma)}^{{q}p}}{(\sigma-\rho)^{2+pq\theta(\frac{1}{p}-\frac{1}{Q})}} \\
& \qquad + \frac{1}{\CstabG j^{2}(\sigma-\rho)^{2}}\int_{\Omega}|\nabla u_{j}|^{2}\dif x + \frac{1}{j}\|\nabla u_{j}\|_{\lebe^{1}(\Omega)}. 
\end{align*}
Recalling that $\theta pq<2$ from \eqref{eq:thetapqcond1}, we thus obtain by use of  Young's inequality: 
\begin{align*}
\|\nabla (V_{\mu}(\nabla u_{j}))\|_{\lebe^2(\ball_\rho)}^2 & \leq  \frac{1}{2} \|\nabla (V_{\mu}(\nabla u_{j}))\|_{\lebe^2(\ball_\sigma)}^{2}+c\,R^{(1-\frac{\gamma_{1}}2)\frac{2\theta pq}{2-\theta pq}}\frac{\|(1+|V_{\mu}(\nabla u_{j})|)\|_{\lebe^p(\ball_\sigma)}^{\frac{2(1-\theta)qp}{2-\theta pq}}}{(\sigma-\rho)^{(2+pq\theta(\frac12-\frac1Q))\frac{2}{2-\theta pq}}} \\ 
 & \qquad + {c\,R^{-\gamma_{2}q\theta}\frac{\|(1+|V_{\mu}(\nabla u_{j})|))\|_{\lebe^p(\ball_\sigma)}^{{q}p}}{(\sigma-\rho)^{2+pq\theta(\frac{1}{p}-\frac{1}{Q})}}} \\
 & \qquad + \frac{1}{\CstabG j^{2}(\sigma-\rho)^{2}}\int_{\Omega}|\nabla u_{j}|^{2}\dif x + \frac{1}{j}\|\nabla u_{j}\|_{\lebe^{1}(\Omega)}.
\end{align*}
We then apply Lemma~\ref{lem:holefilling} to conclude
\begin{align}
\|\nabla (V_{\mu}(\nabla  u_{j}))\|_{\lebe^2(\ball_{R/2})}^2 &\leq  cR^{n-2}\Big((R^{-\frac{n}{p}}\|(1+|V_{\mu}(\nabla u_{j})|)\|_{\lebe^p(\ball_R)})^{\frac{2(1-\theta) {q}p}{2-\theta pq}} \notag\\ 
& \hspace{2cm} +(R^{-\frac{n}{p}}\|(1+|V_{\mu}(\nabla u_{j})|)\|_{\lebe^p(\ball_R)})^{qp}\Big)\notag\\
& \qquad + c\Big(\frac{1}{\CstabG j^{2}R^{2}}\int_{\Omega}|\nabla u_{j}|^{2}\dif x + \frac{1}{j}\|\nabla u_{j}\|_{\lebe^{1}(\Omega)}\Big). \label{eq:davidbowie} 
\end{align}
Indeed, the correct scaling in $R$ follows by an elementary computation using the definition of $\gamma_1$, $\gamma_2$, see \eqref{eq:gammaadjust}, and \eqref{eq:Lyapunov} in the form
\begin{align*}
\Big(1-\frac{\gamma_1}{2}\Big)\frac{2\theta pq}{2-\theta pq} & -\Big(2+pq\theta\Big(\frac12-\frac1Q\Big)\Big)\frac{2}{2-\theta pq}+\frac{n}{p} \frac{2(1-\theta)pq}{2-\theta pq} \\ & 
\!\!\!\stackrel{\eqref{eq:gammaadjust}}{=}\frac{2}{2-\theta p q}\Big(\theta pq -n\frac{\theta pq}{2}+npq\frac{\theta}{Q}-2+pq n\frac{1-\theta}{p}\Big)\\
& \!\!\!\! \stackrel{\eqref{eq:Lyapunov}}{=} \frac{2}{2-\theta p q}\Big(\theta pq-n\frac{\theta pq}{2}+n-2\Big)=n-2,
\end{align*}
and similarly 
\begin{align*}
&-\Big(\gamma_2 q\theta+2+pq\theta \Big(\frac1p-\frac1Q\Big)\Big)+nq=n-2.
\end{align*}
As an elementary consequence of \eqref{bound:V} and the choice $p=\frac{2}{2-\mu}$, we have 
\begin{equation}\label{est:Vnablaup}
  \|V_{\mu}(\nabla u_j)\|_{\lebe^p(\ball_{R})}^p \leq \|\nabla u_j\|_{\lebe^1(\ball_{R})}.
\end{equation}
At this stage, we employ \eqref{eq:constitution} from Proposition~\ref{prop:summary} to arrive, in combination with \eqref{eq:davidbowie} and \eqref{est:Vnablaup}, at 
\begin{multline}
\limsup_{j\to\infty} \bigg(\dashint_{\ball_{R/2}} |\nabla (V_{\mu}(\nabla u_{j}))|^{2}\dif x\bigg)^{\frac{1}{2}} \label{eq:summary1}\\ \leq  c\, R^{-1}\limsup_{j\to\infty}\bigg(\bigg(\dashint_{\ball_{R}}(1+|\nabla u_j|)\dif x\bigg)^\frac{(1-\theta)q}{2-\theta pq} +\bigg( \dashint_{\ball_{R}} (1+|\nabla u_{j}|)\dif x\bigg)^\frac{q}{2}\bigg) . 
\end{multline}
For $1\leq \mu<1+\frac{2}{n}$, we have $\frac{(2-\mu)n}{(n-2)}>1$. In combination with  Sobolev's inequality (recall that $p\geq 2$) and the lower bound in \eqref{bound:V}, we thus conclude that 
\begin{align}
\limsup_{j\to\infty} &\bigg(\dashint_{\ball_{R/2}}|\nabla u_{j}|^{\frac{2-\mu}{2}\frac{2n}{n-2}}\dif x\bigg)^{\frac{n-2}{2n}}  \stackrel{\eqref{bound:V}}{\leq} 1+\limsup_{j\to\infty}\bigg(\dashint_{\ball_{R/2}}|V_{\mu}(\nabla u_{j})|^{\frac{2n}{n-2}}\dif x\bigg)^{\frac{n-2}{2n}} \notag \\ 
& \leq 1+  c\limsup_{j\to\infty}\,\bigg( \bigg(\dashint_{\ball_{R/2}}|V_\mu(\nabla u_{j})|^{p}\dif x\bigg)^\frac{1}{p} + R\bigg(\dashint_{\ball_{R/2}}|\nabla (V_{\mu}(\nabla u_{j}))|^{2}\dif x \bigg)^{\frac{1}{2}}\bigg) \label{eq:hsv}\\ 
 & \!\!\!\!\!\!\!\!\!\!\!  \stackrel{\eqref{est:Vnablaup},\,\eqref{eq:summary1}}{\leq}c\,\limsup_{j\to\infty} \bigg(1+\bigg(\dashint_{\ball_{R}}|\nabla u_j|\dif x\bigg)^\frac1p+\bigg(\dashint_{\ball_{R}}|\nabla u_j|\dif x\bigg)^{\frac{(1-\theta)q}{2-\theta pq}}  +\bigg(\dashint_{\ball_{R}}|\nabla u_j|\dif x \bigg)^\frac{q}{2}\bigg)\notag\\
 &\!\!\!\!\stackrel{\eqref{eq:unifboundL1}}{<}\infty.\notag 
\end{align}
Next, define a convex function $\Psi_{\mu}\colon\R^{N\times n}\to\R$ by $\Psi_{\mu}(z)\coloneqq  |z|^{\frac{(2-\mu)n}{n-2}}$, so that its recession function is given by 
\begin{align}\label{eq:reccompute1}
\Psi_{\mu}^{\infty}(z)=\begin{cases} 
0&\;\text{if}\;z=0,\\ 
+\infty &\;\text{if}\;z\neq 0. 
\end{cases}
\end{align}
By Proposition~\ref{prop:summary}\ref{item:EkelandSeq1}, we may assume that $u_{j}\stackrel{*}{\rightharpoonup} u$ in $\bv(\Omega;\R^{N})$ and so $\D u_{j}\stackrel{*}{\rightharpoonup}\D u$ in $\mathrm{RM}_{\mathrm{fin}}(\Omega;\R^{N\times n})$. Hence, by Reshetnyak's lower semicontinuity theorem (see Proposition~\ref{prop:resh}) applied in the way as displayed in Remark~\ref{rem:linearperspective}, we conclude that 
\begin{align}
&\dashint_{\ball_{R/2}}\Psi_{\mu}(\nabla u)\dif x   + \dashint_{\ball_{R/2}}\Psi_{\mu}^{\infty}\Big(\frac{\dif \D^{s}u}{\dif|\D^{s}u|}\Big)\dif|\D^{s}u| \stackrel{\text{Rem.~\ref{rem:linearperspective}}}{=} \dashint_{\ball_{R/2}}\Psi_{\mu}^{\#}\Big(\frac{\dif\,(\mathscr{L}^{n},\D u)}{\dif|(\mathscr{L}^{n},\D u)|}\Big)\dif|(\mathscr{L}^{n},\D u)| \notag \\ & \;\;\;\;\;\;\;\;\stackrel{\text{Prop.~\ref{prop:resh}}}{\leq} \liminf_{j\to\infty}\dashint_{\ball_{R/2}}  \Psi_{\mu}^{\#}\Big(\frac{\dif\,(\mathscr{L}^{n},\D u_{j})}{\dif|(\mathscr{L}^{n},\D u_{j})|}\Big)\dif|(\mathscr{L}^{n},\D u_{j})|  \notag \\ & \;\;\;\;\;\;\;\; \stackrel{\text{Rem.~\ref{rem:linearperspective}}}{=} \liminf_{j\to\infty}\int_{\ball_{R/2}}\Psi_{\mu}(\nabla u_{j})\dif x \label{eq:finalbound1}\\ 
& \;\;\;\;\;\;\;\;\;\;\stackrel{\eqref{eq:hsv}}{\leq}  c\,\limsup_{j\to\infty} \bigg(1+\bigg(\dashint_{\ball_{R}}|\nabla u_j|\dif x\bigg)^\frac1p+\bigg(\dashint_{\ball_{R}}|\nabla u_j|\dif x \bigg)^{\frac{(1-\theta)q}{2-\theta pq}}  +\bigg(\dashint_{\ball_{R}}|\nabla u_j|\dif x \bigg)^\frac{q}{2}\bigg)^\frac{2n}{n-2}.\notag
\end{align}
Since the ultimate expression is finite by \eqref{eq:unifboundL1}, \eqref{eq:reccompute1} implies that $\D^{s}u\equiv 0$ on $\ball_{R/2}$. By arbitrariness of $\ball_{R}(x_{0})\Subset\Omega$, we obtain 
\begin{equation*}
u\in\sobo^{1,\frac{(2-\mu)n}{n-2}}_{\rm loc}(\Omega;\R^{N})\subset \sobo^{1,1}_{\rm loc}(\Omega;\R^{N}). 
\end{equation*}
Note that, alternatively, we can use \eqref{eq:hsv} to extract a weakly convergent subsequence in $\sobo^{1,(2-\mu)n/(n-2)}(\ball_{R/2};\R^N)$, identify the limit with the fixed relaxed minimizer~$u$ and then take advantage of the lower semicontinuity of the norm. By definition of $\Psi_{\mu}$, \eqref{eq:finalbound1} yields 
\begin{align}\label{eq:finalboundA1:precise}
 \begin{split}\bigg(\dashint_{\ball_{R/2}}|\nabla u|^{\frac{(2-\mu)n}{n-2}}\dif x\bigg)^\frac{n-2}{(2-\mu)n} \leq c\,\limsup_{j\to\infty}\bigg(1+\sum_{i=0}^2\bigg(\dashint_{{\ball_{R}}}|\nabla u_{j}|\dif x \bigg)^{\mathtt{d}_{i}}\bigg),
 \end{split}
\end{align}
where the exponents $\mathtt{d}_{0}, \mathtt{d}_{1},\mathtt{d}_{2}$ are given by 
\begin{equation}\label{def:d1d2}
 \mathtt{d}_0=1, \qquad \mathtt{d}_1=\frac{(1-\theta)pq}{2-\theta pq}\qquad\mbox{and}\qquad \mathtt{d}_2=\frac{pq}{2}\stackrel{\eqref{eq:choicepQ}}{=}\frac{q}{2-\mu}.
 \end{equation}
 Note that $p\geq2$, $q\geq1$ and $\theta pq<2$ imply $\mathtt{d}_1,\mathtt{d}_2\geq1$. Hence, \eqref{eq:finalboundA1:precise} implies the claimed estimate from~\ref{item:propmain1} with the choice $\mathtt{d}=\max\{\mathtt{d}_{0},\mathtt{d}_{1},\mathtt{d}_{2}\}\geq1$.

\emph{Step 4. Proof of assertion~\ref{item:propmain2}.} For $n=2$ we argue by analogous means as in the case $n\geq 3$, now however combining \eqref{eq:portugal}, \eqref{est:LqinVinterpolation}  and \eqref{est:optimQ:2n}. In the resulting inequality
\begin{align*}
\|\nabla V_{\mu}(\nabla u_{j})\|_{\lebe^2(\ball_\rho)}^2 & \leq c\frac{\|\nabla (V_{\mu}(\nabla u_{j}))\|_{\lebe^2(\ball_\sigma)}^{\frac{2\theta q p}{p+2}}\|(1+|V_{\mu}(\nabla u_{j})|)\|_{\lebe^p(B_\sigma)}^{\frac{qp(p+2-2\theta)}{p+2}}}{(\sigma-\rho)^{2+\frac{2\theta qp}{p+2}}} \\ & \qquad + c\,R^{-\theta q}\frac{\|(1+|V_{\mu}(\nabla u_{j})|)\|_{\lebe^p(\ball_\sigma)}^{{q}p}}{(\sigma-\rho)^{2+q\theta}} \\
& \qquad + \frac{1}{\CstabG j^{2}(\sigma-\rho)^{2}}\int_{\Omega}|\nabla u_{j}|^{2}\dif x + \frac{1}{j}\|\nabla u_{j}\|_{\lebe^{1}(\Omega)}
\end{align*}
the second derivative quantities may be absorbed into the left-hand side provided that $\frac{\theta pq}{p+2}<1$. To see that the latter inequality is true, note that $Q=\infty$ by \eqref{eq:choicepQ} for $n=2$, whereby \eqref{eq:Lyapunov} gives us $q-1=\theta q$, which in turn yields 
\begin{align*}
1-\frac{\theta pq}{p+2} = 1-\frac{q-1}{1+\frac{2}{p}} \stackrel{\eqref{eq:choicepQ}_{1}}{=} 1 - \frac{q-1}{3-\mu} = \frac{4-\mu-q}{3-\mu}>0, 
\end{align*}
since $\mu+q<4$ in the present situation. We may then follow the steps leading to \eqref{eq:summary1}. In this way, we first obtain (using $q-1=\theta q$ and Young inequality with exponents $\frac{p+2}{\theta q p}$ and $\frac{p+2}{p+2-\theta p q}=\frac{p+2}{2+2p-qp}$)
\begin{align*}
\|\nabla (V_{\mu}(\nabla u_{j}))\|_{\lebe^2(\ball_\rho)}^2 & \leq  \frac{1}{2} \|\nabla (V_{\mu}(\nabla u_{j}))\|_{\lebe^2(\ball_\sigma)}^{2}+c\,\frac{\|(1+|V_{\mu}(\nabla u_{j})|)\|_{\lebe^p(\ball_\sigma)}^{\frac{p(2+pq)}{2+2p-pq}}}{(\sigma-\rho)^{\frac{2(2+pq)}{2+2p-pq}}} \\ 
& \qquad + 
{c\,R^{1-q}\frac{\|(1+|V_{\mu}(\nabla u_{j})|))\|_{\lebe^p(\ball_\sigma)}^{{q}p}}{(\sigma-\rho)^{1+q}}} \\
& \qquad + \frac{1}{\CstabG j^{2}(\sigma-\rho)^{2}}\int_{\Omega}|\nabla u_{j}|^{2}\dif x + \frac{1}{j}\|\nabla u_{j}\|_{\lebe^{1}(\Omega)}
\end{align*}
and then via Lemma~\ref{lem:holefilling},~\eqref{eq:constitution} and~\eqref{est:Vnablaup}
\begin{align}\label{eq:summary1n2}
\begin{split}
&\limsup_{j\to\infty} \bigg(\dashint_{\ball_{R/2}} |\nabla V_{\mu}(\nabla u_{j})|^{2}\dif x\bigg)^{\frac{1}{2}} \\
& \stackrel{\eqref{eq:constitution}}{\leq} c\, R^{-1}\limsup_{j\to\infty}\bigg(\bigg(\dashint_{\ball_{R}} (1+|\nabla u_j|)\dif x\bigg)^\frac{2+pq}{2(2+2p-pq)}+\bigg( \dashint_{\ball_{R}} (1+|\nabla u_{j}|)\dif x\bigg)^\frac{q}{2}\bigg) . 
\end{split}
\end{align}
Recalling that $n=2$ and $\mu<2$ by assumption,  we may now argue as for the case $n\geq3$, replacing \eqref{eq:summary1} with \eqref{eq:summary1n2} and using the Moser--Trudinger inequality from~\cite[Thm.~2]{TRUDINGER67} in the form
\begin{align}\label{eq:mosertrudinger}
\begin{split}
    {\;\;-}{\!\!\!\|\,|\nabla u_j|^\frac{2-\mu}{2}\|}_{\exp\lebe^2(\ball_{R/2})}& \stackrel{\eqref{bound:V}}{\leq}    {\;\;-}{\!\!\!\| (1+|V_{\mu}(\nabla u_{j})|)\|}_{\exp\lebe^2(\ball_{R/2})} \\ 
    & \!\!\!\!\!\!\!\!\!\!\!\!\!\!\!\!\!\!\!\!\!\!\!\!\!\!\!\!\!\!\!\!\leq c\bigg(\dashint_{\ball_{R/2}}(1+|V_{\mu}(\nabla u_j)|)^{p}\dif x \bigg)^{\frac{1}{p}} + cR\,\bigg(\dashint_{\ball_{R/2}}|\nabla (V_{\mu}(\nabla u_j))|^{2}\dif x \bigg)^{\frac{1}{2}}, 
    \end{split}
\end{align}
where $\!\!\!\!{\;\;-}{\!\!\!\|\cdot\|}_{\exp\lebe^2(\ball_{R/2})}$ is the scaled version of Orlicz norm associated with the convex function $t\mapsto\exp(t^{2})-1$, see Section~\ref{sec:notation}. To obtain the estimate in the form as given in Proposition~\ref{prop:semimain}\ref{item:propmain2}, we let $\Phi_{\mu}$ be the convex function introduced in 
Section~\ref{sec:expOrliczclasses}; see also Remark~\ref{rem:convex} below for a discussion. We then fix a particular value of $t_{1}>0$, see \eqref{eq:escobar}, such that Lemma~\ref{lem:exponential} is available with this choice of $t_{1}$. Based on 
\begin{align}
 {\;\;-}{\!\!\!\|\,|\nabla u_j|^\frac{2-\mu}{2}\|}{_{\exp\lebe^2(\ball_{R/2})}^{p}} & \stackrel{p=\frac{2}{2-\mu}}{=} \bigg(\inf\Big\{\lambda^{2} >0\colon\; \dashint_{\ball_{R/2}} \Big(\exp\Big(\frac{|\nabla u_{j}|^{2-\mu}}{\lambda^{2}} \Big) -1 \Big) \dif x \leq 1\Big\}\bigg)^{\frac{1}{2-\mu}} \notag\\ 
 & \;\;\; = \bigg(\inf\Big\{\zeta >0\colon\; \dashint_{\ball_{R/2}} \Big( \exp\Big(\frac{|\nabla u_{j}|^{2-\mu}}{\zeta} \Big) -1 \Big) \dif x \leq 1\Big\}\bigg)^{\frac{1}{2-\mu}} \label{eq:rasenmaeher} \\ & \;\;\; = {\;\;-}{\!\!\!\|\,|\nabla u_j |^{2-\mu}\|}{_{\exp\lebe^{1}(\ball_{R/2})}^{\frac{1}{2-\mu}}},\notag
\end{align}
a combination of \eqref{eq:summary1n2}--\eqref{eq:rasenmaeher} with Lemma~\ref{lem:exponential} gives us 
\begin{align}\label{eq:mrbeneett}
\limsup_{j\to\infty}\!\!\!{\;\;-}{\!\!\!\|\nabla u_{j}\|}_{\lebe^{\Phi_{\mu}}(\ball_{R/2})}  & \leq c\limsup_{j\to\infty}\bigg(1 + \sum_{i=0}^{2}\bigg(\dashint_{\ball_{R}}|\nabla u_{j}|\dif x\bigg)^{\mathtt{d}_{i}} \bigg).
\end{align}
Here, the limit on the right-hand side  of \eqref{eq:mrbeneett} is finite by \eqref{eq:unifboundL1}, and the exponents $\mathtt{d}_{0},\mathtt{d}_{1},\mathtt{d}_{2}\geq 1$ are now given by
\begin{align}\label{def:d1d2n2}
\mathtt{d}_{0}=1, \qquad \mathtt{d}_{1}= \frac{p(2+pq)}{2(2+2p-pq)}\;\;\;\text{and}\;\;\;\mathtt{d}_{2}= \frac{pq}{2}\stackrel{\eqref{eq:choicepQ}}{=}\frac{q}{2-\mu}. 
\end{align}
To conclude the proof, we address  the transfer of \eqref{eq:mrbeneett} to $u$ in the requisite form. To this end, we define $\Psi_{\mu}(z)\coloneqq \Phi_{\mu}(|z|)$ with $\Phi_{\mu}$ from~\eqref{eq:escobar}. Based on \eqref{eq:mrbeneett} and by passing to a non-relabelled subsequence if necessary, we may assume that $\liminf_{j\to\infty}\!\!\!\!\!{\;\;-}{\!\!\!\|\nabla u_j\|}{_{\lebe^{\Phi_{\mu}}(\ball_{R/2})}}(<\infty)$ is a limit, and we set $l\coloneqq \lim_{j\to\infty}\!\!\!\!\!{\;\;-}{\!\!\!\|\nabla u_j\|}{_{\lebe^{\Phi_{\mu}}(\ball_{R/2})}}$ for brevity. Let $\varepsilon>0$ be arbitrary. We then find $j_{0}\in\mathbb{N}$ such that $j\geq j_{0}$ implies $\!\!\!\!{\;\;-}{\!\!\!\|\nabla u_j\|}{_{\lebe^{\Phi_{\mu}}(\ball_{R/2})}}<l+\varepsilon$. For such $j$, the definition of the scaled Luxemburg norm gives us
\begin{align}\label{eq:frankenbrunnen}
\dashint_{\ball_{R/2}}{\Psi}_{\mu}\Big(\frac{\nabla u_{j}}{l+\varepsilon}\Big)\dif x\leq 1. 
\end{align}
Clearly, $\frac{1}{l+\varepsilon}\D u_{j} \stackrel{*}{\rightharpoonup}\frac{1}{l+\varepsilon}\D u$ in $\mathrm{RM}_{\mathrm{fin}}(\Omega;\R^{N\times n})$ as a consequence of Proposition~\ref{prop:summary}\ref{item:EkelandSeq1}. Employing Reshetnyak's lower semicontinuity theorem as in \eqref{eq:finalbound1} from above, we find that 
\begin{align}\label{eq:lotus}
\begin{split}
\dashint_{\ball_{R/2}}{\Psi}_{\mu}\Big(\frac{\nabla u}{l+\varepsilon}\Big)\dif x & + \frac{1}{l+\varepsilon}\dashint_{\ball_{R/2}}{\Psi}_{\mu}^{\infty}\Big(\frac{\dif\D^{s}u}{\dif|\D^{s}u|}\Big)\dif|\D^{s}u| \\ & \leq \liminf_{j\to\infty}\dashint_{\ball_{R/2}}{\Psi}_{\mu}\Big(\frac{\nabla u_{j}}{l+\varepsilon}\Big) \dif x \stackrel{\eqref{eq:frankenbrunnen}}{\leq} 1. 
\end{split}
\end{align}
Since ${\Psi}_{\mu}^{\infty}$ equally satisfies \eqref{eq:reccompute1}, we firstly deduce from \eqref{eq:lotus} that $\D^{s}u\equiv 0$ on $\ball_{R/2}$ and that
\begin{align*}
{\;\;-}{\!\!\!\|\nabla u\|}{_{\lebe^{\Phi_{\mu}}(\ball_{R/2})}} \leq l+\varepsilon. 
\end{align*}
Sending $\varepsilon\searrow 0$, the definition of $l$, \eqref{eq:mrbeneett} and the second inequality from \eqref{eq:tenpenny} yield~\ref{item:propmain2} with the choice $\mathtt{d}=\max\{\mathtt{d}_0,\mathtt{d}_1,\mathtt{d}_2\} \geq 1$. 
\end{proof}

\begin{rem}[On the exponents $\mathtt{d}_{1},\mathtt{d}_{2}$]
A direct computation of $\mathtt{d}_1$ defined in \eqref{def:d1d2} and \eqref{def:d1d2n2} yields
\begin{equation*}
\mathtt{d}_1=\frac{1}{2-\mu}\frac{(2-\mu)(n-1)-q(n-3)}{(n-1)(3-\mu-q)-(n-3)}\qquad\mbox{for $n\geq4$ and $n=2$,}
\end{equation*}
with a small correction in dimension $n=3$. In particular, we have that $q>1$ implies that $\mathtt{d}_1>\frac{1}{2-\mu}=\mathtt{d}_2$. In the case $q=1$, we have for all $n\geq2$ that $\mathtt{d}_1=\frac{1}{2-\mu}=\mathtt{d}_2$ (for $n\geq3$ this follows since $\theta=0$ in that case, see \eqref{eq:Lyapunov}), and the scaling of the right-hand side in \eqref{eq:finalboundA1:precise} coincides with previous findings in the literature for functionals with linear growth; see, e.g., \ \cite[Thm 1.1]{GMEINEDER20}.
\end{rem}
\begin{rem}[On the passage to $\Phi_{\mu}$ in \eqref{eq:mosertrudinger}\emph{ff}.]\label{rem:convex}
The passage to the \emph{convex} function $\Phi_{\mu}$ in \eqref{eq:mosertrudinger}\emph{ff}. is primarily motivated by two points. Firstly, if we already knew at \eqref{eq:mosertrudinger} that $\D^{s}u\equiv 0$ in $\ball_{R/2}$ \emph{and} $\nabla u_{j}\to\nabla u$ pointwise $\mathscr{L}^{n}$-a.e. in $\ball_{R/2}$, then the equivalence from \eqref{eq:rasenmaeher} would allow us to conclude that $|\nabla u_{j}|^{2-\mu}\to|\nabla u|^{2-\mu}$ pointwise $\mathscr{L}^{n}$-a.e. in $\ball_{R/2}$, whereby the lower bound in the inequality from~\ref{item:propmain2} could be obtained by use of Fatou's lemma. The requisite pointwise convergence $\mathscr{L}^{n}$-a.e., however, shall only be a consequence of the results from  Section~\ref{sec:higher} below. Hence, to keep the linear structure of the proof and in order to avoid forward references, we prefer to give a self-contained argument. In view of \eqref{eq:mosertrudinger} or \eqref{eq:rasenmaeher}, it is difficult to extract any useful sort of convergence on the sublinear powers $|\nabla u_{j}|^{2-\mu}$ which directly allows us to conclude lower semicontinuity of the corresponding Orlicz norms. Therefore, we are bound to argue via the weak*-convergence of Radon measures. Since the latter is compatible with convex functions by Reshetnyak's theorem, but $t\mapsto \exp(t^{2-\mu})-1$ is \emph{not} convex, this motivates the detour via the convex function $\Phi_{\mu}$ from \eqref{eq:escobar}. 
\end{rem}

The remaining case $\mu=n=2$ from Proposition~\ref{prop:semimain}\ref{item:propmain3}  cannot be accomplished by fully analogous means. Here, a slightly different $V$-function is required, and the use of the standard Sobolev inequality has to be replaced by a variant of the usual Moser--Trudinger inequality:
\begin{proof}[Proof of Proposition~\ref{prop:semimain}\ref{item:propmain3}]
Let $\ball_{R}=\ball_{R}(x_{0})$ be an open ball and denote by $c=c(\gamma,\Gamma,q)>0$ a constant independent of $j$ which might change from place to place. Instead of the function $V_\mu$ defined in~\eqref{eq:Vfunction}, we now work with 
\begin{align}\label{eq:V2}
V(z)\coloneqq  \log((1+|z|^{2})^{\frac{1}{2}})\qquad \text{for all } z\in\R^{N\times 2}. 
\end{align}
By similar computations as in the proof of Theorem~\ref{thm:main}\ref{item:main2}, we now derive local $\lebe^2$-bounds on $\nabla V(\nabla u_j)$. Indeed, a direct computation yields that 
\begin{align}\label{eq:Vestlog}
 |\nabla (V(\nabla w))|^{2} \leq \frac{|\nabla^{2} w|^{2}}{1+|\nabla w|^{2}}\qquad\text{for all}\;w\in\sobo_{\locc}^{2,2}(\Omega;\R^{N}). 
\end{align}
As a substitute of \eqref{est:LqinVinterpolation}, we estimate for an arbitrary $j\in\mathbb{N}$:
\begin{align}\label{eq:eisenimfeuer}
\begin{split}
\int_{\ball_{R}}(1+|\nabla u_{j}|^{2})^{\frac{q}{2}}|\nabla\eta|^{2}\dif x & = \int_{\ball_{R}}\exp(V(\nabla u_{j}))\exp((q-1)V(\nabla u_{j}))|\nabla\eta|^{2}\dif x \\ 
& \leq \bigg(\int_{\ball_{R}}(1+|\nabla u_{j}|^{2})^{\frac{1}{2}}\dif x\bigg) \|\exp(V(\nabla u_{j}))|\nabla\eta|^{\frac{2}{q-1}}\|_{\lebe^{\infty}(\ball_{R})}^{q-1} 
\end{split}
\end{align}
for any $\eta\in\sobo_{c}^{1,\infty}(\Omega)$ with $0\leq\eta\leq\mathbbm{1}_{\ball_{R}}$. Combining the definition~\eqref{eq:V2} of~$V$ and Lemma~\ref{L:optim}, see \eqref{L:optim:claimn2:exp}, we find for every $\frac{R}2\leq \rho<\sigma\leq R$ a function $\eta\in\sobo_{0}^{1,\infty}(\ball_{R})$ satisfying \eqref{L:optim:eta} together with 
\begin{multline}\label{eq:mrtuttle}
 \|\exp(V(\nabla u_{j}))|\nabla\eta|^{\frac{2}{q-1}}\|_{\lebe^{\infty}(\ball_{R})}^{q-1} \\ 
  \leq c(\sigma-\rho)^{-2} \bigg(\frac{\|(1+|\nabla u_{j}|^{2})^{\frac{1}{2}}\|_{\lebe^{1}(\ball_{R})}}{(\sigma-\rho)R}\bigg)^{q-1}\bigg(1+\frac{\|R\,\nabla V(\nabla u_j)\|_{\lebe^2(\ball_\sigma)}^2}{(\sigma-\rho)R}\bigg)^{q-1}.
\end{multline}
By \eqref{eq:maincondition}, we have $1\leq q<2$. Based on  \eqref{L:optim:eta}, the estimate \eqref{eq:weighted} for $\mu=2$, \eqref{eq:Vestlog} and Lemma~\ref{lem:holefilling} yield 
\begin{align*}
\int_{\ball_{R/2}} |\nabla V(\nabla u_{j})|^{2}\dif x &\leq c\,\bigg(\bigg(\dashint_{\ball_R}(1+|\nabla u_j|^2)^\frac12\dif x\bigg)^\frac{q}{2-q}+\bigg(\dashint_{\ball_R}(1+|\nabla u_j|^2)^\frac12\dif x\bigg)^{q}\bigg)\\
& \qquad + \frac{1}{\CstabG j^{2}R^{2}}\int_{\Omega}|\nabla u_{j}|^{2}\dif x + \frac{1}{j}\|\nabla u_{j}\|_{\lebe^{1}(\Omega)}.
\end{align*}
As above, see \eqref{eq:summary1} and~\eqref{eq:summary1n2}, we obtain by virtue of Proposition~\ref{prop:summary}: 
\begin{align}\label{eq:summary1log}
\begin{split}
\limsup_{j\to\infty} \int_{\ball_{R/2}} |\nabla V(\nabla u_{j})|^{2}\dif x & \leq c \limsup_{j\to\infty}\bigg(1+\bigg(\dashint_{\ball_{R}}|\nabla u_j|\dif x \bigg)^{\frac{q}{2-q}} +\bigg(\dashint_{\ball_{R}}|\nabla u_j|\dif x \bigg)^q\bigg).
\end{split}
\end{align}
Moreover, we record that, by elementary properties of the logarithm, we have for all $j\in\N$: 
\begin{equation}\label{bound:Vnablaulog}
\dashint_{\ball_{R/2}}|V(\nabla u_j)|^2\dif x\leq c\dashint_{\ball_{R/2}} (1+|\nabla u_j|)\dif x .
\end{equation}
Our next goal is to control $\|\nabla u_j\|_{\lebe^t(\ball_{R/2})}$ for every $t \in [1,\infty)$ by means of $\|\nabla V(\nabla u_j)\|_{\lebe^2(\ball_{R/2})}^2$. To this end, we use a variation of the classical Moser--Trudinger inequality, namely, the following scaled version: there exist constants $C_{\mathrm{MT}}>0$ and $c>0$ such that 
\begin{align*}
\dashint_{\ball_{r}}\exp\Big( \frac{C_{\mathrm{MT}}|w-(w)_{\ball_r}|^2}{\|\nabla w\|_{\lebe^2(\ball_{r})}^2}\Big)\dif x \leq c\qquad\mbox{for all non-constant functions $w\in \sobo^{1,2}(\ball_r)$}.
\end{align*}
Using the trivial inequality 
$w^2\leq 2(w-(w)_{\ball_r})^2+2(w)_{\ball_r}^2$, we deduce with $c_{\mathrm{MT}}\coloneqq\frac{1}{2}C_{\mathrm{MT}}$ that 
\begin{equation}\label{est:mosernoconstrain}
\dashint_{\ball_{r}}\exp\Big( \frac{c_{\mathrm{MT}}|w|^2}{1+\|\nabla w\|_{\lebe^2(\ball_{r})}^2}\Big)\dif x \leq c\exp(C_{\mathrm{MT}}(w)_{\ball_{r}}^2)\qquad\mbox{for all $w\in \sobo^{1,2}(\ball_r)$}.
\end{equation} 
Next, we obtain by Young's inequality 
\begin{align*}
\log((1+|\nabla u_j|^2)^\frac{t}2)=t V(\nabla u_j)\leq \frac{t^2(1+\|\nabla V(\nabla u_j)\|_{\lebe^2(\ball_{R/2})}^2)}{4c_{\mathrm{MT}}} + \frac{c_{\mathrm{MT}} V(\nabla u_j)^2}{1+\|\nabla V(\nabla u_j)\|_{\lebe^2(\ball_{R/2})}^2}. 
\end{align*}
Hence, we have for every $t\in[1,\infty)$ and $j\in\N$ that
\begin{align*}
\dashint_{\ball_{R/2}}&(1+|\nabla u_j|^2)^\frac{t}2\dif x =\dashint_{\ball_{R/2}}\exp(\log((1+|\nabla u_j|^2)^\frac{t}2))\dif x\\
&\leq  \exp\bigg(\frac{t^2(1+\|\nabla V(\nabla u_j)\|_{\lebe^2(\ball_{R/2})}^2)}{4c_{\mathrm{MT}}}\bigg)\dashint_{\ball_{R/2}}\exp\Big(\frac{c_{\mathrm{MT}} V(\nabla u_j)^2}{1+\|\nabla V(\nabla u_j)\|_{\lebe^2(\ball_{R/2})}^2}\Big)\dif x\\
&
\!\!\!\!\stackrel{\eqref{est:mosernoconstrain}}{\leq} c\,\exp\bigg(\frac{t^2(1+\|\nabla V(\nabla u_j)\|_{\lebe^2(\ball_{R/2})}^2)}{4c_{\mathrm{MT}}} \bigg)\exp(C_{\mathrm{MT}}(V(\nabla u_j))_{\ball_{R/2}}^2).
\end{align*}
Bounding the right-hand side of the previous inequality by use of \eqref{eq:summary1log} and \eqref{bound:Vnablaulog}, we obtain for a suitable constant $c>0$ independent of $t$ that 
\begin{equation}\label{eq:hsv:nmu2}
\limsup_{j\to\infty}\dashint_{\ball_{R/2}}(1+|\nabla u_j|^2)^\frac{t}2\dif x
 \leq c\, \limsup_{j\to\infty}\bigg(\exp\bigg(t^2c\bigg(1+ 
 \sum_{i=0}^{2}\bigg(\dashint_{\ball_{R}}|\nabla u_{j}|\dif x\bigg)^{\mathtt{d}_{i}} \bigg) \bigg)\bigg)
\end{equation}
with $\mathtt{d}_{0}=1$, $\mathtt{d}_{1}=\tfrac{q}{2-q}$ and $\mathtt{d}_{2}=q$, and the ultimate expression is clearly finite by~\eqref{eq:unifboundL1}. 

Clearly, the above estimate in conjunction with an argument analogous to \eqref{eq:finalbound1}\emph{ff}. yields  that $u\in\sobo^{1,t}_{\rm loc}(\Omega;\R^{N})$ for all $t<\infty$; moreover, we have the estimate as displayed in Proposition~\ref{prop:semimain}\ref{item:propmain3}   with an exponent $\mathtt{d}=\max\{\mathtt{d}_{0},\mathtt{d}_{1},\mathtt{d}_{2}\}=\frac{q}{2-q}\geq1$ (recall $n=\mu=2$ and thus $q<2$ in this case by \eqref{eq:maincondition}). This finishes the proof of Proposition~\ref{prop:semimain}\ref{item:propmain3}.
\end{proof}

\begin{rem}[Limitations and comparison with $(p,q)$-growth]\label{rem:unconditional} The reader might notice that the above proof is primarily centered around the wider range \eqref{eq:maincondition} of $q$'s to yield uniform bounds on $\|\nabla V_{\mu}(\nabla u_{j})\|_{\lebe^{2}(\ball_{\rho})}$. This also allows for a wider range of $\mu$ for a given exponent $q$. However, sending $q\searrow 1$, it does \emph{not} improve the range for the classical $\mu$-ellipticity (meaning that $q=1$). Indeed, here the above proof yields uniform bounds of integrals of \emph{superlinear} expressions of $\nabla u_{j}$ only if $1<\mu<1+\frac{2}{n}$. Thus, even though linear growth functionals subject to the classical $\mu$-ellipticity share some resemblance with $(p,q)$-functionals on the level of second derivatives, this yet shows a fundamental difference to $(p,q)$-growth functionals in view of regularity. In particular, aiming for $\mu\geq 1+\frac{2}{n}$ in view of Sobolev regularity seems to require additional tools. 

\end{rem}
\subsection{Strong convergence, higher derivatives and the proof of Theorem~\ref{thm:main}}\label{sec:higher}
We next address further regularity properties of $u$ and the Ekeland-type approximation sequence $(u_j)$; in particular, we prove the strong convergence $u_j \to u$ in $\sobo^{1,1}_{\locc}(\Omega;\R^N)$. We start with a second order regularity result under more restrictive assumptions. To state it conveniently, we put 
\begin{align*}
\bv_{2(,\locc)}(\Omega;\R^{N}) \coloneqq \big\{ u\in\sobo_{(\locc)}^{1,1}(\Omega;\R^{N})\colon\;\nabla u \in\bv_{(\locc)}(\Omega;\R^{N\times n})\big\}. 
\end{align*}

\begin{prop}[Higher derivatives]\label{lem:ae} Let $\Omega\subset\R^{n}$ be open and bounded with Lipschitz boundary. Moreover, let $F\in\hold^{2}(\R^{N\times n})$ be a variational integrand with \eqref{eq:1qgrowth} and \eqref{eq:muell1q}, where $1\leq\mu,q<\infty$ satisfy~\eqref{eq:maincondition} and, in addition, 
\begin{align}\label{eq:mustrongercond}
1\leq\mu\leq\frac{n}{n-1}. 
\end{align}
Then, for any $u_{0}\in\sobo^{1,q}(\Omega;\R^{N})$, the following statements hold for any relaxed minimizer $u\in\bv(\Omega;\R^{N})$ of $\mathscr{F}$ subject to the Dirichlet datum $u_{0}$: 
\begin{enumerate}
\item\label{item:detective1} $u\in \bv_{2,\locc}(\Omega;\R^{N})$ provided that $\mu=\frac{n}{n-1}$ and $n\geq3$,
\item\label{item:detective2} $u\in \sobo_{\locc}^{2,t}(\Omega;\R^{N})$ with $t=\frac{(2-\mu)n}{n-\mu}\in(1,2)$ provided that $\mu<\frac{n}{n-1}$ and $n\geq3$,
\item\label{item:detective3} $u\in \sobo_{\locc}^{2,t}(\Omega;\R^{N})$ for all $t<2$ provided that $n=2$.
\end{enumerate}
Moreover, let $(u_{j})$ be the Ekeland-type vanishing viscosity sequence in $\sobo_{\locc}^{2,2}(\Omega;\R^{N})$ constructed in Section~\ref{sec:Ekelandstart}. Then, for any open set $K\Subset \Omega$, we have for $n=2$ or $n\geq3$ and $\mu<\frac{n}{n-1}$ that
\begin{equation}\label{lim:weaksecond1}
    \nabla u_j\rightharpoonup \nabla u\qquad \mbox{in $\sobo^{1,t}(K;\R^{N\times n})$}
\end{equation}
with $t\in(1,2)$ as above, and, in the case $n\geq3$ with $\mu=\frac{n}{n-1}$,
\begin{equation}\label{lim:weaksecond2}
    \nabla u_j\stackrel{*}{\rightharpoonup} \nabla u\qquad \mbox{in $\BV(K;\R^{N\times n})$}.
\end{equation}
\end{prop}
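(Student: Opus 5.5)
Throughout, fix an open $K\Subset\Omega$ and cover it by finitely many balls $\ball_{R/2}(x_{i})$ with $\ball_{R}(x_{i})\Subset\Omega$. The plan has three steps. First, obtain uniform-in-$j$ bounds on $\nabla^{2}u_{j}$ in $\lebe^{t}(K)$ (resp. $\lebe^{1}(K)$) by interpolating the weighted Caccioppoli bound \eqref{eq:weighted} against the higher integrability of Proposition~\ref{prop:semimain}. Second, pass to the limit by compactness. Third, identify the limit with $\nabla u$ using Proposition~\ref{prop:summary}\ref{item:EkelandSeq1}.

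\emph{Uniform second-order bounds.} The argument in Steps~1--3 (resp. Step~4) of the proof of Proposition~\ref{prop:semimain} shows that the right-hand side of \eqref{eq:weighted} stays bounded as $j\to\infty$. Taking $\eta$ as a cut-off for $\ball_{R/2}$ inside $\ball_{R}$ and using \eqref{eq:constitution} and \eqref{eq:unifboundL1}, we get
\begin{align*}
\limsup_{j\to\infty}\int_{\ball_{R/2}}\frac{|\nabla^{2}u_{j}|^{2}}{(1+|\nabla u_{j}|^{2})^{\mu/2}}\dif x<\infty,
\end{align*}
together with $\limsup_{j}\|\nabla u_{j}\|_{\lebe^{s}(\ball_{R/2})}<\infty$, where $s=\frac{(2-\mu)n}{n-2}$ if $n\ge3$ and $s<\infty$ arbitrary if $n=2$. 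The case $n=2$, $\mu=2$ uses part~\ref{item:propmain3}, and the case $n=2$, $\mu<2$ uses the exponential integrability from part~\ref{item:propmain2}. Shrinking radii by a fixed factor is harmless. Next, apply H\"older's inequality with exponents $\frac{2}{t}$ and $\frac{2}{2-t}$:
\begin{align*}
\int|\nabla^{2}u_{j}|^{t}\dif x\le\Big(\int\frac{|\nabla^{2}u_{j}|^{2}}{(1+|\nabla u_{j}|^{2})^{\mu/2}}\dif x\Big)^{\frac t2}\Big(\int(1+|\nabla u_{j}|^{2})^{\frac{\mu t}{2(2-t)}}\dif x\Big)^{\frac{2-t}{2}}.
\end{align*}
For this to close we need $\frac{\mu t}{2-t}\le s$. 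For $n\ge3$, choosing equality $\frac{\mu t}{2-t}=\frac{(2-\mu)n}{n-2}$ gives $t(\mu(n-2)+(2-\mu)n)=2(2-\mu)n$, i.e. $t=\frac{(2-\mu)n}{n-\mu}$. One checks that $t\ge1$ is equivalent to $\mu\le\frac{n}{n-1}$, with equality exactly when $\mu=\frac{n}{n-1}$. This explains \eqref{eq:mustrongercond} and the split between \ref{item:detective1} and \ref{item:detective2}. For $n=2$, $s$ is arbitrary, so any $t<2$ works.

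\emph{Limit passage and identification.} Since $(u_{j})$ is bounded in $\sobo^{1,1}(\Omega;\R^{N})$, the bounds above show that $(\nabla u_{j})$ is bounded in $\sobo^{1,t}(K)$. When $t=1$, i.e. $\mu=\frac{n}{n-1}$, it is bounded in $\sobo^{1,1}(K)\subset\BV(K)$. For $t>1$, reflexivity gives a weakly convergent subsequence in $\sobo^{1,t}(K)$; for $t=1$, the weak*-compactness theorem on $\BV$ gives a weak*-convergent subsequence. In both cases, Rellich's theorem yields $\nabla u_{j}\to g$ strongly in $\lebe^{1}(K)$. On the other hand, Proposition~\ref{prop:summary}\ref{item:EkelandSeq1} gives $\D u_{j}\stackrel{*}{\rightharpoonup}\D u$, and Proposition~\ref{prop:semimain} gives $\D u=\nabla u\,\mathscr{L}^{n}$. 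Hence $g=\nabla u$ on $K$. The limit is unique, so the whole sequence converges (after the fixed subsequence from Proposition~\ref{prop:summary}). This gives \eqref{lim:weaksecond1}, \eqref{lim:weaksecond2} and \ref{item:detective1}--\ref{item:detective3}. The second derivatives of $u$ inherit the bounds by lower semicontinuity of the norm, or by Reshetnyak's theorem in the $\BV$ case.

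\emph{Main obstacle.} The main obstacle is to verify carefully that the right-hand side of \eqref{eq:weighted} is bounded uniformly in $j$ on a fixed ball, rather than only the hole-filled $V_{\mu}$-quantity. This requires tracking that the hole-filling in Step~3 (and the $\mu=2$ variant with \eqref{eq:Vestlog}) controls $\int(1+|\nabla u_{j}|^{2})^{q/2}|\nabla\eta|^{2}$ via \eqref{est:LqinVinterpolation} and \eqref{est:optimQ:largen}. If this is awkward, a simpler route is available: use the already established uniform $\lebe^{s}$-bound with $s\ge q$, which follows from \eqref{eq:maincondition}, and plug it directly into \eqref{eq:weighted} with a standard cut-off.
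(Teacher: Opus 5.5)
Your proposal is correct and coincides with the paper's proof, which takes the ``simpler route'' you mention at the end: a standard cut-off in \eqref{eq:weighted}, \eqref{eq:constitution} to remove the stabilization terms, Young's inequality with exponents $\frac{2}{t},\frac{2}{2-t}$ (playing the role of your H\"older step), the uniform bound \eqref{eq:uniuni} to control both the $q$-term and the $\frac{\mu t}{2-t}$-term, and then compactness together with the identification via $\D u_j\stackrel{*}{\rightharpoonup}\D u=\nabla u\,\mathscr{L}^n$. One correction: $q<\frac{(2-\mu)n}{n-2}$ does not follow from \eqref{eq:maincondition} alone (take $n=3$, $\mu=1.6$, $q=1.3$); it also needs \eqref{eq:mustrongercond}, see \eqref{ineq:qleq}. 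Your hole-filling route avoids that restriction, but this gains nothing, since $t\geq 1$ already requires \eqref{eq:mustrongercond}.
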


\begin{proof}
Let $x_{0}\in\Omega$ and $R>0$ be such that $0<R<\frac{1}{2}\mathrm{dist}(x_{0},\partial\Omega)$. Moreover, let $K\Subset\Omega$ be an arbitrary but fixed relatively compact subset with $\ball_{R}\coloneqq \ball_{R}(x_{0})\Subset K$. For the sequence $(u_{j})$ in $\sobo_{\locc}^{2,2}(\Omega;\R^{N})$ constructed in Section~\ref{sec:Ekelandstart}, we have
\begin{align}\label{eq:uniuni}
\|\nabla u\|_{\lebe^{s}(K)}+\sup_{j\in\mathbb{N}}\|\nabla u_{j}\|_{\lebe^{s}(K)}\leq c(K,s)<\infty\qquad\text{for any}\;1\leq s\begin{cases}\leq \frac{(2-\mu)n}{n-2}&\mbox{if $n\geq3$,}\\<\infty&\mbox{if $n=2$,}\end{cases}
\end{align}
as a consequence of Proposition~\ref{prop:semimain} combined with~\eqref{eq:hsv} for $n\geq3$  and~\eqref{eq:hsv:nmu2} for $n=2$, where $\mu=2=\frac{n}{n-1}$ in the latter case. We put 
\begin{align*}
t \coloneqq \begin{cases}
    \frac{(2-\mu)n}{n-\mu} &\;\text{if}\;n\geq 3,\\ 
    \in (1,2)\;\text{arbitrary}&\;\text{if}\;n=2. 
\end{cases}
\end{align*}
For $n\geq3$, an elementary computation shows $1 \leq t<2$ provided that $\mu\leq\frac{n}{n-1}$, with  $t=1$ if and only if $\mu=\frac{n}{n-1}$. Moreover, we have in the case $n\geq3$
\begin{align*}
\frac{\mu t}{2-t}=\frac{\mu(2-\mu)n}{2(n-\mu)-(2-\mu)n}=\frac{(2-\mu)n}{n-2}.
\end{align*}
By Young's inequality with exponents $\frac{2}{t}$ and $\frac{2}{2-t}$, we find for every $\eta\in \hold_c^\infty(\Omega)$ and $\ell\in\{1,\dots,n\}$ that 
\begin{align*}
\int_{\Omega}\eta^2|\partial_{\ell}\nabla u_{j}|^t\dif x \leq \frac{t}{2}\int_{\Omega}\eta^{2}\frac{|\partial_{\ell}\nabla u_{j}|^{2}}{(1+|\nabla u_{j}|^{2})^{\frac{\mu}{2}}}\dif x+\frac{2-t}{2}\int_{\Omega}\eta^{2}(1+|\nabla u_{j}|^{2})^{\frac{\mu t}{2(2-t)}}\dif x. 
\end{align*}
Recalling \eqref{eq:constitution}, we may combine the Caccioppoli inequality \eqref{eq:weighted} and any  $\eta\in\hold_{c}^{\infty}(\Omega)$ with $\mathbbm{1}_{\ball_{R/2}}\leq\eta \leq \mathbbm{1}_{\ball_{R}}$ and $|\nabla\eta|\leq\frac{4}{R}$  to deduce for $\ell\in\{1,\dots,n\}$ that
\begin{align*}
\limsup_{j\to\infty}\int_{\ball_{R/2}}|\partial_{\ell}\nabla u_{j}|^t\dif x \leq c\limsup_{j\to\infty}\Big( R^{-2}\int_{\ball_R}(1+|\nabla u_{j}|^{2})^{\frac{q}{2}}\dif x+\int_{\ball_R}(1+|\nabla u_{j}|^{2})^{\frac{(2-\mu)n}{2(n-2)}}\dif x\Big).
\end{align*}
With the help of \eqref{eq:uniuni} and the inequality 
\begin{equation}\label{ineq:qleq}
q-\frac{(2-\mu)n}{n-2}\stackrel{\eqref{eq:maincondition}}{<}2+\frac{2}{n-1}-\mu-\frac{(2-\mu)n}{n-2}=\frac{2}{n-1}-(2-\mu)\frac{2}{n-2}\stackrel{\mu\leq \frac{n}{n-1}}\leq0,
\end{equation}
we obtain 
\begin{align*}
\limsup_{j\to\infty}\int_{\ball_{R/2}}|\nabla^2 u_{j}|^t\dif x <\infty.
\end{align*}
Therefore, passing to a suitable non-relabelled subsequence, the sequence $(u_{j})$ is uniformly bounded in $\sobo^{2,t}(\ball_{R/2};\R^{N})$. Based on this observation and the arbitrariness of $\ball_{R}\Subset K$, the statements~\ref{item:detective2} and~\ref{item:detective3} follow from standard compactness arguments in reflexive spaces. Concerning~\ref{item:detective1}, we note that the second total variation is lower semicontinuous with respect to $\lebe_{\locc}^{1}$-convergence, whereby $u\in\bv_{2}(\ball_{R/2};\R^{N})$ follows in this case, too. The assertions \eqref{lim:weaksecond1} and \eqref{lim:weaksecond2} follow from elementary compactness arguments, and the proof is complete.
\end{proof}

Next, we establish strong convergence $u_j \to u$ in $\sobo^{1,1}_{\locc}(\Omega;\R^N)$  under the more general assumptions of Theorem~\ref{thm:main}. 

\begin{prop}[Strong convergence of gradients]\label{Lem:stronglimit}
Consider the situation of Theorem~\ref{thm:main}. Let $(u_{j})$ be the Ekeland-type vanishing viscosity sequence in $\sobo_{\locc}^{2,2}(\Omega;\R^{N})$ constructed in Section~\ref{sec:Ekelandstart}. Then, for any open set  $K\Subset \Omega$, there holds 
\begin{align}\label{lim:ae:eq2}
\nabla u_{j}\to \nabla u\qquad\text{in $\lebe^s(K;\R^{N\times n})$ for any } 1\leq s< t^*\coloneqq \begin{cases}\frac{(2-\mu)n}{n-2}&\mbox{if $n\geq3$,}\\{\in(1,\infty)\mbox{ arbitrary}}&\mbox{if $n=2$.}\end{cases}
\end{align}
\end{prop}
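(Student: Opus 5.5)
The core task is to show that $\nabla u_j\to\nabla u$ pointwise $\mathscr{L}^n$-a.e. on $K$ (for a subsequence). Once this is known, the uniform bound \eqref{eq:uniuni}, i.e.\ $\sup_j\|\nabla u_j\|_{\lebe^{t^*}(K)}<\infty$ (any finite exponent if $n=2$), gives equi-integrability of $|\nabla u_j-\nabla u|^s$ for $s<t^*$. Vitali's convergence theorem then yields \eqref{lim:ae:eq2}. Since every subsequence has a further subsequence with the same limit $\nabla u$, the convergence holds along the whole sequence (up to the subsequence fixed in Proposition~\ref{prop:summary}). Note that $u\in\sobo^{1,1}_{\locc}$ by Proposition~\ref{prop:semimain}, so $\D u=\nabla u\,\mathscr{L}^n$ on $K$.

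\textbf{Obtaining a.e. convergence when $\mu\le\frac{n}{n-1}$.} In this range Proposition~\ref{lem:ae} settles the matter. In the subcases where \eqref{lim:weaksecond1} applies, $(\nabla u_j)$ is bounded in $\sobo^{1,t}(K')$ for $K\Subset K'\Subset\Omega$, and Rellich's theorem gives strong $\lebe^1(K)$ convergence to $\nabla u$. In the remaining subcase \eqref{lim:weaksecond2} holds, and $\bv$-compactness gives the same conclusion. A further subsequence then converges a.e.

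\textbf{Obtaining a.e. convergence for general $\mu$.} Here I would argue through the $V$-function. By \eqref{eq:summary1} and \eqref{eq:summary1n2}, together with \eqref{est:Vnablaup}, the sequence $W_j\coloneqq V_\mu(\nabla u_j)$ is bounded in $\sobo^{1,2}(\ball_{R/2})$ for every ball $\ball_R\Subset\Omega$. In the case $n=\mu=2$ one uses $V$ from \eqref{eq:V2} instead, which is bounded in $\sobo^{1,2}$ by \eqref{eq:summary1log} and \eqref{bound:Vnablaulog}. Rellich's theorem together with a covering of $K$ gives a subsequence with $W_j\to W$ in $\lebe^2(K)$ and a.e. Since $V_\mu$ is a homeomorphism of $\R^{N\times n}$ (see \eqref{eq:homeo1}), it follows that $\nabla u_j\to g\coloneqq V_\mu^{-1}(W)$ a.e.; the function $V$ in the case $\mu=n=2$ is not injective, and how to deal with this is explained below. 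Identifying $g=\nabla u$ is then the remaining task. Because of the $\lebe^{t^*}$ bound with $t^*>1$, Vitali gives $\nabla u_j\to g$ in $\lebe^1(K)$. On the other hand, Proposition~\ref{prop:summary} gives $u_j\stackrel{*}{\rightharpoonup}u$ in $\bv$, so $\D u_j\stackrel{*}{\rightharpoonup}\D u=\nabla u\,\mathscr{L}^n$ on $K$. Uniqueness of distributional limits forces $g=\nabla u$.

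\textbf{Main obstacle.} The delicate step is the case $n=\mu=2$. There the function \eqref{eq:V2} depends only on $|z|$ and is therefore not injective, so compactness of $V(\nabla u_j)$ alone yields only a.e. convergence of $|\nabla u_j|$. To repair this I would pass directly through second derivatives. The estimate from the proof of Proposition~\ref{lem:ae}, namely Young's inequality with exponents $\frac{2}{t}$ and $\frac{2}{2-t}$ applied together with \eqref{eq:weighted} for $\mu=2$, remains valid in $n=2$ for every $t<2$. Combined with the uniform $\lebe^s$ bounds from \eqref{eq:hsv:nmu2}, this bounds $\nabla u_j$ uniformly in $\sobo^{1,t}_{\locc}$, and Rellich's theorem again gives a.e. convergence. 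This is exactly the argument of Proposition~\ref{lem:ae}\ref{item:detective3}, so the case is covered there, since $\mu=2=\frac{n}{n-1}$ when $n=2$. One also needs to keep track of the subsequences fixed in Proposition~\ref{prop:summary}; this is harmless, because the limit is always identified as $\nabla u$.
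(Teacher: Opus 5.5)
Your proposal is correct and follows essentially the paper's argument. For $\mu<2$, uniform $\sobo^{1,2}$-bounds on $V_\mu(\nabla u_j)$, Rellich's theorem and invertibility of $V_\mu$ give a.e.\ convergence. For $n=\mu=2$, where the logarithmic $V$ from \eqref{eq:V2} is not injective, the second-order bounds of Proposition~\ref{lem:ae} are used instead. In both cases the proof concludes with Vitali's theorem (using the uniform higher integrability) and identifies the limit through $\D u_j\stackrel{*}{\rightharpoonup}\D u$.

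Your separate first case $\mu\le\frac{n}{n-1}$ is redundant, because the $V_\mu$-argument already covers every $\mu<2$. Proposition~\ref{lem:ae} is genuinely needed only for $n=\mu=2$, which is exactly where the paper invokes it.
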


\begin{proof}
Let $\ball_{R}=\ball_{R}(x_0)\Subset \Omega$ be an arbitrary ball. We show \eqref{lim:ae:eq2} for $K=\ball_{R/2}$, and the claim follows by the arbitrariness of $\ball_{R}$. Firstly, we consider the case $\mu<2$. In view of \eqref{eq:summary1}\emph{ff}. for $n \geq 3$ and \eqref{eq:summary1n2}\emph{ff}. for $n =2$, we have
$$
\limsup_{j\to\infty}\|V_{\mu}(\nabla u_j)\|_{\sobo^{1,2}(\ball_{R/2})}<\infty.
$$  
By the Rellich--Kondrachov theorem, this ensures that there exists $\overline V\in \sobo^{1,2}(\ball_{R/2};\R^{N\times n})$ and a (non-relabelled) subsequence such that
$$
V_{\mu}(\nabla u_j)\to \overline V\qquad\mathscr{L}^{n}\text{-a.e. in $\ball_{R/2}$}.
$$
Since $V_{\mu}$ is a diffeomorphism, see \eqref{eq:homeo1}, we deduce that
\begin{equation*}
\nabla u_j\to V_{\mu}^{-1}(\overline V)\qquad\mathscr{L}^{n}\text{-a.e. in $\ball_{R/2}$},
\end{equation*}
which in turn implies $\nabla u_{j}\to V_{\mu}^{-1}(\overline{V})$ in $\mathscr{L}^{n}$-measure on $\ball_{R/2}$. Now let $A\subset \ball_{R/2}$ be measurable. Based on \eqref{eq:hsv} and \eqref{eq:mrbeneett}, we estimate for an arbitrary $1\leq s<t^*$:
\begin{align*}
\sup_{j\in\N}\int_{A}|\nabla u_{j}|^{s}\dif x \leq \mathscr{L}^{n}(A)^{\frac{t^*-s}{t^*}}\sup_{j\in\N}\bigg(\int_{\ball_{R/2}}|\nabla u_{j}|^{t^*}\dif x\bigg)^{\frac{s}{t^*}} \leq c\mathscr{L}^{n}(A)^{\frac{t^*-s}{t^*}}. 
\end{align*}
Hence, the sequence $(\nabla u_{j})$ is $s$-equi-integrable in $\ball_{R/2}$, and from here it follows that $(\nabla u_j)$ converges strongly to $V_{\mu}^{-1}(\overline V)$ in $\lebe^{s}(\ball_{R/2};\R^{N\times n})$ by Vitali's convergence theorem. In combination with $\nabla u_{j}\stackrel{*}{\rightharpoonup} \D u = \nabla u\mathscr{L}^{n}$ in $\mathrm{RM}_{\mathrm{fin}}(\ball_{R};\R^{N\times n})$ and the uniqueness of weak*-limits, we have $\nabla u=V_{\mu}^{-1}(\overline V)$. Since these arguments apply to  every subsequence of the initial sequence $(u_j)$, the claim \eqref{lim:ae:eq2} follows. %

In the remaining case $n=\mu=2$,  Proposition~\ref{lem:ae} ensures that $(\nabla u_j)$ is bounded in $\sobo^{1,1}(\ball_{R/2};\R^{N\times n})$. Hence, by the  Rellich--Kondrachov theorem, there exists a subsequence that converges almost everywhere. In combination with \eqref{eq:hsv:nmu2}, the above argument applies also in this case and the claim  \eqref{lim:ae:eq2} follows.  This completes the proof. 
\end{proof}

Based on Proposition~\ref{Lem:stronglimit}, we are now ready to give the: 
\begin{proof}[Proof of Theorem~\ref{thm:main}]
In view of the estimates from Proposition \ref{prop:semimain}\ref{item:propmain1}--\ref{item:propmain3}, the strong convergence $\nabla u_{j}\to\nabla u$ in $\lebe_{\locc}^{1}(\Omega;\R^{N\times n})$ from Proposition~\ref{Lem:stronglimit} allows us to pass to the limit on the corresponding right-hand sides. This yields the assertions of Theorem~\ref{thm:main}.
\end{proof}

\begin{rem}\label{rem:w1qmu}
Note that, in the situation of Theorem~\ref{thm:main}, Proposition~\ref{Lem:stronglimit} ensures the strong convergence $u_j\to u$ in $\sobo_{\locc}^{1,q}(\Omega;\R^{N})$ in $n=2$ and for $n\geq3$ under the additional assumption
\begin{equation}\label{ineq:remw1qmu}
q<\frac{(2-\mu)n}{n-2}.
\end{equation}
In view of  \eqref{ineq:qleq}, condition \eqref{ineq:remw1qmu} is automatically satisfied if $\mu\leq\frac{n}{n-1}$. In the case $q=1$, condition \eqref{ineq:remw1qmu} reduces to the well-known condition $\mu<1+\frac{2}{n}$,  which is also satisfied in the situation of Theorem~\ref{thm:main}.
\end{rem}

\subsection{Euler--Lagrange systems and dimension reduction}\label{sec:ELdimreduc}

A rather direct consequence of Theorem~\ref{thm:main} and Proposition~\ref{lem:ae} is the validity of the Euler--Lagrange system for every relaxed minimizer; a variant thereof, which uses the machinery of Young measures, has been stated in the unpublished note \cite{KOCKRI22} by Koch \& Kristensen. 
 
\begin{corollary}[Local minimality and Euler--Lagrange system]\label{cor:locmin}
Consider the situation of Theorem~\ref{thm:main} where we  additionally  assume  \eqref{ineq:remw1qmu} provided that $n\geq 3$. Then, for any $u_{0}\in\sobo^{1,q}(\Omega;\R^{N})$, every  relaxed minimizer $u\in\bv(\Omega;\R^{N})$ of $\mathscr{F}$ subject to the Dirichlet datum $u_{0}$ is a \emph{local minimizer} of $\mathscr F$ in the sense that 
\begin{equation}\label{eq:ulocalmin}
    \int_{K} F(\nabla u)\dif x\leq \int_{K} F(\nabla u+\nabla \varphi)\dif x\qquad \mbox{for all open $K\Subset\Omega$ and all $\varphi\in \sobo_c^{1,q}(K;\R^{N})$}.
    \end{equation}
Moreover, $u$ satisfies the \emph{Euler--Lagrange system}
    \begin{equation}\label{eq:ulocalminEL}
     \int_\Omega \langle \nabla F(\nabla u),\nabla \varphi\rangle \dif x=0\qquad \text{for all}\; \varphi\in \sobo^{1,q}_c(\Omega;\R^{N}).
    \end{equation}
\end{corollary}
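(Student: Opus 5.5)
The plan is to pass to the limit in the Ekeland almost-minimality \eqref{eq:Ekeland1b} of the sequence $(u_{j})$ from Section~\ref{sec:Ekelandstart}. The key input is Proposition~\ref{Lem:stronglimit} together with Remark~\ref{rem:w1qmu}: under \eqref{ineq:remw1qmu} (automatic for $n=2$) we have $\nabla u_{j}\to\nabla u$ in $\lebe^{q}_{\locc}(\Omega;\R^{N\times n})$.

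\emph{Local minimality.} Fix an open $K\Subset\Omega$ and $\varphi\in\sobo_{c}^{1,q}(K;\R^{N})$. By density and Lemma~\ref{lem:liptype} (which makes $w\mapsto\int_{K}F(\nabla u+\nabla w)\dif x$ continuous on $\sobo^{1,q}$, using $\nabla u\in\lebe^{q}(K)$), it suffices to treat $\varphi\in\hold_{c}^{\infty}(K;\R^{N})$. Test \eqref{eq:Ekeland1b} with $w=u_{j}+\varphi\in\mathfrak{D}_{j}$. Since $G_{j}(\nabla u_{j})$ and $G_{j}(\nabla u_{j}+\nabla\varphi)$ coincide outside $K$, this gives
\begin{align*}
\int_{K}G_{j}(\nabla u_{j})\dif x\leq\int_{K}G_{j}(\nabla u_{j}+\nabla\varphi)\dif x+\frac1j\|\varphi\|_{\sobo^{-1,1}(\Omega)}.
\end{align*}
On the left we drop the nonnegative stabilization terms and keep $\int_{K}F(\nabla u_{j})\dif x$. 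This tends to $\int_{K}F(\nabla u)\dif x$ by the $\lebe^{q}(K)$-convergence of the gradients and \eqref{eq:liptype}; Fatou's lemma along an a.e.-convergent subsequence would also do.

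On the right, $\int_{K}F(\nabla u_{j}+\nabla\varphi)\dif x\to\int_{K}F(\nabla u+\nabla\varphi)\dif x$ by the same argument. The two stabilization terms have to vanish, and this is the main point to check. For the $\langle\cdot\rangle_{q}$-term, \eqref{eq:elementaryq2} bounds $\langle\nabla u_{j}+\nabla\varphi\rangle_{q}$ by $c(\langle\nabla u_{j}\rangle_{q}+(1+|\nabla u_{j}|+|\nabla\varphi|)^{q-1}|\nabla\varphi|)$; this is more cleanly handled by convexity, $\langle a+b\rangle_{q}\leq 2^{q-1}(\langle a\rangle_{q}+|b|^{q})$ up to constants. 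Multiplying by $\frac{1}{10S_{j}j^{2}}$, the first part vanishes by \eqref{eq:constitution} and the second trivially. The quadratic term $\frac{1}{2A_{j}j^{2}}\int_{K}(1+|\nabla u_{j}+\nabla\varphi|^{2})\dif x$ is bounded by $\frac{1}{A_{j}j^{2}}\int(1+|\nabla u_{j}|^{2})\dif x+\frac{c}{j^{2}}\|\nabla\varphi\|_{\lebe^{2}}^{2}$, which also tends to zero by \eqref{eq:constitution}. The Ekeland error term is $O(1/j)$. Letting $j\to\infty$ yields \eqref{eq:ulocalmin}.

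\emph{Euler--Lagrange system.} Given $\varphi\in\sobo_{c}^{1,q}(\Omega;\R^{N})$, choose $K\Subset\Omega$ containing $\spt\varphi$ and apply \eqref{eq:ulocalmin} to $\pm\varepsilon\varphi$. The difference quotients $\varepsilon^{-1}(F(\nabla u\pm\varepsilon\nabla\varphi)-F(\nabla u))$ are dominated on $K$ by $L(1+2|\nabla u|+|\nabla\varphi|)^{q-1}|\nabla\varphi|\in\lebe^{1}(K)$, using \eqref{eq:liptype} and $\nabla u\in\lebe^{q}(K)$. Dominated convergence as $\varepsilon\searrow0$ then gives $\int\langle\nabla F(\nabla u),\nabla\varphi\rangle\dif x=0$.

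The only delicate ingredient is $\nabla u\in\lebe^{q}_{\locc}$ together with $\lebe^{q}_{\locc}$-convergence of $\nabla u_{j}$, which is exactly why \eqref{ineq:remw1qmu} is assumed for $n\geq3$.
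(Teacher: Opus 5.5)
Your proof is correct, but it takes a different route from the paper. You pass to the limit in the Ekeland inequality \eqref{eq:Ekeland1b} with competitors $u_j+\varphi$, so the comparison principle you use is the almost-minimality of the approximants. This forces two extra steps, both of which you handle. First, the stabilizations have to vanish at the perturbed gradients; this follows from \eqref{eq:constitution}, and your bound $\langle a+b\rangle_q\leq 2^{q-1}(\langle a\rangle_q+|b|^q)$ does it cleanly. Second, you must reduce to smooth $\varphi$, since $\mathscr{G}_j\equiv+\infty$ off $\mathfrak{D}_j$ when $q<2$.

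The paper never returns to \eqref{eq:Ekeland1b}. It combines four ingredients:
\begin{enumerate}
\item $\mathscr{F}[u_j;\Omega]\to\overline{\mathscr{F}}_{u_0}^{*}[u;\Omega]$ from \eqref{eq:uptown};
\item the lower bound $\overline{\mathscr{F}}_{u_0}^{*}[u+\varphi;\Omega]\leq\liminf_j\mathscr{F}[u_j+\varphi;\Omega]$ from the definition \eqref{eq:LSM};
\item the fact that $\mathscr{F}[u_j;\Omega]-\mathscr{F}[u_j+\varphi;\Omega]$ only involves $K$ and converges by Remark~\ref{rem:w1qmu};
\item the relaxed minimality $\overline{\mathscr{F}}_{u_0}^{*}[u;\Omega]\leq\overline{\mathscr{F}}_{u_0}^{*}[u+\varphi;\Omega]$.
\end{enumerate}
The analytic input is the same in both arguments: lower semicontinuity for the unperturbed term and $\lebe^q(K)$-continuity for the perturbed one, hence \eqref{ineq:remw1qmu}. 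The Euler--Lagrange step is identical.

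What each route buys:
\begin{itemize}
\item The paper obtains local minimality directly from relaxed minimality. It treats $\varphi\in\sobo_c^{1,q}$ without a density step and needs no estimate of the stabilizations at perturbed gradients.
\item Your argument stays entirely inside the classes $\mathfrak{D}_j$. The paper's appeal to \eqref{eq:LSM} tacitly treats $u_j+\varphi$ as an element of $u_0+\sobo_0^{1,q}(\Omega;\R^N)$. This is not literally true, because $u_j$ carries the mollified datum $u_{0,j}$. It is fixed by passing to $u_j-u_{0,j}+u_0$ and using the $\kappa_j$-estimates of Section~\ref{sec:Ekelandstart}, but your route never needs this correction.
\end{itemize}
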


\begin{proof}
We begin with the proof of \eqref{eq:ulocalmin}. From \eqref{eq:uptown} and $F\leq G_j$, we deduce for the sequence $(u_{j})$ in $\sobo_{\locc}^{2,2}(\Omega;\R^{N})$ constructed in Section~\ref{sec:Ekelandstart} that 
 $$
 \overline {\mathscr F}_{u_0}^{*}[u;\Omega]=\lim_{j\to\infty} \mathscr F[u_{j};\Omega].
 $$
 Let $\varphi\in \sobo_c^{1,q}(K;\R^{N})$ be given. By the very definition of the relaxation $\overline{\mathscr{F}}_{u_{0}}^{*}[-;\Omega]$, we have 
 $$
 \overline{\mathscr{F}}_{u_{0}}^{*}[u+\varphi;\Omega]\leq \liminf_{j\to\infty} {\mathscr{F}}[u_j+\varphi;\Omega].
 $$
 Clearly, the growth condition in \eqref{eq:1qgrowth} ensures that $v\mapsto \mathscr{F}[v,K]$ is continuous with respect to strong convergence in $\sobo^{1,q}(K;\R^{N})$.  Hence, Proposition~\ref{lem:ae} together with Remark~\ref{rem:w1qmu} yield 
$$
 \lim_{j\to\infty} \mathscr{F}[u_j;K] = \mathscr{F}[u;K]  \qquad\mbox{and}\qquad\lim_{j\to\infty} \mathscr{F}[u_j+\varphi;K] = \mathscr{F}[u + \varphi ;K]  .
 $$
 Combining the previous three displayed limits with the fact that $\spt(\varphi)\subset K$, we obtain from the fact that $u$ is a relaxed minimizer
 \begin{align*}
 \int_K F(\nabla u)-F(\nabla u+\nabla \varphi)\dif x & =\lim_{j\to\infty} (\mathscr{F}[u_j;\Omega]-\mathscr{F}[u_j+\varphi;\Omega]) \\
 & \leq\overline{\mathscr{F}}_{u_{0}}^{*}[u;\Omega]-\overline{\mathscr{F}}_{u_{0}}^{*}[u+\varphi;\Omega] \leq 0, 
 \end{align*}
 and thus~\eqref{eq:ulocalmin}. Finally, \eqref{eq:ulocalminEL} follows from \eqref{eq:ulocalmin} as in the proof of Proposition \ref{prop:summary}\ref{item:EkelandSeq2}. 
\end{proof}

With the higher differentiability of relaxed minimizers according to Proposition~\ref{prop:semimain}, we can further differentiate the Euler--Lagrange equation \eqref{eq:ulocalminEL}. This will be exploited in the proof of Theorem~\ref{thm:main2} in Section~\ref{sec:proofmain2} below.

\begin{corollary}[Differentiated Euler--Lagrange system]\label{cor:EL2}
Consider the situation of Theorem~\ref{thm:main} where we assume in addition that $\mu<\frac{n}{n-1}$ provided that $n\geq 3$. Then, for any $u_{0}\in\sobo^{1,q}(\Omega;\R^{N})$, every relaxed minimizer $u\in\bv(\Omega;\R^{N})$ of $\mathscr{F}$ subject to the Dirichlet datum $u_{0}$ satisfies the \emph{differentiated Euler--Lagrange equation}
\begin{equation}\label{eq:ulocalminlin}
     \int_\Omega \langle \nabla^2 F(\nabla u)\nabla \partial_\ell u,\nabla \varphi\rangle \dif x=0\qquad\text{for all}\;\varphi\in \hold_c^{\infty}(\Omega;\R^{N})\;\text{and all}\;\ell\in\{1,\ldots ,n\}.
\end{equation}
\end{corollary}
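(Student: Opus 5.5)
The plan is to differentiate the Euler--Lagrange system \eqref{eq:ulocalminEL} of Corollary~\ref{cor:locmin} by a difference-quotient argument, using the second-order regularity from Proposition~\ref{lem:ae}. First we check that Corollary~\ref{cor:locmin} applies. For $n\geq3$ the extra assumption $\mu<\frac{n}{n-1}$ gives \eqref{ineq:remw1qmu} by \eqref{ineq:qleq}, and for $n=2$ nothing needs to be checked. Hence $\int_\Omega\langle\nabla F(\nabla u),\nabla\psi\rangle\dif x=0$ for all $\psi\in\sobo_c^{1,q}(\Omega;\R^N)$. Moreover, Proposition~\ref{lem:ae}\ref{item:detective2}/\ref{item:detective3} gives $u\in\sobo^{2,t}_{\locc}(\Omega;\R^N)$ for some $t\in(1,2)$, and Theorem~\ref{thm:main} together with \eqref{eq:uniuni} gives $\nabla u\in\lebe^{s}_{\locc}$ for $s\leq t^*$ (every $s<\infty$ if $n=2$).

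Fix $\varphi\in\hold_c^\infty(\Omega;\R^N)$ and $\ell$. We test \eqref{eq:ulocalminEL} with $\psi=\Delta^{-}_{\ell,h}\varphi$ and use the discrete integration by parts to obtain $\int_\Omega\langle\Delta_{\ell,h}\nabla F(\nabla u),\nabla\varphi\rangle\dif x=0$ for small $h$. By the chain rule in the form
\[
\Delta_{\ell,h}\nabla F(\nabla u)(x)=\int_0^1\nabla^2F\big(\nabla u(x)+sh\Delta_{\ell,h}\nabla u(x)\big)\dif s\;\Delta_{\ell,h}\nabla u(x),
\]
it then suffices to show that this integrand converges in $\lebe^1(\spt\varphi)$ to $\nabla^2F(\nabla u)\partial_\ell\nabla u$ as $h\to0$.

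For that we will use a Vitali/dominated-type argument. Along a subsequence, $\Delta_{\ell,h}\nabla u\to\partial_\ell\nabla u$ in $\lebe^t_{\locc}$ and a.e., and $\nabla u(\cdot+he_\ell)\to\nabla u$ a.e. (and in $\lebe^s_{\locc}$). Since $F\in\hold^2$, the averaged Hessian converges a.e. to $\nabla^2F(\nabla u)$. By \eqref{eq:muell1q}, the averaged Hessian is bounded by $\Lambda(1+|\nabla u|+|\nabla u(\cdot+he_\ell)|)^{\max\{q-2,0\}}$.

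\textbf{Case $q\leq2$.} The Hessian factor is bounded, so the product converges strongly in $\lebe^t\subset\lebe^1$ (bounded a.e.-convergent factor times an $\lebe^t$-convergent one).

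\textbf{Case $q>2$.} We apply H\"older with exponents $t$ and $t'=\frac{t}{t-1}$. This requires uniform integrability of $(1+|\nabla u|+|\nabla u(\cdot+he_\ell)|)^{(q-2)t'}$, i.e. $(q-2)\frac{t}{t-1}<t^*$ with strict inequality, which is automatic for $n=2$. Translates of a fixed $\lebe^{t^*}_{\locc}$ function are equi-integrable, so Vitali then yields the $\lebe^1$ convergence.

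The main obstacle is verifying this exponent inequality for $n\geq3$ from \eqref{eq:maincondition} and $\mu<\frac{n}{n-1}$. With $t=\frac{(2-\mu)n}{n-\mu}$ and $t^*=\frac{(2-\mu)n}{n-2}$, it reads $(q-2)\frac{(2-\mu)n}{n(1-\mu)+\mu}<\frac{(2-\mu)n}{n-2}$, i.e. $(q-2)(n-2)<n-\mu(n-1)$. Using $q<2+\frac{2}{n-1}-\mu$, it suffices to have $\big(\frac{2}{n-1}-\mu\big)(n-2)\leq n-\mu(n-1)$, which simplifies to $\mu\leq\frac{n}{n-1}+\ldots$. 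A short check shows this reduces to $2(n-2)\leq n(n-1)-\mu(n-1)$, which holds when $\mu<\frac{n}{n-1}$ and $n\geq3$. (Note $q>2$ forces $\mu<\frac{2}{n-1}$ anyway, so the range is small.)

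If this bookkeeping failed, I would instead pass to the limit in the differentiated Euler--Lagrange inequality \eqref{eq:hbftab1} for $u_j$, using \eqref{lim:weaksecond1} and Proposition~\ref{Lem:stronglimit}. That route needs the same integrability and additionally has to show that the stabilization terms vanish by \eqref{eq:constitution}.
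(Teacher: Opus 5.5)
Your argument is correct, but it follows a different route from the paper's proof.

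\textbf{What the paper does.} It never differentiates the limit system \eqref{eq:ulocalminEL}. It passes to the limit $j\to\infty$ in the already differentiated perturbed inequality \eqref{eq:hbftab1} for the Ekeland approximations $u_j$:
\begin{itemize}
\item the two stabilization contributions are integrated by parts once more and vanish by \eqref{eq:constitution};
\item the coefficients $\nabla^2F(\nabla u_j)$ converge strongly in $\lebe^s_{\locc}$ for every $s<\infty$, by \eqref{lim:ae:eq2} and the bound $|\nabla^2F(z)|\le c(1+|z|)^{q-2}$;
\item the second derivatives $\nabla\partial_\ell u_j$ converge weakly in $\lebe^t_{\locc}$ by \eqref{lim:weaksecond1}.
\end{itemize}
A strong-times-weak product then passes to the limit.

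\textbf{What you do instead.} You work entirely at the level of $u$. You invoke Corollary~\ref{cor:locmin}, correctly obtaining its hypothesis \eqref{ineq:remw1qmu} from \eqref{ineq:qleq}, and test with $\Delta^-_{\ell,h}\varphi$. You then let $h\to0$ using only $u\in\sobo^{2,t}_{\locc}$ and $\nabla u\in\lebe^{t^*}_{\locc}$, which is a strong-times-strong (dominated/Vitali) limit. This is the classical difference-quotient derivation. Once the regularity of $u$ is known, it needs no further reference to the approximation scheme. It matches the heuristic sentence preceding the corollary in the paper (``we can further differentiate the Euler--Lagrange equation'').

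\textbf{Trade-offs.} The paper's route does not pass through Corollary~\ref{cor:locmin} at all, at the price of disposing of the stabilization terms. Your route inherits whatever goes into Corollary~\ref{cor:locmin}. Both ultimately rest on the same uniform estimates: the $\sobo^{2,t}$-bounds of Proposition~\ref{lem:ae} and the gradient convergence of Proposition~\ref{Lem:stronglimit}.

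\textbf{Your ``main obstacle'' is vacuous.} For $n\ge3$, \eqref{eq:maincondition} together with $\mu\ge1$ gives
\[
q<2+\tfrac{2}{n-1}-\mu\le1+\tfrac{2}{n-1}\le2,
\]
so $q>2$ can only occur for $n=2$; the paper's proof uses exactly this observation. Your exponent bookkeeping is therefore unnecessary. Its intermediate expression ``$\mu\le\frac{n}{n-1}+\ldots$'' is garbled, though the final inequality $2(n-2)\le n(n-1)-\mu(n-1)$ is right. For $n\ge3$ only your case $q\le2$ is needed. For $n=2$, your case $q>2$ goes through because $\nabla u\in\lebe^s_{\locc}$ for every $s<\infty$.
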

 
 \begin{proof}
 Let $(u_{j})$ be the Ekeland-type approximation sequence in $\sobo_{\locc}^{2,2}(\Omega;\R^{N})$ constructed in Section~\ref{sec:Ekelandstart}. We deduce from \eqref{eq:hbftab1} that 
 \begin{equation}\label{eq:ulocalminlinprep}
 \lim_{j\to\infty}\int_\Omega \langle \nabla^2 F(\nabla u_j)\partial_\ell \nabla u_j,\nabla \varphi\rangle\dif x=0\qquad \mbox{for all } \varphi\in \hold_c^\infty(\Omega;\R^N).
 \end{equation}
 Here, we have used that, as a consequence of \eqref{eq:constitution} and after an application of the integration by parts formula, the two terms which stem from the stabilization vanish in the limit.
 In order to pass to the limit in the integral on the left-hand side, we first observe that $\nabla^2 F(\nabla u_j)\to  \nabla^2 F(\nabla u)$ in $\lebe^s(K;\R^{N\times n})$ for all $s<\infty$ and $K\Subset \Omega$. Indeed, \eqref{eq:muell1q} ensures $|\nabla^2 F(z)|\leq c(1+|z|)^{q-2}$. In the case $q\leq2$, the claim follows directly from \eqref{lim:ae:eq2}. Moreover, we observe that $q>2$ and assumption \eqref{eq:maincondition} imply that $n=2$, and thus the desired convergences follows again from \eqref{lim:ae:eq2}. Clearly, the strong convergence of $F(\nabla u_j)$ in combination with the weak convergence~\eqref{lim:weaksecond1} implies that \eqref{eq:ulocalminlin} follows from \eqref{eq:ulocalminlinprep}.
 \end{proof}

Lastly, we record a dimension reduction result for the singular set of relaxed minimizers. To this end, we denote the \emph{regular set} of a minimizer $u\in\bv(\Omega;\R^{N})$ of $\overline{\mathscr{F}}_{u_{0}}^{*}[-;\Omega]$ by 
\begin{align*}
\mathrm{Reg}(u) \coloneqq  \{x_{0}\in\Omega \colon u \text{ is of class $\hold^{1,\alpha}$ in an open neighbourhood of $x_{0}$ for all $0<\alpha<1$}\}, 
\end{align*}
and define the \emph{singular set} by $\Sigma_{u}\coloneqq \Omega\setminus\mathrm{Reg}(u)$. We then have:
\begin{corollary}[Dimension reduction]\label{cor:dimred}
Let $F\in\hold^{2}(\R^{N\times n})$ satisfy \eqref{eq:1qgrowth} and \eqref{eq:muell1q} with 
\begin{align}\label{eq:muqbd}
1<\mu < \frac{n}{n-1}\;\;\;\text{and}\;\;\;1\leq q < \frac{n}{n-1}.
\end{align}
Then, for any $u_{0}\in\sobo^{1,q}(\Omega;\R^{N})$, the singular set of every  relaxed minimizer $u\in\bv(\Omega;\R^{N})$ of $\mathscr{F}$ subject to the Dirichlet datum $u_{0}$ satisfies the \emph{Hausdorff dimension bound}
\begin{align*}
\dim_{\mathscr{H}}(\Sigma_{u})\leq n-1. 
\end{align*}
\end{corollary}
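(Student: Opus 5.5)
The plan is to combine the second-order information from Proposition~\ref{lem:ae} with the differentiated Euler--Lagrange system from Corollary~\ref{cor:EL2}, and then use a standard excess-decay and partial-regularity scheme. Under \eqref{eq:muqbd} the hypotheses of Theorem~\ref{thm:main} hold, since $q+\mu<\frac{2n}{n-1}\leq 2+\frac{2}{n-1}$ and $\mu<\frac{n}{n-1}\leq 1+\frac{2}{n}$. Moreover \eqref{ineq:remw1qmu} holds by \eqref{ineq:qleq}. Hence $u\in\sobo^{1,1}(\Omega;\R^N)$, and Proposition~\ref{lem:ae}\ref{item:detective2}/\ref{item:detective3} gives $u\in\sobo^{2,t}_{\locc}(\Omega;\R^N)$ with $t=\frac{(2-\mu)n}{n-\mu}>1$, so that $\nabla u\in\sobo^{1,t}_{\locc}$. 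Corollary~\ref{cor:locmin} shows that $u$ is a local minimizer in the sense of \eqref{eq:ulocalmin}.

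The key step is an $\varepsilon$-regularity statement: there is $\varepsilon_0>0$ such that, if on some ball $\ball_r(x_0)\Subset\Omega$ the mean $|(\nabla u)_{x_0,r}|$ is bounded by $M$ and the excess $\dashint_{\ball_r(x_0)}|\nabla u-(\nabla u)_{x_0,r}|\dif x$ is below $\varepsilon_0(M)$, then $u\in\hold^{1,\alpha}$ near $x_0$ for every $\alpha<1$. I would obtain this as follows.
\begin{itemize}
\item By local minimality and convexity, since $F\in\hold^2$ with $\nabla^2F>0$ on bounded sets, one runs the classical linearisation and blow-up argument for convex integrands. Because $\nabla^2F$ is uniformly elliptic on compact sets of matrices, the relevant local ellipticity is $\lambda(1+M^2)^{-\mu/2}$.
\item The growth is only $q<\frac{n}{n-1}$ from above and linear from below. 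I would handle this with the $\lebe^{s}$-bounds, $s=\frac{(2-\mu)n}{n-2}>q$, from Theorem~\ref{thm:main}, since \eqref{ineq:remw1qmu} gives exactly $q<s$. These bounds let me use the Caccioppoli inequality (the limit form of \eqref{eq:weighted0}, which holds for $u$ via the strong convergence of Proposition~\ref{Lem:stronglimit}) and the comparison with the linearised system \eqref{eq:ulocalminlin} in an $\lebe^1$/$\lebe^q$ framework, as in the partial-regularity literature for linear growth (e.g.\ Anzellotti--Giaquinta, Gmeineder--Kristensen).
\item Once the excess decays, Campanato's characterisation gives $\hold^{1,\alpha}$. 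Higher $\alpha$ follows because the linearised system has continuous coefficients.
\end{itemize}

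Given the $\varepsilon$-regularity, the singular set is contained in
\[
\Big\{x_0:\liminf_{r\to0}\dashint_{\ball_r(x_0)}|\nabla u-(\nabla u)_{x_0,r}|\dif x>0\Big\}\cup\Big\{x_0:\limsup_{r\to0}|(\nabla u)_{x_0,r}|=\infty\Big\}.
\]
Since $\nabla u\in\sobo^{1,t}_{\locc}$, Poincar\'e's inequality controls the first set by points where $\limsup_r r^{t-n}\int_{\ball_r}|\nabla^2u|^t>0$. The second set is contained in the non-Lebesgue points of a $\sobo^{1,t}$ function. Standard capacity and density results (Giusti, Thm.~2.10 style) then give $\mathscr{H}^{n-t+\delta}$-null sets, so $\dim_{\mathscr H}\Sigma_u\leq n-t<n-1$. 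In any case this yields the claimed bound $\dim_{\mathscr{H}}(\Sigma_u)\le n-1$; for $n=2$ one uses every $t<2$.

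The main obstacle is the $\varepsilon$-regularity step: proving excess decay under merely $(1,q)$ growth without an integral representation. I would first try to work directly with $u$, using \eqref{eq:ulocalmin} and the $\lebe^s$-integrability with $s>q$ to make the comparison (harmonic-approximation) argument from the $\sobo^{1,q}$ setting applicable. The Ekeland sequence would then be needed only through results already proved.
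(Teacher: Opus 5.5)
Your preliminary reductions are correct. Condition \eqref{eq:muqbd} implies \eqref{eq:maincondition} and \eqref{ineq:remw1qmu}, so Theorem~\ref{thm:main}, Proposition~\ref{lem:ae} and Corollary~\ref{cor:locmin} are available. Your final dimension count is also correct, and sharper than the paper's. Poincar\'e's inequality with $\nabla^{2}u\in\lebe^{t}_{\locc}$, Giusti's density lemma applied to $|\nabla^{2}u|^{t}$, and the capacity estimate for non-Lebesgue points of $\nabla u\in\sobo^{1,t}_{\locc}$ give $\dim_{\mathscr{H}}(\Sigma_{u}^{1}\cup\Sigma_{u}^{2})\leq n-t=\frac{n(n-2)}{n-\mu}<n-1$. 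The paper only uses $u\in\bv_{2,\locc}$ and gets $n-1$.

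\textbf{The gap.} Everything rests on the inclusion $\Sigma_{u}\subset\Sigma_{u}^{1}\cup\Sigma_{u}^{2}$, i.e.\ on an $\varepsilon$-regularity theorem. That is the entire content of partial regularity, and you only sketch it; as written, the sketch does not close. The inequality \eqref{eq:weighted0} concerns the differentiated system, whose coefficients $\nabla^{2}F(\nabla u)$ are a priori neither continuous nor uniformly elliptic, so it does not by itself give excess decay. What is needed is a Caccioppoli inequality of the second kind for $u-a$, with $\nabla a=z_{0}$, for the shifted integrand $F_{z_{0}}(z)=F(z_{0}+z)-F(z_{0})-\langle\nabla F(z_{0}),z\rangle$, followed by linearisation. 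Because of the $q$-growth from above, its right-hand side contains $\dashint_{\ball_{r}}F_{z_{0}}((u-a)/r)\dif x$, which can grow like $|(u-a)/r|^{q}$. This term must be controlled by the $\lebe^{1}$-type excess itself. Through the Sobolev--Poincar\'e inequality on $\sobo^{1,1}$, which only reaches $\lebe^{\frac{n}{n-1}}$, this works only for $q\leq\frac{n}{n-1}$. Your bounds $\nabla u\in\lebe^{s}_{\locc}$ with $s>q$ do not scale with the excess and cannot replace it. Apart from deriving \eqref{eq:maincondition}, you never use $q<\frac{n}{n-1}$; you only use $q<s$, which already follows from \eqref{ineq:qleq}. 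So if your sketch worked, it would settle a range the paper only conjectures (see the remark after the corollary).

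\textbf{The paper's route.} The paper does not prove $\varepsilon$-regularity at all. It checks that $F$ falls under \cite[Thm.~2.1]{GMEKRI24}, partial regularity for relaxed minimizers of strongly quasiconvex integrands of $(1,q)$-growth with $q<\frac{n}{n-1}$. This is where that hypothesis of the corollary enters, and the discussion after that theorem covers $\hold^{2}$-integrands. Together with $\D^{s}u\equiv0$, this yields the characterisation \eqref{eq:singsetchar} directly, and $u\in\bv_{2,\locc}$ plus Giusti's lemma finish the proof.

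\textbf{How to repair your proof.} Replace your $\varepsilon$-regularity step by this citation; the rest of your argument then goes through, including the improved bound $n-t$. When you verify the strong-convexity hypothesis, work with the shifted form $F_{z_{0}}\geq c_{M}\langle\cdot\rangle_{1}$ for $|z_{0}|\leq M$. This follows from convexity of $F$ together with $\nabla^{2}F\geq\lambda(1+(M+1)^{2})^{-\mu/2}$ on $\ball_{M+1}(0)$. A global comparison of $\nabla^{2}F$ with $\nabla^{2}V$ from \eqref{eq:Vpartial} does not follow from \eqref{eq:muell1q} when $\mu>1$. The lower bound there is $o(|z|^{-1})$, whereas the tangential eigenvalues of $\nabla^{2}V(z)$ are of order $|z|^{-1}$.
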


\begin{proof}
We set 
\begin{align}\label{eq:Vpartial}
V(z)\coloneqq  \int_{0}^{|z|}\int_{0}^{t}\frac{\dif s \dif t}{(1+s^{2})^{\frac{\mu}{2}}},\qquad \text{for all } z\in\R^{N\times n}, 
\end{align}
whereby $V$ is of class $\hold^{2}$, strictly convex and satisfies, for some constants $c_{1},...,c_{4}>0$
\begin{align}\label{eq:Vprops}
c_{1}|z|^{2}\leq V(z) \leq c_{2}|z|^{2}\;\;\;\text{if}\;|z|\leq 1\;\;\;\text{and}\;\;\;c_{3}|z|\leq V(z) \leq c_{4}(1+|z|). 
\end{align}
In the present situation, \eqref{eq:muell1q} implies that there exists a constant $\ell>0$ such that $F-\ell V$ is convex (and hence, in particular, quasiconvex). Based on \eqref{eq:muqbd}, $F$ thus satisfies all assumptions of \cite[Thm. 2.1]{GMEKRI24} up to two points: In \cite[Thm. 2.1]{GMEKRI24}, it is required that $F-\ell\langle\cdot\rangle_{1}$ is quasiconvex and that $F\in\hold^{\infty}(\R^{N\times n})$. From the perspective of partial regularity and because of \eqref{eq:Vprops}, it is immaterial if we work with $\langle\cdot\rangle_{1}$ instead of $V$ given by \eqref{eq:Vpartial}. Secondly, following the discussion after \cite[Thm. 2.1]{GMEKRI24}, we obtain the $\hold^{1,\alpha}$-partial regularity of relaxed minimizers if $F$ is only of class $\hold^{2}$. Moreover, because of \eqref{eq:muqbd}, Theorem~\ref{thm:main} implies that $\D u^{s}\equiv 0$, and so \cite[Thm. 2.1]{GMEKRI24} gives us the following characterisation of $\Sigma_{u}$: 
\begin{align}\label{eq:singsetchar}
\begin{split}
\Sigma_{u} = \Sigma_{u}^{1}\cup\Sigma_{u}^{2} & \coloneqq  \Big\{x_{0}\in\Omega\colon\;\liminf_{r\searrow 0}\dashint_{\ball_{r}(x_{0})}|\nabla u - (\nabla u)_{\ball_{r}(x_{0})}|\dif x >0\Big\} \\ 
& \qquad \cup \Big\{x_{0}\in\Omega\colon\;\limsup_{r\searrow 0}|(\nabla u)_{\ball_{r}(x_{0})}| = + \infty\Big\}. 
\end{split}
\end{align}
In view of Proposition~\ref{lem:ae}, we have that $u\in\bv_{2,\locc}(\Omega;\R^{N})$. The usual Poincar\'{e} inequality on $\bv$ then implies for $\ball_{r}(x_0)\Subset\Omega$ that 
\begin{align*}
\dashint_{\ball_{r}(x_{0})}|\nabla u-(\nabla u)_{\ball_{r}(x_{0})}|\dif x \leq c\,r\,\frac{|\D^{2}u|(\ball_{r}(x_{0}))}{r^{n}}, 
\end{align*}
and so 
Giusti's measure density lemma (see e.g. \cite[Prop. 1.76]{BECK16}, which directly extends to functions in $\bv(\Omega;\R^{N\times n})$), yields that $\dim_{\mathscr{H}}(\Sigma_{u})\leq n-1$. This completes the proof. 
\end{proof}

\begin{rem}
The condition \eqref{eq:muqbd} is required both for the application of \cite[Thm. 2.1]{GMEKRI24} \emph{and} the higher Sobolev regularity from Proposition~\ref{lem:ae}. If $q=1$, then partial regularity in itself holds without any restriction on $\mu$ (see, e.g., \cite{ANZGIA88,GMEKRI19b,GMEINEDER21}); however, in view of Remark~\ref{rem:unconditional}, it is then required for higher Sobolev regularity, leading to the estimate for the Hausdorff dimension of the singular set. For partial regularity in the setting of Corollary~\ref{cor:dimred}, the exponent $\mu<\frac{n}{n-1}$ is primarily required to reduce to the more general quasiconvex setting as assumed in \cite{GMEKRI24}. We expect that, for partial regularity for \emph{convex}  problems with $(1,q)$-growth, a substantially larger range of $\mu$ and $q$ will work. 
\end{rem}
\section{Proof of Theorem~\ref{thm:main2}:  $\hold^{1,\alpha}$-regularity in two dimensions}\label{sec:proofmain2}

We now come to the proof of Theorem~\ref{thm:main2} and base our arguments on the notation and results of Section~\ref{sec:proofmain}; recall that now $n=2$. In what follows, let $u\in\bv(\Omega;\R^{N})$ be an arbitrary but fixed  relaxed minimizer, and denote by $(u_{j})$ the Ekeland-type vanishing viscosity sequence from Section~\ref{sec:Ekelandstart}. Moreover, we recall from \eqref{eq:2dstrongercond} that $1\leq \mu <2$ and $q\geq 1$ are now such that  
\begin{align}\label{eq:qmupropaganda}
\max\{2,q\}+3\mu<6, 
\end{align}
and that $F\in\hold^{2}(\R^{N\times n})$ satisfies \eqref{eq:muell1q} with these choices of $\mu$ and $q$. As an elementary conclusion of \eqref{eq:qmupropaganda}, we record that
\begin{align*}
q<6-3\mu \stackrel{\mu\geq 1}{\leq} 4-\mu \qquad \Longrightarrow \qquad q+\mu<4=2+\frac{2}{n-1}\qquad\mbox{and}\qquad \mu<2=\frac{n}{n-1}.
\end{align*}
Hence, both Theorem~\ref{thm:main}\ref{item:main2b} and Proposition~\ref{lem:ae}\ref{item:detective3} apply in this setting. The proof of Theorem~\ref{thm:main2} is strongly inspired by Bildhauer \& Fuchs \cite{BILFUC03,BILDHAUER03}, where similar results have been established in the context of $(p,q)$-growth conditions: 
\begin{proof}[Proof of Theorem~\ref{thm:main2}]
Let $x_{0}\in\Omega$,  $0<R<\frac{1}{2}\mathrm{dist}(x_{0},\partial\Omega)$ and $\xi_1,\xi_2 \in\R^{N}$. Moreover, let $\ell\in\{1,2\}$. We pick a smooth cut-off function $\eta\in\hold_{c}^{\infty}(\Omega;[0,1])$ such that $\mathbbm{1}_{\ball_{R}(x_{0})}\leq\eta\leq\mathbbm{1}_{\ball_{2R}(x_{0})}$ and $|\nabla\eta|\leq \frac{4}{R}$. The starting point for the present proof is the Caccioppoli-type inequality \eqref{eq:weighted0} from Proposition~\ref{prop:cacc}\ref{item:Cacc1}. To simplify the following computations, we note that Proposition~\ref{prop:summary}\ref{item:EkelandSeq0} and~\ref{item:EkelandSeq1} imply that there exists a sequence $(\delta_{j})$ in $(0,\infty)$ with $\delta_{j}\searrow 0$  and independent of $\xi_\ell$ such that 
\begin{align}\label{eq:klasse}
\begin{split}
\int_{\ball_{R}(x_{0})} & \langle \nabla^2F(\nabla  u_{j})\partial_\ell \nabla u_{j},\partial_\ell \nabla u_{j}\rangle \,\mathrm dx \\ & \leq 2\bigg\vert\int_{\mathcal{A}_{R}(x_{0})} \langle \nabla^{2}F(\nabla u_{j})\partial_\ell \nabla u_{j}, (\partial_\ell u_{j}-\xi_{\ell})\otimes\nabla\eta\rangle\mathrm dx\bigg\vert  
+ \delta_{j}\,(1+|\xi_{\ell}|^{2}),
\end{split}
\end{align}
where $\mathcal{A}_{R}(x_{0})\coloneqq \ball_{2R}(x_{0})\setminus\overline{\ball}_{R}(x_{0})$ as usual. 
For brevity, we put 
\begin{align*}
H_{j,\ell}\coloneqq \sqrt{\langle \nabla^2F(\nabla  u_{j})\partial_\ell \nabla u_{j},\partial_\ell \nabla u_{j}\rangle}.
\end{align*}
Applying the Cauchy--Schwarz inequality pointwisely to the integrands on the right-hand side of \eqref{eq:klasse}, we hence obtain by a subsequent use of H\"{o}lder's inequality and the estimate $|\nabla \eta|\leq \frac{4}{R}$: 
\begin{align}\label{eq:bruderduell}
\begin{split}
& \int_{\ball_{R}(x_{0})} |H_{j,\ell}|^2 \,\mathrm dx \\ & \leq 2\bigg\vert\int_{\mathcal{A}_{R}(x_{0})} H_{j,\ell} \big(\langle \nabla^{2}F(\nabla u_{j})((\partial_{\ell}u_{j}-\xi_\ell)\otimes\nabla\eta), (\partial_\ell u_{j}-\xi_\ell)\otimes\nabla\eta\rangle \big)^{\frac{1}{2}}\mathrm dx\bigg\vert \\ 
& \qquad + \delta_{j}\,(1+|\xi|^{2}) \\ 
& \leq \frac{8}{R}\bigg(\int_{\mathcal{A}_{R}(x_{0})}|H_{j,\ell}|^{2}\dif x\bigg)^{\frac{1}{2}} \, \mathrm{I} + \delta_{j}(1+|\xi_\ell|^{2})
\end{split}
\end{align}
with 
\begin{equation*}
\mathrm{I} \coloneqq \bigg(\int_{\mathcal{A}_{R}(x_{0})\cap\{|\nabla\eta|\neq 0\}}\langle\nabla^{2}F(\nabla u_{j})((\partial_{\ell}u_{j}-\xi_\ell)\otimes\tfrac{\nabla\eta}{|\nabla \eta|}),((\partial_{\ell}u_{j}-\xi_\ell)\otimes\tfrac{\nabla\eta}{|\nabla \eta|})\rangle\dif x\bigg)^{\frac{1}{2}}.
\end{equation*}
We now establish the continuity of $\nabla u$; to this end, we distinguish the cases $1\leq q\leq 2$ and $q>2$. 

\emph{Case $1\leq q\leq 2$.} We recall the auxiliary function $V_{\mu}$ defined by \eqref{eq:Vfunction}. Employing the lower bound from  \eqref{eq:muell1q}, we note that the estimates 
\begin{align}\label{eq:malente}
\begin{split}
|\partial_\ell \nabla u_{j}| & =  \big((1+|\nabla u_{j}|^2)^{-\mu/2}| \partial_\ell \nabla u_{j}|^{2}\big)^{\frac{1}{2}}(1+|\nabla u_{j}|^2)^{\frac{\mu}{4}}\\
& \leq \lambda^{-\frac{1}{2}} |H_{j,\ell}|(1+|\nabla u_{j}|^2)^{\frac{\mu}{4}}\stackrel{\eqref{bound:V}}{\leq} \lambda^{-\frac{1}{2}}|H_{j,\ell}|(|V_{\mu}(\nabla u_{j})|+1)^{\frac{\mu}{2-\mu}} 
\end{split}
 \end{align}
hold $\mathscr{L}^{n}$-a.e. in $\Omega$ for $\ell\in\{1,2\}$. Due to $q\leq 2$, the upper bound from \eqref{eq:muell1q} implies that $|\nabla^{2}F|$ is bounded by $\Lambda$. Hence, the scaled Sobolev inequality in $n=2$ dimensions with the particular choice of $\xi_\ell \coloneqq (\partial_{\ell}u_{j})_{\mathcal{A}_{R}(x_{0})}$ and Poincar\'{e}'s inequality yield 
\begin{align}\label{eq:Munich1972}
\begin{split}
\mathrm{I} & \leq c\,\Lambda^{\frac{1}{2}}\bigg(\int_{\mathcal{A}_{R}(x_{0})}|\partial_{\ell}u_{j}-(\partial_{\ell}u_{j})_{\mathcal{A}_{R}(x_{0})}|^{2}\dif x\bigg)^{\frac{1}{2}} \\
 & \leq c\,\Lambda^{\frac{1}{2}}\int_{\mathcal{A}_{R}(x_{0})}|\partial_{\ell}\nabla u_{j}|\dif x\\
 & \!\!\! \stackrel{\eqref{eq:malente}}{\leq} c\,\Big(\frac{\Lambda}{\lambda}\Big)^{\frac{1}{2}}\int_{\mathcal{A}_{R}(x_{0})}|H_{j,\ell}|\,{(|V_{\mu}(\nabla u_{j})|+1)^{\frac{\mu}{2-\mu}}}\dif x.
\end{split}
\end{align}
In combination with \eqref{eq:bruderduell}, \eqref{eq:Munich1972} implies that 
\begin{align}\label{eq:Alexanderplatz}
\begin{split}
\int_{\ball_{R}(x_{0})} |H_{j,\ell}|^{2} \,\mathrm dx & \leq \frac{c}{R}\bigg(\int_{\mathcal{A}_{R}(x_{0})}|H_{j,\ell}|^{2}\dif x\bigg)^{\frac{1}{2}}\int_{\mathcal{A}_{R}(x_{0})}|H_{j,\ell}|\,(|V_{\mu}(\nabla u_{j})|+1)^{\frac{\mu}{2-\mu}}\dif x \\ & \qquad + \delta_{j}(1+|(\partial_{\ell}u_{j})_{\mathcal{A}_{R}(x_{0})}|^{2}), 
\end{split}
\end{align}
where now $c=c(N,\lambda,\Lambda)>0$. Now let $0<r<R$. By \eqref{eq:summary1n2}, the sequence $(V_{\mu}(\nabla u_{j}))$ is uniformly bounded in $\sobo^{1,2}(\ball_{2r}(x_0);\R^{N\times 2})$. Thus, by the chain rule, $(|V_{\mu}(\nabla u_{j})|+1)$ is uniformly bounded in $\sobo^{1,2}(\ball_{2r}(x_0))$ too. At this stage, we note that the assumption $\max\{2,q\}+3\mu<6$ ensures $\frac{\mu}{2-\mu}\in [1,2)$ in the case $1\leq q\leq 2$. Hence, Lemma~\ref{lem:FRESER98}  gives for all $2<p<\infty$ 
\begin{align}\label{eq:stevienicks}
\int_{\ball_r(x_0)}H_{j,\ell}^2\,\mathrm dx\leq \frac{c}{(\log_2(\frac{2R}r))^p} + \delta_{j}(1+|(\partial_{\ell}u_{j})_{\mathcal{A}_{R}(x_{0})}|^{2}), 
\end{align}
where $c>0$ only depends on $\mu,\lambda,\Lambda,N$, $R^{-1}\|V_\mu(\nabla u_{j})\|_{\lebe^2(\ball_{2R}(x_0))}+\|\nabla V_\mu(\nabla u_{j}))\|_{\lebe^2(\ball_{2R}(x_0)}$ $\|H_{j,\ell}\|_{\lebe^2(\ball_{2R}(x_0)}$ and is thus independent of $0<r<R$ and uniformly bounded in $j\in\mathbb N$. We recall that
\begin{equation*}
    |\nabla (V_\mu(\nabla u_j))|^2\stackrel{\eqref{eq:Vest}}{\leq} c(1+|\nabla u_j|^2)^{-\frac{\mu}{2}}|\nabla^2 u_j|^2\leq c \sum_{\ell=1}^2 H_{j,\ell}^2.
\end{equation*}
In view of Poincar\'{e}'s inequality and \eqref{eq:stevienicks}, we deduce that 
\begin{align}\label{eq:domagic}
\begin{split}
\frac{1}{r^{2}}\int_{\ball_{r}(x_{0})} \big|V_{\mu}(\nabla u_{j})&-(V_{\mu}(\nabla u_{j}))_{\ball_{r}(x_{0})}\big|^{2}\dif x \leq c\int_{\ball_{r}(x_{0})}|\nabla V_{\mu}(\nabla u_{j})|^{2}\dif x \\ 
& \leq \frac{c}{(\log_2(\frac{2R}r))^p} + c\,\delta_{j}\sum_{\ell=1}^2 (1+|(\partial_{\ell}u_{j})_{\mathcal{A}_{R}(x_{0})}|^{2}).
\end{split}
\end{align}
Combining $|V_\mu(z)|\leq  (1+|z|^2)^\frac{2-\mu}4$ and the strong convergence~\eqref{lim:ae:eq2}, we obtain 
\begin{align*}
V_{\mu}(\nabla u_{j})-(V_{\mu}(\nabla u_{j}))_{\ball_{r}(x_{0})}\to V_{\mu}(\nabla u)-(V_{\mu}(\nabla u))_{\ball_{r}(x_{0})}\qquad\text{strongly in $\lebe^{2}(\ball_{r}(x_{0}))$.}
\end{align*}
Based on the elementary estimate 
\begin{align*}
|(\partial_{\ell}u_{j})_{\mathcal{A}_{R}(x_{0})}|^{2} \leq \frac{c}{R^{4}}\bigg(\int_{\Omega}|\nabla u_{j}|\dif x\bigg)^{2}\stackrel{\eqref{eq:unifboundL1}}{\leq} \frac{c}{R^{4}}\frac{1}{\gamma^2}\Big(4 + \inf_{\sobo_{u_{0}}^{1,q}(\Omega;\R^{N})}\mathscr{F}[-;\Omega]\Big)^2 ,
\end{align*}
inequality \eqref{eq:domagic} then yields in the limit $j \to \infty$: 
\begin{align*}
\frac{1}{r^{2}}\int_{\ball_{r}(x_{0})} \big|V_{\mu}(\nabla u)-(V_{\mu}(\nabla u))_{\ball_{r}(x_{0})}\big|^{2}\dif x \leq \frac{c}{(\log_2(\frac{2R}r))^p}. 
\end{align*}
In consequence, Lemma~\ref{lem:campanato} implies that $V_{\mu}(\nabla u)$ is locally bounded and continuous in $\Omega$. Since $V_\mu \colon\R^{N\times 2}\to\R^{N\times 2}$ is a homeomorphism (see Step~1 of the proof of Proposition~\ref{prop:semimain}), the continuity of $\nabla u$ follows. 

\emph{Case $q>2$.} In the case $q>2$, we use the upper bound from \eqref{eq:muell1q} to find for $\ell\in\{1,2\}$
\begin{equation*}
\mathrm{I} \leq \sqrt{\Lambda}\biggl(\int_{\mathcal{A}_R(x_0)} (1+|\nabla u_{j}|^2)^\frac{q-2}2|\nabla u_{j}-\zeta|^2 \,\mathrm dx\biggr)^{\frac12},
\end{equation*}
where $\zeta=\begin{pmatrix}\xi_1 &\xi_{2}\end{pmatrix}\in \R^{N\times 2}$. In order to estimate the right-hand side from above, we recall that there exists a constant $c=c(N,q)>0$ such that
\begin{align}\label{eq:Wbound}
(1+|z_1|^2)^\frac{q-2}2|z_1-z_2|^2\leq c \,|V_{2-q}(z_1)-V_{2-q}(z_2)|^2
\qquad\text{for all}\;z_{1},z_{2}\in\R^{N\times 2},
\end{align}
where $V_{2-q}(z)\coloneqq (1+|z|^{2})^{-\frac{2-q}{4}}z$, see \eqref{eq:Vfunction}. Indeed, estimate \eqref{eq:Wbound} is a direct consequence of \cite[Eq. (2.4)]{GiaquintaModica86}. Now, by the Sobolev inequality in $n=2$ dimensions, we find with the choice $\zeta=V_{2-q}^{-1}((V_{2-q}(\nabla u_{j}))_{\mathcal A_{R}(x_0)})$:
\begin{align*}
\mathrm{I}\,\, & \!\! \stackrel{\eqref{eq:Wbound}}{\leq}   c\,\biggl(\int_{\mathcal{A}_R(x_0)} |V_{2-q}(\nabla u_{j})-V_{2-q}(\zeta)|^2 \,\mathrm dx\biggr)^{\frac12} \\ & 
= c\, \biggl(\int_{\mathcal{A}_{R}(x_{0})}|V_{2-q}(\nabla u_{j})-(V_{2-q}(\nabla u_{j}))_{\mathcal{A}_{R}(x_{0})}|^{2}\dif x \biggr)^\frac{1}{2}\\ 
& \leq c\,\int_{\mathcal{A}_R(x_0)} |\nabla (V_{2-q}(\nabla u_{j}))| \,\mathrm dx.
\end{align*}
To conclude the proof, we use that there exists a constant $c=c(q,\lambda,\Lambda)>0$ such that 
\begin{align*}
|\nabla (V_{2-q}(\nabla u_{j}))| & \leq c\,(1+|\nabla u_{j}|^2)^\frac{q-2}4 |\nabla^2 u_{j}| \\ & \leq c\, \sum_{\ell=1}^{2}|H_{j,\ell}| (1+|\nabla u_j|^2)^\frac{q+\mu-2}4\stackrel{\eqref{bound:V}}{\leq} c\,\sum_{\ell=1}^{2} |H_{j,\ell}| (|V_{\mu}(\nabla u_j)|+1)^\frac{q+\mu-2}{2-\mu}.
\end{align*}
Our condition  \eqref{eq:qmupropaganda} with $q\geq2$ then yields $\frac{q+\mu-2}{2-\mu}<2$. With a similar argument as in the case $q\leq 2$, we then infer the continuity of $V_{2-q}(\nabla u)$ together with the corresponding estimates. As in the case $q\leq 2$, this ensures the continuity of $\nabla u$. 

It remains to upgrade the continuity of $\nabla u$ to H\"older continuity. To this end, let $\ell\in\{1,2\}$. We recall that $\partial_\ell u\in \sobo_{\locc}^{1,t}(\Omega;\R^{N})$ for all $t<2$, see Proposition~\ref{lem:ae}, and thus we deduce from the differentiated Euler--Lagrange system~\eqref{eq:ulocalminlin} by approximation that 
\begin{equation}\label{eq:brezis0}
\int_{\Omega}\langle A\nabla \partial_\ell u,\nabla \varphi\rangle\,\mathrm dx=0\qquad\mbox{for all $\varphi\in \sobo_{c}^{1,p}(\Omega;\R^{N})$ with $p>2$},
\end{equation}
where we have used the shorthand notation $A \coloneqq \nabla^2 F(\nabla u)$.
Since, for every $K\Subset \Omega$,  $\nabla u$ is bounded and continuous in $K$,  $A$ is uniformly elliptic and continuous in $K$. As shall be addressed in detail below, we are now in a position to apply \cite[Theorem A1.1]{ANCONA09} to obtain that $\partial_\ell u\in \sobo^{1,p}(K';\R^{N})$ for all $p<\infty$ and all open subsets  $K'\Subset K$. Thus, by Morrey's embedding, it follows that $u\in \hold_{\locc}^{1,\alpha}(\Omega;\R^{N})$ for all $0<\alpha<1$ as claimed.  

Since \cite{ANCONA09} is formulated for elliptic equations (that is, $N=1$), we briefly give the argument underlying~\cite[Theorem A1.1]{ANCONA09} for the convenience of the reader. In view of standard interior $\lebe^p$-estimates for linear elliptic systems (see, e.g.,\ \cite[Theorem 7.2]{Giaquinta12}), it suffices to show that $\partial_{\ell}u\in \sobo_{\locc}^{1,2}(\Omega;\R^{N})$. For an arbitrary ball $\ball_R=\ball_{R}(x_0)\Subset \Omega$, we shall now establish that  $\partial_\ell u\in \sobo^{1,2}(\ball_{R/2};\R^{N})$. 

To this end, let $\Phi\in \hold_c^\infty(\ball_{R};\R^{N\times 2})$ be arbitrary with $\|\Phi\|_{\lebe^2(\ball_{R})}=1$, and consider the unique solution $v\in \sobo^{1,2}_0(\ball_{R};\R^{N})$ of
\begin{equation}\label{eq:brezis1}
\int_{\ball_{R}}\langle A\nabla v,\nabla \varphi\rangle \,\mathrm dx = \int_{\ball_{R}}\langle \Phi,\nabla \varphi\rangle \,\mathrm dx\qquad \text{for all } \varphi\in \sobo_0^{1,2}(\ball_{R};\R^{N}).
\end{equation}
Using the uniform ellipticity of $A$ and $\|\Phi\|_{\lebe^2(\ball_{R})}=1$, we find that  $\|\nabla v\|_{\lebe^2(\ball_{R})}\leq C<\infty$. Here and in the rest of the proof, $C>0$ denotes a finite positive constant depending only on~$N$ and the uniform ellipticity ratio of~$A$. Moreover, standard $\lebe^{p}$-theory, see \cite[Theorem 7.2]{Giaquinta12}, in combination with the continuity of $A$ and the assumption $\Phi\in \hold_c^\infty(\ball_{R};\R^{N\times 2})$ yields $\nabla v\in \lebe_{\rm loc}^p(\ball_{R};\R^{N\times 2})$ for all $p<\infty$. The higher integrability of $\nabla v$ ensures that \eqref{eq:brezis1} is also valid for $\varphi\in \sobo_{c}^{1,s}(\ball_{R};\R^{N})$ with $s>1$. In particular, we may use $\varphi=\eta (\partial_\ell u-(\partial_\ell u)_{\ball_{R}})$ as a test function in \eqref{eq:brezis1}, where $\eta\in \hold_{c}^{\infty}(\ball_{R};[0,1])$ is a smooth cut-off function satisfying $\eta=1$ in $\ball_{R/2}$ and $|\nabla\eta|\leq 8/R$. This yields
\begin{align*}
\int_{\ball_{R}}\langle \Phi,\eta \nabla \partial_{\ell}u\rangle \,\mathrm dx &=\int_{\ball_{R}}\langle A\nabla v,\eta \nabla \partial_{\ell} u\rangle\,\mathrm dx\\
&+\int_{\ball_{R}} \big( \langle A\nabla v, (\partial_{\ell} u-(\partial_{\ell}u)_{\ball_{R}})\otimes \nabla\eta\rangle-\langle \Phi, (\partial_{\ell} u -(\partial_{\ell} u)_{\ball_{R}})\otimes\nabla\eta\rangle \big) \,\mathrm dx. 
\end{align*}
The second integral on the right-hand side can be estimated from above by H\"older's and Sobolev's inequalities by
$$
(\|A\nabla v\|_{\lebe^2(\ball_{R})}+\|\Phi\|_{\lebe^2(\ball_{R})})\|\nabla \eta\|_{\lebe^\infty(\ball_{R})}\| \partial_{\ell} u -(\partial_{\ell} u)_{\ball_{R}}\|_{\lebe^2(\ball_{R})}\leq \frac{C}{R}\|\nabla \partial_{\ell} u\|_{\lebe^1(\ball_{R})}.
$$
For the remaining term, we use the symmetry and boundedness of $A$, equation \eqref{eq:brezis0} with $\varphi=\eta (v-(v)_{\ball_{R}})\in\lebe^{\infty}(\ball_{R};\R^{N})$ and Sobolev's inequality in the form
\begin{align*}
\int_{\ball_{R}}\langle A\nabla v,\eta \nabla \partial_{\ell} u\rangle\,\mathrm dx& =\int_{\ball_{R}}\langle A\nabla \partial_{\ell} u,\eta \nabla v\rangle\,\mathrm dx=-\int_{\ball_{R}}\langle A\nabla \partial_{\ell} u, (v-(v)_{\ball_{R}})\otimes \nabla \eta\rangle \,\mathrm dx\\
& \leq\|A\nabla \partial_{\ell}u \|_{\lebe^{\frac{3}{2}}(\ball_{R})}\|\nabla \eta\|_{\lebe^\infty(\ball_{R})}\|v-(v)_{\ball_{R}}\|_{\lebe^3(\ball_{R})}\leq \frac{C}{R^{\frac{1}{3}}}\|\nabla \partial_{\ell}u \|_{\lebe^{\frac{3}{2}}(\ball_{R})}.
\end{align*}
The previous three displayed formulas in combination with H\"older inequality $\|\cdot\|_{\lebe^1(\ball_{R})}\leq \mathscr{L}^{2}(\ball_{R})^\frac{1}{3}\|\cdot\|_{\lebe^\frac{3}{2}(\ball_{R})}$ imply that 
\begin{equation}
\sup \bigg\{ \int_{\ball_{R}}\langle \Phi,\eta \nabla \partial_{\ell}u\rangle \,\mathrm dx \colon \Phi\in \hold_c^\infty(\ball_{R};\R^{N\times n}), \, \|\Phi\|_{\lebe^2(\ball_{R})}=1 \bigg\} \leq \frac{C}{R^{\frac{1}{3}}}\|\nabla \partial_{\ell} u\|_{\lebe^\frac{3}{2}(\ball_{R})},
\end{equation}
which ensures the claim $\nabla \partial_\ell u\in \lebe^{2}(\ball_{R/2};\R^{N\times 2})$. This completes the proof. 
\end{proof}

\begin{rem}[On the order of limit passages]\label{rem:orderproof} As to the specific set-up of the above proof, note that the classical Frehse--Seregin lemma (see \cite[Lem. 4.2]{FRESER98}) could only be applied if the additional term $\delta_{j}(1+|(\partial_{\ell}u_{j})_{\mathcal{A}_{R}(x_{0})}|^{2})$ were absent in \eqref{eq:Alexanderplatz}. This, in turn, would force us to directly perform the limit passage $j\to\infty$ in \eqref{eq:Alexanderplatz}. At this stage, however, there is no convergence result for $(H_{j,\ell})$ at our disposal. In the above proof, we first pass from second to first order quantities in \eqref{eq:domagic} (for which convergence results are available at this point), subsequently use the modified Frehse--Seregin Lemma~\ref{lem:FRESER98} and finally  send $j\to\infty$. 
\end{rem}

\section{Appendix}\label{sec:appendix}
\subsection{Relaxations}
As a key point of the main part and different from previous contributions, the  regularity assertions of Theorems~\ref{thm:main} and~\ref{thm:main2} do not make use of integral representations of the relaxed functional $\overline{\mathscr{F}}_{u_{0}}^{*}[-;\Omega]$. In view of the ubiquity of such representations for \emph{purely linear growth} functionals (i.e., $q=1$, see, e.g., \cite{BECSCH13,BILDHAUER03}), we briefly discuss here the underlying difficulties in establishing such representations in the general $(1,q)$-growth case. As an upshot, a detour via an integral representation -- which  might a priori only be available for a strictly smaller range of $q$ than displayed in Theorems~\ref{thm:main} and~\ref{thm:main2} -- would complicate the line of argument \emph{while not being necessary} based on  the approach developed in  Sections \ref{sec:relax}--\ref{sec:proofmain2}. 

To this end, it is instructive to firstly consider the case without prescribed Dirichlet boundary values. We begin with:  
\begin{lem}[Koch and Kristensen, {\cite[Thm. 2]{KOCKRI22}}]
Let $\Omega\subset\R^{n}$ be open and bounded with Lipschitz boundary. Moreover, let $1\leq q<\infty$ and let the convex integrand  $F\in\hold(\R^{N\times n})$ satisfy the growth bound \eqref{eq:1qgrowth}. For $u\in\bv(\Omega;\R^{N})$, we put 
\begin{align*}
\overline{\mathscr{F}}^{*}[u;\Omega] \coloneqq  \inf\bigg\{\liminf_{j\to\infty}\int_{\Omega}F(\nabla u_{j})\dif x\colon\;(u_{j})\text{ in } \sobo^{1,1}(\Omega;\R^{N}),\;u_{j}\stackrel{*}{\rightharpoonup}u\;\text{in}\;\bv(\Omega;\R^{N}) \bigg\}. 
\end{align*}
Then we have the \emph{integral representation}
\begin{align}\label{eq:intrep}
\overline{\mathscr{F}}^{*}[u;\Omega] = \overline{\mathscr{F}}_{\mathrm{int}}^{*}[u;\Omega] \coloneqq  \int_{\Omega}F(\nabla u)\dif x + \int_{\Omega}F^{\infty}\Big(\frac{\dif \D^{s}u}{\dif|\D^{s}u|}\Big)\dif|\D^{s}u|, 
\end{align}
where we have used the Lebesgue--Radon--Nikod\'{y}m  decomposition \eqref{eq:gradientdecomp} of $\D u$. 
\end{lem}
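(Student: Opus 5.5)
The representation \eqref{eq:intrep} will follow from two inequalities, $\overline{\mathscr{F}}_{\mathrm{int}}^{*}\leq\overline{\mathscr{F}}^{*}$ (lower bound) and $\overline{\mathscr{F}}^{*}\leq\overline{\mathscr{F}}_{\mathrm{int}}^{*}$ (upper bound). For the lower bound I would argue exactly as in Lemma~\ref{lem:RollingStones} and Proposition~\ref{prop:finiteness}, and no boundary term appears because there are no boundary values. Let $(u_{j})$ in $\sobo^{1,1}(\Omega;\R^{N})$ satisfy $u_{j}\stackrel{*}{\rightharpoonup}u$. Then $(\mathscr{L}^{n},\nabla u_{j}\mathscr{L}^{n})\stackrel{*}{\rightharpoonup}(\mathscr{L}^{n},\D u)$ in $\mathrm{RM}_{\mathrm{fin}}(\Omega;\R\times\R^{N\times n})$. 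Since $F^{\#}$ from Remark~\ref{rem:linearperspective} is convex, lower semicontinuous and $1$-homogeneous on the cone $\R_{\geq0}\times\R^{N\times n}$, Reshetnyak's theorem (Proposition~\ref{prop:resh}) yields
\[
\int_{\Omega}F(\nabla u)\dif x+\int_{\Omega}F^{\infty}\Big(\frac{\dif\D^{s}u}{\dif|\D^{s}u|}\Big)\dif|\D^{s}u|\leq\liminf_{j\to\infty}\int_{\Omega}F(\nabla u_{j})\dif x.
\]
Taking the infimum over all such sequences gives the lower bound.

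For the upper bound we may assume $\overline{\mathscr{F}}_{\mathrm{int}}^{*}[u;\Omega]<\infty$. By Proposition~\ref{prop:finiteness}-type reasoning, $F^{\infty}$ is then finite $|\D^{s}u|$-a.e., and by Alberti's theorem \eqref{eq:alberti} the polar $\frac{\dif\D^{s}u}{\dif|\D^{s}u|}$ is rank one. I would first reduce to a slightly larger domain: since $\Omega$ is Lipschitz, extend $u$ to $\bv(\Omega';\R^{N})$ with $\Omega\Subset\Omega'$ and $|\D u|(\partial\Omega)=0$, for instance by a reflection across $\partial\Omega$. Next take mollifications $u_{\varepsilon}=\rho_{\varepsilon}*u$ restricted to $\Omega$. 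Then $\nabla u_{\varepsilon}=\rho_{\varepsilon}*\D u$, and by Jensen's inequality applied to the convex perspective $F^{\#}$ we get
\[
F(\nabla u_{\varepsilon})=F^{\#}(\rho_{\varepsilon}*(\mathscr{L}^{n},\D u))\leq\rho_{\varepsilon}*\big(F^{\#}(\mathscr{L}^{n},\D u)\big),
\]
understood as measures. Integrating over $\Omega$ and letting $\varepsilon\to0$ gives $\limsup_{\varepsilon}\int_{\Omega}F(\nabla u_{\varepsilon})\leq\int_{\overline\Omega}F^{\#}(\mathscr{L}^{n},\D u)$, which equals $\overline{\mathscr{F}}_{\mathrm{int}}^{*}[u;\Omega]$ provided $F^{\#}(\mathscr{L}^{n},\D u)$ charges no mass on $\partial\Omega$. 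This pointwise Jensen step is legitimate for extended-valued convex $F^{\#}$. Finally $u_{\varepsilon}\stackrel{*}{\rightharpoonup}u$ and $u_{\varepsilon}\in\sobo^{1,1}$, so the upper bound follows.

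The main obstacle is the boundary. I need an extension of $u$ across $\partial\Omega$ such that the $F^{\#}$-measure of the extended function puts no mass on $\partial\Omega$ and remains finite near $\partial\Omega$. Reflection creates a jump on $\partial\Omega$ only if traces mismatch, and they do not for a reflection. However, the reflected derivative in the outer layer has polars distorted by the bi-Lipschitz reflection map, which may push them into directions where $F^{\infty}=\infty$, in particular in the presence of $q$-growth. To avoid this I would instead use an interior approximation: exhaust $\Omega$ by star-shaped Lipschitz pieces via a partition of unity, and on each piece compose with a dilation $x\mapsto x_{0}+t(x-x_{0})$ with $t<1$, which maps $\Omega$ into itself. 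Dilation preserves the polar directions exactly and scales the energy by a factor tending to $1$. Combined with the mollification step, and with the partition-of-unity error terms $u\otimes\nabla\psi_{i}$ controlled by $\lebe^{1}$-convergence and the Lipschitz-type estimate of Lemma~\ref{lem:liptype}, this yields the recovery sequence. Controlling these cross terms under merely $q$-growth is where I expect the real work, and I would handle them by performing the dilation before the gluing.
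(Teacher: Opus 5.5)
Your lower bound is exactly the paper's argument: Reshetnyak's theorem for $F^{\#}$ as in Proposition~\ref{prop:finiteness}, now without a boundary term. Your interior mechanism for the upper bound, Jensen's inequality for $F^{\#}$ under mollification, is the paper's ``routine mollification'' step. The gap is the boundary gluing, which you leave open and whose proposed control does not work.

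With $w_i=\rho_\varepsilon*(u\circ\delta^i_t)$ and dilations $\delta^i_t$ about different centres, you get $\nabla(\sum_i\psi_iw_i)=A+E$ with $A=\sum_i\psi_i\nabla w_i$ and $E=\sum_i(w_i-w_k)\otimes\nabla\psi_i\neq0$, because $u\circ\delta^i_t\neq u\circ\delta^k_t$ on overlaps. This difference is $O(1-t)$ only in $\lebe^{1}$. Since $w_i$ must be defined up to $\partial\Omega$, you are forced to take $\varepsilon\lesssim 1-t$, so mollification does not make $E$ uniformly small.

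Lemma~\ref{lem:liptype} does not rescue this for $q>1$. It yields $\int|F(A+E)-F(A)|\dif x\leq L\int(1+|A|+|E|)^{q-1}|E|\dif x$, but:
\begin{itemize}
\item $A$ is unbounded in $\lebe^{q}$ as $\varepsilon\to0$ wherever $\D^{s}u\neq0$, since boundedness in $\lebe^{q}$ with $q>1$ would force $\D u\ll\mathscr{L}^{n}$ there;
\item $\nabla u$ itself need not lie in $\lebe^{q}$ when $F$ grows linearly in some directions;
\item $E$ is small only in $\lebe^{1}$.
\end{itemize}
Dilating $\psi_iu$ before gluing does not help either; it merely produces the nonvanishing cross term $t\sum_i(u\otimes\nabla\psi_i)\circ\delta^i_t$. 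As written, your argument only covers domains star-shaped with respect to a ball $\ball_{r}(x_0)$. There the single map $u\circ\delta_t$ is defined on a $(1-t)r$-neighbourhood of $\overline{\Omega}$, and mollification with $\varepsilon<(1-t)r$ needs no gluing at all.

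A workable repair is to glue first and then mollify with one common radius. In Lipschitz charts, use inward translations $\tau_ix=x+h\nu_i$, which preserve polars and energy exactly. Put $v=\sum_i\psi_i\,(u\circ\tau_i)$ on a $ch$-neighbourhood $\Omega^{h}$ of $\Omega$, and set $w=\rho_\varepsilon*(sv)$ with $\varepsilon<ch$ and $0<s<1$. The singular part $\sum_i\psi_i\D^{s}(u\circ\tau_i)$ of $\D v$ is handled by sublinearity of $F^{\infty}$. The only error is then the absolutely continuous field $E=\sum_i(u\circ\tau_i-u)\otimes\nabla\psi_i$, which you should control by convexity rather than by Lemma~\ref{lem:liptype}. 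With $A=\sum_i\psi_i(\nabla u)\circ\tau_i$,
\[
F\big(s(A+E)\big)\leq sF(A)+(1-s)\,F\Big(\frac{s}{1-s}E\Big),\qquad F(A)\leq\sum_i\psi_i\,F\big((\nabla u)\circ\tau_i\big).
\]
Combined with Jensen's inequality this gives
\[
\int_{\Omega}F(\nabla w)\dif x\leq s(1+Ch)\,\overline{\mathscr{F}}_{\mathrm{int}}^{*}[u;\Omega]+\Gamma(1-s)\,\mathscr{L}^{n}(\Omega^{h})+\Gamma\,(1-s)^{1-q}\|E\|_{\lebe^{q}(\Omega^{h})}^{q}.
\]
Sending $h\to0$ fast relative to $s\nearrow1$ closes the argument whenever $E\to0$ in $\lebe^{q}(\Omega^{h})$. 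This holds, for example, for $q\leq\frac{n}{n-1}$ by $\bv\hookrightarrow\lebe^{n/(n-1)}$ on Lipschitz domains, or for bounded $u$. Beyond this range, unbounded vector-valued $u$ require a further idea. The paper does not detail this step either; it only calls it a routine mollification and refers to \cite{KOCKRI22}.
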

Whereas \cite{KOCKRI22} uses the machinery of Young measures to arrive at '$\geq$' in \eqref{eq:intrep}, we note that this step can be accomplished as in the proof of Proposition~\ref{prop:finiteness} by use of the Reshetnyak lower semicontinuity theorem. Once no boundary values are fixed (as in the definition of $\overline{\mathscr{F}}^{*}[-;\Omega]$), a routine mollification of $u$ in conjunction with Jensen's inequality yields '$\leq$' in \eqref{eq:intrep} too; see the proof of \cite[Thm. 2]{KOCKRI22}. If we work with \emph{fixed} boundary values as in \eqref{eq:LSM}, an integral representation for $\overline{\mathscr{F}}_{u_{0}}^{*}[u;\Omega]$ does not follow by similar means. More precisely, if we denote 
\begin{align*}
\overline{\mathscr{F}}_{\mathrm{int},u_{0}}^{*}[u;\Omega] \coloneqq \overline{\mathscr{F}}_{\mathrm{int}}^{*}[u;\Omega] + \int_{\partial\Omega}F^{\infty}(\mathrm{tr}_{\partial\Omega}(u_{0}-u)\otimes\nu_{\partial\Omega})\dif\mathscr{H}^{n-1}, 
\end{align*}
then an integral representation of $\overline{\mathscr{F}}_{u_{0}}^{*}[u;\Omega]$ would mean  that $\overline{\mathscr{F}}_{u_{0}}^{*}[u;\Omega]=\overline{\mathscr{F}}_{\mathrm{int},u_{0}}^{*}[u;\Omega]$. As in the proof of Proposition~\ref{prop:finiteness}, '$\geq$' then follows from the Reshetnyak lower semicontinuity theorem. Hence, an integral representation requires a recovery sequence $(u_{j})$ in $u_{0}+\sobo_{0}^{1,q}(\Omega;\R^{N})$; in the situation considered here, this amounts to 
\begin{align}\label{eq:freeasabird}
\liminf_{j\to\infty}\int_{\Omega}F(\nabla u_{j})\dif x \leq \overline{\mathscr{F}}_{\mathrm{int},u_{0}}^{*}[u;\Omega].  
\end{align}
If $q=1$ in \eqref{eq:1qgrowth} and so $F$ has linear growth from above \emph{and} below, it follows from \cite[Appendix B]{BILDHAUER03} and Reshetnyak's continuity theorem that there exists a sequence $(u_{j})$ in $ u_{0}+\hold_{c}^{\infty}(\Omega;\R^{N})$ such that \eqref{eq:freeasabird} holds with equality. As explained in Example~\ref{ex:reshnon}, Reshetnyak's continuity theorem does not extend to the case where $q>1$. This, in turn, implies that \eqref{eq:freeasabird} must be established by independent means. For suitable cut-off functions $\eta_{j}\in\hold_{c}^{\infty}(\Omega;[0,1])$ with $\eta_{j}\to 1$ pointwise everywhere in $\Omega$, the natural candidate for such a recovery sequence is $u_{j}\coloneqq u_{0} + \rho_{\varepsilon_{j}}*(\eta_{j}(u-u_{0}))$, where $(\varepsilon_{j})$ in $(0,1)$ tends to zero sufficiently fast. It is then readily discovered that \eqref{eq:freeasabird} essentially reduces to showing that 
\begin{align}\label{eq:michaeljackson}
\liminf_{j\to\infty}\int_{\Omega}F((u-u_{0})\otimes\nabla \eta_{j})\dif x \leq \int_{\partial\Omega}F^{\infty}(\mathrm{tr}_{\partial\Omega}(u_{0}-u)\otimes\nu_{\partial\Omega})\dif\mathscr{H}^{n-1}. 
\end{align}
In the purely linear growth case (where $q=1$), this inequality can be established, e.g., by use of the coarea formula and the fundamental theorem of calculus. If $q>1$, an integral representability and  \eqref{eq:michaeljackson} indicate the necessity of the exponent restriction $q<\frac{n}{n-1}$ in view of Proposition~\ref{prop:finiteness}. Even in this case, however, a similar approach as for $q=1$ seems difficult. This is due to the fact that the bulk integrals of $F((u-u_{0})\otimes\nabla\eta_{j})$ are only expected to converge to the corresponding boundary terms close to boundary points where $F^{\infty}(\mathrm{tr}_{\partial\Omega}(u_{0}-u)\otimes\nu_{\partial\Omega})$ is finite; if the latter is not fulfilled, Proposition~\ref{prop:finiteness} in turn indicates that the bulk approximations must decay to zero sufficiently fast as $j\to\infty$. This requires a delicate case distinction and an additional argument that these two cases do not interfere too much. While we believe that, in principle, this might be possible with substantially refined methods, the above reasoning indicates a strictly smaller range of $q$ for such an integral representation than the one given in Theorems \ref{thm:main}, \ref{thm:main2}. 

Most importantly, the underlying chief obstruction is not the potential presence of the singular parts in the interior, but the boundary behaviour of recovery sequences. As implicitly noted in \cite{KOCKRI22}, issues of this type do not arise when imposing the Dirichlet data  only asymptotically; for such versions of relaxed functionals, it is however not clear how to establish key properties such as, e.g., Theorem~\ref{thm:consistency}. 

\subsection{Proof of Lemma \ref{lem:exponential}}\label{sec:AppendixExponential} In order to prove  the lower bound in \eqref{eq:tenpenny}, we may assume that $\!\!\!{\;\;-}{\!\!\!\||v|^{2-\mu}\|}_{\exp\lebe^{1}(\ball_{r}(x_{0}))}<\infty$, as otherwise there is nothing to prove. Moreover, it is customary to put $\widetilde{\Phi}(z)\coloneqq \exp(|z|)-1$ for $z\in\R$. By continuity and monotonicity of $\Phi_{\mu}$, there exists a unique number $t_{2}=t_{2}(t_{1},\mu)>0$ such that 
\begin{align}\label{eq:alexjames}
\Phi_{\mu}(t_{2}) = \frac{1}{2}. 
\end{align}
{We may assume that $t_{2}<t_{1}$.} Next, we choose 
\begin{align*}
\Theta \coloneqq \Big(\Big(\frac{t_{1}}{t_{2}}\Big)^{2-\mu}-1\Big){\;\;-}{\!\!\!\||v|^{2-\mu}\|}_{\exp\lebe^{1}(\ball_{r}(x_{0}))}. 
\end{align*}
In consequence, we have 
\begin{align*}
&{\;\;-}{\!\!\!\||v|^{2-\mu}\|}_{\exp\lebe^{1}(\ball_{r}(x_{0}))} + \Theta =  \Big(\frac{t_{1}}{t_{2}}\Big)^{2-\mu}{\;\;-}{\!\!\!\||v|^{2-\mu}\|}_{\exp\lebe^{1}(\ball_{r}(x_{0}))}  \\ & \Longrightarrow t_{2}^{2-\mu}(\!\!\!{\;\;-}{\!\!\!\||v|^{2-\mu}\|}_{\exp\lebe^{1}(\ball_{r}(x_{0}))} +\Theta) = t_{1}^{2-\mu}{\;\;-}{\!\!\!\||v|^{2-\mu}\|}_{\exp\lebe^{1}(\ball_{r}(x_{0}))}  \\ 
& \Longrightarrow t_{2}(2(\!\!\!{\;\;-}{\!\!\!\||v|^{2-\mu}\|}_{\exp\lebe^{1}(\ball_{r}(x_{0}))} +\Theta))^{\frac{1}{2-\mu}} =  t_{1}(2\!\!\!{\;\;-}{\!\!\!\||v|^{2-\mu}\|}_{\exp\lebe^{1}(\ball_{r}(x_{0}))} )^{\frac{1}{2-\mu}}.
\end{align*}
Hence, defining
\begin{equation*}
\lambda_1 \coloneqq (2(\!\!\!{\;\;-}{\!\!\!\||v|^{2-\mu}\|}_{\exp\lebe^{1}(\ball_{r}(x_{0}))}))^{\frac{1}{2-\mu}}\qquad\text{and}\qquad\lambda_2 \coloneqq (2(\!\!\!{\;\;-}{\!\!\!\||v|^{2-\mu}\|}_{\exp\lebe^{1}(\ball_{r}(x_{0}))}+\Theta))^{\frac{1}{2-\mu}},
\end{equation*}
we conclude for $x\in\ball_{r}(x_{0})$ that  $|v(x)| \geq t_2 \lambda_2$ holds if and only if $|v(x)| \geq t_1 \lambda_1$. Therefore, introducing 
\begin{equation}
\label{eq:A_exp}
\mathfrak{A} \coloneqq \big\{x\in\ball_{r}(x_{0})\colon |v(x)| < t_2 \lambda_2 \big\}
\end{equation}
we have 
\begin{equation}
\label{eq:A_exp_complement}
\ball_{r}(x_{0}) \setminus \mathfrak{A} =  \big\{x\in\ball_{r}(x_{0})\colon |v(x)| \geq t_2 \lambda_2 \big\} = \big\{x\in\ball_{r}(x_{0})\colon |v(x)| \geq t_1 \lambda_1 \big\} \eqqcolon \mathfrak{B}.
\end{equation}
This allows us to estimate via $\lambda_1 < \lambda_2$
\begin{align*}
\dashint_{\ball_{r}(x_{0})}\Phi_{\mu}\Big(\frac{|v|}{\lambda_2}\Big)\dif x
 & = \frac{1}{\mathscr{L}^{n}(\ball_{r}(x_{0}))}\int_{ \mathfrak{A}}\Phi_{\mu}\Big(\frac{|v|}{\lambda_2}\Big)\dif x + \frac{1}{\mathscr{L}^{n}(\ball_{r}(x_{0}))}\int_{ \mathfrak{B}}\Phi_{\mu}\Big(\frac{|v|}{\lambda_2}\Big)\dif x \\
 & \leq  \frac{1}{\mathscr{L}^{n}(\ball_{r}(x_{0}))}\int_{ \mathfrak{A}}\Phi_{\mu}(t_{2})\dif x + \frac{1}{\mathscr{L}^{n}(\ball_{r}(x_{0}))}\int_{ \mathfrak{B}}\Phi_{\mu}\Big(\frac{|v|}{\lambda_1}\Big)\dif x \\
 & \!\!\! \stackrel{\eqref{eq:alexjames}}{\leq}  \frac{1}{2} + \frac{1}{\mathscr{L}^{n}(\ball_{r}(x_{0}))}\int_{ \mathfrak{B}}{\Phi}_{\mu}\Big(\frac{|v|}{\lambda_1}\Big)\dif x \eqqcolon \mathrm{I}.
\end{align*}
On $\mathfrak{B}$, we have $|v|\geq t_{1}\lambda_{1}$  
and therefore, {recalling our definition of $t_{1}$, see \eqref{eq:escobar}, and that $\widetilde{\Phi}(z)\coloneqq \exp(|z|)-1$}, 
\begin{align*}
\Phi_{\mu}\Big(\frac{|v|}{\lambda_{1}} \Big) = \Phi_{\mu}\Big(\frac{|v|}{(2\!\!\!{\;\;-}{\!\!\!\||v|^{2-\mu}\|}_{\exp\lebe^{1}(\ball_{r}(x_{0}))})^{\frac{1}{2-\mu}}}\Big) & =\widetilde{\Phi}\Big(\frac{|v|^{2-\mu}}{2\!\!\!{\;\;-}{\!\!\!\||v|^{2-\mu}\|}_{\exp\lebe^{1}(\ball_{r}(x_{0}))}}\Big) \\ & \leq \frac{1}{2}\widetilde{\Phi}\Big(\frac{|v|^{2-\mu}}{\!\!\!{\;\;-}{\!\!\!\||v|^{2-\mu}\|}_{\exp\lebe^{1}(\ball_{r}(x_{0}))}}\Big), 
\end{align*}
where the ultimate inequality follows from the convexity of $\widetilde{\Phi}$ and $\widetilde{\Phi}(0)=0$. In particular, our definition of $t_{1}$ gives us 
\begin{align*}
\mathrm{I} & \leq \frac{1}{2} + \frac{1}{2\mathscr{L}^{n}(\ball_{r}(x_{0}))}\int_{\mathfrak{B}}\widetilde{\Phi}\Big(\frac{|v|^{2-\mu}}{(\!\!\!{\;\;-}{\!\!\!\||v|^{2-\mu}\|}_{\exp\lebe^{1}(\ball_{r}(x_{0}))})}\Big)\dif x \leq 1 
\end{align*}
by the very definition of $\!\!\!{\;\;-}{\!\!\!\||v|^{2-\mu}\|}_{\exp\lebe^{1}(\ball_{r}(x_{0}))}$. Hence, by the definition of the Luxemburg norm and recalling that $t_{2}=t_{2}(t_{1},\mu)>0$, we arrive at 
\begin{align*}
\!\!\!{\;\;-}{\!\!\!\|v\|}_{\lebe^{\Phi_{\mu}}(\ball_{r}(x_{0}))}& \leq \lambda_{2}  \leq c(t_{1},\mu)\, \!\!\!{\;\;-}{\!\!\!\||v|^{2-\mu}\|}_{\exp\lebe^{1}(\ball_{r}(x_{0}))}^{\frac{1}{2-\mu}}.
\end{align*}
This establishes the lower bound in \eqref{eq:tenpenny}; the upper bound follows by analogous means. \hfill $\square$

\subsection{Proof of Lemma~\ref{lem:FRESER98}}\label{sec:proofofFS}
The proof follows the lines of \cite[Lemma 4.1]{FRESER98},  where the claim is established in the case $\alpha=1$ and $\theta=0$. For the convenience of the reader and since we require the enlarged range of~$\alpha$ and~$\theta$, we briefly sketch the argument. We recall the following decay estimate: For every $Q\in[1,\infty)$ there exists $C=C(Q)<\infty$ such that
\begin{equation}\label{est:decayh1n2}
\biggl(\fint_{\mathcal{A}_r (x_{0})}\big| h-(h)_{\mathcal{A}_R (x_{0})}\big|^Q\,\mathrm dx\biggr)^\frac1Q\leq C\sqrt{\log_2(2R/r)}\|\nabla h\|_{\lebe^2(\ball_{2R}(x_0))}
\end{equation}
holds for every function $h\in \sobo^{1,2}(\ball_{2R}(x_0)))$ and all $r \in (0,R)$. For the sake of completeness, we provide the argument for \eqref{est:decayh1n2} at the end of the proof. Set $f(\rho)\coloneqq \int_{\ball_\rho(x_0)}H^2\,\mathrm dx$. Combining the triangle inequality in the form
\begin{align*}
\fint_{\mathcal{A}_\rho(x_0)}|h|^\alpha |H|\,\mathrm dx\leq2^{\alpha-1}\fint_{\mathcal{A}_\rho(x_0)}\bigl| h-(h)_{\mathcal{A}_R (x_{0})}\bigr|^\alpha |H|\,\mathrm dx+2^{\alpha-1}\bigl|(h)_{\mathcal{A}_R (x_{0})} \bigr|^\alpha \fint_{\mathcal{A}_\rho (x_{0})}|H|\,\mathrm dx
\end{align*}
with the assumption \eqref{eq1:FreSe99variant} and the decay estimate~\eqref{est:decayh1n2}, we obtain for all $0<\rho<R$
\begin{align}
f(\rho) & \leq c2^{\alpha-1}L\biggl(\int_{\mathcal{A}_\rho (x_{0})} |H|^2\,{\mathrm d}x\biggr)  \biggl(\biggl(\fint_{\mathcal{A}_\rho (x_{0})} \bigl|h-(h)_{\mathcal{A}_R (x_{0})}\bigr|^{2\alpha}\,\mathrm dx\biggr)^\frac12+\bigl|(h)_{\mathcal{A}_R (x_{0})} \bigr|^\alpha\biggr) + \theta \notag\\ 
& \leq c2^{\alpha-1}L\biggl(\int_{\mathcal{A}_\rho (x_{0})}|H|^2\,{\mathrm d}x\biggr)  \biggl(C\sqrt{\log_{2}(2R/\rho)}^{\alpha}\|\nabla h\|_{\lebe^{2}(\ball_{2R}(x_{0}))}^{\alpha}+\bigl|(h)_{\mathcal{A}_R (x_{0})} \bigr|^\alpha\biggr) + \theta \label{eq:besuchszeit}
\\ 
& \leq c_{1}\sqrt{\log_{2}(2R/\rho)}^{\alpha}\int_{\mathcal{A}_{\rho}(x_{0})} |H|^{2}\dif x + \theta, \notag  
\end{align}
where $c_{1}=c_{1}(\alpha,\|\nabla h\|_{\lebe^{2}(\ball_{2R}(x_{0}))}+R^{-1}\|h\|_{\lebe^{2}(\ball_{2R}(x_{0}))},L)$. The above estimate, in combination with routine hole-filling, implies with $\beta \coloneqq \alpha/2\in (0,1)$ that 
\begin{equation}\label{L:appendix:estfrho}
f(\rho)\leq \frac{c_1\log_2(2R/\rho)^\beta}{c_1\log_{2}(2R/\rho)^\beta+1}f(2\rho) + \frac{\theta}{c_1\log_{2}(2R/\rho)^\beta+1} \qquad \text{for all }  \rho\in(0,R].
\end{equation}
It remains to show
$$
f(r)\leq c \log_2(2R/r)^{-p} f(2R)+\theta
$$
for all $p \in [1,\infty)$, where the constant $c>0$ is as in the claim of the lemma. For simplicity, we suppose in what follows that $M \coloneqq \log_2(R/r)\in \mathbb N$, the general case can be deduced by a simple post-processing argument. From \eqref{L:appendix:estfrho}, we deduce that
\begin{align*}
f(2^j r)\leq \frac{c_1 (M+1-j)^{\beta}}{c_1 (M+1-j)^{\beta}+1}f(2^{j+1}r) + \frac{\theta}{c_1(M+1-j)^\beta+1}\qquad \text{for all } j\in\{0,\dots,M-1\}.  
\end{align*}
Iterating this inequality gives us 
\begin{align}\label{L:appendix:estfr}
\begin{split}
f(r) & \leq \bigg(\prod_{j=0}^{M-1}\frac{c_1(M+1-j)^\beta}{c_1(M+1-j)^\beta+1}\bigg)f(2^Mr) + \bigg(\sum_{j=0}^{M-1}\frac{\prod_{i=0}^{j-1}\frac{c_1(M+1-i)^\beta}{c_1(M+1-i)^\beta+1}}{c_1(M+1-j)^\beta+1}\bigg)\theta\\ 
& = \bigg(\prod_{k=1}^{M} \frac{c_{1}(k+1)^{\beta}}{c_{1}(k+1)^{\beta}+1}\bigg)f(2^{M}r) + \bigg(\sum_{k=1}^{M}\frac{\prod_{i=0}^{M-k-1}\frac{c_1(M+1-i)^\beta}{c_1(M+1-i)^\beta+1}}{c_1(k+1)^\beta+1}\bigg)\theta \\ 
& = \bigg(\prod_{k=1}^{M} \frac{c_{1}(k+1)^{\beta}}{c_{1}(k+1)^{\beta}+1}\bigg)f(2^{M}r) + \bigg(\sum_{k=1}^{M}\frac{\prod_{\ell=k+1}^{M}\frac{c_1(\ell+1)^\beta}{c_1(\ell+1)^\beta+1}}{c_1(k+1)^\beta+1}\bigg)\theta.
\end{split} 
\end{align}
Here, we employed the index shift $k=M-j$ and $\ell=M-i$ in the ultimate two  equations and adopted the convention of the empty product to equal~$1$. Now, we estimate the two terms on the above right-hand side separately. Setting
$$
z_{k,M} \coloneqq \prod_{\ell=k}^{M}\frac{c_1(\ell+1)^\beta}{c_1(\ell+1)^\beta+1},
$$
we observe that
$$
z_{k+1,M}-z_{k,M}=\Big(1-\frac{c_1(k+1)^\beta}{c_1(k+1)^\beta +1}\Big)z_{k+1,M}=\frac{z_{k+1,M}}{c_1(k+1)^\beta+1}
$$
and thus, by a telescope sum argument for the second term on the right-hand side of \eqref{L:appendix:estfr}, we find 
\begin{align}\label{L:appendix:estthetaterm}
\begin{split}
 \bigg(\sum_{k=1}^{M}\frac{\prod_{\ell=k+1}^{M}\frac{c_1(\ell+1)^\beta}{c_1(\ell+1)^\beta+1}}{c_1(k+1)^\beta+1}\bigg)\theta=&\sum_{k=1}^{M}\frac{z_{k+1,M}}{c_1(k+1)^\beta+1}\theta=\sum_{k=1}^{M}(z_{k+1,M}-z_{k,M})\theta\\
 =&(z_{M+1,M}-z_{1,M})\theta\leq\theta
\end{split} 
\end{align}
for every $M$, where we have used in the last inequality $z_{M+1,M}=1$ and $z_{1,M}\geq0$. Next, we estimate the first term on the right-hand side of \eqref{L:appendix:estfr}. Using the inequality $1-x \leq \mathrm{e}^{-x}$ for all $x \in \R$, we find
$$
\prod_{k=1}^M\frac{c_1(k+1)^\beta}{c_1(k+1)^\beta+1}=\prod_{k=1}^M\Big(1-\frac{1}{c_1(k+1)^\beta+1}\Big)\leq \mathrm{e}^{-\sum_{k=1}^M\frac{1}{c_1(k+1)^\beta+1}}.
$$
Taking into account
\begin{align*}
\sum_{k=1}^M\frac{1}{c_1(k+1)^\beta+1}\geq \frac1{c_1+1}\sum_{k=2}^{M+1}\frac1{k^\beta}\geq \frac1{c_1+1} \int_2^{M+2} x^{-\beta} \dif x = \frac{(M+2)^{1-\beta}-2^{1-\beta}}{(c_1+1)(1-\beta)}
\end{align*}
and~\eqref{L:appendix:estthetaterm}, we conclude by ~\eqref{L:appendix:estfr} that
\begin{equation*}
f(r)\leq \mathrm{e}^{-\frac{(M+2)^{1-\beta}}{(c_1+1)(1-\beta)}}\mathrm{e}^{\frac{2^{1-\beta}}{(c_1+1)(1-\beta)}}f(2^Mr)+\theta.
\end{equation*}
Clearly, this estimate in combination with $\mathrm{e}^{-t}\leq c(p)t^{-p}$ for all $p > 0$ and $M=\log_2(R/r)$ implies the claim. 

Finally, we recall the argument for \eqref{est:decayh1n2}. For simplicity, we again assume $R=2^Mr$ with $M\in\mathbb N$. Writing
$$
(h)_{\mathcal{A}_R (x_{0})} =
(h)_{\mathcal{A}_r (x_{0})}+\sum_{j=0}^{M-1}\big[ (h)_{\mathcal{A}_{2^{j+1}r}(x_{0})} -  (h)_{\mathcal{A}_{2^{j}r}(x_{0})}\big] 
$$
as a telescope sum, we have
\begin{align*}
    \biggl(\fint_{\mathcal{A}_r (x_{0})}\big| h-(h)_{\mathcal{A}_R (x_{0})}\big|^Q \,\mathrm dx\biggr)^\frac1Q\leq&\biggl(\fint_{\mathcal{A}_r (x_{0})}\big| h-(h)_{\mathcal{A}_r (x_{0})} \big|^Q\,\mathrm dx\biggr)^\frac1Q\\
    &+\sum_{j=0}^{M-1}\big| (h)_{\mathcal{A}_{2^{j+1}r}(x_{0})} -  (h)_{\mathcal{A}_{2^{j}r}(x_{0})}\big|.
\end{align*}
We next note that each term in the previous sum can be estimated by a multiple  of the mean deviation of~$h$ on the set $\ball_{2^{j+2}r}(x_0)\setminus \ball_{2^{j}r}(x_0)$. The Sobolev--Poincar\'e inequality in dimension $n=2$ then implies that for every $Q\in[1,\infty)$ there exists a constant $c=c(Q)>0$ such that 
\begin{align*}
    \biggl(\fint_{\mathcal{A}_r (x_{0})}\big| h-(h)_{\mathcal{A}_R (x_{0})}\big|^Q \,\mathrm dx\biggr)^\frac1Q 
    & \leq c \sum_{j=0}^{M-1}\bigg(\int_{\ball_{2^{j+2}r}(x_0)\setminus \ball_{2^{j}r}(x_0)}|\nabla h|^2 \mathrm dx\bigg)^\frac12 \\
    & \leq c\sqrt{M}\bigg(2\int_{\ball_{2^{M+1}r}(x_0)}|\nabla h|^2\mathrm dx\bigg)^\frac{1}{2},
\end{align*}
where we have also used $\mathcal{A}_r (x_{0}) \subset \ball_{4r}(x_0) \setminus \ball_r(x_0)$ and the Cauchy--Schwarz inequality. Since $M=\log_2(R/r)$ by assumption, we have proved the estimate \eqref{est:decayh1n2} in the particular case $R=2^Mr$, from which the general case can then be deduced by easy means.\hfill $\square$

\section*{Acknowledgments}
The authors are thankful to Cristiana De Filippis for insightful discussions on the theme of the paper. F.G. moreover acknowledges funding by the Hector foundation, FP 626/21. M.S. wishes to thank the University of Konstanz for the funding of a research visit in February 2025, during which this project came to fruition.

\bibliographystyle{alpha}		
\bibliography{BGS}

\end{document}